\newcommand{\U}{\mathcal{U}}
\newcommand{\image}{\text{Im}}
\newcommand{\curvecomplex}{\mathcal{C}}
\newcommand{\betas}{\mathfrak{b}}
\newcommand{\mrs}{\mathcal{S}}%marked riemann surfaces$
\newcommand{\mct}{\overline{\mathcal{S}}^c}
\newcommand{\mnc}{\overline{\mathcal{S}}}
\newcommand{\nc}{\overline{\mathcal{M}}}
\newcommand{\const}{\mathrm{const}}
\newcommand{\prel} {p_{rel}}
\newcommand{\rank}{\operatorname{rank}}
\newcommand{\per}{\operatorname{Per}}
\newcommand{\percompact}{\mathcal{P}er}
\newcommand{\vol}{\operatorname{vol}}
\newcommand{\curvesystem}{\overline{c}}
\newcommand{\spl}{\text{simple}}
\newcommand{\Forget}{\text{For}}
\newtheorem{theorem}{Theorem}[section]
\newtheorem{proposition}[theorem]{Proposition}
\newtheorem{corollary}[theorem]{Corollary}
\newtheorem{lemma}[theorem]{Lemma}
\theoremstyle{definition}
\newtheorem{definition}[theorem]{Definition}
\newtheorem{remark}[theorem]{Remark}
\newtheorem{example}[theorem]{Example}
\title{A transfer principle: from periods to isoperiodic foliations}
\author{Gabriel Calsamiglia}
\address{Departamento de Matem\' atica Aplicada, Universidade Federal Fluminense,Rua Professor Marcos Waldemar de Freitas Reis, s/n, Campus do Gragoat\' a, Bloco G, Niter\' oi, 24210-201 Brazil} \email{gcalsamiglia@id.uff.br}
\author{Bertrand Deroin}
\address{CNRS \& Universit\'e Cergy-Pontoise, UMR 8088, 2 av. Adolphe Chauvin, 95302 Cergy-Pontoise Cedex, France}
\email{bertrand.deroin@u-cergy.fr}
\author{Stefano Francaviglia}
\address{Dipartimento di Matematica, Universit\`a di Bologna, P.zza
Porta S. Donato 5, 40126 Bologna, Italy }
\email{stefano.francaviglia@unibo.it}
\keywords{Isoperiodic foliation, Hodge bundle, isoperiodic abelian differentials}
\subjclass[2010]{57M50, 30F30, 53A30, 14H15, 32G13}
\begin{document}
\maketitle
\begin{abstract}
 We classify the possible closures of leaves of the isoperiodic foliation 
 defined on the Hodge bundle over the moduli space of genus $g\geq 2$ curves and prove that the foliation is ergodic on those sets. The results derive from the connectedness properties of the fibers  of the period map defined on the Torelli cover of the moduli space. Some consequences on the topology of Hurwitz spaces of primitive branched coverings over elliptic curves are also obtained. To prove the results we develop the theory of augmented Torelli space, the branched Torelli cover of the Deligne-Mumford compactification of the moduli space of curves.
\end{abstract}

\section{Introduction}

\subsection{Overview}
Let $\Omega \mathcal M_g$ be the moduli space of abelian differentials on compact genus $g\geq 2$ smooth curves. The period of an element $(C,\omega) \in \Omega \mathcal M_g$ is the element of $H^1 (C, \mathbb C)\simeq \text{Hom} ( H_1(C,\mathbb Z), \mathbb C)$ that is defined by
\begin{equation}\label{eq: period}  \text{Per} (C,\omega)  : \gamma \in H_1 (C,\mathbb Z) \mapsto \int _\gamma \omega  \in \mathbb C. \end{equation}
The periods of an abelian differential do not allow to recover the abelian differential itself, even infinitesimally.  Actually, it is always possible to find non trivial isoperiodic deformations of a given abelian differential, namely an immersed complex submanifold $L\subset \Omega \mathcal M_g$ such that the period of a form $(C,\omega)\in L$ is a locally constant function, when we use the local identifications of the $H^* (C,\mathbb C)$'s given by the Gauss-Manin connection. For instance,  if $f_t:C_t\rightarrow C$ is a continuous family of degree $d$ branched coverings over $C$ depending on the parameter $t$, the period map of the family $\omega_t=f_t^*\omega$ is locally constant.

The case $g=2$ is instructive: every genus two curve is a double cover of $\mathbb P^1 $ ramified over six distinct points, say $0, 1, \infty, x_1, x_2, x_3$. An abelian differential on such a curve can be written as the hyperelliptic integrand $$ \frac{(ax+b ) dx} {\sqrt{x(x-1)(x-x_1)(x-x_2)(x-x_3)}}.$$ Picard-Fuchs theory (see \cite[p.60]{Clemens} for details) tells us that isoperiodic deformations on $\Omega \mathcal M_2$ are integral curves of the  following vector field

% in this case, every genus two curve $C$ is hyperelliptic and can be defined by an equation $y^2 = x (x-1) (x-x_1)(x-x_2)(x-x_3)$ in $\mathbb P^1 \times \mathbb P^1$, where the $x_i$'s are distinct elements of $\mathbb C \setminus \{ 0, 1 \}$. An abelian differential  on $C$ can be written as the hyperelliptic intregrand $\omega = \frac{(ax+b) dx}{y}$. Then using Picard-Fuchs theory, one can prove that every isoperiodic deformation on $\Omega\mathcal M_2$ is an integral curve of the following vector field
\begin{small}
\begin{equation}\label{eq: genus two}  \sum _j  \frac{x_j (1-x_j)}{ax_j+b} \frac{\partial }{\partial x_j}    -\frac{1}{2}  \frac{\partial }{\partial a} -\frac{1}{2} \Big(1+  \sum_j \frac{b(x_j-1)}{ax_j +b} \Big)  \frac{\partial }{\partial b}. \end{equation}
\end{small}
Apart from the invariant closed subsets characterized by topological properties of the set of periods, there is an interesting family of closed invariant sets for the isoperiodic foliation in genus two called the Hilbert modular invariant manifolds, introduced by Calta in \cite{Calta} and McMullen in \cite{McMullen3, McMullen4}. The set of curves of genus two whose Jacobian has real multiplication by a quadratic order of discriminant $D$ is a Hilbert modular surface $X_D$, and a remarkable fact is that the subset of the Hodge bundle  \(\Omega X_D\)  over $X_D$ consisting of eigenforms for the complex multiplication is invariant by the isoperiodic foliation. 

In any genus, the collection of all maximal isoperiodic deformations defines a holomorphic foliation $\mathcal F_g$ of $\Omega^* \mathcal M_g$, called the isoperiodic foliation\footnote{In the literature, this foliation is also called the kernel foliation, or the absolute period foliation. } (see \cite{Movasati} for further examples). It has dimension $2g-3$, and it is also algebraic: its leaves are solutions of a system of algebraic equations analogous to \eqref{eq: genus two} with respect to the Deligne-Mumford algebraic structure on moduli space. It admits a real analytic global first integral defined by the volume of the flat metric induced by the abelian differential. Complex multiplication on the forms induces isomorphisms between the restrictions of the foliation to the volume levels.

  It turns out that the restriction of $\mathcal{F}_g$ to the volume levels carries a transverse structure modelled on the homogeneous space 
\begin{equation} \label{eq: homogeneous space} \text{Sp} (2g,\mathbb R)/ U, \text{ with } U= I_ 2 \times \text{Sp} (2g-2,\mathbb R) \end{equation}
with changes of coordinates acting by right multiplications by the elements of the lattice \(\text{Sp} (2g, \mathbb Z) \).
There are some well known examples of closed invariant sets for the given restriction and an integer $d\geq 2$: the moduli spaces \(H_{g,d} \subset \Omega \mathcal M_g\) of forms that are pull-back of elliptic differentials by primitive branched coverings of degree \(d\). The set $H_{g,d}$ is saturated by the isoperiodic foliation and each leaf there is algebraic. 

The main goal of this paper is to investigate the topological properties of the isoperiodic foliation, and derive some dynamical consequences: we prove that any dynamical property satisfied by the action of the lattice \(\text{Sp} (2g, \mathbb Z)\) on the homogeneous space \eqref{eq: homogeneous space} can be transferred to a property satisfied by the isoperiodic foliation. This fact is what we call the transfer principle. Using Ratner's theory, it allows to describe explicitely the closure of \emph{each} leaf  and prove that it is a real analytic subset. We also obtain the ergodicity of the isoperiodic foliation on these sets, a fact that has been proven independently by Hamenst\" adt in \cite{Hamenstadt}.

The proof of the transfer principle  is achieved by studying the topological properties of the fibers of the period map defined on the moduli space of homologically marked genus \(g\) curves, with values in the affine space \(\mathbb C^{2g}\): we prove that all the fibers that do not correspond to leaves in $H_{g,2}$ with $g\geq 4$ are connected. This result provides a solution to a problem posed by McMullen  in \cite[p. 2282]{McMullen} for genus $g\geq 4$. The main idea of the proof of the connectedness of the fibers  is to show that any form can be deformed continuously and isoperiodically to a stable form on a nodal curve, and then connect all possible stable forms obtained in this way using an inductive argument on the genus. As for fibers formed by branched double coverings over an elliptic differential, we generalize an invariant described by Arnold \cite{Arnold} for branched double coverings over the Riemann sphere  (i.e. hyperelliptic loci) to prove they are disconnected when the genus is at least five.

 \subsection{Statement of results}
 
 A homologically marked genus \(g\) curve is a couple \((C,m)\), where \(C\) is a genus \(g\) curve and \(m : \mathbb Z^{2g} \rightarrow H_1 (C,\mathbb Z)\) is a symplectic identification. The moduli space of marked genus \(g\) curves is the so called Torelli covering \(\mrs_g \rightarrow\mathcal M_g\). Its covering group  $\mathcal{I}_g\subset\pi_{1}(\mathcal{M}_g)$-- called the Torelli group-- is identified with the set of classes in the mapping class group of an orientable closed surface of genus $g$ that act trivially on the first homology group of the surface.   We denote by \(\Omega\mrs_g\) the pull-back of the Hodge bundle to \( \mrs_g\). We analyze the topological properties of the period map
$$
\text{Per}_g: \Omega\mrs_g\rightarrow \mathcal H_g\cup 0 \text{ defined by }\per_g(C,m,\omega)=\per(C,\omega)\circ m .$$
where \(\mathcal H_g \subset \text{Hom} (\mathbb Z^{2g} , \mathbb C)  \) denotes the open subset formed by periods of non zero abelian differentials on a homologically marked genus \(g\) curve. The isoperiodic foliation lifts to a foliation of $\Omega\mrs_g$ whose leaves are the connected components of the level sets of the period map. 
 \begin{definition}[Primitive degree]\label{def:primitive degree}
 Given $p\in \text{Hom} (\mathbb{Z}^{2g}, \mathbb C) $ we define 
 \begin{itemize}
 \item its volume $\vol(p)=\Re (p)\cdot \Im(p)$, the  symplectic product on $\text{Hom} (\mathbb Z^{2g} , \mathbb R) $ 
 \item its primitive degree, denoted $\deg(p)$ as $\infty$ if $\Lambda= p (\mathbb Z^{2g} ) \subset \mathbb{C}$ is non-discrete and as $$\deg (p)=\frac{\vol(p)}{\vol(\mathbb{C}/\Lambda)}\text{ if } \Lambda\subset \mathbb{C} \text{ is discrete. }$$ 
 \end{itemize}

\end{definition}

When $p\in \text{Hom} (\mathbb Z^{2g} , \mathbb{C} ) $ is the period of some non-zero abelian differential \(\omega\) on a homologically marked smooth curve \((C,m)\), $\vol(p)$ corresponds to its volume \(\frac{i}{2} \int \omega\wedge \overline{\omega} \) too, and is therefore positive. When the periods of the form define a lattice $\Lambda\subset\mathbb{C}$, the number $\deg(p)$  corresponds to the topological degree of the primitive branched covering $C\rightarrow \mathbb{C}/\Lambda$ defined by integration of the form (i.e. it induces a surjection at the first homology group level). The degree is a positive integer and it is one only if $g=1$. The conditions $\vol(p)>0$ and either $g=1$ or $\deg(p)>1$  will be referred to as \text{Haupt conditions}. 

%This shows that for $g\geq 2$,  \begin{equation}\label{eq:easy inclusion Haupt}\mathcal{H}_g\subset\{p\in\mathbb{C}^{2g}:\vol(p)>0\text{ and } \deg (p)>1\},\end{equation} and this latter is actually an equality by a theorem of Haupt. 
 
 Our main result is 
 
\begin{theorem} 
\label{t:connectedness}
For $g\geq 2$, the fibers of the period map $\per_g$ over points of primitive degree at least three are connected.
\end{theorem}

Its proof, whose structure is detailed in section \ref{ss: strategy}, will take up most of the paper. 

Regarding the fibers of $\per_g$ over points of primitive degree two, it was already known that they are connected for $g=2,3$ (see \cite{McMullen}). We suspect that the equivalent statement is still true in genus four, but we were not able to prove it. Nevertheless, in higher genera we prove%, and hence the homomorphism \eqref{eq: homomorphism} is onto in these cases too. However, in higher genera, this fails to be true. 

\begin{theorem}\label{t:degree 2} If $g\geq 5$, 
the fibers of $\per_g$ over points of primitive degree two are disconnected. %In particular, for $g\geq 5$ and $d=2$ the homomorphism \eqref{eq: homomorphism} fails to be onto. 
\end{theorem}

 Theorem \ref{t:degree 2} is proved using an adaptation of a method due to Arnold \cite{Arnold} based on an invariant associated to branched double coverings. It enabled him to prove that the symplectic representation \(\pi_1 (H_{g,2}(\mathbb{P}^1)) \rightarrow \text{Sp} (2g, \mathbb Z) \), induced by the natural inclusion $H_{g,2}(\mathbb{P}^1)\rightarrow \mathcal{M}_g$ of the Hurwitz space $H_{g,2}(\mathbb{P}^1)$ of genus $g$ branched double coverings over the Riemann sphere, is not onto for \(g\geq 3\). In our case the invariant will be extended to branched double coverings over an elliptic curve.

 The connectedness of the fibers of the lift of  $\per_g$ to the universal cover of $\Omega\mathcal{M}_g$ fails in general. For example, in genus $g=3$ the fibers of $\per_3$ over points of primitive degree two are biholomorphic to Siegel space $\mathfrak{S}_2$, hence simply connected. Therefore there are infinitely many components of the lift of such a fiber to the universal cover of $\Omega\mathcal{M}_3$. In genus $g=2$, the generating family of $\pi_1(\mathcal{M}_2)$ given by  Mess in \cite{Mess} allows to prove that the lift of the generic fiber is disconnected (see Corollary \ref{c: fibers on T_2 are disconnected}). In both cases the proof relies on the fact that the projection of the isoperiodic sets to Siegel space via the Torelli map, do not accumulate on some of the components of the boundary of Torelli space.

%As a consequence, this gives some information on the fundamental group of the leaves of the isoperiodic foliation that correspond to a period $p\in\mathcal{H}_g$ when $\deg(p)\neq 2$: \marginpar{transform Remark \ref{rem:fundamental group of leaves} into a proof of corollary}

%\begin{corollary}\label{c: fundamental group of leaves} Let \(L\) be a leaf of the isoperiodic foliation on \(\Omega^* \mathcal M_g\), and \(p\) be the period of one of its element, after having marked the first homology of its underlying curve. Assume \( \deg(p) >2\). Then, the natural representation \( \pi_1(L)\rightarrow\text{Stab}(p)\subset\text{Sp}(2g,\mathbb Z)\) induced by the natural inclusion $L \rightarrow \Omega \mathcal{M}_g$ is onto. \end{corollary}

A first application of these results concerns the topology of Hurwitz spaces of branched coverings over elliptic curves. The connectedness of the moduli space of genus $g>1$ primitive branched coverings of degree \(d\) over an elliptic curve was proven by Berstein and Edmonds (see \cite{BE}). Our method allows to retrieve this result, and to get some new information on the fundamental group of these moduli spaces whenever $d>2$:

\begin{corollary}
Let $H_{g,d}(E)$ be the Hurwitz space of degree $d$ and genus $g\geq 2$ primitive branched coverings over the elliptic curve $E$. Let $p: H_1(\Sigma_g)\twoheadrightarrow H_1(E) $ be the homology map of any of its elements. For $d\geq 3$ the homomorphism \begin{equation}\label{eq: homomorphism} \pi_1(H_{g,d}(\mathbb{C}/\Lambda))\rightarrow\text{Stab}(p)\subset\text{Sp}(2g,\mathbb Z)\text{ is onto.}\end{equation} 

\end{corollary}

This result is in fact more general: the fundamental group of the leaves of the isoperiodic foliation that correspond to a period $p\in\mathcal{H}_g$ when $\deg(p)\neq 2$ surjects onto \(\text{Stab} (p)\subset \text{Sp}(2g,\mathbb Z)\). It is an immediate consequence of Theorem \ref{t:connectedness} (see Remark \ref{rem:fundamental group of leaves}).

Let us mention that, by analogy with the fact that the level sets of the period map might be disconnected in the universal cover of \(\Omega\mathcal M_g\),  the morphism from the fundamental group of a leaf to $\pi_1(\Omega\mathcal{M}_g)$ -- which is isomorphic to the mapping class group-- has an image that might be strictly contained in the stabilizer of its corresponding period. This is precisely what happens in genus two (see Corollary \ref{c: fibers on T_2 are disconnected}).

%\textcolor{red}{DELTE IN RED: Another application of Theorem \ref{t:connectedness}, which was our original motivation,  is that it allows to transfer dynamical properties of the action of $\text{Sp}(2g,\mathbb Z)$ on $\mathcal H_g\cap\{p\in\mathbb{C}^{2g}:\deg (p)\geq 3\}$ to properties satisfied by the isoperiodic foliation on the moduli space of abelian differentials.A second application concerns the invertibility of the correspondence between $\mathcal{F}_g$-saturated sets and $\text{Sp}(2g,\mathbb{Z})$-invariant subsets of $\mathcal{H}_g$.  Theorem \ref{t:connectedness} is equivalent to what we call the Transfer Principle: }
A second application concerns the transfer principle: the map $\per_g$ is equivariant with respect to the action of the covering group   $\text{Sp}(2g,\mathbb{Z})$ of $\pi:\Omega\mrs_g\rightarrow\Omega\mathcal{M}_g$ if we consider its action on $\text{Hom}(\mathbb{Z}^{2g},\mathbb{C})$ by precomposition. This implies that to any $\mathcal{F}_g$-saturated subset $B\subset\Omega\mathcal{M}_g$ we can associate the $\text{Sp}(2g,\mathbb{Z})$-invariant subset $ A=\per_g(\pi^{-1}(B))\subset\mathcal{H}_g$.
Reciprocally, for each $\text{Sp}(2g,\mathbb{Z})$-invariant subset $A\subset\mathcal{H}_g$ we can associate an $\mathcal{F}_g$ saturated subset $B=\pi(\per_g^{-1}(p))$.  These correspondences are inverse one of the other if and only if the fibers of $\per_g$ over $A$ are connected. This correspondence is what allows to transfer properties of the $\text{Sp}(2g,\mathbb{Z})$ action on the homogeneous space \eqref{eq: homogeneous space} to properties of the isoperiodic foliation, and justifies the title of the paper.
 It view of Theorem \ref{t:connectedness},  the transfer principle can be applied on the complement of the primitive degree two  points  of $\mathcal{H}_g$.  
 %A simple calculation shows that for each $d\geq 2$ and lattice $\Lambda\subset\mathbb{C}$ the set $\{p\in\text{Hom} (\mathbb Z^{2g} , \mathbb{C} ) : \deg(p)=d, p(\mathbb{Z}^{2g})=\Lambda\}$ is a single $\text{Sp}(2g,\mathbb{Z})$ orbit (see Lemma \ref{l:finite degree orbit}).
%\begin{tcolorbox}\label{box}
%\xymatrix{\Omega\mrs_g\ar[r]^{\per_g}\ar[d]_{\pi}&\mathcal{H}_g\\\Omega\mathcal{M}_{g}&}
%Every  $\text{Sp}(2g,\mathbb Z)$-invariant subset $A\subset\mathcal H_g\cap\{p\in\mathbb{C}^{2g}:\deg (p)\geq 3\}$ corresponds to an  $\mathcal F_g$-saturated subset  \begin{equation}\label{eq: correspondance} B = \pi (\text{Per}_g^{-1} (A) ) \subset\Omega\mathcal M_g\setminus\{ \text{degree two coverings over elliptic differentials}\},\end{equation}and the correspondence $A\mapsto B$ defined in \eqref{eq: correspondance} constitutes a bijection. 
%\end{tcolorbox}
%\textcolor{blue}{It allows to transfer any result on the homogeneous action of $\text{Sp}(2g,\mathbb{Z})$ on the image of the period map to an equivalent property of the isoperiodic foliation on the non-homogeneous space $\Omega\mathcal{M}_{g}$. }
Using Ratner's theory, we deduce the following
\begin{theorem} [Dynamics of isoperiodic foliations]
\label{t:dynamics}
Let $g>2$ and $(C,\omega) \in \Omega^* \mathcal M_g$ of volume $V= \frac{i}{2} \int \omega \wedge \overline{\omega}$ and $\Lambda$ the closure of the image of its periods. Then the closure of the leaf $L(C,\omega)$ passing through $(C,\omega)$ is, up to the action of $\text{GL}(2,\mathbb R)$
\begin{itemize}

\item ($\Lambda$ is discrete) the component of Hurwitz space consisting of genus $g$ primitive branched coverings over $(\mathbb C / \Lambda, dz)$ of volume $V$.
\item ($\Lambda$ is $\mathbb R + i \mathbb Z$) the set of abelian differentials with periods contained in $\Lambda$, with primitive imaginary part, and with volume $V$,
\item ($\Lambda = \mathbb C$) the subset of $\Omega \mathcal M_g$ consisting of abelian differentials of volume $V$,
\end{itemize}
If $g=2$ the same statement holds, with an extra possibility occurring when $\omega$ is an eigenform for real multiplication by a real quadratic order $\mathfrak{o}_D$ of discriminant $D>0$. In this case the closure is the Hilbert modular invariant submanifold $\Omega X_D$.

Moreover, the restriction of $\mathcal F_g$ to any of these real analytic subsets of $\Omega \mathcal M_g$ is ergodic with respect to the Lebesgue class.
\end{theorem}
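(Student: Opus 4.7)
The overall plan is a \emph{transfer principle}: the dynamics of the leaf $L(C,\omega)$ of $\mathcal{F}_g$ are controlled, via the relative period map $\text{Per}$, by the dynamics of $\text{Sp}(2g,\mathbb{Z})$ on $\text{Hom}(H_1(C,\mathbb{Z}),\mathbb{C})\cong\mathbb{C}^{2g}$. By construction the leaves of $\mathcal{F}_g$ are the fibres of a local lift of $\text{Per}$, so $\overline{L(C,\omega)}$ is necessarily a union of such fibres over the closure of the orbit of the period point; the task is to identify that orbit closure and then transfer the information back.

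First I would use the $\text{GL}(2,\mathbb{R})$-action on $\Omega\mathcal{M}_g$, which preserves $\mathcal{F}_g$ and acts linearly on periods, to normalise the closed additive subgroup $\Lambda\subset\mathbb{C}$ generated by the periods of $(C,\omega)$. Since $V>0$, the group $\Lambda$ cannot lie in a real line, so after normalisation it is a lattice, of the form $\mathbb{R}+i\mathbb{Z}$, or equal to $\mathbb{C}$. A direct check then shows that each of the three candidate sets in the statement is closed, $\mathcal{F}_g$-invariant and contains $L(C,\omega)$: in the discrete case $(C,\omega)$ is a branched cover of $(\mathbb{C}/\Lambda,dz)$ and the combinatorics of the covering are preserved by any isoperiodic deformation, while in the two continuous cases the prescribed condition on the image of $\text{Per}$ is manifestly isoperiodic. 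The non-trivial direction is the reverse inclusion.

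To establish it, I would pass to a finite cover of moduli space on which the mapping class group acts through its symplectic representation, so that $\text{Per}$ induces a well-defined global map
\[
\overline{\text{Per}}\,:\,\Omega\mathcal{M}_g\longrightarrow \text{Sp}(2g,\mathbb{Z})\backslash\text{Hom}(\mathbb{Z}^{2g},\mathbb{C}).
\]
The closure of $L(C,\omega)$ then maps into the closure of the $\text{Sp}(2g,\mathbb{Z})$-orbit of its period, and classification of these orbit closures on $\mathbb{C}^{2g}$ follows from Ratner--Shah type results: if $\Lambda$ is a lattice the stabiliser of the period is arithmetic and the orbit is closed; if $\Lambda=\mathbb{R}+i\mathbb{Z}$ the stabiliser contains enough unipotent elements to force the closure to be the full locus of periods with primitive imaginary part; and if $\Lambda=\mathbb{C}$ the orbit is already dense. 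In genus $2$ the cohomology is small enough that an additional stabiliser coming from real multiplication by $\mathfrak{o}_D$ produces a further closed invariant set, matching $\Omega E_D$; for $g>2$ no analogous constraint can survive, because the cohomology is strictly larger than any $\mathfrak{o}_D$-eigenspace.

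The hardest step is the transfer itself: upgrading this orbit-closure information on the homogeneous side back to the foliation, that is, showing that the entire preimage of the relevant orbit closure lies in $\overline{L(C,\omega)}$. This amounts to proving that the Torelli group acts with a single orbit, up to closure and isoperiodic continuation, on each generic fibre of $\overline{\text{Per}}$. Concretely, in the discrete case this reduces to a connectedness statement for Hurwitz spaces of branched covers of an elliptic curve with prescribed combinatorial data, and in the continuous cases it is a surgery-type argument producing an isoperiodic path between any two differentials sharing the same period. Once connectedness of the fibre is secured, ergodicity with respect to Lebesgue is inherited from the Moore-type ergodicity of the unipotent dynamics on the homogeneous side, completing the proof.
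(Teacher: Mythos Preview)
Your outline is essentially the paper's own strategy: reduce to the $\text{Sp}(2g,\mathbb Z)$-action on $\mathbb C^{2g}$ via Ratner theory (this is Kapovich's analysis, reproduced in the appendix), and transfer back using connectedness of the fibres of $\text{Per}$ on the Torelli cover (the paper's Theorem~\ref{t:connectedness}). Two points are worth sharpening, though.

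First, in the discrete case you say the transfer ``reduces to a connectedness statement for Hurwitz spaces''. It does not: the theorem only asserts that $\overline{L(C,\omega)}$ is \emph{a connected component} of the relevant Hurwitz space, and indeed the leaf is already closed here because the $\text{Sp}(2g,\mathbb Z)$-orbit of a period with discrete image is closed. No connectedness of Hurwitz spaces is needed or claimed.

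Second, you describe the fibre-connectedness step as ``a surgery-type argument producing an isoperiodic path''. This is the heart of the matter and is dramatically harder than that phrase suggests: it is Theorem~\ref{t:connectedness}, whose proof occupies Sections~\ref{s:inductive step}--\ref{s:proof of inductive hypothesis}. The genus $2,3$ cases rely on McMullen's identification of the fibre with a Siegel slice of the Schottky locus; for $g\geq 4$ the paper argues by induction on $g$, degenerating to the Deligne--Mumford boundary, showing (via Schiffer variations and Masur's cylinder theorem) that every component of $\overline{\mathcal S}'(p)$ meets the boundary, proving each boundary stratum $V'(p)$ is connected (this uses Simpson's Lefschetz theorem for integral leaves), and finally carrying out a substantial algebraic analysis of symplectic submodules and Haupt characters to connect the boundary strata to one another. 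A direct surgery argument between two arbitrary isoperiodic forms is not available.
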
 

 Note in particular that this result classifies the algebraic leaves of the isoperiodic foliation: the only closed leaves correspond to Hurwitz spaces and they are known to be algebraic. 

\subsection{Strategy of the proof of Theorem \ref{t:connectedness}} \label{ss: strategy}

McMullen proved that each fiber of the period map- referred to as isoperiodic moduli space-  embeds, via the Torelli map, in Siegel space  $\mathfrak{S}_g$. The image is a Zariski dense subset in a slice of the Schottky locus by a Siegel space of lower genus $\mathfrak{S}_{g-1}$. In particular, in genus 2 or 3, he deduces that it is connected. In higher genera, the complicated geometry of the Schottky locus makes it difficult to implement this analytical approach.

Our strategy consists in analyzing the isoperiodic degenerations of abelian differentials towards abelian differentials on nodal stable curves. We point out that we are merely interested in degenerating the underlying curve, not the abelian differential (i.e.  the form does not degenerate to a zero form on any component). The first step of the proof is to prove that it is always possible to find such degenerations. Adding the obtained nodal forms allows to define a bordification of the isoperiodic moduli space, naturally stratified by the number of nodes of the underlying curves. We prove that this stratification is locally homeomorphic to the pull back by an infinite abelian covering of the stratification associated to a collection of normal crossing divisors on a smooth manifold, ramifying along some components of the divisors. In particular, the connectedness of the fiber is equivalent that of the bordification. For genus at least four, we prove the connectedness of the boundary of a fiber that is not formed by primitive double coverings of an elliptic differential by induction on the genus. This step is essentially achieved by algebraic arguments that encode the combinatorics of the boundary stratification.

As in other instances of this degeneration strategy -- see \cite{5A18,5A19} for contemporary examples with strata of differentials-- the definition of the bordification requires a comprehensive theory. In the present case the convenient ambient space is the augmented Torelli space, quotient of the augmented Teichm\" uller space by the Torelli group. For lack of a comprehensive adapted reference we include the theory in section \ref{s:augmented Torelli and period maps} and a guideline for the proof in section \ref{s:guideline}. 

%\textcolor{red}{McMullen  proved that each fiber of the period map is biholomorphic to a linear slice of the image of the Torelli map-- the well known Schottky locus (see Section \ref{ss:Torelli and period map} or \cite{McMullen}). A point in the boundary of this set corresponds to a stable nodal curve of compact type possibly having a zero form in some component and same period homomorphism. In genus $g=2,3$ the Schottky locus is Zariski dense and therefore the slice is connected.  In higher genera the Schottky locus is an analytic set of positive codimension and determining the connected components of the slice seems difficult. Instead of using the Torelli map, we adopt the strategy of working directly on the period fiber, finding a connected set of isoperiodic nodal stable forms to which every form in the fiber can be  isoperiodically degenerated. The set of nodal isoperiodic stable  forms with no zero components (i.e. the curve degenerates to a node, but the form does not degenerate in the complement) has these properties. Moreover, it shares a property with divisors on complex manifolds: it does not separate the ambient isoperiodic space in several components. The inductive hypothesis of Theorem \ref{t:connectedness} and an analysis of the combinatorics of the nodes, allow to deduce the connectedenss of this boundary set whenever they are not formed by primitive double covers of a fixed elliptic curve.}
\subsection{Notes and references} \label{ss: notes and references}
    
%\subsubsection{On Haupt's Theorem}
Our strategy for the proof of the connectedness of the boundary of a fiber of $\per$ works for $g\geq 4$ but leads to difficult algebraic problems due to Haupt's conditions when $g=2,3$. The description of the fibers as slices of Schottky space in those cases, 
%which was observed by McMullen in \cite{McMullen}, 
is crucial for the inductive argument to work. 

The same type of difficulties were already present in the characterization of periods of holomorphic forms on Riemann surfaces by Otto Haupt in \cite{Haupt}, namely that those are the periods satisfying Haupt's conditions.  
%\emph{Ein Satz ÃŒber die Abelschen Integrale 1. Gattung. } - Math. Z. 6, 1920, 219-237:
%\begin{theorem}[Haupt] If $g\geq 2$, then  $\mathcal{H}_g=\{p\in\mathbb{C}^{2g}:\vol(p)>0\text{ and }\deg (p)>1\}$.\end{theorem}
Most of his work boiled down to treat the arithmetic involved in the genus two case, caused by forms belonging to the Hilbert modular invariant manifolds $X_D$ described above. From the point of view of slices of Schottky loci, the proof of Haupt consists in finding, by elementary algebraic methods, a point in the intersection of the slice with the boundary strata of Torelli space and then apply a surgery.  We actually provide a simpler proof of the result of Otto Haupt, which only differs by the remark that the genus two case can be treated using the Torelli map. Torelli's result (in \cite{Torelli}) was published two years before his death, in 1913, shortly before the outbrake of World War One. We wonder whether  Haupt was aware of Torelli's results. 

An alternative proof of the result of Otto Haupt with techniques that were completely out of reach at his time was given by Misha Kapovich in \cite{Kapovich}.  Rather surprisingly, it relies on Ergodic and Ratner's theory. 

%M. Kapovich.Periods of abelian differentials and dynamics. ?Dynamics: Topology and Numbers? (Proceedings of Kolyada Memorial Concerence), Contemporary Mathematics, AMS, vol. 744, 2020, 297-315.

The main observation of Kapovich is that the set of periods of holomorphic one forms on homologically marked Riemann surfaces is invariant by the action of the integral symplectic group. Its action on the period domain is homogeneous and can be studied through Ratner theory. Our approach pushes the analysis of the period map a bit further to allow to transfer properties of the action to properties of the foliation induced by the map. 

At around the time our transfer principle was announced, Hamenst\" adt proved the ergodicity of the isoperiodic foliation, with a different method. in \cite{Hamenstadt}. She announced analogous results for the intersection of the foliation with connected components of strata of abelian differentials having a simple zero. Weiss and Chaika recently announced, at the Geometry and Dynamics (online) seminar at TAU,  a proof of the ergodicity of the isoperiodic foliation on any connected component of any stratum by yet another method. They use recent ongoing research of Eskin, Brown, Filip, Rodriguez-Hertz, etc.  on generalizations of the  'Magic Wand Theorem'  of Eskin-Mirzakhani (see \cite{EskinMirzakhani}).

%Ergodicity of the absolute period foliation Israel Journal of Mathematics volume 225, pages661?680(2018). 

The problem of connectedness of isoperiodic sets has been considered by several authors. Martin Schmoll established in \cite{Schmoll}, among other things, connectedness of moduli spaces of degree \(d\) coverings of a given elliptic curve.

Remark that the same statement fails at the level of the Torelli covering for genus at least five and degree two,  as stated in Theorem \ref{t:connectedness in genus 2 and 3}. We realized this fact after having pre-published a first version of our work on the arXiv, stating erronously that those fibers were also connected. Their disconnectedness can be established by generalizing to the context of moduli spaces of double coverings over an elliptic curve a famous work of Arnold, who observed that the inclusion of the hyperelliptic locus in the moduli space of genus $g\geq 3$ curves is not surjective (to the integral symplectic group) at the homological level. See \cite{Arnold}.

Fortunately, this exception did not fraud the whole (inductive!) argument, and constitute the single exception of disconnected isoperiodic moduli spaces of abelian differentials.

%\subsubsection{On strata of abelian differentials} 

A problem that arises naturally is the description of the connected components of the intersection of the fibers of $\per$ with the other strata and more generally the affine invariant manifolds that have been brought to light in the work of Eskin and Mirzakhani \cite{EskinMirzakhani}. Kontsevich and Zorich gave a description of the connected components of strata of abelian differentials without any condition on the periods in \cite{KC}; they found cases with up to three components. A direct application of our theorem shows that for generic period, the isoperiodic sets in the stratum will have several components too. Another extreme example of disconnected isoperiodic set: the intersection of a leaf of the isoperiodic foliation corresponding to non-discrete periods with the minimal stratum  forms an infinite discrete set. Such phenomena were studied by McMullen in the genus two case \cite{McMullen}.

The analogous approach using a transfer principle for studying the dynamical properties of the isoperiodic foliation on strata can \textit{a priori} be considered. There have been recent advances in this direction that point towards understanding the dynamical properties that could eventually be transferred:  
%two new ingredients that could permit to apply such a transfer principle in view of getting informations on the dynamics of the isoperiodic foliation on the strata, has been brought:
firstly that of determining the image of each stratum by the period map, and secondly that of determining the monodromy representation, i.e. the image of the symplectic representation of a connected component of a stratum. Concerning the first, a clear obstruction in the case of elements of $H_{g,d}$ is that the order at each branch point cannot be larger than $d-1$. In recent preprints \cite{BJJP}, \cite{Le Fils} the authors show that,
together with the positive volume condition, these are the only obstructions to realizing the
periods in the stratum. For the second, the monodromy in some connected components of strata has been computed in recent works of Hamenst\" adt \cite{Ham18,Ham20} and Calderon and Salter \cite{CS1,CS2}. Recent advances in the description and properties of the closure of strata in spaces of stable forms were achieved by Bainbridge, Chen, Gendron, Grushevsky and M\" oller in \cite{5A18, 5A19}.  Nevertheless, we point out that the techniques developed in this article do not seem to be enough to compute the connected components of the fibers of the period map on a stratum. For the time being it is unclear whether the transfer of properties to the isoperiodic foliation on a stratum can be carried.
%The analogous approach using a transfer principle does not seem manageable to compute the connected components of the isoperiodic sets of abelian differentials in strata for the time being. A first step towards taking the same route on strata is that of determining its image by the period map. A clear obstruction in the case of elements of $H_{g,d}$ is that the order at each zero cannot be bigger than $d-1$. In recent preprints \cite{BJJP}, \cite{Le Fils} the authors show that, together with the positive volume condition, these are the only obstructions to realizing the periods in the stratum.

%\bibitem{BJJP} {\sc Bainbridge M., Johnson, Judge, Park} Haupt's theorem on strata of abelian differentials, arXiv:2002.12901
%\bibitem{LF}{\sc Le Fils, T.} Periods of abelian differentials with prescribed singularities,  arXiv:2003.02216
%\bibitem{\sc Hooper W.P., Weiss B.} Rel leaves of the Arnoux?Yoccoz surfaces, Selecta Mathematica volume 24, pages875?934(2018)

%Each stratum is a smooth complex manifold (see \cite{Veech}).  Theorem \ref{t:connectedness} is also valid for the restriction of $\per$ to the generic stratum (the stratum with only simple zeros).  

Some interesting recent works have established similar dynamical properties for the isoperiodic foliations on strata and respectively on affine manifolds. The first contribution is by Hooper-Weiss in \cite{HW}; they show that the leaf of the isoperiodic foliation of the Arnoux-Yoccoz surface contained in the stratum \(\mathcal H (g-1,g-1) \) is dense. We have been aware recently that K. Winsor has proven that in case $g=3$ such a leaf has inifinite genus. Also, Ygouf has given an interesting criterion enabling to decide if leaves of isoperiodic foliation are dense in certain affine invariant manifolds of rank one. The rank has been defined in the context of affine manifolds by Wright as half of the codimension of the isoperiodic foliation \cite{Wright}.

%Bainbridge/Smillie/Weiss ---> relations between horocycle dynamics on moduli space with real isoperiodic foliation (mostly unrelated to our work I suspect) 

%\subsubsection{On foliations on Hilbert surfaces and $\text{GL}^+(_2,\mathbb{R})$-orbits} 
We get a nice description of the isoperiodic foliation in the Hilbert  invariant submanifolds $\Omega X_D$ of $\mathcal{F}_2$ after taking projectivisation. The foliation corresponds to the horizontal foliation of the uniformization of $X_D$ by the product $\mathbb{H}\times\mathbb{H}$ of two copies of the upper-half plane in $\mathbb{C}$ (see \cite{McMullen4}). In that paper McMullen describes precisely the $\text{GL}^+(2,\mathbb{R})$-action after projectivization, and finds some very interesting real analytic foliations by Riemann surfaces on $X_D$, that are not transversely holomorphic. 

It follows from works of Calta (in \cite{Calta}) and McMullen (in \cite{McMullen4}) that the union of leaves of $\mathcal{F}_2$ intersecting a closed  $GL^+(2,R)$ -orbit in the minimal stratum  \(\mathcal H(2)\)  has interestingly the structure of a closed analytic subset in the generic stratum $\mathcal H(1,1)$. In the problem paper \cite{HMSZ}, Problem 12, the authors ask to what extent this phenomenon is general. Our theorem shows that in the generic stratum it holds only for the closed leaves. The question remains completely open in the other strata. 
 \subsection{Organization of the paper} In section \ref{s:results without compactification} we give the proof of the results that do not need the bordification of spaces, and Theorem  \ref{t:dynamics} as a consequence of Theorem \ref{t:connectedness}. In sections \ref{s:augmented Torelli and period maps} to \ref{s:connectedness of the boundary} we prove Theorem \ref{t:connectedness}. Along the way, in section \ref{s: isoperiodic sets on curves with one node} we analyze isoperiodic sets with one node and give a proof of Haupt`s theorem. In the Appendix I (Section \ref{s:appendix2} we collect the relevant proofs of the dynamical properties in \cite{Kapovich} and extend it to the case of genus two. In Appendix II and Appendix III we give the proofs of technical results needed along the paper that are presumably well known, but for which we have not found specific reference. 
 %For the convenience of the reader we have included a list of notations in section \ref{s: notations}. 
\subsection{Acknowledgements} 
%We  warmly thank the referee that reported our paper in the first place for the careful reading and useful comments. They led us to a correct statement and proof of the Transfer Principle theorem, and the statement on the exceptional disconnected fibers. 
We warmly thank the referees of this article for the careful reading and the many improvements that their comments made possible. We also hank U. Hamenst\" adt, P. Hubert, M. Kapovich, E. Lanneau, F. Loray, D. Margalit, M. M\" oller, G. Mondello, H. Movasati, A. Putman and A. Wright for useful conversations. This paper was partially supported by the France-Brazil agreement in Mathematics. G. Calsamiglia was partially supported by Faperj/CNPq/CAPES/Mathamsud/ Cofecub, B. Deroin by ANR project LAMBDA ANR-13-BS01-0002, and S. Francaviglia by GNSAGA group of INdAM, and by PRIN 2017JZ2SW5. It was mainly developed at Universidade Federal Fluminense, IMPA, ENS/Paris, U. Cergy-Pontoise, UPMC, and Universit\`a de Bologna, to whom we thank the nice working conditions provided.
%bibitem{C2}{\sc Calta, K.}, Veech surfaces and complete periodicity in genus two, JAMS, 17, 871--908, 2004 and 
%\bibitem{McMullen4} {\sc McMullen, C.} Billiards and TeichmÃŒller curves on Hilbert modular surfaces, JAMS, 16, no 4, 857-885, 2003
\section{Detailed guideline for the proof of Theorem \ref{t:connectedness}}\label{s:guideline}
The  Deligne-Mumford-Knudsen orbifold (compact) bordification $\nc_{g,n}$ of the moduli space $\mathcal{M}_{g,n}$ of smooth genus $g$  
curves with $n$ marked points by adding the moduli spaces of stable genus $g$ curves with $n$ marked points. It is a compact orbifold in which the boundary forms a normal crossing divisor. Each stratum of complex codimension $k\geq 0$ of the divisor corresponds to a stratum of curves with $k$ nodes. An intermediate (non-compact) bordification $\mathcal{M}_{g,n}\subset\nc_{g,n}^c\subset\nc_{g,n}$ is obtained if we add the moduli spaces of stable curves of compact type, i.e. only with separating nodes. Since the boundary has complex codimension one, it cannot separate the total space. 

The Hodge bundle can be naturally extended to the bundle $\Omega\nc_{g,n}$ of holomorphic stable forms. A stable form can be thought as a meromorphic form on each component of the normalization of a stable curve, called a part, having at worst simple poles at the marked points where they are glued and each pair of points that are glued have opposite residue. The local isoperiodic equivalence relation can be naturally extended to $\Omega\nc_{g,n}$. As in the smooth case, any stable form admits a non-trivial local isoperiodic deformation. However, the local isoperiodic deformation space is not always a complex manifold, but rather a (possibly singular ) analytic set. For  generic forms with zero components or residues at the nodes they are strictly contained in the boundary. In each stratum the local isoperiodic deformation is parametrized by a product of local isoperiodic deformations of the parts.
\cbstart
In Section \ref{s:augmented Torelli and period maps} we will prove that the local space of isoperiodic deformations at a point of the subset $\Omega_0^*\nc_{g,n}\subset\Omega\nc_{g,n}$ of stable forms with zero residues at the nodes and no zero components is -- at the level of the orbifold chart-- a smooth complex manifold transverse to each boundary component passing through the point. This implies that the set of points in the local isoperiodic complex manifold that lie on some boundary stratum of the ambient space form a normal crossing divisor. Furthermore the local divisor has precisely one codimension one component for each boundary component of $\Omega\nc_{g,n}$ passing through the point. In particular the set of points lying on boundary strata do not separate the isoperiodic local space in several components and are stratified by the number of nodes in the underlying curves.

To bordify the fibers of the period map we need to consider the pull back of the bundle of stable forms by the \textit{ramified} cover $\mnc_{g,n}\rightarrow\nc_{g,n}$  associated to  the Torelli subgroup $\mathcal{I}_{g,n}$. This space can be realized as the quotient of Augmented Teichm\" uller space, and   we call it Augmented Torelli space. The structure and relevant properties of these spaces are  detailed in Section \ref{s:augmented Torelli and period maps}. We loose the manifold and holomorphic structure around the ramification locus, but we still have a topological stratified space by complex manifolds that lifts the stratification of the boundary divisor of $\mathcal{M}_{g,n}$. \cbend The local projection has two properties that we will exploit: over each local connected component of a stratum of the boundary divisor $\nc_{g,n}$ it is an abelian cover (hence a connected complex manifold of the same dimension). In particular, the open stratum (the complement of the boundary strata) is locally connected around each point on some boundary stratum, Moreover, as happens with the stratification of a normal crossing divisor, every local stratum can be identified by the closures of the codimension one strata where it belongs to or not. This last property is what allows to associate a natural simplicial complex $\mathcal{C}(\mnc_{g,n})$ to the boundary of  $\mathcal{S}_{g,n}$ called the dual boundary complex. A vertex is considered for each connected component of codimension one of the stratification, and among $k$ of them we attach a $k-1$ simplex for each connected component of the intersection of their closures. These simplices codify all possible connected components of the boundary stratification. 

The Hodge bundle $\Omega\mnc_{g,n}$ inherits the same boundary stratification by the number of nodes, with each stratum having the same codimension as in $\mnc_{g,n}$. A word of warning: this stratification is not to be confused with the famous substratification given by the zero and polar sets of the forms. The maximal isoperiodic deformation space on  $\Omega\mrs_{g,n}$ is bordified by considering its closure in $\Omega^*_0\mnc_{g,n}$. It coincides with the maximal isoperiodic deformation there. The transversality of the local isoperiodic deformation space with the boundary components show that we have a  stratification of its boundary points by the number of nodes that has similar local properties as $\Omega\mnc_{g,n}$. In particular the set of boundary points does not locally separate the isoperiodic space we can associate a similar simplicial complex to its boundary stratification: a vertex for each connected component of the codimension one stratum and a $k-1$-simplex joining $k$ vertices for each connected component of the intersection of the closures of the components.

%For higher genera the proof of Theorem \ref{t:connectedness} proceeds by induction on the genus. 
In Section \ref{s:moving points in the leaf} we will prove that any non-zero form in $\Omega\mrs_g$ can be isoperiodically deformed to converge to a boundary point of the bordification $\Omega_0^*\mnc_g$. The proof follows by finding families of twin geodesics, that is, parallel geodesic paths of the singular metric  defined by a non-zero abelian differential, that start at a saddle point and have the same length. An appropriate surgery along them-- namely a Schiffer variation-- provide a piece of the desired path in $\Omega\mathcal{S}_g$. For forms with a single zero these pairs of twins are found on the boundary of cylinders of closed geodesics whose existence is guaranteed by Masur's Annulus Theorem (see \cite{Masur}). 

To prove the connectedness of the bordification of a period fiber we just need to connect all boundary points by isoperiodic deformations. An isoperiodic deformation in a boundary stratum of $\Omega^*_0\mnc_g$ can be thought as a product of isoperiodic deformations of forms on $\Omega^{*}\mrs_{h,n}$ with $h< g$. This is how we will use the inductive hypothesis to reduce the proof of the connectedness \textit{of the boundary}  to a combinatorial algebraic problem that we describe next.

Given $p\in\mathcal{H}_{g}$, we consider the dual boundary complex  $\mathcal{G}_p$ of the bordification of $\per^{-1}(p)$ in $\Omega^*_0\mnc_{g}$. The definition of the stratifications allow to define a simplicial continuous map  
\begin{equation}\label{eq:bdry inclusion}
    \mathcal{G}_p\rightarrow \mathcal{C}(\mnc_g)
\end{equation} that associates to each connected component of the boundary stratum of codimension one of the isoperiodic set, the component of the stratum of codimension one of the ambient space where it sits. It extends to simplices in the natural way thanks to the transversality of the local isoperiodic deformation space with boundary components. Its image tells us which components of boundary strata of $\mnc_{g}$ admit forms of periods $p$ without zero components.

In Section \ref{s: isoperiodic sets on curves with one node} we describe all vertices in the image of \eqref{eq:bdry inclusion} by characterizing them algebraically as follows:
$\mnc_{g}$ can be thought as the quotient of augmented Teichm\" uller space quotiented by the Torelli group $\mathcal{I}_g$. For  this cover the complex is the so called curve complex $\mathcal{C}_{g}$ of a surface of genus $g$, introduced by Harvey in \cite{Harvey} (see (see \cite[Chapter 4.1]{FM} for further details). It has a vertex for each homotopy class of essential simple closed curve on a genus $g$ surface and a $k-1$ simplex joining $k$ given vertices if the corresponding classes can be realized disjointly by simple closed curves. The subcomplex generated by separating curves is denoted by $\mathcal{C}^{\text{sep}}_{g}$. The mapping class group  of  the surface with marked points at the punctures  acts naturally on both complexes and we have  $$\mathcal{C}(\mnc_{g})\simeq\mathcal{C}_{g}/\mathcal{I}_{g}\text{ and }  \mathcal{C}(\mnc^{c}_{g})\simeq\mathcal{C}^{\text{sep}}_{g}/\mathcal{I}_{g} $$ 

The vertex of $\curvecomplex_g/\mathcal{I}_g$ corresponding to a Torelli class of a simple closed curve $c$ is characterized either by the primitive rank one submodule $\mathbb{Z}[c]$ in the first homology group of the surface $\mathbb{Z}^{2g}$ if this class in non-trivial, or by a splitting of $\mathbb{Z}^{2g}=V_1\oplus V_2$ into a direct sum of orthogonal symplectic submodules otherwise (i.e. $c$ is separating). Some simplexes connecting those vertices can be identified by simply using this algebraic information. As an instance, a splitting of $\mathbb{Z}^{2g}$ into three factors $V_{1}\oplus V_{2}\oplus V_{3}$ that are pairwise orthogonal symplectic submodules, allows to construct a marked stable curve with two separating nodes that induces the given splitting.  It determines at least an edge in $\curvecomplex_g/\mathcal{I}_g$  joining  the vertex  $V_{1}\oplus V_{1}^{\perp}$ and the one corresponding to $V_{3}^{\perp}\oplus V_{3}$.

A vertex of non-separating type lies in the image of \eqref{eq:bdry inclusion} if and only if $[c]\in \ker p\setminus 0$ and the map induced by $p$ on $[c]^{\perp}/\mathbb{Z}[c]$ is the period of some non-zero abelian differential on a smooth curve. We say that $[c]$ is pinched by $p$.  A vertex of the second type belongs the image of \eqref{eq:bdry inclusion} if and only if $p_{|V_i}$ is the period of some non-zero abelian differential on a smooth curve for $i=1,2$. We say that $V_1\oplus V_2$ is a $p$-admissible decomposition. Haupt conditions is what allows to algebrize the problem, even for other simplexes. We will exploit the following dichotomy to identify vertices:  either the image of $p$ has large rank -- and we can identify enough $p$-admissible splittings --- or $p$ has large kernel -- and we can find enough classes pinched by $p$. 
%In section \ref{s: isoperiodic sets on curves with one node} we describe the image of the map \eqref{eq:bdry inclusion} in terms of  homological information. 

In Section \ref{s:period fibers with marked points}  we find, by using normalization of the node,  a homeomorphism of the isoperiodic set of period $p$ in a boundary component of $\Omega\mnc_g$ of codimension one with a product of isoperiodic sets of  forms (with marked points!) on lower genera. Using the inductive hypothesis and the cases of genera two and three it allows to determine a family of codimension one boundary components of $\Omega^{*}_{0}\mnc_g$ (or vertices of $\mathcal{C}_g/\mathcal{I}_g$), called $p$-simple, that contain a single isoperiodic connected component of period $p$ ( i.e. have a single vertex in the preimage by \eqref{eq:bdry inclusion}). 

When $p$ is injective every boundary stratum of codimension one of $\Omega\mnc_g$ has at most one isoperiodic component of period $p$. In fact, all boundary points of $\per^{-1}(p)$ are of compact type, the bordification is a complex manifold and the boundary a normal crossing divisor in it whose dual complex is $\mathcal{G}_p$. On the other hand, in subsection \ref{ss:connected boundary injective case} we prove the connectedness of the image of \eqref{eq:bdry inclusion} which actually lies in $\curvecomplex^{\text{sep}}_g/\mathcal{I}_g$ by analyzing the problem in terms of splittings and the volumes of the parts Haupt's conditions on the factors of a splitting are reduced to a condition on the volume in this case. 

When $p$ is not injective, it is not always true that the boundary divisor of compact type is connected. Examples can be constructed in high genus by taking stable curves with one separating node that have double covers over distinct  elliptic curves on each side. When they are double covers of the same elliptic curve and the total degree is two, the curve has necessarily genus two. This is due to the fact that the sum of the degrees is at most two, and the degree of each part is at least one. Therefore both parts have genus one. Hence the need of introducing the boundary components of non-compact type.

 Whenever $\deg(p)\geq 3$, we show that the restriction of \eqref{eq:bdry inclusion} to a  subcomplex $\mathcal{G}'_p$ of $\mathcal{G}_p$ spanned by simple vertices has  connected image. Here we need to deal with both of Haupt's conditions. On the other hand, non-simple vertices of $\mathcal{G}_p$ are always related to degree two coverings of elliptic curves and  can be isoperiodically degenerated to pinch enough different non-separating curves to be able to connect them to some $p$-simple component of the boundary.  This implies that $\mathcal{G}_p$ is connected finishing the proof of the inductive step of Theorem \ref{t:connectedness}.
 
 The boundary components of $\Omega\mnc_g$ with precisely one non-separating node play a fundamental role in the proof of the general case. One of the main technical difficulties of the paper is to control  the number of connected components of the intersection of one of them with the closure of a fiber of $\per^{-1}(p)$. This is done in Section  \ref{s:period fibers with marked points}. Here we use once again the bordification strategy, but this time we need to add some strata of the boundary of $\Omega\mnc_{g}$ where the forms of period $p\in\mathcal{H}_{g}$ have zero components --of genus one-- and where the local isperiodic deformations have, potentially, wilder singularities.  The bordification is best described by normalizing the non-separating node. \cbstart It is carried in subsections  \ref{s:bordification of Sg2} and  \ref{sss:singularities}. \cbend
 \begin{figure}[httb]
\centering
\def\svgwidth{\columnwidth}
\includegraphics[width=3in]{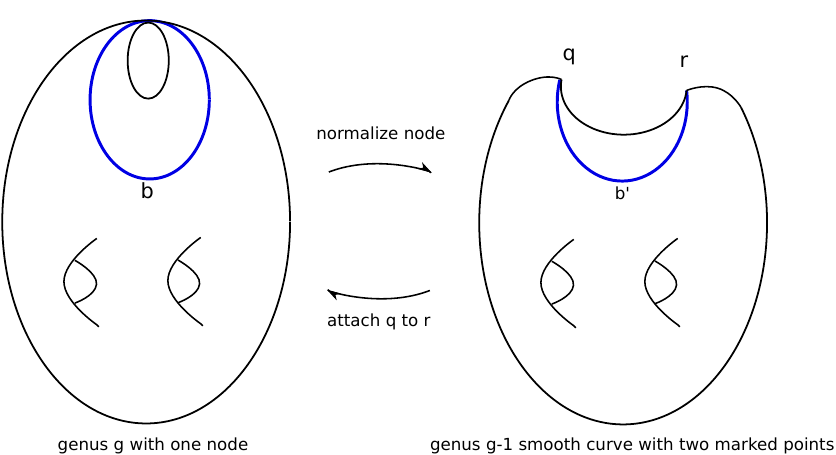}
 \caption{The closed cycle $b$ becomes a relative homology class $b'$ after  normalization} \label{fig:normalization}
\end{figure}

 The normalization map -- normalizing the nonseparating node-- sends the given set homeomorphically to a subset of $\Omega\mrs_{g-1,2}$  that lies in a fiber $F$ of the composition 
 \begin{equation}\label{eq:PeroFor}\Omega\mathcal{S}_{g-1,2}\overset{\text{For}}\rightarrow\Omega{\mathcal{S}}_{g-1}\overset{\per_{g-1}}{\longrightarrow}\mathcal{H}_{g-1}.\end{equation}
 
 \cbstart In fact this subset is precisely an analytic hypersurface of $F$, defined by the zeros of a holomorphic function $$h:\Omega\mathcal{S}_{g-1,2}\rightarrow\mathbb{C}$$ also given by integration-- on a relative cycle joining the two marked points as $b'$ in Figure \ref{fig:normalization}. The map $h$ extends holomorphically to the Hodge bundle over a smooth bordification $\U_{g-1,2}\subset\mnc_{g-1,2}$ of $\mrs_{g-1,2}$ described in subsection \ref{s:bordification of Sg2}. \cbend
 There is a particular isoperiodic deformation in $\Omega\mrs_{g-1,2}$ that consists in fixing the underlying marked form $(C,m,\omega)$ on a smooth genus $g-1$ curve and  moving the marked points in $C$ by preserving the value of the integral along the class $b'$. If this integral lies in the image of the periods of $\omega$ it might happen that  both marked points tend to the same point $q$ in $C$ along the isoperiodic deformation.  We are then approaching a point in a boundary stratum of $\Omega\mnc^{c}_{g-1,2}$ whose normalization has a zero component of genus zero with three marked points  that is  glued to  $(C,m,\omega)$ at a point $q\in C$ where  $\omega(q)=0$ (see Figure \ref{fig:normalize_degeneration} for a representation).
 \begin{figure}[httb]
\centering
\def\svgwidth{\columnwidth}
\includegraphics[width=4in]{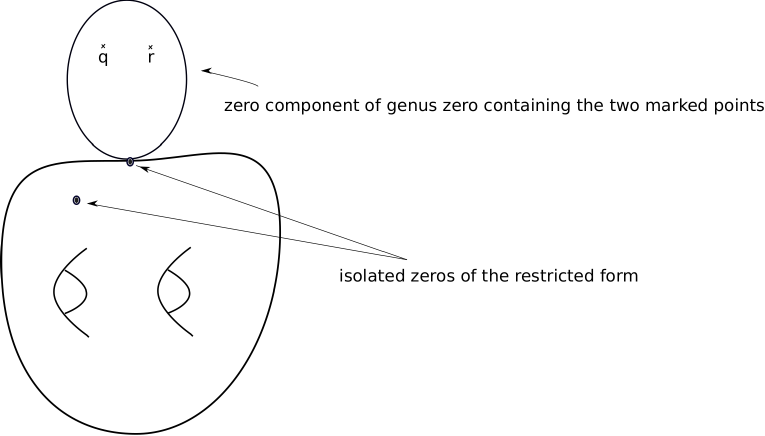}
 \caption{Example of a (singular) point of normal crossing of two regular local isoperiodic components  in \(\Omega\U_{2,2}\): the marked points lie in a zero component of genus zero that is glued to a simple isolated zero of a genus two component.} \label{fig:normalize_degeneration}
\end{figure}
 Those are the boundary points we add to bordify the isoperiodic set \cbstart (in subsection  \ref{s:bordification of Sg2}). Each of them has singular local isoperiodic deformation space (see subsection \ref{sss:singularities}). \cbend Indeed, by varying the point of gluing in the neighbourhood of $q$ we obtain another isoperiodic deformation lying on the boundary. However, as the zeros of $\omega$ are isolated, the neighbouring isoperiodic boundary points do not have local isoperiodic deformations that leave the boundary. 
We will need to consider those isoperiodic boundary components to deduce the connectedness of the bordification too.  

The problem of  relating the topology of a space with that of a hypersurface in it is reminiscient of Lefschetz hyperplane section Theorem. In the present context  the lack of an algebraic setting  for $F$ and $h$  do not allow to use it directly. 
 However, an application of Simpson's Theorem in  \cite{Simpson} --a generalization of Lefschetz theorem for functions defined via integration of a holomorphic one-form--  we prove that the hypersurface defined by $h$ on the generic (two dimensional) fiber of the forgetful map $\Omega\mnc_{g-1,2}^c\rightarrow\Omega\mnc_{g-1}$ is connected.  Moreover it is a nodal curve whose singular points lie in the boundary and have a local irreducible component completely contained in the boundary and the other transverse to the boundary. Each of those nodal points corresponds the stable forms with a zero component defined above. When the fiber $F$ of the map \eqref{eq:PeroFor} is over a homomorphism of degree at least three, we can use the inductive hypothesis and the fibration $\text{For}$ to deduce that the constructed bordification is contained in a connected analytic set in $\Omega\mnc^{c}_{g-1,2}$ having possibly some additional smooth irreducible components contained in the boundary. The intersection of each of those boundary components with the other components can be parametrized by a set of isoperiodic forms of genus $g-1$ with a marked simple zero. By analyzing how the zeros of a form can be permuted in the isoperiodic set we deduce that this intersection is connected. This allows to take the boundary irreducible components one by one without changing the total number of connected components, to deduce that the fiber of $h$ in $F$  is connected.

\section{The period map on the Hodge bundle over Torelli space}
\label{s:results without compactification}

\subsection{Torelli map and period fibers} 
\label{ss:Torelli and period map}Fix a reference closed connected and oriented surface $\Sigma_g$ of genus $g$ with $n\geq 0$ ordered marked points  $\Sigma_{g,n}=(\Sigma_g,q_1,\ldots,q_n)$. 
 A homotopical marking of a smooth genus $g$ compact complex curve $C$ with $n$ pairwise distinct ordered marked points, $P=(p_1,\ldots,p_n)$ is a homeomorphism $f:\Sigma_{g,n}\rightarrow (C,P)$ sending $q_i$ to $p_i$. Two such markings $f,f'$ of $(C,P)$ and $(C',P')$ are equivalent if there exists a biholomorphism $\varphi:(C,P)\rightarrow (C',P')$ such that $\varphi\circ f$ is isotopic to $f'$.
The Teichm\" uller space $\mathcal{T}_{g,n}=\mathcal{T}(\Sigma_{g,n})$ for $(g,n)$ satisfying $k=3g-3+n>0$ is the set of equivalence classes of homotopically marked genus $g$ smooth complex curves with $n$ ordered distinct points endowed with the Teichm\" uller topology, i.e. the weakest topology for which the length function associated to a homotopy class of closed curve on \(\Sigma_{g,n}\) is continuous. A point in $\mathcal{T}_{g,n}$ will be denoted by $[f:\Sigma_{g,n}\rightarrow (C,P)]$.

The mapping class group of $\Sigma_{g,n}$ is the group $\text{Mod}(\Sigma_{g,n})$ of isotopy classes of orientation preserving diffeomorphisms that fix each marked point. It acts on $\mathcal{T}_{g,n}$ by precomposition on the marking. 

Bers (\cite{Bers1}, Section 16) showed that whenever $k>0$, the space $\mathcal{T}_{g,n}$ can be embedded in $\mathbb{C}^k$ as a bounded open domain, inheriting a complex structure. Following Ahlfors \cite{Ahlfors}, this complex structure is the only over the given topology for which the coordinate functions of the period matrices of curves depend holomorphically on the curve. In more detail, given a symplectic basis $a_1,b_1, \ldots,a_g,b_g$ of $H_1(\Sigma_g,\mathbb{Z})$ (i.e. the only non-zero products of the cycles are $a_i\cdot b_i=1$ and $b_i\cdot a_i=-1$) we can choose, for each marked curve $f:\Sigma_{g,n}\rightarrow (C,P)$ of genus $g$ the unique basis $\omega_1,\ldots, \omega_g$ of $\Omega(C)$ such that $$\int_{f_*a_i}\omega_j=\delta_{ij}.$$ The maps $\tau_{ij}: \mathcal{T}_{g,n}\rightarrow\mathbb{C}$ defined by \begin{equation}\label{eq:period matrix coordinate}\tau_{ij}[f:\Sigma_{g,n}\rightarrow (C,P)]= \int_{f_*b_i}\omega_j\end{equation} are well defined and holomorphic. 
Let $\mathfrak{S}_g$ denote the Siegel space of genus $g$, i.e. the set of symmetric $g\times g$ matrices with complex entries whose imaginary part is positive definite. Riemann showed that the squared matrix of functions $(\tau_{ij})$ defines a holomorphic map 
$\mathcal{T}_{g,n}\rightarrow \mathfrak{S}_g$ that is invariant by the action of the Torelli group $\mathcal{I}_{g,n}$, kernel of the representation \[\text{Mod}(\Sigma_{g,n})\rightarrow \text{Aut}(H_1(\Sigma_g,q_1,\ldots,q_n,\mathbb{Z})).\] It induces the Torelli map on the Torelli space $\mrs_{g,n}:=\mathcal{T}_{g,n}/\mathcal{I}_{g,n}$, \begin{equation}\label{eq:torelli map}\mrs_{g,n}\rightarrow \mathfrak{S}_g.\end{equation} 
In the case of $n=0$ it was shown to be injective by Torelli(\cite{Torelli}).
Each point in $\mrs_{g,n}$ is characterized by a triple $(C,P,f_*)$ where $f_*$ is the isomorphism induced by $f$ in homology. Therefore we will denote a point in $\mrs_{g,n}$ simply as  $(C,p_1,\ldots,p_n, m)$ where $m:H_1(\Sigma_{g},q_1,\ldots,q_n,\mathbb{Z})\rightarrow H_1(C,p_1,\ldots,p_n,\mathbb{Z})$ is an isomorphism.  
The quotient $\mathcal{T}_{g,n}/\text{Mod}(\Sigma_{g,n})$ is the moduli space $\mathcal{M}_{g,n}$ of genus $g$ curves with $n$ marked points and the holomorphic structure induces an orbifold structure on $\mathcal{M}_{g,n}$.  The Hodge bundle is a holomorphic vector bundle $\Omega\mathcal{M}_{g,n}\rightarrow \mathcal{M}_{g,n}$ whose fiber over a point $(C,P)$ is the set of abelian differentials $\omega\in \Omega(C)$. It can be pulled back to a holomorphic bundle $\Omega\mrs_{g,n}\rightarrow\mrs_{g,n}$.

\begin{definition}
 The period map on $\Omega\mrs_{g,n}$ is  the holomorphic map $$\per_{g,n}:\Omega\mrs_{g,n}\rightarrow \text{Hom}(H_1(\Sigma_g,q_1,\ldots, q_n,\mathbb{Z}),\mathbb{C})$$ defined by $\per_{g,n}(C,p_1,\ldots,p_n,m,\omega)= \{\gamma\mapsto\int_{m(\gamma)}\omega\}$. When $n=0$ we write $\per_g=\per_{g,0}$. When there is no risk of confusion we omit all subindices and write $\per$.
\end{definition}
For instance given a homomorphism  $p:H_1(\Sigma_g,q_1,\ldots,q_n,\mathbb{Z})\rightarrow \mathbb{C}$ we denote $\per^{-1}(p)$ the fiber of $\per_{g,n}$ over $p$. 

Multiplying forms by a non-zero constant induces a biholomorphism between fibers of $\per_{g,n}$.

In the case $n=0$ the properties of the Torelli map and of the intersection form in $H_1(\Sigma_g,\mathbb{Z})$ have a nice consequence:
\begin{theorem}{\cite{McMullen}}\label{t:slice of siegel} Let $g\geq 2$ and $p:H_{1}(\Sigma_{g},\mathbb{Z})\rightarrow \mathbb{C}$ be a homomorphism with $\vol (p)>0$. 
Then $\per^{-1}_{g}(p)$ is biholomorphic to the intersection of a linear Siegel space $\mathfrak{S}_{g-1}\subset \mathfrak{S}_{g}$ and the image of the Torelli map -- the so-called  Schottky locus.-- \end{theorem}

\begin{proof}
We take the choices and notations of the definition of \eqref{eq:period matrix coordinate}.
Let  $(C,m,\omega)\in\per^{-1}(p)$. Its expression in the chosen basis reads \( \omega= p(a_1)  \omega_1 +\ldots + p(a_g) \omega_g\) and the following equations are satisfied:
\begin{equation}\label{eq:image of Torelli}
    m_{ij}=\tau_{ij}(C,m)  \text{ for } i,j=1,\ldots, g
\end{equation}
\begin{equation}\label{eq:slice of siegel}
    p(b_i) = p(a_1) m_{i1}+\ldots + p(a_g) m_{ig} \text{ for } i=1,\ldots, g.
\end{equation}
A point $Z=(m_{ij})\in\mathfrak{S}_{g}$  that satisfies  \eqref{eq:image of Torelli} is said to belong to the Schottky locus in $\mathfrak{S}_{g}$. 
If it furthermore satisfies \eqref{eq:slice of siegel}, then $\omega:=\sum p(a_{j})\omega_{j}$ is the unique abelian differential on $(C,m)$ having periods $p$. This proves that $\per^{-1}(p)$ is biholomorphic to the intersection of the Schottky locus with the set of solutions of \eqref{eq:slice of siegel}.

It remains to show that the condition $\vol (p)>0$ implies that the set of all solutions $Z=(m_{ij})\in\mathfrak{S}_{g}$ of \eqref{eq:slice of siegel} (that we call the set of matrices that admit $p$ as a period) is biholomorphic to $\mathfrak{S}_{g-1}$. 
Up to multiplying all forms in $\per^{-1}(p)$ by a constant we can suppose that all forms have volume one, i.e. $\vol(p)=1$. 

The symplectic automorphism group of $H_1(\Sigma_g,\mathbb{R})$ acts on Siegel space (by changing the marking) . Direct calculation shows that,  writing $T\in\text{Sp}(2g,\mathbb{R})$ in the basis $\{a_{i},b_{i}\}$ as block $g\times g$ real matrices with two lines $A,B$ and $C,D$ the action is defined by   $$Z'=T\star Z=(AZ+B)(CZ+D)^{-1}. $$ 

Moreover denoting $p_{\mathbb{R}}:H_{1}(\Sigma_g,\mathbb{R})\rightarrow \mathbb{C}$  the natural extension of $p:H_1(\Sigma_{g},\mathbb{Z}))\rightarrow \mathbb{C}$, the given action preserves the condition  $p_{\mathbb{R}}$ is a period of the matrix. 

By definition,  $\Re (p_{\mathbb{R}})\cdot\Im (p_{\mathbb{R}})=\Re (p)\cdot \Im (p)=\vol (p)=1$.  On the other hand, by duality there exist elements $a_{1},b_{1}\in H_{1}(\Sigma_{g},\mathbb{R})$ such that $a_{1}\cdot b_{1}=1$,  $a_{1}^{*}=\Re p_{\mathbb{R}}$ and $b_{1}^{*}=\Im p_{\mathbb{R}}$.  Now,  $\ker p_{\mathbb{R}}=\ker \Re (p_{\mathbb{R}})\cap\ker \Im p_{\mathbb{R}}$ and contains the rank $2g-2$ symplectic orthogonal of $\mathbb{R}a_{1}\oplus\mathbb{R}b_{1}$. We deduce $\ker p$ is symplectic of rank $2g-2$. Choose a symplectic basis $a_{2},b_{2},\ldots,a_{g},b_{g}$ of $\ker p_{\mathbb{R}}$. The matrix in Siegel space associated to the real basis $\{a_{i},b_{i}\}$  has two diagonal blocks:  $1\in\mathbb{C}$ and a matrix in $\mathfrak{S}_{g-1}$.  Every such matrix admits $p_{\mathbb{R}}$ as period, trivially. Therefore the set of solutions of \eqref{eq:slice of siegel} is also a linear Siegel subspace $\mathfrak{S}_{g-1}\subset\mathfrak{S}_{g}$. \end{proof}

\begin{theorem}\label{t:connectedness in genus 2 and 3}
 Let $g=2,3$ and $p\in H^1(\Sigma_g,\mathbb{C})$ with $\vol (p)>0$ and $\deg (p)>1$. Then  $\per^{-1}(p)$ is non-empty and connected. 
\end{theorem}
\begin{proof}
The Schottky locus is Zariski dense in $\mathfrak{S}_{g}$ for  $g=2,3$. Its complement corresponds to period matrices of curves of compact type. By Theorem \ref{t:slice of siegel}  we have that $\per^{-1}(p)$ is biholomorphic to a Zariski open set of $\mathfrak{S}_{g-1}$, hence connected. 

Suppose $\per^{-1}(p)$ is empty. Then, the linear subspace of Theorem \ref{t:slice of siegel} is completely contained in the complement of the Schottky locus. The same construction of forms of the proof of Theorem \ref{t:slice of siegel} (solutions of \eqref{eq:image of Torelli} and \eqref{eq:slice of siegel}) can be carried on Jacobians of curves of compact type to construct stable forms of periods $p$. 

From a stable form of compact type that has no zero component we can obtain a form on a smooth curve having the same periods.  Indeed by taking parallel slits instead of points to glue the different components we smoothen the curve, without changing the period map. 

If $g=2$, the curve has two components of genus one and we know that one of the components of the form is zero. This implies $\deg(p)=1$. A contradiction. 

If $g=3$ the curve has either a part of genus two and a part of genus one or three parts of genus one and the stable form is zero in one of the parts. If it it is zero on a genus two part, it implies $\text{deg}(p)=1$, contrary to assumption. If it is zero in just one part of genus one we can suppose that we have a form $\omega\neq 0$ on a smooth curve of genus two. It has at least one zero $z_{1}$. Consider the local map $z\mapsto \int_{z_{1}}^{z}\omega$ and the pre-image $\gamma$ of a small segment $[0,\varepsilon]$. It is a path with distinct endpoints satisfying $\int_{\gamma}\omega=0$. Gluing the endpoints produces a stable form on a curve that is not of compact type. If we take parallel slits at the endpoints  instead, the gluing produces a stable form on a smooth curve of genus three. Marking the curve appropriately we obtain that its period is $p$ . A contradiction. 
\end{proof}

 For $g\geq 4$ the image of the Torelli map is an analytic set of positive codimension (as an easy dimension count shows) and determining it  is known as the Schottky problem. In fact, as stated in Theorem \ref{t:degree 2}, there are some of the intersections given by Theorem \ref{t:slice of siegel}  that will not be connected. As for the non-emptiness of the fiber of $\per$ over points of positive volume and primitive degree at least two,  we will show inductively that they can be realized as period of a stable form without zero components as in the proof of Theorem \ref{t:connectedness in genus 2 and 3} and apply the surgeries. These surgeries will be extensively developed in Section \ref{s:moving points in the leaf}  to produce continuous isoperiodic  deformations of stable forms.  
 \subsection{Strata of holomorphic forms on smooth curves and isoperiodic foliations}
 \label{ss:strata}
It is well known that the spaces $\Omega^{*}\mathcal{M}_{g,n}$ and $\Omega^{*}\mrs_{g,n}$  are stratified by the properties of the zero sets of forms. Two points $(C,P,\omega)$, $(C',P',\omega')$  lie in the same stratum if there exists a homeomorphism $(C,P)\rightarrow (C',P')$  sending the zero divisor $(\omega)$ to the zero divisor $(\omega')$. In other words, each marked point is sent to a marked point, each zero to a zero, and the orders of the zeros are preserved. The generic stratum  $\Omega^{SZ}\mrs_{g,n}$ is formed by forms with ($2g-2$) simple zeros, none of which is a marked point. The minimal stratum $\Omega\mrs_{g,n}(2g-2)$ is formed by abelian differentials with a single zero (at some marked point if there are any). Veech (\cite{Veech}) and Masur (\cite{Masur2}) showed that there are local holomorphic coordinates defined on the stratum of a point $(C,P,\omega)$, with coordinates in $$\text{Hom}(H_1(C,Z(\omega)\cup P,\mathbb{Z}),\mathbb{C}), $$ defined by integration. As a consequence, the period map restricted to any stratum is a linear projection in the coordinates, and is therefore submersive and open. This implies that the fibers of the map $\per_{g,n}$ are regular and transverse to all strata different from the generic stratum. In particular, in restriction to  $\Omega\mrs_{g,n}(2g-2)$, the  map $\per_{g,n}$ is a local biholomorphism. On the other hand it also proves that the restriction of the map $\per_{g,n}$ to each stratum of stable forms defines a regular holomorphic foliation of codimension $2g$.

The natural action of $\text{Mod}(\Sigma_{g,n})$ on $\Omega\mrs_{g,n}$ preserves each stratum, and the map $\per_{g,n}$ is equivariant with respect to  the corresponding action on $\text{Hom}(H_1(\Sigma_{g},q_1,\ldots,q_n;\mathbb{Z}),\mathbb{C})$.
\begin{definition} The regular foliation induced by $\per_{g,n}$ on $\Omega^{*}\mrs_{g,n}$ (and its restriction to each stratum) descends to $\Omega^{*}\mathcal{M}_{g,n}$ as a regular holomorphic foliation called the isoperiodic foliation and denoted $\mathcal{F}_{g,n}$ (or $\mathcal{F}_g=\mathcal{F}_{g,0}$ when there are no marked points). 
\end{definition}
If $L\subset\Omega\mathcal{M}_g$ is the leaf of $\mathcal{F}_g$ corresponding to the periods $p\in\mathcal{H}_g$, the restriction of the Torelli cover to the fiber   \begin{equation}\label{eq:galois cover of leaf}
    \per^{-1}(p)\rightarrow L
\end{equation} is a Galois covering. We can give some information about the covering group.

\begin{definition}
Given a symplectic module $M$ over a ring and a  homomorphism $p:M\rightarrow\mathbb{C}$, the stabilizer of $p$ is the subgroup of the group of symplectic automorphisms of $M$ defined by  \[\text{Stab}(p)=\{h\in\text{Aut}(M):p\circ h=p\}\] When we want to stress the group it sits in we denote it $\text{Stab}_{\text{Aut}(M)}(p)$. 
\end{definition}
\begin{remark}
\label{ss:symmetries of periods} Let $p=\per (C,m,\omega)$ be the period map of a marked abelian differential $(C,m,\omega)\in\Omega\mrs_{g,0}$. Then the group $\text{Stab}(p)$ also stabilizes the fiber $\per^{-1}(p)$.  The isotopy class of a Dehn twist around any simple closed curve in $C$ defining a primitive element $a\in\ker p\setminus 0$, induces a non-trivial element $\delta_a\in\text{Stab}(p)$ whose action fixes no point. 
%The point $(C,m,\omega)$ is fixed by the action of $\delta_a$ if and only if $a\in \ker m\setminus 0$.\textcolor{red}{In particular, if $\ker m=0$ 
 \end{remark}

\begin{remark}\label{rem:fundamental group of leaves}

%A path in a fiber of $\per$ that joins two distinct points $(C,m,\omega)$ and $(C,m\circ D_a,\omega)$ as before defines a nontrivial class in the fundamental group of the leaf $L$ of the isoperiodic foliation $\mathcal{F}_g$ when projected to $\Omega\mathcal{M}_g$.
  The covering group of the map \eqref{eq:galois cover of leaf} is $\text{Stab}(p)$. In particular, if $\per^{-1}(p)$ is connected, the image of $\pi_1(L)$ in $\text{Sp}(2g,\mathbb{Z})$ is precisely $\text{Stab}(p)$, which is non-trivial as soon as $\ker (p)\neq 0$.
\end{remark}

%\begin{theorem}\label{t:degree 2}
%The fiber of $\per_g$ through a genus $g\geq 5$ primitive branched double cover over an elliptic differential is disconnected. 
%\end{theorem}

\subsection{Disconnected fibers: proof of Theorem \ref{t:degree 2}}

Let $p\in\mathcal{H}_g$ with $\deg p=2$. We will define an invariant that is constant on each component of $\per^{-1}(p)$. On the other hand we will prove that if $g\geq 5$, the stabilizer of $p$ in the symplectic group $\text{Aut}(H_1(\Sigma_g,\mathbb{Z}))$ is large enough to guarantee that the invariant takes at least two values on any orbit of its action on $\per^{-1}(p)$. 

Consider the elliptic curve \(E:=\mathbb C/\text{Im}(p)\). To any \((C,m,\omega)\in \text{Per}^{-1}(p)\) we can  associate a double branched covering \(\pi : C\rightarrow E\) defined as the integral of \(\omega\) based at some point. It is well-defined up to post composition by a translation in \(E\). Denote by \(C(\pi)\) (resp. \(VC(\pi)\)) the set of critical points (resp. critical values) of \(\pi\). 

As observed by Arnold, see \cite{Arnold}, the map \(\pi_*: H_1(C\setminus C(\pi),\mathbb Z / 2\mathbb Z) \rightarrow H_1(E\setminus VC(\pi) ,\mathbb Z / 2\mathbb Z) \) extends as a homomorphism 
\begin{equation} \label{eq: pre-Arnold's map} \Pi : H_1(C,\mathbb Z / 2\mathbb Z) \rightarrow H_1(E\setminus VC(\pi) ,\mathbb Z / 2\mathbb Z) .\end{equation}
This is due to the fact that any cycle turning once around a critical point of \(\pi\) is mapped to a cycle turning twice around a critical value of \(\pi\). The image of \(\Pi\) is the kernel of the monodromy representation \(\varepsilon : H_1( E\setminus VC(\pi_0), \mathbb Z / 2\mathbb Z)\rightarrow \mathbb Z / 2\mathbb Z \) of the covering \( \pi\).

To define an invariant that does not depend on the choices made so far, we need to choose a reference for the homology and the critical values. Fix a reference subset \(VC_0\subset E\) of cardinality \(2g-2\). There exists a homeomorphism \(\varphi\) of \(E\) sending \(VC(\pi)\) to \( VC_0\), that is homotopic to the identity. Two such choices differ by  post-composition by an element of the braid group of the pair \((E, VC_0)\). We define Arnold's invariant of \( (C,m,\omega)\) as the map 
\[ Ar (C,m,\omega) := \varphi _* \circ \Pi \circ m \in \text{Hom} (H_1(\Sigma_g, \mathbb Z / 2\mathbb Z), H_1 (E\setminus VC_0,\mathbb Z / 2\mathbb Z))/G,\]
where \(\varphi_*: H_1 (E\setminus VC(\pi) ,\mathbb Z / 2\mathbb Z)\rightarrow H_1 (E\setminus VC_0,\mathbb Z / 2\mathbb Z)\) is the natural map induced by \(\varphi\), and where \(G \subset \text{Aut} (H_1(E\setminus VC_0,\mathbb Z / 2\mathbb Z))\) is the image of the natural representation of the braid group of the pair \( (E,VC_0) \) in the homology group \(H_1 (E\setminus VC_0,\mathbb Z / 2\mathbb Z)\), acting on \(\text{Hom} (H_1(\Sigma_g, \mathbb Z / 2\mathbb Z), H_1 (E\setminus VC_0,\mathbb Z / 2\mathbb Z))\) by post-composition. By construction the Arnold invariant is constant on every connected component of $\per^{-1}(p)$.

The group \(\text{Aut}(H_1(\Sigma_g, \mathbb Z / 2\mathbb Z))\) of linear automorphisms preserving the symplectic structure acts on \(\text{Hom} (H_1(\Sigma_g, \mathbb Z / 2\mathbb Z), H_1 (E\setminus VC_0,\mathbb Z / 2\mathbb Z))/G\) by precomposition. Now, if the image of the stabilizer of $p$ in $\text{Aut}(H_1(\Sigma_g,\mathbb{Z}))$ by the mod 2 reduction homomorphism \[ \text{Aut}( H_1(\Sigma_g,\mathbb{Z}))\rightarrow \text{Aut}(H_1(\Sigma_g,\mathbb{Z}/2\mathbb{Z}))\] does not stabilize the Arnold invariant of an element, it means that there are at least two values of the invariant in the orbit. We will compare the sizes of the stabilizer and the image to conclude.  

\begin{lemma}\label{l: bound stabilizer Arnold invariant}
For any \((C,m,\omega)\in \text{Per}^{-1}(p)\) the stabilizer of \(Ar(C,m,\omega)\) in \(\text{Aut}(H_1(\Sigma_g, \mathbb Z / 2\mathbb Z))\) has order at most $2^{4g-5} (2g-2)!$, i.e. 
\[| \text{Stab} _{\text{Aut}(H_1(\Sigma_g, \mathbb Z / 2\mathbb Z))} (Ar(C,m,\omega))|\leq 2^{4g-5} (2g-2)! \] 
\end{lemma}

\begin{proof}
Let \(\varphi\) be a homeomorphism of \(E\) sending \(VC(\pi)\) to \( VC_0\), that is homotopic to the identity, and define \( A:= \varphi_*\circ\Pi\circ m\), where \( \varphi_*: H_1 (E\setminus VC(\pi); \mathbb Z/2\mathbb Z) \rightarrow H_1 (E\setminus VC_0, \mathbb Z/2\mathbb Z) \) is the map induced by \(\varphi\) on homology.  The Arnold's invariant is the class of \(A\) modulo post composition by an element of \(G\). We need to count the number of elements $M\in Aut(H_1(\Sigma_g,\mathbb Z/2\mathbb Z))$ such that there exists an element $g\in G$ satisfying
$$A\circ M=g\circ A.$$

The map $g$ preserves the image of $A$, and $g|_{\rm{Im}(A)}$ is determined by $M$. So we have a well-defined representation \[\rho:\text{Stab}_{Aut(H_1(\Sigma_g,\mathbb Z/2\mathbb Z))}(A)\to \text{End}(\text{Im}(A)) \text{ given by }\rho(M):=g|_{\text{Im}(A)}.\] To bound the cardinality of \(\text{Stab}_{Aut(H_1(\Sigma_g,\mathbb Z/2\mathbb Z))}(A)\) we proceed to bound the cardinality of the image and kernel of \(\rho\). 

%We see now that the map $\rm{Stab}_G(\rm{Im}(A))\to \rm{Aut(Im}(A))$ given by $g\mapsto g|_{\rm{Im}(A)}$ is injective, so $\rho$ is in fact a representation $$\rho:\rm{Stab}_{Aut(H_1(\Sigma_g,\mathbb Z/2\mathbb Z))}(A)\to \rm{Stab}_G(\rm{Im}(A)).$$

To bound the cardinality of the image of \(\rho\), let us analyze  the action of $G$ on $H_1(E\setminus VC_0,\mathbb Z/2\mathbb Z)$. Consider the natural map $i_*:H_1(E\setminus
  VC_0,\mathbb Z/2\mathbb Z)\to H_1(E,\mathbb Z/2\mathbb Z)$ given by the inclusion. Its kernel can be identified
  with the space $\left(\oplus_{v\in CV_0}(\mathbb Z/2\mathbb Z)v\right)/(\mathbb Z/2\mathbb Z)\sigma$, with
  $\sigma=\sum_{v\in CV_0}v$, where $v\in CV_0$ corresponds to the cycle turning once
  around $v$. We have the exact sequence
\begin{equation} \label{eq: homology punctured elliptic curve} 0\to \left(\oplus_{v\in CV_0}(\mathbb Z/2\mathbb Z)v\right)/(\mathbb Z/2\mathbb Z)\sigma \to H_1(E\setminus VC_0,\mathbb Z/2\mathbb Z)\stackrel{i_*}{\to} H_1(E,\mathbb Z/2\mathbb Z)\to 0. \end{equation}
The group $G$ preserves $\ker i_*$ and acts on it by permutations. Moreover, it acts trivially on the quotient $H_1(E\setminus VC_0,\mathbb Z/2\mathbb Z)/\ker i_*\simeq H_1(E,\mathbb Z/2\mathbb Z)$ because any element of the braid group is isotopic to the identity once marked points are forgotten.

Since the image of $\Pi$ is the kernel of the monodromy representation of the branched covering $\pi: C\to E$, the subspace $\text{Im} (A)\subset H_1( E\setminus VC_0, \mathbb Z/2\mathbb Z) $ is a hyperplane, whose intersection with $\ker i_*$ is the $(2g-4)$-dimensional vector space $\ker i_*\cap\rm(Im(A))\simeq \left(\oplus_{v\in CV(\pi)}(\mathbb Z/2\mathbb Z)v\right)^{even}/(\mathbb Z/2\mathbb Z)\sigma$ (i.e. the formal sums of an even number of critical values  of \(\pi\)). Moreover, $i_*(\text{Im}(A))=H_1(E,\mathbb Z/2\mathbb Z)$. So the exact sequence \eqref{eq: homology punctured elliptic curve} induces an exact sequence
\begin{equation} \label{eq: exact sequence Im(A)} 0\to \left(\oplus_{v\in CV_0}(\mathbb Z/2\mathbb Z)v\right)^{even}/(\mathbb Z/2\mathbb Z)\sigma \to \text{Im}(A)\stackrel{i_*}{\to} H_1(E,\mathbb Z/2\mathbb Z)\to 0. , \end{equation}
which is invariant by \(\rho\), the action of this latter on the left-hand module being made by permutations.

%Since any permutation of a set with at least three elements is determined by its action on  subsets with an even number of elements, we have that the restriction $g|_{\rm{Im}(A)}$ determines $g\in\text{Stab}_G(\rm{Im}(A))$. Note also that no non-trivial permutation of $CV(\pi)$ acts trivially on $W$ because $CV(\pi)$ has more than two elements as soon as $g>2$.

%Since $\text{Stab}_G(\rm{Im}(A))$ acts trivially on $\rm{Im}(A)/W$, any element of $\text{Stab}_G(\rm{Im}(A))$ is determined by the permutation which provides the action on $W$, and an element of $\text{Hom}(H_1(E,\mathbb Z/2\mathbb Z),W)$. 

Since the group of automorphisms of the exact sequence \eqref{eq: exact sequence Im(A)} acting on its left hand side by permutations has cardinality at most $(2g-2)! \times 2^{2(2g-4)}$, we get the bound \begin{equation}\label{eq: bound image rho} |\text{Im}(\rho)|\leq (2g-2)! \times 2^{2 (2g-4)}. \end{equation}

Let us now bound the number of elements of the kernel of \(\rho\). Let \(M\in \text{ker}(\rho)\), which means that \( A\circ M=A\). Write \(M = I + \psi \) where \( \psi : H_1(\Sigma_g, \mathbb Z / 2\mathbb Z) \rightarrow \text{ker}(A)\). 

\vspace{0.2cm} 

\noindent {\bf Claim:} \textit{ \(\ker (A)\) is a two-dimensional isotropic subspace of \(H_1(\Sigma_g,\mathbb Z / 2\mathbb Z)\).}

\vspace{0.2cm} 

\begin{proof}[Proof of the claim]
From  \cite{BE} we deduce that,  up to composition by homeomorphisms in source and target, there is only one double  branched covering from a genus $g$ closed oriented connected surface  to \(E\).  It is therefore sufficient to verify the statement on an example that we depict in Figure \ref{fig:claimexample}.

\begin{figure}[httb]
\centering
 \begin{tikzpicture}[x=1ex,y=1ex]
\draw (0,0) to[out=up,in=left] (15,20) to[out=right,in=up] (30,0) to[out=down,in=right]
(15,-20) to[out=left,in=down] (0,0);
\draw (8,14) to[out=-40,in=220] (22,14);
\draw (10,12.65) to[out=40,in=-220] (20,12.65);

\draw (8,-13) to[out=-40,in=220] (22,-13);
\draw (10,-14.35) to[out=40,in=-220] (20,-14.35);

\begin{scope}[rotate={90}, shift={(-7.5,-12)}, scale=1/2]
\draw (8,14) to[out=-40,in=220] (22,14);
\draw (10,12.65) to[out=40,in=-220] (20,12.65);
\end{scope}

\begin{scope}[rotate={90}, shift={(-7.5,-18)}, scale=1/2]
\draw (8,14) to[out=-40,in=220] (22,14);
\draw (10,12.65) to[out=40,in=-220] (20,12.65);
\end{scope}

\begin{scope}[rotate={90}, shift={(-7.5,-24)}, scale=1/2]
\draw (8,14) to[out=-40,in=220] (22,14);
\draw (10,12.65) to[out=40,in=-220] (20,12.65);
\end{scope}

\begin{scope}[rotate={90}, shift={(-7.5,-30)}, scale=1/2]
\draw (8,14) to[out=-40,in=220] (22,14);
\draw (10,12.65) to[out=40,in=-220] (20,12.65);
\end{scope}

\draw[dashed, ->] (-10,0) -- (40,0);
\begin{scope}[x={(.5,0)}]
\draw[->] (-10,.8) arc (20:330:2);  
\end{scope}

\node[above] at (-8,2) {$180^\circ$};

\draw[red] (15,13) circle [x radius= 9, y radius =4];
\draw[blue] (15,20) to[out=240,in=120] (15,14.5);
\draw[dashed, blue] (15,20) to[out=300,in=60] (15,14.5); 
\node[below] at (15,9) {$a_g$};
\node[left] at (14.9,18.2) {$b_g$};

\draw[red] (15,-14) circle [x radius= 9, y radius =4];
\draw[blue] (15,-15.65) to[out=240,in=120] (15,-20);
\draw[dashed, blue] (15,-15.65) to[out=300,in=60] (15,-20); 
\node[above] at (15,-10) {$a_{g-1}$};
\node[below] at (15,-20) {$b_{g-1}$};

\end{tikzpicture}
 \caption{Example of double cover with bi-dimensional isotropic $\ker A$} \label{fig:claimexample}
\end{figure}
First we construct a \(2:1\) branched covering \( \pi ' : \Sigma' \rightarrow \mathbb P^1\) on a connected closed oriented surface of genus \(g-2\), whose associated Arnold's map \(A ' : H_1 (\Sigma ', \mathbb Z /2\mathbb Z ) \rightarrow H_1 (\mathbb P^1 \setminus VC(\pi' ) , \mathbb Z/2\mathbb Z)\) is injective.  Let \( z_1, z_2, \ldots, z_{2g-2}\in \mathbb P^1\) be distinct points in the Riemann sphere, and let \( I_0, \ldots, I_{g-2}\subset \mathbb P^1\) be pairwise disjoint segments, such that \(\partial I_k = \{z_ {2k+1}, z_{2k+2}\} \). Slit two copies of \( \mathbb P^1 \setminus \left( \cup _k I_k \right) \) and glue them together by the usual rule: the right side of \(I_k\) in one copy is glued to the left side of \(I_k\) in the second, and vice versa. We get a branched double cover \( \pi': \Sigma' \rightarrow \mathbb P^1\) whose critical values are the \( z_k\)'s. We let \( \alpha_1 , \ldots , \alpha_{g-2}\) be simple closed curves that are boundaries of small neighborhoods of the \(I_k\)'s. Let \(\{J_k\}_{k=1,\ldots , g-2}\) be a disjoint family of embedded segments in the Riemann sphere, the boundary of \(J_k\) intersecting the union \(\cup I_l \) only in its extremities \(z_{2k}\) and \(z_{2k+1}\). We denote by \( \beta_k\) the boundary of a small neighborhood of \(J_k\).  The curves \( \alpha_1, \beta_1, \ldots , \alpha_{g-2}, \beta_{g-2}\) lift to curves \(a_1 ', b_1 ', \ldots , a_{g-2} ', b_{g-2} ' \subset \Sigma'\) forming a symplectic basis of \(\Sigma'\). Notice that the  homology of \(\mathbb P^1\setminus \{z_1, \ldots, z_{2g-2}\}\) with coefficients in \(\mathbb Z/2\mathbb Z\) is generated by the cycles associated to small simple closed curves \(c_k '\) turning around \( z_k\), the only relation being that the sum of these cycles is zero.  By construction we have \(A'(a_k' ) = c_{2k+1}'+c_{2k+2}'\) and \( A'(b_k') = c_{2k}'+c_{2k+1}'\). From this we easily prove that \(A'\) is injective.

Now, take an open disc \(\Delta\) in the Riemann sphere  that contains the union of the intervals \(I_k\) and \(J_k\), and the curves \(\alpha_k, \beta_k, c_k \),  for \(k=1,\ldots, g\). The surface \( \Sigma \setminus (\pi') ^{-1} (\Delta) \) is a union of two discs corresponding to lifts of \( \mathbb P^1 \setminus \Delta\). Consider a torus \( E\), and an embedding \(i:  \Delta \rightarrow E\). Attach to \((\pi') ^{-1} (\Delta) \) two copies of \( E\setminus i (  \Delta ) \) along the identification  of their boundaries given by the maps \(i\) and \(\pi\). We get a surface \(\Sigma \) of genus \(g\), and a branched covering \( \pi : \Sigma \rightarrow E\) defined by \( \pi '\) in \( (\pi ')^{-1} (\Delta) \) and by the natural identification of the two components of \( \Sigma \setminus  (\pi ')^{-1} (\Delta)\) with \( E\setminus i (\Delta)\). Denoting by \( \alpha, \beta\subset E\) oriented simple closed curves that do not intersect \( i (\Delta) \) and that form a symplectic basis of \( H_1 (E, \mathbb Z) \), we let \( a_{g-1}, b_{g-1}\subset \Sigma\) and \( a_g, b_g\subset \Sigma\) the corresponding cycles in the two copies of \( E\setminus i (\Delta) \). We denote by \( a_1, b_1, \ldots, a_{g-2}, b_{g-2}\subset \Sigma\) the simple closed curves equal to the curves \( a_1', b_1',\ldots, a_{g-2}', b_{g-2}'\) in \( (\pi')^{-1} (\Delta)\). The cycles \( a_1, b_1, \ldots, a_g, b_g\) in \(H_ 1(\Sigma, \mathbb Z/2\mathbb Z) \) form a symplectic basis, and we have 
\[A (a_k ) = c_{2k+1}+c_{2k+2}, \ A(b_k ) = c_{2k}+c_{2k+1} \text{ if } k\leq g-2\]
and 
\[A(a_{g-1})=A(a_g) = a, A(b_{g-1}) = A(b_g) = b, \]
where, in these formula, we denote by \( c_k= i (c_k')\). The kernel of \( A\) is then the space generated by \( a_{g-1} - a_g \) and \( b_{g-1} - b_g\), which is isotropic modulo 2. The claim follows.
\end{proof}

Since $M$ is symplectic and $\ker A$ isotropic of dimension two,  \(\psi\) satisfies \( \psi+ \psi ^* =0\), and induces an anti-symmetric map \(H_1(\Sigma_g,\mathbb Z / 2\mathbb Z)/ \text{ker}(A)^\perp \rightarrow \text{ker}(A)\) (here the anti-symmetric character is relative to the natural duality between \(\text{ker}(A)\) and \(H_1(\Sigma_g,\mathbb Z / 2\mathbb Z)/ \text{ker}(A)^\perp\) given by the intersection form). Since \( \text{ker}(A)\) is two dimensional, there is a \(3\)-dimensional space of such maps \(\psi\) over \(\mathbb Z / 2\mathbb Z\). In particular, we have 
\begin{equation}\label{eq: bound kernel rho} |\text{ker} (\rho) | \leq 2^3. \end{equation} 
The lemma follows immediately from bounds \eqref{eq: bound image rho} and \eqref{eq: bound kernel rho}. \end{proof}

We denote by \(p[2]\) the reduction of \(p\) modulo two, namely the map 
\begin{equation}\label{eq: period mod two} p[2]: H_1(\Sigma_g, \mathbb Z / 2\mathbb Z) \rightarrow \text{Im}(p) / 2\text{Im} (p) \simeq H_1(E, \mathbb Z / 2\mathbb Z).\end{equation}
More geometrically, for an element \( (C,m,\omega)\in \text{Per}^{-1}(p)\), we have \( p[2]= i_* \circ \varphi_* \circ \Pi \circ m\), where \(i_*: H_1 (E\setminus VC_0, \mathbb Z/2\mathbb Z) \rightarrow H_1 (E , \mathbb Z/2\mathbb Z)\) is the map induced by inclusion. 

To conclude the proof of Theorem \ref{t:degree 2}, we use an algebraic result whose proof can be found in Appendix II (see section \ref{s:appendix1}).

\begin{lemma}\label{l: surjection symplectic modulo 2}
If $g\geq 3$, the image of the mod 2 reduction map \[\text{Stab}_{\text{Aut}(H_1(\Sigma_g, \mathbb Z))}(p) \rightarrow \text{Aut}(H_1(\Sigma_g,\mathbb{Z}/2\mathbb{Z}))\] is \(\text{Stab}_{\text{Aut}(H_1(\Sigma_g, \mathbb Z / 2\mathbb Z))}(p[2])\). Its cardinality is
\(2^{2g-2}\times 2^{2g-3}\times (2^{2g-4}-1) \times 2^{2g-5} \times \ldots \times (2^2-1) \times 2. \) 
\end{lemma}

The combination of Lemma \ref{l: bound stabilizer Arnold invariant} and Lemma \ref{l: surjection symplectic modulo 2} show that  Arnold's invariant takes at least
\[\frac{2^{2g-2}\times 2^{2g-3}\times (2^{2g-4}-1) \times 2^{2g-5} \times \ldots \times (2^2-1) \times 2}
{2^{4g-5} (2g-2)!}\] 
distinct values on any orbit of \(\text{Stab}_{\text{Aut}(H_1(\Sigma_g, \mathbb Z))}(p)\) in \(\per^{-1}(p)\). This number is strictly larger than \(1\) if \(g\geq 5\), hence the proof of Theorem \ref{t:degree 2} is complete.

In the case of genus $g=4$, a more detailed study allows to show that  the Arnold's invariant takes only one value on the isoperiodic spaces of degree two (and actually, realizes another instance of the exceptional isomorphism between \(\text{Sp} (4,\mathbb Z/2\mathbb Z)\) and \(S_6\)). It is plausible that these isoperiodic sets are connected indeed, but we could not prove it by using this invariant. 

%we cannot conclude that there are two values of the Arnold invariant in an orbit of the stabilizer of $p$ in $\per^{-1}(p)$. 

\subsection{Dynamics of the action of  $\text{Sp}(2g,\mathbb{Z})$ on $\mathcal{H}_g$}
\label{s:dynamics}

In this subsection, we state the extension of the analysis of the linear action of $\Gamma=\text{Sp}(2g,\mathbb{Z})$ on the set of periods of positive volume, that were presented by Kapovich in \cite{Kapovich} for the cases $g\geq 3$ to the case of genus $g=2$. For the sake of completeness we include the proof in Appendix I (see section \ref{s:appendix2}). This will enable us to derive Theorem \ref{t:dynamics} from Theorem \ref{t:connectedness}.

%It suffices to understand the dynamics of the linear action of $\Gamma = \text{Sp} (2g, \mathbb Z)$ on $\mathbb C^{2g}$, or, more precisely on the set of periods $p\in \mathbb C^{2g}$ of positive volume. 

%apovich analyzes the different possibilities for the group $H$ depending on the properties of $p$. 

The closure of the orbit of a period \(p\) depends heavily on the rationality properties of the real two dimensional symplectic subspace 
\begin{equation} \label{eq: W} W= \mathbb R \Re p + \mathbb R \Im p \subset \mathbb{R}^{2g}\end{equation} defined over smaller fields. Except for the case where $W$ is defined over a quadratic field and $W^{\sigma}=W^{\perp}$ for the Galois involution, the different cases can be characterized by the dimension of the maximal subspace of $W$ defined over the rationals. This dimension can also be detected by the topological properties of the closure of the $\mathbb{Z}$-submodule $\Lambda(p)$ of $\mathbb{C}$ generated by the entries of $p\in\mathbb{C}^{2g}$. It is two if $\Lambda(p)$ is discrete, one if it is isomorphic to $\mathbb{R}+i\mathbb{Z}$ and zero if it is dense. Notice that this submodule is invariant under the action of $\Gamma$ on $X$. The different possibilities in Theorem \ref{t:dynamics} arise from this analysis at the level of this submodule. We resume the analysis in the following

\vspace{0.3cm}

\begin{proposition}\label{p:Kapovich}
Assume $g>2$. For any $p\in \mathbb C^{2g}$ of positive volume, we have the trichotomy for $W=\mathbb R \Re p + \mathbb R \Im p$
\begin{itemize}
\item $W$ is defined over $\mathbb Q$. In this case, $\Lambda(p)$ is discrete and either $p$ is the period of a finite branched covering of the elliptic differential $(\mathbb C/ \Lambda(p), dz)$ or it is a collapse of $g-1$ handles. The set $\overline{\Gamma \cdot p}$ is the set of periods $q\in \mathbb C^{2g}$ of volume $V(q)=V(p)$ such that $\Lambda(q)=\Lambda(p)$.
\item $W$ is not rational but contains a rational line. In this case, up to the action of $\text{GL} (2,\mathbb{R})$, the set $\overline{\Lambda(p)}$ is $\mathbb R + i\mathbb Z$, and $\overline{\Gamma \cdot p}$ is the set of periods $q\in \mathbb C^{2g}$ of volume $V(q)=V(p)$ whose imaginary part are integer valued and primitive.
\item $W$ does not contain any rational subspace of positive dimension. In this case, $\overline{\Lambda(p)} = \mathbb C$, and $\overline{\Gamma \cdot p} $ is the set of periods $q \in \mathbb C^{2g}$ such that $V(q)=V(p)$.
\end{itemize}
In genus $g=2$, there is another possibility:
\begin{itemize}
\item $W$ is defined over a quadratic field $K$, and $W^\perp = W^\sigma$, where $\sigma$ is the Galois involution of $K$. In this case $\overline{\Gamma \cdot p}$ is the set of periods $q$ which differ from $p$ by post-composition by an element of $\text{GL}(2,\mathbb{R})$, and by pre-composition by an element of $\text{Sp}(4,\mathbb Z)$.
\end{itemize}
In any case, the action of $\Gamma$ is ergodic in $\overline{\Gamma \cdot p}$.
\end{proposition}

%For the sake of completeness we include the proof of the genus $g=2$ case, that is missing in \cite{Kapovich} in the Appendix.

\subsection{Proof of (Theorem   \ref{t:connectedness})$_g \Rightarrow$(Theorem \ref{t:dynamics})$_{g}$} \label{ss: pf of thm dynamics} We suppose that we know that Theorem \ref{t:connectedness} is true for some genus $g\geq 2$ and hence by the Transfer Principle,  invariant sets of the isoperiodic foliation $\mathcal{F}_g:=\mathcal{F}_{g,0}$ correspond bijectively to  invariant sets of the $\text{Sp}(2g,\mathbb{Z})$ action on $\mathcal{H}_g$. We want to deduce Theorem $\ref{t:dynamics}$ for genus $g$. For the sake of simplicity we omit the sub-index of $\per_{g,0}$ and write $\per$. 

The closure $\overline{L}$ of the leaf $L=L(C,\omega)$ of $\mathcal{F}_g$ through $(C,\omega)\in\Omega\mathcal{M}_g$ is a closed $\mathcal{F}_g$-invariant set. By the Transfer principle (see Theorem \ref{t:connectedness}), after choosing a marking $m$ of $C$, it corresponds to the projection from $\Omega\mrs_g$ to $\Omega\mathcal{M}_g$ of all marked abelian differentials having periods in the closure of the $\text{Sp}(2g,\mathbb{Z})$-orbit of $p=\per(C,m,\omega)\in\mathcal{H}_g$. The final ergoditicy part of the statement of Theorem \ref{t:dynamics} follows from the final statement in Proposition \ref{p:Kapovich}. 

\emph{Case 1}: If $\omega$ is a genus two eigenform for real multiplication by a real quadratic order $\mathfrak{o}_D$ of discriminant $D>0$. Then the closure of $L$ is one of the Hilbert modular manifolds $\Omega X_D$ (see \cite{Calta} or \cite{McMullen3}). As shown in \cite[Case 3. of Theorem 5.1]{McMullen3} this case occurs if and only if $p$ has the properties described in the last possibility of Proposition \ref{p:Kapovich}.

In what follows we suppose that $p$ does not satisfy the last condition of Proposition \ref{p:Kapovich}.
The other possibilities can be characterized in terms of the closure of $\Lambda(p)$ in $\mathbb{C}$:

\emph{Case 2}: $\Lambda(p)$ is discrete. The image of the periods of any $(C',\omega')\in \overline{L}$ are precisely $\Lambda(p)$.

Each element in the fibre $\per^{-1}(p)\subset \Omega\mrs_g$ corresponds, by integration, to a unique primitive branched covering over $\mathbb{C}/\Lambda(p)$ of volume $V$. If we find a marking $m'$ of $C'$ where $\text{Per}(C',m',\omega')=p$, then by Theorem \ref{t:connectedness} we have $(C',\omega')\in L$. Therefore the leaf $L$ is closed. The first item of Proposition \ref{p:Kapovich} (or equivalently Lemma \ref{l:finite degree orbit}) shows that such a marking $m'$ on $(C',\omega')$ exists. 

In particular this provides an alternative proof of the connectedness of the Hurwitz space of primitive branched coverings over $\mathbb{C}/\Lambda(p)$ of volume $V$ (see \cite{BE} for the original proof).

\emph{Case 3}: $\overline{\Lambda(p)}$ is neither discrete nor dense in $\mathbb{C}$. Then its closure is an infinite union of parallel real lines and we fall in the second case of Proposition \ref{p:Kapovich}. The conclusion is that in the closure $\overline{L}$ we find all pairs $(C',\omega')$ of volume $V$ whose periods belong to $\Lambda(p)$ and contain elements in every real line where $\Lambda(p)$ is dense.

\emph{Case 4}: $\Lambda(p)$ is dense in $\mathbb{C}$. We fall in the third case of Proposition \ref{p:Kapovich}. In $\overline{L}$ we find all pairs $(C',\omega')$ of volume $V$.

\subsection{Remark on the transfer principle on strata}

The generic stratum is the complement of an analytic subset, and therefore its intersection with each fiber of $\per_{g}$ has the same number of connected components as the whole fiber. We can thus apply the Transfer principle to the generic stratum as well. On the other strata, the question seems to be much more delicate (see the discussion in section \ref{ss: notes and references}). At the other extreme, on the minimal stratum, the isoperiodic foliation has dimension zero, so we cannot expect to transfer dynamical properties of the symplectic group on the period domain to dynamical properties of the isoperiodic foliation. We even show here that 
 
\begin{proposition}
The intersection of a fiber of $\per_g$ with the minimal stratum is either empty or infinite discrete.
\end{proposition}

The connectedness of the intersection of the fibers of $\per_g$ with the connected components of all other strata remains open.

\begin{proof}
The restriction of the map $\per_{g}$ to the minimal stratum is a local diffeomorphism onto an open set in $\mathcal{H}_{g}$. Hence its fibers are discrete. 
Let $(C,m,\omega)\in\Omega\mrs_{g}$ be a point in the minimal stratum and $p=\per (C,m,\omega)$. Remark that the image of $\text{Aut}(C,\omega)\rightarrow \text{Sp}(H_1(C,\mathbb{Z}))$ is a finite subgroup $G$.% that can be thought in $\text{Sp}(H_1(\Sigma_{g},\mathbb{Z}))$ via $m$.  
Therefore the group $$\text{Stab}(C,m,\omega)=\{M\in\text{Sp}(H_{1}(\Sigma_{g},\mathbb{Z})): (C,m\circ M,\omega)\sim (C,m,\omega)\}$$ is finite.
If $p$ has discrete image, then $\text{Stab}_{\text{Sp}(H_{1}(\Sigma_{g},\mathbb{Z}))}(p)$ is infinite and contains $G$. Hence there is an infinite number of points in the orbit of $(C,m,\omega)$ under this group. All of them lie in $\per^{-1}(p)$, so the result follows in this case.

Next suppose $p$ has non-discrete image. By Proposition \ref{p:Kapovich} there exists an infinite sequence of  $M_{n}\in\text{Sp}(H_{1}(\Sigma_{g},\mathbb{Z}))$ such that  $p_{n}=p\circ M_{n}$ is a sequence of pairwise distinct points converging to  $p$ in $\mathcal{H}_{g}$.  Up to taking a subsequence, these points correspond to points $(C_{n},m_{n},\omega_{n})$ in the minimal stratum via $\per_{g}$, that project to an infinite family of pairwise distinct points $(C_{n},\omega_{n})\in\Omega\mathcal{M}_{g}$. By construction, all points $(C_{n},m_{n}\circ M_{n}^{-1},\omega_{n})$ belong to the intersection of $\per^{-1}(p)$ with the minimal stratum and are pairwise distinct in $\Omega\mrs_{g}$. \end{proof}
\subsection{Monodromy of zeros along a fiber of  $\per_{g,n}$ in the generic stratum }

\begin{definition} Given a connected component $K$ of the intersection of a fiber of $\per_{g,n}$ with a stratum of $\Omega\mrs_{g,n}$ having $l$ distinct zeros, define the monodromy group of zeros associated to $K$ as the conjugacy class of the image group $G_{K}$ of the representation $$\pi_1(K,(C_0,P_0,m_0,\omega_0))\rightarrow S_l$$
that associates to each loop in $K$ the permutation induced on the zero set $z_1,\ldots,z_l$ of $\omega_0$.
\end{definition}

Our main interest will be the monodromy group of the zeros in the generic stratum. Recall that it is a Zariski open set in the fiber of the isoperiodic map and is therefore connected if and only if the fiber is. Turning around the divisors associated to other strata in the fiber already produces some monodromy of the zeros:

\begin{lemma}\label{l:non-trivial monodromy of zeros}
 Let $g\geq 2$ and $L$ be a connected component of a fiber of $\per_{g,n}$. Suppose it contains a point whose zero divisor is $d_1p_1+\ldots+d_lp_l$. Then, the monodromy group of the zeros in the generic stratum $L^{SZ}$ of $L$,  contains all transpositions of pairs of elements that belong to the same part of a partition $Z_1\sqcup\cdots\sqcup Z_l$ of the set of $2g-2$ elements with $|Z_i|=d_i$.
 \end{lemma}
 
 Before we proceed to the proof we recall the interpretation of an element $(C,p_1,\ldots,p_n,\omega)$ in $\Omega\mathcal{M}_g$  as a branched projective structure (see \cite{bps} for more details), or, more precisely, a branched translation structure. Indeed, around any point $q\in C$ we can define a local branched covering of degree $\text{ord}_q(\omega)+1$ by $\phi_q(z)=\int_{q}^{z}\omega$. At the intersection of domains of two such branched coverings differ by a translation, and therefore define a branched atlas on the topological curve $C$ with transitions in the set of translations (which are actually holomorphic). The branch points correspond to the zeros of the form.
 Reciprocally, if we are given an atlas $\{D_q, \phi_q\}$ on a compact topological surface of genus $g$ of (topological) branched coverings $\phi_q:D_q\rightarrow \mathbb{C}$  with a finite number of branch points whose transition maps are translations, it defines a complex structure on the surface and the form defined locally by  $\omega:=d\phi_q$ is globally defined. In section \ref{s:Schiffer variations} we will exploit this interpretation further. For the moment we will use it to deform a given point without changing the corresponding periods by changing a given atlas locally around a branched point.

 \begin{proof}[Proof of Lemma \ref{l:non-trivial monodromy of zeros}:]  Let $(C,r_1,\ldots,r_n,m,\omega)$ be the point satisfying $(\omega)_0=d_1p_1+\ldots+d_lp_l$. If all $d_i=1$ there is nothing to prove. Otherwise, $L^{SZ}$ is the complement of a normal crossing divisor in $L$ (see section \ref{ss:strata}). Choose a local chart $\phi_i:D_i\rightarrow\mathbb{C}$ of degree $d_i+1$ of the translation structure of $\omega$  around $p_i$.  We can suppose that none of the zeros of $\omega$ is a marked point $r_i$ and take discs $D_i$ not containing any marked point.

Recall from the appendix in \cite{bps} that we can construct a  germ of continuous  map $$\big(H(\phi_1)\times\cdots\times H(\phi_{d_l}),(\phi_1,\ldots,\phi_l)\big)\rightarrow (L,\omega)$$
where each $H(\phi_i)$ is a Hurwitz space of coverings of degree $d_i$ over a disc $\phi_i(D_i)$ up to an equivalence in the boundary that allows to glue each element with the translation surface defined by $\omega$ on $C\setminus (\cup D_i)$ to obtain a branched translation structure on a closed marked surface of genus $g$.  Since we can keep the marked points and a marking of the homology by avoiding the discs $D_i$, the gluing preserves all integrals over cycles of $H_1(\Sigma_{g},q_1,\ldots,q_n;\mathbb{Z})$.

By \cite{bps}[Lemma A7, p. 439], each $H(\phi_i)$ can be parametrized by the space of polynomials $P_a(z)=z ^{d_i}+a_{d_i-1}z^{d_i-1}+\ldots+a_0$ with $\sum a_j=0$ having critical values in the unit disc. The point with all coordinates $a_j=0$ corresponds to the initial point. Any choice on each Hurwitz space of a point that has $d_i$ distinct critical values $\{v_1,\ldots,v_{d_i}\}$ (therefore determining simple critical points on some set $Z_i$) has as image in $L$ a point with simple zeros.  On the other hand, the set of critical values determines the values of the $a_i$'s in the parameter space. Choose one such point $(C_0,\omega_0)\in L^{SZ}$. Since the germ of  $\mathbb{D}^{d_i}\setminus \{(z_1,\ldots,z_{d_i}): \text{ with }z_j=z_k\text{ for some } j\neq k\}$ at the origin is connected, there exists a path in it joining $(v_1,\ldots,v_{d_i})$ to any point obtained by permuting the order of the coordinates. Such a path will determine a closed path in the parameter space whose image in $L^{SZ}$ will permute the zeros of $\omega_0$ in each part $Z_i$ as desired. 

\end{proof}

\begin{lemma} \label{l: transitivity}
Let $g\geq 3$ and $p\in \mathcal{H}_g$ such that \( \per^{-1}(p) \) is connected. Then, the monodromy group of the zeros in the generic stratum of \( \per^{-1}(p)\) is $S_{2g-2}$ if \( \deg(p) \geq 3 \) and trivial if $\deg(p)=2$.
\end{lemma}

\begin{proof}
Assume first that \(p\) has infinite primitive degree, or equivalently non discrete image. In this case we claim there exists a form in \(\per ^{-1} (p) \) in the minimal stratum, namely having a unique zero of multiplicity \(2g-2\). From Lemma \ref{l:non-trivial monodromy of zeros}  we deduce that the whole group $S_{2g-2}$ is in the image of the representation. To prove the claim, observe that up to the action of the group \( \text{GL}^+(2,{\mathbb R})\), we can assume that the closure of the orbit of \(p\) under the action of the group \( \text{Aut} (H_1(\Sigma_g, \mathbb Z) )\) contains the set \(P\) of all periods \( q: H_1(\Sigma_g , \mathbb Z) \rightarrow \mathbb R +i {\mathbb Z} \) of volume \(\vol(q)= \vol(p)\) (see  Proposition \ref{p:Kapovich}). Since the period map restricted to the minimal stratum is a local biholomophism (see subsection 3.2), its image is an open set. It therefore contains points with discrete values in $\mathbb{Q}+i\mathbb{Q}$ that necessarily have finite degree.  Up to the action of $\text{GL}(2,\mathbb{R})$ we can find one whose periods lie in $P$. Moreover, there is a neighborhood of \(P\) in \(\text{Hom} (H_1(\Sigma_g , {\mathbb Z}), {\mathbb  C}) \) which consist of periods of forms in the minimal stratum. In particular, the orbit of \(p\) under \( \text{Aut} (H_1(\Sigma_g , {\mathbb Z}) )\) intersects this neighborhood, and thus \(p\) itself is the period of a form in the generic stratum. Hence the result follows in the case \(\deg(p)=\infty\). 

Assume now that \( \deg (p) < \infty\). In this case, consider the map \( R : \per^{-1} (p) \rightarrow H _{g,d} (E)\) taking values in the Hurwitz space \( H_{g,d} (E) \) of equivalence classes of primitive branched coverings over the elliptic curve \( E= {\mathbb C} / \text{Im} (p) \) of primitive degree \( d= \deg(p)\), the equivalence being the pre- and post-composition by automorphisms. The map \(R\) sends a form \( (C,\omega, m) \in \per^{-1} (p)\) to the equivalence class of the covering from \(C\) to \(E\) given by integrating \(\omega\). Denoting by \( U\subset \per  ^{-1} (p)^{SZ}\) the Zariski open subset consisting of forms with simple zeros whose integral between any pair of distinct zeros does not belong to the lattice \(\text{Im} (p)\), and by \( H_{g,d} (E)^{DC}\subset  H _{g,d} (E)\) the Zariski open subset consisting of coverings with $2g-2$ distinct critical values, the map \(R\) induces a Galois covering \( U \rightarrow H_{g,d}  (E)^{DC}\) with group \( \text{Aut} (H_1(\Sigma_g, {\mathbb  Z}))\). Consider the representation of permutations of the critical values \begin{equation}\label{eq:permutations of critical values}\pi _1 ( H_{g,d}  (E)^{DC} ) \rightarrow S_{2g-2}.\end{equation}  
It is well-known that  this representation is surjective, but by lack of references we reproduce a proof in the following paragraph. With this at hand, let $H\subset S_{2g-2}$ be the normal subgroup corresponding via \eqref{eq:permutations of critical values} to the image of the Galois covering $\pi_{1}(U)\rightarrow \pi _1 ( H_{g,d}  (E)^{DC} )$. We claim that if \(\deg(p)\geq 3\), and \(g\geq 3\), then $H$ contains a transposition, and hence $H=S_{2g-2}$.  We will find a transposition by applying  Lemma \ref{l:non-trivial monodromy of zeros} to a particular form with a double zero.  By the first item of Proposition \ref{p:Kapovich}, or equivalently Lemma 9.1 ,  it suffices to prove the existence of primitive genus $g$ and degree $d$ branched covering over $E$, branched over a set $\{e_1,\ldots,e_{2g-3}\}$ of $2g-3$ elements  with transitive (covering !) monodromy representation $\pi_1(E\setminus\{e_1,\ldots,e_{2g-3}\})\rightarrow S_d$ taking one of the peripherals to a cycle of order three (which corresponds to the local monodromy of the branched cover defined by integration around a double zero of a holomorphic form) and every other peripheral to a transposition (which corresponds to the local monodromy generator of the branched cover defined by integration around a simple zero of a holomorphic form). This is certainly possible if $d\geq 3$ and $2g-3\geq 2$, i.e. $g\geq 3$.  

Let us now  prove that the representation  $\eqref{eq:permutations of critical values}$ is onto. Let \( c : E'\rightarrow E\) be a non ramified cyclic \(d:1\) covering of \(E\). We denote by \( \tau \in \text{Aut} (E')\) a generator of the Galois group of \(c\).  Given $g-1$  smooth curves \(s_1, \ldots, s_{g-1}\)  in \(E'\), diffeomorphic to closed intervals, such that all the intervals \(c(s_1) , \ldots, c(s_{g-1})\) are disjoint we can construct an associated element in $H_{g,d}  (E)^{DC}$. Indeed,  all curves in the family \(s_1,\ldots, s_{g-1}, \tau (s_1), \ldots, \tau(s_{g-1}) \) are disjoint, and we can  slit \(E'\) along all the curves of this later family, and glue the right side of \(s_k\) to the left side of \(\tau(s_k)\), and vice versa, for every \(k=1,\ldots, g-1\). We obtain a Riemann surface \(C\) of genus \(g\), on which the map \(c\) induces a well-defined degree \(d\) branched covering \( r : C\rightarrow E\), which is primitive. Observe that the critical values of \( r\) are the $2g-2$ points lying at the extremities \(\iota_k,e_k\) of the curves \(c(s_k)\) for \(k=1,\ldots, g-1\).

The previous construction can be used to describe paths in $H_{g,d}  (E)^{DC}$ by deforming the chosen smooth curves continuously. Take a choice of $g-1$  smooth curves \(s_1, \ldots, s_{g-1}\)  in \(E'\) and consider the associated covering $r:C\rightarrow E$ . Observe, for instance,  that moving the curve \(s_k\) in $E$ without touching the other curves, and returning back to the same curve \(s_k\) but with opposite orientation, we draw a closed loop in \( H_{g,d}  (E)^{DC}\) based at $r$, and the permutation on the critical values is the transposition exchanging \(\iota_k\) and \(e_k\). It remains to show that transpositions of critical values, involving endpoints of distinct segments are also possible. Now, since the elements of \( H_{g,d}  (E)^{DC}\) have $2g-2$ critical values, all critical points of $r$ are simple and there is a one--to--one correspondence between critical values and critical points of $r$. Each critical point corresponds to a zero of $\omega=dr$, so if for each pair of distinct $k,l$ we manage to find a loop in \(U\) with base point $\omega$ whose associated monodromy of the zeros exchanges the zero associated to the endpoint $\iota_{k}$ of $s_{k}$ with the zero associated to an endpoint $e_{l}$ of $s_{l}$ and fixes every other zero, we will be done. As before, we will invoke Lemma \ref{l:non-trivial monodromy of zeros} which allows to deduce the property if we manage to find a path in $\per^{-1}(p)$ starting at $\omega$ and finishing at a point in a stratum of forms with precisely $2g-4$ simple zeros corresponding to the endpoints distinct from $\iota_{k}$ and $e_{l}$ and a double zero. This path can be constructed in \( H_{g,d}  (E)^{DC}\) by deforming \(s_k\) in such a way that it does not touch any other \(s_h\), apart at the very end of the movement where the only two points of distinct segments that coincide are \( \iota_k\) and \(e_l\). The set of permutations constructed in this way generate the whole symmetric group of the set \(\{\iota_1,e_1, \ldots, \iota_{g-1}, e_{g-1}\}\). 

It remains to prove the last statement, namely that if \(p\) has  primitive degree two, the monodromy of the zeros on \(\per^{-1} (p) \) is trivial. Remark that all forms in that set have simple zeros. We can use  done using the invariant of Arnold's introduced in the proof of Theorem \ref{t:degree 2}. Consider the (locally flat) fiber bundle  \(\mathcal E\rightarrow \per ^{-1} (p) \) by \( \mathbb Z/ 2\mathbb Z \)-vector spaces, whose fiber over an element \( (C, m , \omega) \) is the \(\mathbb Z/2\mathbb Z\) vector space generated by the zeros of \(\omega\), the only relation being that the sum of all zeros is trivial. Since the number of zeros of \(\omega\) is \(2g-2\geq 4\), we need to prove that the monodromy of \(\mathcal E\) is trivial when the primitive degree of \(p\) is equal to two. 

The fiber bundle \(\mathcal E\) embeds naturally in the locally flat \(\mathbb Z /2\mathbb Z\)-vector bundle \(\mathcal G\) over \(\per ^{-1} (p)\) whose fiber over a point \( (C, m , \omega) \) is the space \( H_1 (E\setminus \text{VC} (\pi), \mathbb Z/2\mathbb Z) \); the embedding sends a zero of \(\omega\) to the peripheral cycle turning around its \(\pi\)-image in \(E\).   

Fiberwise, the family of maps \( \Pi \circ m\), where \( (C,m,\omega) \) varies in \(\text{Per}^{-1} (p)\) and \(\Pi\) is the map \eqref{eq: pre-Arnold's map}, induce a map from the constant bundle \( H_1(\Sigma_g, \mathbb Z/2\mathbb Z) \) over \(\text{Per}^{-1} (p) \) to the bundle \(\mathcal G\). The image of the constant subbundle \( \text{Ker} (p[2])\) of \(H_1(\Sigma_g, \mathbb Z/2\mathbb Z) \) over \( \per ^{-1} (p)\) (recall that \(p[2]\) is the reduction of \(p\) modulo two, see \eqref{eq: period mod two}) fall inside the subbundle \(\mathcal E\) of \(\mathcal G\). Moreover,  it consists fiberwise of the subspaces of formal sums of an even number of peripheral cycles in \(H_1 (E\setminus \text{VC} (\pi) , \mathbb Z/2\mathbb Z)\).  

We infer that the monodromy of \(\mathcal E\) acts trivially on this subspace. Since a permutation of a set of at least three elements is determined by its action on the subsets of an even number of elements, we are done.\end{proof}

\section{Augmented Torelli space and extension of the period map}
\label{s:augmented Torelli and period maps}
In this section we review the topological and analytical properties of moduli spaces of curves and some of their coverings and (partial) bordifications. We will adopt the analytic point of view first developed by Abikoff in \cite{Abikoff}. It is based on the Augmented Teichm\"uller space (see \cite{ACG,Bers1,Bers2,Bers3,GH,Harris,Herrlich,HK,Masur3,WW, Wolpert1,Wolpert2,Yamada} for details and further references) and the bundles of stable forms over it (see the two recent papers \cite{5A18,5A19}). Most of the section will be devoted to describe the properties of the quotient of Augmented Teichm\" uller space by the Torelli group, the so-called Augmented Torelli space. We will also analyze the extension of the period map to the  points lying in the closure of fibers of $\per_{g,n}$.

\subsection{Marked stable curves }
\label{s:stable curves}
\begin{definition}
  A connected complex curve $C$ with $n$ marked distinct points $r_1,\ldots,r_n\in C$ is said to be stable if its singularities are nodes and do not coincide with any of the marked points,  and the closure $C_i$ of each component of $C^*:=C\setminus\text{Sing}(C)$, called a part of $C$, has a group of automorphisms that fix the marked points and the boundary points that is finite. The normalization of $C$ is the smooth curve $\hat{C}=\sqcup C_i$. A stable curve $C$ is said of compact type if every node separates $C$ in two components. Otherwise $C$ is said to be of non-compact type.
\end{definition}

The arithmetic genus of a stable curve is $g=h^1(C,\mathcal{O})$. When $C$ has $\delta$ nodes and its normalization has $\nu$ components of genera $g_1,\ldots, g_{\nu}$, the arithmetic genus satisfies (see \cite{Harris}[p. 48]) $$g=\sum_{i=1}^\nu (g_i-1)+\delta+1$$

As in section \ref{ss:Torelli and period map}, for $g,n\geq 0$ we fix a reference surface with marked points $\Sigma_{g,n}=(\Sigma_g,q_1,\ldots,q_n)$: a closed connected oriented surface $\Sigma_g$  of genus $g$ with a set of $n$ distinct ordered marked points $Q=(q_1,\ldots, q_n)\in\Sigma_g$. When $n=0$ we omit the subindex and write $\Sigma_g=\Sigma_{g,0}$.  

\begin{definition}
 A homotopical marking (or sometimes a collapse) of a connected genus $g$ stable curve $C$ with $n$ ordered pairwise distinct marked points $R=(r_1,\ldots, r_n)\in C^*$ is a continuous surjection $f:\Sigma_{g,n}\rightarrow (C, r_1,\ldots,r_n)$ such that $f(q_i)=r_i$, the preimage of each node is a simple closed curve on $\Sigma_g\setminus Q$ and on each component of $\Sigma_g\setminus f^{-1}(N)$ where $N$ is the set of nodes, the map $f$ is a homeomorphism onto a part of $C$ that preserves the orientation. 
\end{definition}

When $C$ is non-singular a collapse is a homeomorphism and the definition coincides with the one given in subsection \ref{ss:Torelli and period map}. 

\begin{definition}
 A homotopically marked stable curve with $n$ marked points is a marked stable curve $(C, r_1,\ldots,r_n)$ together with a homotopical marking $f:(\Sigma_{g},Q)\rightarrow (C,R)$. Two homotopically marked stable curves $f_i:(\Sigma_g,Q)\rightarrow (C_i,R_i)$ for $i=1,2$ are said to be equivalent if there exists a conformal isomorphism $g:C_1\rightarrow C_2$ such that $g\circ f_1$ is homotopic to $f_2$ relative to $Q$. The class of a $\Sigma_{g,n}$ marked stable curve will be denoted by $[f:\Sigma_{g,n}\rightarrow (C, r_1,\ldots, r_n)]$. 
\end{definition}

\begin{remark}\label{rem:homotopically deform to dehn}
If $\Delta:\Sigma_{g,n}\rightarrow\Sigma_{g,n}$ is a Dehn twist around a curve  in $\Sigma_g$ that is collapsed by the marking $f_1:(\Sigma_g,Q)\rightarrow (C,R)$ to a point, then $(C,R)$ marked by $f_1\circ\Delta$ is equivalent to the same curve marked by $f_1$. 
\end{remark}

\subsection{Augmented Teichm\" uller space and its stratification}
\label{ss:augmented teich}
\begin{definition}
The augmented Teichm\" uller space $\overline{\mathcal{T}}_{g,n}$ is the set of all homotopically marked stable genus $g$ curves with $n$ marked points  up to equivalence. 
\end{definition}

The Teichm\" uller space $\mathcal{T}_{g,n}$ is the subset of  $\overline{\mathcal{T}}_{g,n}$ formed by curves without nodes. Its complement, denoted by $\partial\mathcal{T}_{g,n}=\overline{\mathcal{T}}_{g,n}\setminus \mathcal{T}_{g,n}$ is called the boundary.

\begin{definition}
A curve system $c=\sqcup c_i$ in $\Sigma_{g,n}=(\Sigma_g,q_1,\ldots,q_n)$ is a disjoint collection of simple closed curves $c_i$ on $\Sigma_g\setminus \{q_1,\ldots,q_n\}$ none of which is isotopic to any other, to a point or to a cylinder in $\Sigma_g\setminus \{q_1,\ldots,q_n\}$. To a curve system $c$ we can associate the subset $B_{c}\subset \partial\mathcal{T}_g$ of the boundary consisting of homotopically marked stable curves topologically equivalent to a collapse $\Sigma_g\rightarrow \Sigma_g/c$ obtained by identifying each curve in $c$ to a point. 
\end{definition}

Given a curve system $c$, for each component $\Sigma^{i}$ of $\Sigma_{g}\setminus c$ we define $(\Sigma_{g_i},Q_i)$ to be the closed surface of genus $g_i$ with a set of marked points $Q_i$ obtained by collapsing each boundary component of $\Sigma^i$ to a (marked) point and keeping the marked points of $\Sigma_{g}$ lying on $\Sigma^i$ in $Q_i$. As we will see in subsection \ref{ss:attach&forget on augmented Torelli} there is a natural identification  \begin{equation}B_{c}\cong\Pi_{i}\mathcal{T}_{g_i,n_i}.\label{eq:curvesystemdecomposition}\end{equation}
 
The boundary $\partial\mathcal{T}_{g,n}$ is the disjoint union of all boundary strata $\sqcup_{c}B_{c}$ where 
$c$ varies in the set of nonempty curve systems. $\mathcal{T}_{g,n}$ corresponds to the empty curve system. 
Each stratum $B_c$ has a topology and complex structure given by the bijection (\ref{eq:curvesystemdecomposition}). All the curves that appear in a stratum have the same number of separating and non-separating nodes. When all the nodes are non-separating we say that the stratum is of compact type. 

Given a simple closed curve $c'\subset \Sigma_{g,n}$ we denote by $$D_{c'}=\bigsqcup_{c'\subset c}B_{c}$$ the union of all strata that collapse $c'$ to a node.

\cbstart
\begin{definition}
Given $\mathcal{K}\subset\overline{\mathcal{T}}_{g,n}$, a stratum of $\mathcal{K}$ is, by definition, the intersection $\mathcal{K}\cap B_c$ of $\mathcal{K}$ with a stratum  $B_c\subset\overline{\mathcal{T}}_{g,n}$. If the stratum lies in the boundary of $\mathcal{T}_{g,n}$ we call it a boundary stratum. The boundary strata of $\mathcal{K}$ is the set $\partial \mathcal{K}:= \mathcal{K}\cap\partial\mathcal{T}_{g,n}$.
\end{definition}
\cbend
\subsection{Topology and stratification of $\overline{\mathcal{T}}_{g,n}$}
\label{ss:topology of Augmented teichm} 

For a detailed description of the topology we refer to \cite[pp. 485-493]{ACG} and references therein. 

%A basis of neighbourhoods of points is given as follows. Recall that around a geodesic $\gamma$ on a complete hyperbolic surface we can define a standard annulus or collar $A_{\gamma}$ (\cite{Buser}[p.94]). Around a node on a stable curve we can still define a standard annulus, formed by a union of two punctured discs and bounded on each branch by a horocycle of length $2$. By adequately trimmering these annuli (see \cite{Buser}[p.94]) we can guarantee that two annuli coming from different nodes are disjoint.

%Let $\varepsilon>0$ and $[f_0:\Sigma_g\rightarrow C_0]$ be a point in $\overline{\mathcal{T}}_g$. Its $\varepsilon$-neighbourhood is the set $V_{\varepsilon}$ of $[f:\Sigma_g\rightarrow C]$ belonging to some $B_{\cs '}$ where
%\begin{itemize}

%\item $\cs'\subset \cs$ up to homotopy
%\item for each simple closed curve $c\in\cs_0\setminus \cs$, the geodesic in the homotopy class of $f(c)$ in the hyperbolic metric of the component of $C$ where it belongs, has length bounded by $\varepsilon$ 
%\item there exists a $(1+\varepsilon)$-quasiconformal map $C\setminus A\rightarrow C_0\setminus A_0$ where $A$ (resp. $A_0$) is the disjoint union of the trimmered annuli around the simple closed curves in $\cs$ (resp. $\cs_0$).
%\end{itemize}

The restriction of the given topology to $\mathcal{T}_{g,n}$ produces the so-called conformal topology. Abikoff showed (in \cite{Abikoff}[Theorem 1]) that this topology is equivalent to the Teichm\"uller topology introduced in subsection \ref{ss:Torelli and period map}. 

The restriction of the topology to the boundary stratum $B_{c}$ corresponding to a curve system $c$ is equivalent to the product topology obtained from (\ref{eq:curvesystemdecomposition}).

%For every closed curve $\gamma$ on $\Sigma_g$ define a function $\ell_{\gamma}:\overline{\mathcal{T}}_g\rightarrow \mathbb{R}\cup \infty$ as follows: on a point $[f:\Sigma_g\rightarrow C]\in \overline{\mathcal{T}_g}$, the value of $\ell_{\gamma}$ is $\infty$ if $\gamma$ is not homotopic to a curve avoiding the nodes; it is $0$ if it is homotopic to a node; otherwise $\gamma$ is homotopic to a curve not passing through the nodes and $\ell_{\gamma}$ is the length of the unique geodesic in the Poincar\' e metric on $C\setminus N$ where $N$ is the set of nodes. 

%The topology on $\overline{\mathcal{T}}_g$ is the weakest topology that turns all the $\ell_{\gamma}$'s into continuous functions. It is a Hausdorff topology.

The topology is not locally compact around any boundary point. Indeed, if $U$ is a neighbourhood of a point in $B_c$, the action of the Dehn twist $\Delta_a:\Sigma_g\rightarrow \Sigma_g$ around a simple closed curve $a\in c$ fixes all the points in $U$ for which the marking collapses $a$ to a point, but has infinite orbits at any other point in $U$. Therefore, there is no manifold structure in $\overline{\mathcal{T}}_{g,n}$ compatible with the given topology.

\begin{definition}
Given a curve system $c$ the distinguished neighbourhood of the stratum $B_{c}$ is the set   $$U_{c}=\bigsqcup_{c'\subset c}B_{c'}.$$
\end{definition}

\subsection{Local stratification and Deligne-Mumford compactification} 
The mapping class group of $\Sigma_{g,n}$, i.e. the group $\text{Mod}(\Sigma_{g,n})$ of isotopy classes of orientation preserving diffeomorphisms that fix each marked point, 
acts on $\overline{\mathcal{T}}_{g,n}$ by homeomorphisms that preserve the stratification and are holomorphic in restriction to any stratum. The action is defined by pre-composition on the marking. The quotient $$\overline{\mathcal{M}}_{g,n}=\overline{\mathcal{T}}_{g,n}/\text{Mod}(\Sigma_{g,n})$$ is a compact topological space. 
It can be endowed with a complex orbifold structure. 

Consider a curve system $c$ and define  $\Gamma_c$ the abelian group generated by Dehn twists around the curves in $c$. Following \cite{Bers1}, the quotient $U_c/\Gamma_c$ is equivalent to a bounded domain in $\mathbb{C}^{3g-3}$. Under this equivalence, each stratum $B_{c'}$ associated to a simple closed curve $c'\subset c$ has image contained in a regular divisor $D_{c'}$. These divisors intersect normally and their intersections define the other different strata: the stratum associated to $c'\subset c$ is the intersection of all the divisors associated to the simple curves in $c'$. The complement of this divisor is the stratum $B_{\emptyset}=\mathcal{T}_{g,n}$ formed by smooth marked curves. The union of all natural maps $U_c/\Gamma_c\rightarrow \overline{\mathcal{M}}_{g,n}$ induce a system of (orbifold) charts with holomorphic transition maps on $\overline{\mathcal{M}}_{g,n}$. 
 A a more precise local description of the quotient map $U_{c}\rightarrow U_{c}/\Gamma_{c}$ in the neighbourhood of a point of $B_{c}$ up to local homeomorphisms on source and target that can be found in  \cite[pp. 485-493]{ACG}. We recall the model here because it will be useful to determine the local properties of the isoperiodic deformations of stable forms. 
 
 Consider the set $X=\frac{\overline{\mathbb{R}}_{+}\times\mathbb{R}}{0\times\mathbb{R}}$ endowed with the (non-locally compact, first countable, Hausdorff) topology whose basis of open sets around $[0,0]$  are the sets of the form $\{[\rho,\theta]: \rho<\varepsilon\}$ with $\varepsilon>0$ and the usual round discs around any other point.  Let $n\geq 2$ and $0\leq l\leq n$ and the the normal crossing divisor $$D_l=\{(z_1,\ldots,z_n)\in\mathbb{C}^{n}: z_{1}\cdots z_{l}=0\}. $$ Consider the map with infinite ramification on $D_{l}$ \begin{equation}\label{eq:bldown1}\varphi_{l,n}:X^{l} \times \mathbb{C}^{n-l}\rightarrow\mathbb{C}^{n}\text{ defined by }\end{equation}  $(\rho_{1},\theta_{1},\ldots, \rho_{l},\theta_{l},z_{l+1},\ldots, z_{n})\mapsto (\rho_{1}e^{2i\pi\theta_{1}},\ldots, \rho_{l}e^{2i\pi\theta_{l}}, z_{l+1},\ldots, z_{n})$.
It is a continuous map with respect to the product topologies on source and target. On the complement of the  preimage of $D_{l}$  the map $\varphi_{l,n}$ is a topological cover with free abelian covering group-- that corresponds to the action of $\mathbb{Z}^{l}$ on the $\theta$ variables by translations, componentwise. This implies that it is a connected complex manifold. Given a subset $I\subset\{1,\ldots,l\}$ with $k$ elements, the restriction of $\varphi_{l,n}$ to the set $\{\rho_{j}=0: j\in I\}$ produces map that is equivalent to some other $\varphi_{l',n'}$, so the set $\{\rho_{j}=0: j\in I\}\cap\{\rho_{j}\neq 0:j\notin I \}$ is a connected complex manifold of codimension $k$ accumulating the origin. They fit together to form a stratification of $X^{l} \times \mathbb{C}^{n-l}$. The action of $\mathbb{Z}^{l}$  leaves \textit{each connected component} of a stratum  invariant. 
\begin{remark}\label{rem: normal crossing type}
Each connected component of a local stratum of positive codimension close to the origin is characterized by the set of local components of codimension one that accumulate to it. This property persists under quotients of  $X^{l} \times \mathbb{C}^{n-l}$ by subgroups $H\subset \mathbb{Z}^{l}$. 
\end{remark}

Coming back to the local descriptions of the map $U_{c}\rightarrow U_{c}/\Gamma_{c}$, around a point of the curve system $c$ with $l$ elements, it is locally equivalent to  $\varphi_{l,3g-3}$ at the origin. 

\begin{definition}
A stratified space is a local abelian ramified cover of a normal crossing divisor if the stratification is locally homeomorphic  to a quotient stratification of $X^{l} \times \mathbb{C}^{n-l}$ by some subgroup $H\subset \mathbb{Z}^{l}$ for some $n$ and $l\leq n$ at the origin.
\end{definition}

\begin{definition} 
To any stratified space $X$ that is a local abelian ramified cover of a normal crossing divisor on some fixed dimension $n\geq 2$ we can associate a boundary complex $\mathcal{C}(X)$ having a vertex for each connected component of the codimension one stratum and a simplex joining $k$ vertices for each connected component of the stratum of codimension $k$ contained in the closure of the corresponding components. 
\end{definition}

By the description of connected component of strata by classes of curve systems, we get an isomorphism of this dual boundary complex with the curve complex $\mathcal{C}(\overline{\mathcal{T}}_{g,n})\simeq\curvecomplex_{g,n}$ (see subsection \ref{ss: strategy}) for the definition . The group $\text{Mod}(\Sigma_{g,n})$ acts on $\mathcal{T}_{g,n}$ ( resp. $\mathcal{C}_{g,n}$) preserving strata (resp. the simplicial structure).

\subsection{Augmented Torelli space: a ramified covering of $\overline{\mathcal{M}}_{g,n}$.} The map induced in homology by an element of $\text{Mod}(\Sigma_{g,n})$ provides an exact sequence $$0\rightarrow\mathcal{I}_{g,n}\rightarrow\text{Mod}(\Sigma_{g,n})\rightarrow \text{Aut}(H_1(\Sigma_g,q_1,\ldots, q_n;\mathbb{Z}))$$
where $\mathcal{I}_{g,n}$ is the Torelli group of $\Sigma_{g,n}$ formed by isotopy classes of diffeomorphisms that act trivially on relative homology. The Augmented Torelli space is the topological quotient  $$\overline{\mathcal{S}}_{g,n}=\overline{\mathcal{T}}_{g,n}/\mathcal{I}_{g,n}.$$ 

Again, $\overline{\mathcal{S}}_{g,n}$ contains the Torelli space $\mathcal{S}_{g,n}=\mathcal{T}_{g,n}/\mathcal{I}_{g,n}$ as a proper set. Its boundary is the set $\partial \mrs_{g,n}=\mnc_{g,n}\setminus \mrs_{g,n}$. Two strata $B_{c}$ and $B_{c'}$ lie on the same class if and only if there exists an element in the Torelli group that sends the curve system $c$ to the curve system $c'$. Therefore, the boundary inherits a stratification 
\cbstart 
\begin{definition}
The equivalence class under the action of the Torelli group of a curve system $c$  in $\Sigma_{g,n}$ will be denoted by $\curvesystem$.  
\end{definition}
The boundary inherits a partition induced by the stratification
$$\partial \mrs_{g,n}=\bigsqcup_{\curvesystem\neq\emptyset}B_{\curvesystem}$$ 
where $\curvesystem$ runs over all equivalence classes of non-empty curve systems $c$ under the action of the Torelli group $\mathcal{I}_{g,n}$.

The complex structure in restriction to the stratum $B_c$ induces a complex structure on $B_{\curvesystem}$. However, the action of the Dehn twist around a  simple closed curve of non-trivial class in $H_1(\Sigma_{g,n})$ can act non-trivialy on $H_1(\Sigma_{g,n})$. For instance, in the case where $n=0$ and the simple closed curve is non-separating of class $a\in H_1(\Sigma_g)\setminus 0$ the action is  on homology is the non-trivial isomorphism $\delta_a:H_1(\Sigma_g,\mathbb{Z})\rightarrow H_1(\Sigma_g,\mathbb{Z})$ given by 
$$\delta_{a}(b)=b+(a\cdot b)a.$$

The topology of $\mnc_{g,n}$ is still non-locally compact around such points. Therefore, $\mnc_{g,n}$ does not admit a compatible manifold structure.

\begin{lemma}{\emph{(Local structure  around a stratum $B_{\curvesystem}$ and boundary complex of $\mnc_{g,n}$)}} \label{l:mncstratification}
The stratification of $\overline{\mathcal{T}}_{g,n}$ induces a stratification of $\mnc_{g,n}$ that is a local abelian ramified covering of a normal crossing divisor. The dual boundary complex of $\mnc_{g,n}$ is $$\mathcal{C}(\mnc_{g,n})\simeq\mathcal{C}_{g,n}/\mathcal{I}_{g,n}.$$ A neighbourhood of a stratum $B_{\curvesystem}$ in $\mnc_{g,n}$ admits a (compatible complex) manifold structure if and only if $\Gamma_c\subset\mathcal{I}_{g,n}$, i.e. the Dehn twist around any simple closed curve of $\curvesystem$ acts trivially on $H_1(\Sigma_{g,n})$.
\end{lemma}

\begin{proof}
  The stratification of the open set $U_{c}\subset\overline{\mathcal{T}}_{g,n}$ is invariant under the action of the subgroup $\Gamma_{c}\cap\mathcal{I}_{g,n}\subset \Gamma_{c}$ and a local abelian ramified cover of a normal crossing divisor. Therefore, the quotient  on $U_{c}/(\Gamma_{c}\cap\mathcal{I}_{g,n})$ has the same local property. On the other hand, as a consequence of the fact that any automorphism of a stable curve that acts trivially on homology is equivalent to some Dehn twist around the pinched curves, $U_{c}/(\Gamma_{c}\cap\mathcal{I}_{g,n})\rightarrow U_{c}/\mathcal{I}_{g,n}\subset \mnc_{g,n}$ is a local homeomorphism at every point of the stratum $B_{c}$. This proves all statements about the local structure around a stratum. 
  
 A connected component of a stratum of $\mnc_{g,n}$ corresponds to one orbit of connected components of a stratum of $\overline{\mathcal{T}}_{g,n}$ under the action of $\mathcal{I}_{g,n}$. The result follows from the isomorphism $\mathcal{C}(\overline{\mathcal{T}}_{g,n})\simeq \mathcal{C}_{g,n}$. 

\end{proof}
\cbend
Given a simple closed curve $c_1$, there might be distinct strata in the boundary component $D_{c_1}$ of $\overline{\mathcal{T}}_{g,n}$ that are identified in the quotient. For instance, suppose $c_2$ is a non-separating curve determining the same homology class as $c_1$ but distinct homotopy classes in $\Sigma_{g,0}$. 
By \cite{BF}[Section 1.3] there exists an element $\phi$ in the Torelli group $\mathcal{I}_{g,0}$ sending $c_2$ to $c_1$. The  
two curve systems $c=c_1\sqcup c_2$ and $c'=\phi(c)=c_1\sqcup \phi(c_1)$ are equivalent and define two distinct strata contained in $D_{c_1}$ provided $\phi(c_1)$ and $c_2$ define distinct isotopy classes in $\Sigma_{g,0}$. In terms of the dual boundary complex this means that there are edges that join the same vertex. 

\subsection{Extension of the Torelli map $\mrs_{g}\rightarrow\mathfrak{S}_{g}$ }\label{ss:extension of complex structure} The holomorphic orbifold structure of $\overline{\mathcal{M}}_{g}$ can be pulled back to a complex manifold structure in the open dense subset $\mct_{g}$ of marked curves of compact type in $\mnc_{g}$. Indeed, all the Dehn twists around the curves that are collapsed to the nodes lie already in the Torelli group and the ramification disappears. 

The  definition of the coordinate functions via \eqref {eq:period matrix coordinate} can be done word by  word for stable curves of compact type. In this manner we define an extension of \eqref{eq:torelli map} to a map $\mct_g\rightarrow \mathfrak{S}_g$ that is still holomorphic.  Its image is the so-called Schottky locus, an analytic set that has positive codimension as soon as $g\geq 4$. For $g\geq 3$ the extended Torelli map has fibers of positive dimension over boundary points. 

\begin{remark}\label{rem:extended torelli map}
In genus $g=2$ the added points $\mnc
^c_2\setminus \mrs_2$ correspond to products of two marked elliptic curves. The extension of the Torelli map \eqref{eq:torelli map} in this case is a biholomorphism (see \cite{Mess}). 
\end{remark}

\subsection{Homological markings} 
\label{ss:homological invariants}

Given  $[f:\Sigma_{g,n}\rightarrow (C,r_1,\ldots,r_n)]\in\mnc_{g,n}$ we can associate a surjective homomorphism $$f_*:H_1(\Sigma_g,q_1,\ldots,q_n;\mathbb{Z})\rightarrow H_1(C,r_1,\ldots,r_n;\mathbb{Z}).$$
When $\ker f_*=0$, any element $\phi\in\text{Mod}(\Sigma_{g,n})$ such that $(f\circ\phi)_*=f_*$ satisfies $\phi_*=\text{Id}$. Therefore $f_*$ determines the Torelli class of the marking $f$. In particular this is the case when each node of $C$ is separating. When there is at most one node that is non-separating we have a similar result:
\begin{lemma}
 Let $f:\Sigma_g\to C$ be a homotopical marking that pinches a simple closed,
non-separating curve $\gamma$ and some separating simple closed 
curves. Let $\phi\in \text{Mod}(\Sigma_{g,0})$ such that $(f\circ
\phi)_*=f_*:H_1(\Sigma_g;\mathbb Z)\to H_1(C,\mathbb Z)$. Then, up to Dehn twists along
$\gamma$, $\phi_*=id$, that is to say, it belongs to the Torelli group of $\Sigma_g$. 
\end{lemma}

\begin{proof} Set $a=[\gamma]$, then $\ker f_*=\mathbb{Z}a$. Fix $a=a_1,b_1,a_2,b_2\dots a_g,b_g$ a symplectic basis of $ H_1(\Sigma_g)$. Since $f_*\circ \phi_*=f_*$ we have 
$f_*(\phi_*-id)=0$. Therefore $\psi=\phi_*-id$ is a morphism from $H_1(\Sigma_g,\mathbb Z)$ to
$\ker f_*=\mathbb{Z}a$. So  we have 
$\psi(b_i)=\lambda_i a_1,  \psi(a_i)=\mu_ia_1$ for some $\lambda_i,\mu_i\in\mathbb{Z}$. From the fact
that $\phi$ is symplectic, we deduce $\mu_i=0$ for all $i$ and $\lambda_i=0$ for all $i\neq
1$. ($\delta_{i1}=a_i\cdot b_1=\phi(a_i)\cdot\phi(b_1)=\delta_{i1}+\mu_i$ and $0=b_i\cdot
b_1=\phi(b_i)\cdot \phi(b_1)=\lambda_i$). 
Up to Dehn twist along $\gamma$ we may
assume $\lambda_1=0$, so $\psi=0$ and $\phi_*=id$, that is to say $\phi\in\mathcal{I}_{g,0}$ is a Torelli mapping
class.
\end{proof}

An equivalent statement fails in general for curves $C$ with more than two non-separating nodes. Indeed, suppose $f$ is a marking that collapses two elements $a_1$ and $a_2$ of a symplectic basis $a_1,b_1,a_2,b_2,a_3,b_3,\dots a_g,b_g$ of $H_1(\Sigma_g)$. Define a morphism $\psi:H_1(\Sigma_g)\rightarrow \ker f_*$ by  $\psi(b_1)=a_2$ and $\psi(b_2)=a_1$ and
$\psi(a_i)=\psi(b_j)=0$ for all $i\geq 1$ and for all $j>2$. Then $Id+\psi\in
Sp(2g,\mathbb Z)$ and it is induced by a mapping class $\phi\in\text{Mod}(\Sigma_g)$. By construction $f_*=f_*\circ\phi_*$
but $\phi$ is not in the Torelli group, nor is a product of twists along loops whose class
belong to $\ker f_*=\mathbb{Z}a_1\oplus\mathbb{Z}a_2$ (because $\delta_{na_1+ma_2}(b_1)=b_1+na_1$ so all such twists leave
$<a_1,b_1>$ invariant, and $\phi$ does not).

\begin{definition}
A homological marking of a stable curve with $n$ marked points $(C,r_1,\ldots,r_n)$ is a surjective homomorphism $$m:H_1(\Sigma_g,q_1,\ldots,q_n;\mathbb{Z})\rightarrow H_1(C,r_1,\ldots,r_n;\mathbb{Z}).$$
The data $(C,r_1,\ldots,r_n,m)$ will be referred to as a homologically marked curve. 
\end{definition}
%\cbstart
%\begin{remark}\label{rem:homological marking is enough}
%\textcolor{red}{Any homologically marked curve  $(C,r_1,\ldots,r_n,m)$ is the homologically marked curve induced by some point in $\mnc_{g,n}$. When $\rank(\ker m)\leq 1$ there is only one point in  $\mnc_{g,n}$ inducing  $(C,r_1,\ldots,r_n,m)$.}
%\end{remark}
%\cbend
\subsection{Parametrization of the vertices of the dual boundary complex $\curvecomplex_{g}/\mathcal{I}_{g}$ of $\mnc_{g}$}\label{s:parametrization of torelli classes} 
With the use of homological markings we will provide a homological characterization of the Torelli classes of simple closed curves on $\Sigma_{g}$. This will allow us to give a special parametrization of the vertices of the dual boundary complex of $\mnc_{g}$.

 First we recall some concepts of homology theory. The intersection of two elements $a,b\in H_1(\Sigma_g,\mathbb Z)$ is denoted by $a\cdot b$ and the associated intersection form defines an integral unimodular symplectic structure on $H_1(\Sigma_g):=H_1(\Sigma_g,\mathbb Z)$.
Two submodules of a unimodular symplectic module $M$ are said to be orthogonal if the intersection of any element of one of them with an element of the other is zero. Given a submodule $N\subset M$ we denote by $N^{\perp}\subset M$ the orthogonal submodule of $N$ in $M$, that is, the set of elements in $M$ that have zero intersection with all elements of $N$.

\begin{definition}
A submodule $N$ of a unimodular symplectic module $M$ is said to be symplectic if the symplectic form of $M$ restricted to $N$ is still unimodular. A splitting $M=N_1\oplus\cdots\oplus N_k$ into pairwise orthogonal submodules is said to be symplectic if every $N_i$ is a symplectic submodule. 
\end{definition}

\begin{definition}
 A submodule $N$ of a $\mathbb{Z}$-module $M$ is primitive if whenever $zm\in M$ for some $m \in M$ and $z\in\mathbb{Z}$ then also $m\in N$.
\end{definition}
Let $c$ be a simple closed curve in $\Sigma_g$. If it is non-separating, it determines a primitive class $[c]\in H_1(\Sigma_g)$. If it is separating, the induced homology class is trivial and carries no interesting information. However, $c$ splits $\Sigma_g$ in two parts, and these induce a symplectic splitting of $H_1(\Sigma_g)$ that characterizes the Torelli class of the curve in the following sense: 

\begin{proposition}[\cite{FM}, Proposition 6.14]
Let $c,c'$ be two isotopy classes of simple closed curves in $\Sigma_g$. If $c$ and $c'$ are separating, then they are equivalent by some element of the Torelli group $\mathcal{I}_{g,0}$ if and only if the associated symplectic splittings coincide up to changing the order of the factors. If $c$ and $c'$ are non-separating, then they are equivalent by some element of the Torelli group $\mathcal{I}_{g,0}$ if and only if, up to sign, they determine the same homology class in $H_1(\Sigma_g)$. 
\end{proposition}
We can use this information to parametrize vertices of the dual boundary complex $\curvecomplex_{g}/\mathcal{I}_{g}$ of the augmented Torelli space  $\mnc_g$.
\begin{corollary}\label{c:parametrizing boundary}
The set of unordered pairs $\{V,V^{\perp}\}$ of non-trivial symplectic submodules of $H_1(\Sigma_g)$ such that $V\oplus V^{\perp}=H_1(\Sigma_g)$ is in 1-1 correspondence with Torelli classes of separating essential simple closed curves in $\Sigma_{g}$. 
The set of non-trivial primitive submodules $\mathbb{Z}[c] \subset H_1(\Sigma_g)$ is in 1-1 correpondence with the set of Torelli classes of  non-separating simple closed curves.

\end{corollary}
\subsection{Identifying some simplexes of $\mathcal{C}_g/\mathcal{I}_g$}

A separating node on a homologically marked stable curve $(C,m)$ induces a non-trivial symplectic splitting $V\oplus V^{\perp}$ of $H_1(\Sigma_g)$. A non-separating node induces a cyclic non-trivial primitive submodule $\mathbb{Z}a$ of $\ker m\subset H_1(\Sigma_g)$.

\begin{definition}
Two marked nodal curves $(C_i,m_i)$ for $i=1,2$ of genus $g$ are said to share a separating node if for each $i=1,2$ there exists a separating node $r_i\in C_i$ that induces the same splitting $V\oplus V^{\perp}$ of $H_1(\Sigma_g)$ up to the order of the factors. Equivalently, we say that they share a non-separating node if there exist non-separating nodes $r_i\in C_i$ whose associated cyclic primitive submodules of $H_1(\Sigma_g)$ coincide.
\end{definition}

Two marked stable curves that share a node define points that lie in the closure of the same bondary stratum of codimension one. In terms of the complex $\mathcal{C}_g/\mathcal{I}_g$ they represent simplexes that intersect at one vertex. 

This characterization allows to identify certain simplexes of $\mathcal{C}_g/\mathcal{I}_g$ by looking at the homological information of the vertices. For instance, given a splitting $V_1\oplus\ldots\oplus  V_{k+1}$ of $H_1(\Sigma_g)$ into symplectic submodules of rank at least two,  the vertices associated to $\{V_i,V_i^{\perp}\}$  $i=1\ldots k$ belong to a simplex of $\mathcal{C}_g/\mathcal{I}_g$. It suffices to construct a marked stable curve with $k$ separating  nodes inducing the given splitting. If in each $V_i$ we are given a primitive class of $H_1(\Sigma_g)$ we can construct a stable curve with $2k$ nodes, that defines a simplex joining the corresponding $2k$ vertices, etc. These simplexes will be very useful to analyze the topology of the boundary stratification of $\mnc_g$.

\subsection{Attaching and forgetful maps on augmented Torelli spaces}\label{ss:attach&forget on augmented Torelli}
It is well known that attaching maps, i.e. identification of distinct marked points, define holomorphic maps at the level of moduli spaces. The same is true for forgetful maps, i.e. forgetting part of the marked points followed by stabilization. In this subsection we will see how these maps can be defined at the level of augmented Teichm\" uller and Torelli spaces.

Let $\phi:\Sigma_{g,n}\rightarrow\Sigma_{g,n}/c$ be the collapse of a simple closed curve $c$ in $\Sigma_{g,n}\setminus\{q_1,\ldots,q_n\}$. We will define an attaching map associated to $\phi$.

If $c$ is a separating curve, the surface $\Sigma_{g,n}\setminus c$ has two components, each with a boundary component. Collapsing each boundary component to a new marked point produces two surfaces with marked points $\Sigma_{g_1,n_1}$ and $\Sigma_{g_2,n_2}$ where $g_1+g_2=g$,  $n_1+n_2-2=n$ and the new points are named $q_{n_1}$ and $q_{n_2}$ respectively. The attaching map $$A_{\phi}:\overline{\mathcal{T}}_{g_1,n_1}\times\overline{\mathcal{T}}_{g_2,n_2}\rightarrow \overline{\mathcal{T}}_{g,n}$$ is defined by the isotopy class of  the composition $(f_1\vee f_2)\circ \phi$ where $f_i:\Sigma_{g_i,n_i}\rightarrow (C_i,r^1_{1},\ldots,r^1_{n_i})$ is a representative of its class in $\overline{\mathcal{T}}_{g_i,n_i}$ and $f_1\vee f_2$ denotes the  map $\Sigma_{g,n}/c\rightarrow (C_1\vee C_2, q_1,\ldots, q_n)$ by collapsing $c$ to the new node $q_{n_1}=q_{n_2}$ of $C_1\vee C_2$ and applying the corresponding $f_i$ in each part of the complement of the node.
\begin{remark}
The isomorphism \eqref{eq:curvesystemdecomposition} is realized by attaching maps
\end{remark}
\begin{lemma}\label{l:attaching marked curves}
 The map $A_{\phi}$ induces a well defined continuous attaching map $$\mnc_{g_1,n_1}\times\mnc_{g_2,n_2}\rightarrow \mnc_{g,n}.$$
\end{lemma}
\begin{proof}
Remark that $\phi$ allows to define a  map $$\rho_{\phi}: \mathcal{I}_{g_1,n_1}\times \mathcal{I}_{g_2,n_2}\rightarrow \mathcal{I}_{g,n}$$ between Torelli groups. Indeed, every pair $(\psi_1,\psi_2)\in \mathcal{I}_{g_1,n_1}\times\mathcal{I}_{g_2,n_2}$ defines an automorphism $\psi_{c}$ of $\Sigma_{g,n}/c$ that acts trivially on its relative homology (with $n$ marked points). We need to lift the element to an element $\psi\in\mathcal{I}_{g,n}$. The lift via $\phi$ is a well defined map $\Sigma_{g,n}\setminus c\rightarrow \Sigma_{g,n}\setminus c$. Up to changing $\psi_{i}'s$ in the neighbourhood of the points $q_{n_1}$ and $q_{n_2}$ homotopically, we can guarantee that there exists an extension $\psi=\rho (\psi_1,\psi_2)$ of the lift to $\Sigma_{g,n}$ that fixes  $c$ pointwise. 

By construction the map $A_{\phi}$ is equivariant with respect to $\rho$ and therefore induces the desired map $\mnc_{g_1,n_1}\times\mnc_{g_2,n_2}\rightarrow \mnc_{g,n}$. 

\end{proof}

If $c$ is non-separating, then $\Sigma_{g,n}\setminus c$ is connected and has two boundary curves. Collapsing each boundary curve to a new marked point produces a surface of genus $g-1$ with $n+2$ marked points that we denote $\Sigma_{g-1,n+2}$. The new points will be denoted $q_{n+1}, q_{n+2}$. Remark that $\Sigma_{g,n}/c$ can be identified with $\Sigma_{g-1,n+2}/q_{n+1}\sim q_{n+2}$. The attaching map is  $$A_{\phi}:\overline{\mathcal{T}}_{g-1,n+2}\rightarrow \overline{\mathcal{T}}_{g,n}$$ defined by $(\vee f)\circ \phi$ where $\vee f:\Sigma_{g,n}/c\rightarrow (\vee C,r_1,\ldots,r_n)$ is the map induced by a marking  $f:\Sigma_{g-1,n+2}\rightarrow (C,r_1,\ldots, r_{n+1},r_{n+2})$
on the stable curve $\vee C$ (that has an extra non-separating node, compared to $C$) obtained by identifying $r_{n+1}$ and $r_{n+2}$ in $C$. 
\begin{lemma}\label{l:attaching at two points}
 The map $A_{\phi}$ induces a continuous attaching map $$\mnc_{g-1,n+2}\rightarrow \mnc_{g,n}.$$
\end{lemma}
\begin{proof}
As before $\phi$ induces a homomorphism between the relevant Torelli groups $$\rho_{\phi}:\mathcal{I}_{g-1,n+2}\rightarrow \mathcal{I}_{g,n}$$ and $A_{\phi}$ is $\rho_{\phi}$ equivariant by construction.  

To construct $\rho_{\phi}$, suppose $\psi:\Sigma_{g-1,n+2}\rightarrow \Sigma_{g-1,n+2}$ represents an element in $\mathcal{I}_{g-1,n+2}$. Then $\psi$ induces an automorphism of $\Sigma_{g-1,n+2}/q_{n+1}\sim q_{n+2}$. The equivalence between this space and $\Sigma_{g,n}/c$ provides an automorphism of $\Sigma_{g,n}/c$ that can be lifted to $\Sigma_{g,n}\setminus c$ via $\phi$. Up to changing the initial $\psi$ homotopically in a small neighbourhood of the points $q_{n+1}$ and $q_{n+2}$ we can suppose that the lift extends to an automorphism $\rho_{\phi}(\psi):\Sigma_{g,n}\rightarrow \Sigma_{g,n}$  that fixes $c$ pointwise. The action of $\rho(\psi)$ in $H_1(\Sigma_{g},\{ q_1,\ldots,q_n\}; \mathbb{Z})$ is trivial by construction and therefore $\rho_{\phi}(\psi)\in \mathcal{I}_{,g,n}$.  
\end{proof}

As for forgetful maps, given an inclusion $\phi:\Sigma_{g,n}\rightarrow \Sigma_{g,n+1}$ of the marked points,  we can define its associated  forgetful map \begin{equation}\label{eq:forgetful_map}
    \overline{\mathcal{T}}_{g,n+1}\rightarrow \overline{\mathcal{T}}_{g,n}
\end{equation} sending the class of $f:\Sigma_{g,n+1}\rightarrow (C,r_1,\ldots, r_{n+1})$ to the stabilization of $f\circ \phi$, i.e. if the component of $C$ that contains the forgotten point has an infinite group of automorphisms after deleting the point, we collapse the component of source and target to a point.  Again we have 
\begin{lemma}\label{l:forget a point}
 The map $\phi$ induces a continuous  forgetful map $\mnc_{g,n+1}\rightarrow \mnc_{g,n}$.
\end{lemma}
\begin{proof}
In this case, $\phi$ induces an inclusion $\rho_{\phi}:\mathcal{I}_{g,n+1}\rightarrow \mathcal{I}_{g,n}$ since any diffeomorphism that fixes $n+1$ points fixes any of the $n$ points that correspond to the image by  $\phi$ of the $n$ marked points of $\Sigma_{g,n}$.  The map (\ref{eq:forgetful_map}) is $\rho_{\phi}$-equivariant by construction. 
\end{proof}
\cbstart
\subsection{A useful bordification $\U_{g,2}$ of $\mrs_{g,2}$ in $\mnc_{g,2}$}\label{s:bordification of Sg2}
In this subsection we introduce a smooth partial bordification of $\mrs_{g,2}$ that will  be useful in Section \ref{s:period fibers with marked points}. We also describe its boundary complex. 

Consider the composition of two forgetful maps \begin{equation}
    \text{For}_{g,2}:\mnc_{g,2}\rightarrow \mnc_{g,0}=\mnc_g
\end{equation}
and the open set $$\U_{g,2}=\text{For}_{g,2}^{-1}(\mrs_{g})$$
of marked curves whose stabilization after forgetting both marked points is a smooth curve. 
%Each point in $\U_{g,2}$ lying in a boundary stratum has a tail of genus zero containing both marked points. 
The restriction of $\text{For}_{g,2}$ to $\U_{g,2}$ will be denoted by $\text{For}$.

Let $\mathfrak{C}(\mrs_g)\rightarrow\mrs_g$ denote the universal curve bundle over $\mrs_g$ and $\mathfrak{C}^2(\mrs_{g})\rightarrow \mrs_g$ denote the pull back of the square of the universal curve bundle $\mathfrak{C}(\mrs_g)\times\mathfrak{C}(\mrs_g)\rightarrow \mrs_g\times\mrs_g$ by the diagonal map $\mrs_g\rightarrow\mrs_g\times\mrs_g$.

\begin{proposition} \label{p:bordification of Sg2} 
There exists a covering map $R:\U_{g,2}\rightarrow\mathfrak{C}^2(\mrs_g)$ such that 
\begin{equation}
    \begin{tikzcd}
\U_{g,2} \arrow[r,"R"] \arrow[dr,"For"]
& \mathfrak{C}^2(\mrs_g) \arrow[d]\\
& \mrs_g
\end{tikzcd}
\end{equation}
commutes.
%The restriction of the forgetful map to $\U_{g,2}$ 
%$$\text{For}:\U_{g,2}\rightarrow\mrs_{g}$$ is a $H_1(\Sigma_g)$-fiberwise cover of  the bundle $\mathfrak{C}^2(\mrs_{g})\rightarrow \mrs_g$.
For each $(C,m)\in\mrs_{g}$,   $R_{|\text{For}^{-1}(C,m)}:\text{For}^{-1}(C,m)\rightarrow C\times C$ is an $H_1(\Sigma_g)$-cover with monodromy given by  
\begin{equation}\label{eq: abelian monodromy}  (\alpha_1, \alpha_2) \cdot \gamma = \gamma + \alpha_2 - \alpha_1,\quad for\quad  (\alpha_1,\alpha_2)\in H_1(\Sigma_g)^2\simeq H_1(C\times C), \gamma\in H_1(\Sigma_g).\end{equation} 
Moreover, the boundary strata in the complex manifold $\U_{g,2}$ form a smooth divisor whose components are in one to one correspondence with the set  $$\betas=\{\beta\in H_1(\Sigma_g,q_1,q_2;\mathbb{Z}): \partial\beta=q_2-q_1\}.$$
\end{proposition}

\begin{proof}[Proof of Proposition \ref{p:bordification of Sg2}:]
%First remark that $\U_{g,2}$  has only one type of boundary stratum, namely stable curves of genus $g$ with precisely one node that leaves both marked points on a genus zero component. Although the class in $H_1(\Sigma_{g},q_1,q_2,\mathbb{Z})$ of the curve that is collapsed to the node by the marking is non-trivial, its associated Dehn twist acts trivially on $H_1(\Sigma_{g},q_1,q_2,\mathbb{Z})$. This implies, via Lemma \ref{l:mncstratification}, that the local abelian ramified cover of a normal crossing divisor structure of $\U_{g,2}$ has actually no ramification around any point. Therefore $\U_{g,2}$ is a complex manifold. Moreover since there are no crossings of the divisorial components, the boundary strata form a smooth divisor in $\U_{g,2}$.

%\subsection{The fibers on $\U_{g,2}$ of the map $\text{For}$}
%In this subsection we give a description of a fiber of $\text{For}$ over a point $(C,m)\in\mrs_{g,0}$. 
We split the proof in several parts:

\vspace{0.3cm} 
\textit{Action of \(\text{Mod}_{g,2}\) on relative homology.}

Let \(\text{Aut} (H_1 (\Sigma_g, \{q_1,q_2\}, \mathbb Z))\) be the subgroup of linear automorphisms of \(H_1 (\Sigma_g, \{q_1,q_2\}, \mathbb Z)\) that preserve the exact sequence 
\begin{equation} \label{eq: homology exact sequence}  0\rightarrow H_1 (\Sigma_g, \mathbb Z) \rightarrow H_1 (\Sigma_g, \{q_1,q_2\}, \mathbb Z) \stackrel {\partial}{\rightarrow} \mathbb Z (q_2-q_1) \rightarrow 0\end{equation} 
where \(\partial \) is the boundary operator. The group \( \text{Aut} (H_1 (\Sigma_g, \{q_1,q_2\}, \mathbb Z))\) splits into an exact sequence 
\[ 0\rightarrow G \rightarrow \text{Aut} (H_1 (\Sigma_g, \{q_1,q_2\}, \mathbb Z)) \rightarrow \text{Aut} (H_1(\Sigma_g, \mathbb Z) ) \rightarrow 0 \] 
where \(\text{Aut} (H_1(\Sigma_g, \mathbb Z)) \) is the group of linear automorphisms of \(H_1(\Sigma_g, \mathbb Z)\) that preserve the intersection form, and where \(G\) is the abelian unipotent subgroup of \(\text{Aut} (H_1 (\Sigma_g, \{q_1,q_2\}, \mathbb Z))\) formed by the transformations 
\[ G_\alpha ( \gamma ) := \gamma + n(\gamma) \alpha \] 
where \(\alpha \in H_1 (\Sigma_g, \mathbb Z) \), and where we write the boundary operator  as \( \partial = n (q_2- q_1)\). Notice that \(G\) naturally identifies with the absolute homology group \(H_1(\Sigma_g, \mathbb Z)\).

Since the action of \(\text{Mod}_{g,2}\) on \( H_1 (\Sigma_g, \{p,q\}, \mathbb Z)\) preserves \eqref{eq: homology exact sequence}, it gives rise to morphism 
\begin{equation} \label{eq: action on relative homology}\text{Mod} _{g,2} \rightarrow \text{Aut} (H_1 (\Sigma_g, \{q_1,q_2\}, \mathbb Z)). \end{equation}  

\vspace{0.3cm} 
\textit{Action of the braid group.} 

Consider the exact sequence 
\begin{equation} \label{eq: exact sequence brraid group} 0\rightarrow B(\Sigma_{g,2}) \rightarrow \text{Mod} (\Sigma_{g,2}) \rightarrow \text{Mod} (\Sigma_g) \rightarrow 0 \end{equation}
where  \(B(\Sigma_{g,2})\) is the braid group with two strings on \(\Sigma_g\), and where the right arrow is given by the forgetful map that forgets the fact that the mapping class fixes the two marked points. %The image of \( B (\Sigma_{g,2})\) in the quotient  \(\text{Mod} (\Sigma_{g,2}) / \mathcal{I}_{g,2} \simeq \text{Aut} (H_1 (\Sigma_{g}, \{q_1, q_2\}, \mathbb Z) )\) is the subgroup \(G\) consisting of unipotent transformations \( Id + n \alpha\) where \(n: H_1(\Sigma_{g}, \{q_1, q_2\}, \mathbb Z) )\rightarrow \mathbb Z\) is the linear form defined by \( \delta = n (q_2- q_1)\) and \(\alpha \in H_1 (\Sigma_g)\); \(G\) is naturally isomorphic to \(H_1 (\Sigma_g, \mathbb Z)\) and it acts by precomposition of marking on \(\mnc_{g,2}\). 
By Birman's theorem, the braid group is isomorphic to the fundamental group of the square of \(\Sigma_g \) deprived of its diagonal \(\Delta\).\footnote{Usually what is refered to as Birman's exact sequence is for a surface punctured at only one point, but the idea of the proof is the same in the case of two points: given an element of the braid group \([\varphi]\in B(\Sigma_{g,2})\), represented by an isotopy class of diffeomorphism \(\varphi\in \text{Diff} (\Sigma_g, q_1,q_2)\), there exists an isotopy \((\varphi_t)_{t\in [0,1]}\) in \(\text{Diff}(\Sigma_g)\) such that \(\varphi_0=\varphi \) and \(\varphi_1 = Id\). The loop \((\varphi_t (q_1),\varphi_t (q_2))\in \Sigma_g^2 \setminus \Delta\) defines an element \(\gamma\in \pi_1 (\Sigma_g^2 \setminus \Delta, (q_1,q_2))\) that only depends on \([\varphi]\) and the map \( [\varphi] \mapsto \gamma\) induces an isomorphism between \(B(\Sigma_{g,2})\) and \(\pi_1 (\Sigma_g^2 \setminus \Delta, (q_1,q_2))\).  } 

Its action on relative homology group (obtained by restriction of the action \eqref{eq: action on relative homology}) takes values in the abelian unipotent group \(G\)
, hence its descends to an action defined on the abelianization of \( B_{g,2}\), which is isomorphic to  \( H_1 (\Sigma_g^2 \setminus \Delta, \mathbb Z) \simeq H_1 (\Sigma_ g ^2 , \mathbb Z) \simeq H_1 (\Sigma_g)^2 \); one checks that under these identifications it is given by the morphism 
\begin{equation} \label{eq: braid group action on relative homology}  (\alpha_1, \alpha_2 ) \in H_1 (\Sigma_g)^2 \mapsto G_{\alpha_2- \alpha_1} \in \text{Aut} (H_1 (\Sigma_{g}, \{q_1, q_2\}, \mathbb Z) ) . \end{equation}
% Notice that the injective morphism \(\alpha \in H_1 (\Sigma_g, \mathbb Z) \mapsto G_\alpha\in \text{Aut} (H_1 (\Sigma_{g}, \{q_1, q_2\}, \mathbb Z) )\) identifies \(H_1(\Sigma_g, \mathbb Z)\) with a unipotent subgroup  
%\begin{equation} \label{eq: }H_1(\Sigma_g, \mathbb Z) \simeq G\subset \text{Aut} (H_1 (\Sigma_{g}, \{q_1, q_2\}, \mathbb Z) ).\end{equation} 

\textit{A first \(G\)-covering of $\mrs_{g,0}$.} 

The group \(G\) acts on \(\mrs_{g,2}\) by precomposition of the marking, and the action is free and proper (recall that the action of the group \( \text{Mod}_{g,2} / \mathcal I _{g,2} \simeq \text{Aut} (H_1(\Sigma_g,\{q_1,q_2\}, \mathbb Z))\) on \(\mrs_{g,2}\) in free and discontinuous, see subsection \ref{ss:Torelli and period map}). Denote by \(R'\) the covering \( \mrs_{g,2} \rightarrow \mrs_{g,2}/G \). The fundamental group of \( \mrs_{g,2}\) is isomorphic to \(\mathcal I_{g,2}\), the one of \( G\backslash  \mrs_{g,2}\) is the group generated by the Torelli group \(\mathcal I_{g,2}\) and the braid group \( B(\Sigma_{g,2})\), and the monodromy of \( R'\) is given by the quotient map 
\[ <B(\Sigma_{g,2}) , \mathcal I_{g,2} > \rightarrow <B(\Sigma_{g,2}) , \mathcal I_{g,2} >/ \mathcal I_{g,2} \simeq G \subset \text{Aut} (H_1(\Sigma_g,\{q_1,q_2\}, \mathbb Z))\simeq \text{Mod}_{g,2} / \mathcal I _{g,2}\]

The \(G\)-action preserves the restricted forgetful map \(\Forget_{|\mrs_{g,2}} :\mrs_{g,2} \rightarrow \mrs_{g}\), so there is a map \( \widetilde{\Forget} : \mrs_{g,2}/G\rightarrow \mrs_{g} \), we have \( \Forget_{|\mrs_{g,2}} = \widetilde{\Forget} \circ R'\).  The map \(\widetilde{\Forget}\) is a holomorphic fibration, whose fibers \( \widetilde{\Forget} ^{-1} (C, m)\) are biholomorphic to the square of \(C\) deprived of its diagonal, namely \( \widetilde{\Forget} ^{-1} (C, m)\simeq C\times C\setminus \Delta\) where \(\Delta:=\{(x,x)\ |\ x\in C\}\). The fibration at the level of the fundamental group gives the exact sequence  
\[ 0\rightarrow B (\Sigma_{g,2}  ) \rightarrow <B(\Sigma_{g,2}) , \mathcal I_{g,2} > \rightarrow \mathcal I_g \rightarrow 0 .\] 
Hence the fibers of \(\widetilde{\Forget}\) are homologically marked by \(H_1 (\Sigma_g, \mathbb Z)^2\)) and the monodromy of the \(G\)-covering \( R' \) in restriction to a fiber of \( \widetilde{\Forget} \) is given by \eqref{eq: braid group action on relative homology}.

%{\color{blue} Proof: Let us begin by some generalities. Given a connected space \(X\), a cyclic abelian group \(\Gamma\) acting freely properly discontinuously on \(X\), the fundamental group of \(Y:= \Gamma \backslash X\) is an extension \begin{equation} \label{eq: quotient map} 0\rightarrow \pi_1 (X)\rightarrow  \pi_1 (Y) \rightarrow \Gamma \rightarrow 0 \end{equation} where the left side arrow is the inclusion map at the level of the first homotopy group. The monodromy of the \(\Gamma\)-covering \(X \rightarrow Y\) is given by the quotient map \(\pi_1(Y)\rightarrow \Gamma\). If we take \( X = \mrs_{g,2} \), \(Y= H_1(\Sigma_g,\mathbb Z) \backslash X\), then \( \pi_1 (X) \simeq \mathcal{I}_{g,2}\), \(\pi_1 (Y) \) is the subgroup \(<\mathcal{I}_{g,2}, B(\Sigma_{g,2})>\) of \(\text{Mod} (\Sigma_{g,2})\) generated by \( \mathcal{I}_{g,2}\) and the braid group \(B(\Sigma_{g,2})\), and the sequence \eqref{eq: exact sequence} is the sequence \[ 0 \rightarrow  \mathcal{I}_{g,2} \rightarrow <\mathcal{I}_{g,2}, B(\Sigma_{g,2})> \rightarrow H_1 (\Sigma_g, \mathbb Z) \rightarrow 0\]discussed in the first paragraph. The monodromy of the \(H_1(\Sigma_g, \mathbb Z)\)-covering \( X\rightarrow Y\) is given by the right side arrow. }

\vspace{0.3cm} 

\textit{Construction of \(R\) as a bordification of the covering \(R'\).}

We now bordify the previous covering \(R'\) over \(\mrs_{g,2}/G\). The idea is to add in each fiber \(\widetilde{\Forget}^{-1} (C,m)\simeq C\times C \setminus \Delta\) the diagonal \(\Delta\). 

As before, the action of \( G\simeq H_1(\Sigma_g, \mathbb Z)\) on \( \mnc_{g,2}\) preserves each fiber of the forgetful map \(\Forget_{g,2} :\mnc_{g,2}\rightarrow \mnc_{g} \), hence it preserves the open subset \(\U_{g,2} := \Forget ^{-1}_{g,2} (\mrs_g) \subset \mnc _{g,2}\). The action is free on this subset. Indeed, recall that the stabilizer of a point \((C,r_1,r_2,m)\in \mnc_{g,2}\) in \(\text{Aut} (H_1 (\Sigma_g, \{q_1,q_2\}, \mathbb Z) ) \) is generated by the action on relative homology of the Dehn twists along the (Torelli classes of) pinched curves of \(m\). In the case of a point \((C,r_1,r_2,m)\in \U_{g,2}\), the only non trivial (Torelli class of) pinched curves of \(m\) are the ones that separate the surface \(\Sigma_{g, 2}\) into two domains, the first of genus zero containing the two marked points \(q_1,q_2\), the second being of genus \(g\) with a disc removed (a representation of the surface of genus two with the pinched curve is depicted in Figure \ref{fig:normalize_degeneration}). The Dehn twist along such a curve acts trivially on the relative homology group. On the one hand we deduce from Lemma \ref{l:mncstratification} that \(\U_{g,2}\) is a manifold and on the other that the action of \(H_1(\Sigma_g, \mathbb Z)\) on \(\U_{g,2}\) is free. 

We claim that the action of \(G\) on \(\U_{g,2}\) is moreover proper and discontinuous. Since \(\U_{g,2}\) is a manifold, and hence is locally compact, it suffices to prove that the action is discontinuous, namely that any point has a neighborhood which is disjoint from its images by \(G\).  We already know this is so on the open stratum \(\mrs_{g,2}\) of \(\U_{g,2}\) since there the action of the whole Torelli group is. So it remains to prove that the action is proper around any point of the boundary of \(\U_{g,2}\). For any point  \([(C,q_1,q_2,m)]\in \partial \U_{g,2} \) denote by \(\gamma\subset \Sigma_{g,2}\) a loop which is mapped to the node of \(C\) by some marking of \(C\) in the equivalence class of \(m\). There is a neighborhood \(\mathcal W\subset \mathcal U_{g,2}\) defined by the property that for every \( [(C',q_1',q_2',m')]\in \mathcal W\) the image of the loop \(\gamma\) in \(C'\) by some marking in the equivalence class of \(m'\) is of minimal length and this is the unique loop having this property.  In particular, if an element of \(G\) maps an element of \(\mathcal W\) to an element belonging to \(\mathcal W\), then some of its lift to the mapping class group of \( \Sigma_{g,2}\) needs to fix \(\gamma\) (by uniqueness) and this can happen only if the original element of \(G\) is the identity. Hence the claim follows.

We denote the quotient map  by \(R: \U_{g,2}\rightarrow \mathcal V:= \U_{g,2}/H_1 (\Sigma_g ,\mathbb Z)\), which is a \(H_1(\Sigma_g,\mathbb Z)\)-covering, the forgetful map \(\Forget_{g,2}\) transits via a map 
\[\overline{\Forget} : \mathcal V \rightarrow \mrs_g \]
satisfying \( \Forget= \overline{\Forget} \circ R\); it is a holomorphic fibration isomorphic to the square universal bundle $\mathfrak{C}^2(\mrs_g)\rightarrow\mrs_g$ by using Riemann's extension theorem applied on the added boundary points.
%the square of the universal curve (namely if \(\pi: \mathcal C\rightarrow \mrs_g\) denotes the universal curve, the squared universal curve is the \((\pi,\pi)\)-preimage  \( \mathcal C\times\mathcal C\) of the diagonal). 
Notice that for a given  \( (C, m)\in \mrs_g\) the fiber  \(\overline{\Forget} ^{-1} (C,m)\) is biholomorphic to the complex surface  \(C\times C\) and inherits a natural identification of the homology \(m\times m:H_1(\Sigma_g)\times H_1(\Sigma_g)\rightarrow H_1(C)\times H_1(C)\).  The covering 
\[ R_{|\Forget ^{-1} (C,m)} : \Forget ^{-1} (C,m) \rightarrow \overline{\Forget} ^{-1} (C,m)\simeq C\times C \]
is a \(H_1 (\Sigma_g)\)-covering of monodromy given by \eqref{eq: abelian monodromy}.

\vspace{0.3cm} 
\textit{Boundary complex of $\U_{g,2}$.}

%To finish the proof we need to consider the boundary stratification of $\U_{g,2}$. 
Each boundary stratum $B_{\curvesystem}$ of $\U_{g,2}$ is characterized by a Torelli class of a simple closed curve $c$ in $\Sigma_{g,2}$ that bounds a disc containing the marked points. The following Lemma proves that every such stratum can be characterized by a  homology class in $\betas$.
 
 \begin{lemma}
 Let $c$ be an isotopy class of separating simple closed curve in $\Sigma_{g,2}$ that splits $\Sigma_{g}$ into a disc containing both marked points and a genus $g$ component without marked points. Then, $c$ defines a unique class $\beta_c \in H_1(\Sigma_g, q_1,q_2; \mathbb{Z})$ represented by a cycle in the disc such that $\partial\beta_c=q_2-q_1$. 
 Two such curves $c,c'$ are equivalent under the Torelli group $\mathcal{I}_{g,2}$ if and only if $\beta_c=\beta_{c'}$. 
\end{lemma}
\begin{proof}
 If $c$ and $c'$ are equivalent by an element of the Torelli group it is obvious that $\beta_{c'}$ is the image of $\beta_{c}$ and since the action is trivial in homology we have $\beta_c=\beta_{c'}$. 

Suppose $\beta_c=\beta_{c'}$. Let $D$ and $D'$ be the open discs bounded by representatives of $c$ and $c'$ respectively. Find an orientation preserving diffeomorphism $\phi:\Sigma_g\rightarrow \Sigma_g$ such that $\phi(c)=c'$,  $\phi(D)=D'$ and $\phi_1(q_i)=q_i$. By definition the map $\phi_*$ induced by $\phi$ in $H_1(\Sigma_g,q_1,q_2,\mathbb{C})$ satisfies $\phi_*(\beta_c)=\beta_{c'}$. We will show that up to pre-composition of $\phi$ with an diffeomorphims $\psi$ that fixes the closure of $D$ pointwise, we can suppose that $(\phi\circ\psi)_*$ fixes every class in the homology group. 
Indeed, the restriction of $\phi_*$ to the image of the natural inclusion $0\rightarrow H_1(\Sigma_g,\mathbb{Z})\rightarrow H_1(\Sigma_g,q_1,q_2,\mathbb{Z})$ defines an element in the group $\text{Aut}(H_1(\Sigma_g,\mathbb{Z}))$. On the other hand, the action of the mapping class group of orientation preserving isotopy classes of diffeomorphisms of $\Sigma_g\setminus D$ that fix the boundary pointwise, provides a map $\text{Mod}(\Sigma_g\setminus D)\rightarrow \text{Aut}(H_1(\Sigma_g,\mathbb{Z}))$ that is surjective (see \cite{FM}{p.169 and Section 6.3.2} for details). Hence, there exists a $\psi:\Sigma_g\rightarrow\Sigma_g$ that is the identity on $D$, such that $\psi_*=(\phi_{*|H_1(\Sigma_g)})^{-1}$ and $\psi_*(\beta_c)=\beta_c$. By construction  $(\phi\circ\psi)_*=\text{Id}$ and $\phi\circ\psi(c)=c'$.
\end{proof}
This finishes the proof of Proposition \ref{p:bordification of Sg2}. 
\end{proof}
\cbend
\subsection{Stable forms on a stable curve}
%In this section we recall the topological and analytical properties of the moduli spaces of abelian differentials, their compactification and relevant coverings. 
\begin{definition}
 Let $C$ be a stable nodal curve. A stable one-form on $C$ is a section of its dualizing sheaf. In other words, a holomorphic $1$-form on $C^*$ that has at worst simple poles at the nodes and satisfies that the sums of the residues of the branches meeting at each node is zero. $\Omega(C)$ denotes the space of stable forms on $C$. A stable one form will be sometimes referred to as an abelian differential.
\end{definition}

By Riemann Roch's Theorem, the space of meromorphic 1-forms on a compact connected genus $g$ Riemann surface $X$ with at most simple poles at $k>0$ marked points has dimension $k+g-1$. If we apply this to each part of a connected stable nodal curve $C$ of genus $g$, we deduce that the dimension of the complex vector space $\Omega(C)$ is $g$ (see \cite{Harris}[p.82])
Remark that the restriction of a stable form $\omega\in\Omega(C)$ to a component $C_i$ of the normalization of $C$ can be the zero form. If this is the case we say that $\omega$ has a zero component. 

\begin{definition} Given a stable curve $C$ we denote
\begin{itemize}
\item $\Omega^*(C)\subset \Omega(C)$ the set of stable forms without zero components.
\item $\Omega_0(C)\subset\Omega(C)$ the vector subspace of stable forms with zero residue at any node
\item $\Omega^*_0(C)=\Omega^*(C)\cap\Omega_0(C)$ the set of stable forms without zero components, and whose residues at the nodes are zero.
\end{itemize}
\end{definition}

If $C$ is of compact type, then by the residue theorem applied to each part of $C$, all residues of all branches at the nodes have to be zero, so $\Omega_0(C)=\Omega(C)$. If $C_1,\cdots,C_k$ are the distinct parts of $C$, a stable form in $\Omega(C)$ will be written as $ \omega_1\vee\ldots\vee\omega_k$ where each $\omega_i\in\Omega(C_i)$ and the $\vee$ indicates that we glue them at the points corresponding to the nodes of $C$. 

\begin{definition}
The order of a stable form $(C,\omega)$ at a node $q$ is defined to be $$\text{ord}_q(\omega)=2+\text{ord}_q(\omega_1)+\text{ord}_q(\omega_2)$$ where $\omega_i$ denotes the restriction of $\omega$ to a branch of $C$ through $q$. 
\end{definition}
Note that the order of the node cannot be $1$ and $\text{ord}_q(\omega)\geq 0$ for any point $q\in C$.

Given a stable form $\omega\in\Omega^*(C)$ of genus $g$ we define its associated divisor $$(\omega)=\sum_{q\in C} \text{ord}_q(\omega)q$$ whose primitive degree satisfies $$\deg (\omega)=\sum_{q\in C} \text{ord}_q(\omega)=2g-2.$$
The support is a disjoint union of the zeros $Z(\omega)$ and the nodes $N(\omega)$ of $\omega$. 
As a consequence any stable curve $C$ with more than $g-1$ nodes has empty $\Omega_0^*(C)$.

%By Riemann-Roch's theorem applied to each component $C_i$ of the normalization of $C$ we have that for an abelian differential $\omega\in\Omega(C)$ with no zero components the degree of its associated divisor is $\sum 2g_i-2$ where $g_i$ is the genus of $C_i$. 
%In particular, if the restriction of $\omega$ to a connected component has no poles and no zeros the component is an elliptic curve. Remark that if the points of $C_i$ where the components are glued coincide with zeros of the restrictions, then the original form $\omega$ has all its zeros "in the nodes".

\subsection{Stable one forms and singular translation structures}
On each component $C_i$ where a stable form $(C,\omega)$ is not identically zero it defines naturally a singular translation structure. Indeed, around a point $q\in C_i^*$ we can locally define a holomorphic function $\phi_q(z)=\int_q^{z}\omega$ that is a branched covering of degree $\text{ord}_q(\omega)+1$, ramified over $0$ if the degree is at least two. At the intersection of domains two such maps $\phi_q$ and $\phi_p$ satisfy $$\phi_p=\phi_q+\const.$$ 

Reciprocally if we are given a cover $U_{\alpha}$ of a compact (possibly disconnected) topological surface $\Sigma$, and finite branched coverings $\phi_{\alpha}:U_{\alpha}\rightarrow V_{\alpha}\subset \mathbb{C}$ satisfying $\phi_{\alpha}=\phi_{\beta}+\const$ at the intersections $U_{\alpha}\cap U_{\beta}$, we can define a complex structure on $\Sigma$ by declaring that the $\phi_{\alpha}$'s are holomorphic. The abelian differential $\omega$ defined locally by $d\phi_{\alpha}$ is well defined on the obtained Riemann surface $\hat{C}$. By identifying pairs of points in $\hat{C}$, we obtain all nodal curves $C$ that are normalized by $\hat{C}$.

\subsection{Singular flat metric and geodesic foliations}
\label{ss:singular flat metric}
Denote by $Z(\omega)\cup N(\omega)$ the support of zeros and nodes of the divisor $(\omega)=\sum_{q\in C} \text{ord}_q(\omega) q$ of a stable form $(C,\omega)$ having components  $(C_i,\omega_{C_i})$. Any object invariant by translations in $\mathbb{C}$ can be pulled back to $C_i^*\setminus Z(\omega_{C_i})\cup N(\omega_{C_i})$ with singularities at the points of $Z(\omega)\cup N(\omega)$. 

In particular the pull back of the flat metric in $\mathbb{C}$ produces a  a singular flat metric on any non-zero component $(C_i,\omega_i)$ defined by $\omega\otimes\overline{\omega}$.  At a branch $C_i$ of $C$ around a point $q\in C_i$ of non-negative order for the restriction $\omega_{|C_i}$, the metric is equivalent to a standard conical point of angle $2\pi(\text{ord}_q(\omega_{C_i})+1)$. Around a point with non-zero residue $a\in\mathbb{C}^*$, the metric is a semi-infinite cylinder equivalent to one of the ends of $\mathbb{C}^*/a\mathbb{Z}$.  
The volume of a stable form $\omega\in\Omega(C)$ is defined as \begin{equation}\vol(\omega)=\frac{i}{2}\int_C\omega\wedge\overline{\omega}.\end{equation} In particular, $0\leq \vol(\omega)\leq\infty$ and it is finite if and only if all the residues of $\omega$ at the nodes of $C$ are zero. If $\vol(\omega)=0$ then $\omega$ is the zero form.

The oriented geodesic directional foliation of $\mathbb{C}$ given by an angle $\theta\in\mathbb{S}^1$ is also invariant by translations, so we can also lift it to a singular oriented directional foliation $\mathcal{G}_{\theta}$ on $C_i$. Its leaves are geodesics for the metric induced by the form $\omega$. At a zero $q$ of $\omega_{C_i}$ the foliation has a saddle with $2(\text{ord}_q(\omega_{C_i})+1)$ separatrices, that alternatively enter and leave the singularity by forming an angle of $\pi$ (see Figure \ref{fig:saddle}). At any other point the foliation is regular.

\begin{figure}[httb]
\centering
\def\svgwidth{\columnwidth}
\includegraphics[width=1in]{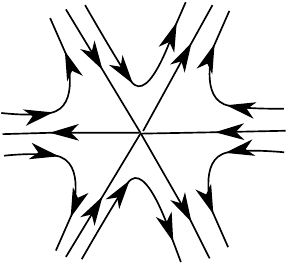}
 \caption{A saddle point of a directional foliation at a zero of order two} \label{fig:saddle}
\end{figure}

Some subsets of stable forms where these geometric objects encode important information will be of special importance for us and we introduce appropriate notation.

\begin{definition}
Given a family $\mathcal{K}$ of (marked or unmarked) stable curves, we define 
\begin{itemize}
\item $\Omega_0\mathcal{K}\subset \Omega\mathcal{K}$ to be the set of stable forms over curves in $\mathcal{K}$ whose residues at all nodes are zero (or equivalently the volume of the underlying metric is finite) and 
\item $\Omega^*_0\mathcal{C}\subset \Omega_0\mathcal{C}$ the set of stable forms over curves in $\mathcal{C}$ that have no zero components and zero residue at all nodes (or equivalently the underlying metric has finite volume and isolated singularities) 
\end{itemize}
\end{definition}
\subsection{The Hodge bundle over the Deligne-Mumford compactification of $\mathcal{M}_{g,n}$}
Recall (see \cite{Harris}, Chapter 4) that over the Deligne-Mumford compactification $\overline{\mathcal{M}}_{g,n}$ we can define the universal curve bundle $$\overline{\mathcal{C}}\mathcal{M}_{g,n}\rightarrow\overline{\mathcal{M}}_{g,n}$$ which is a holomorphic map between compact complex orbifolds whose fiber over the point $(C,r_1,\ldots, r_n)\in\overline{\mathcal{M}}_{g,n}$ is biholomorphic to the curve with marked points $(C,r_1,\ldots,r_n)$. The cotangent bundle to this fibration is well defined on the complement of the set of nodes. It extends as a line bundle $\mathcal{L}$ to the whole universal curve , called the relative cotangent bundle. A section of the restriction $\mathcal{L}_{|C}$ corresponds precisely to a stable form on $C$. The constancy of the dimension of this space of sections implies that the sheaf obtained by pushing it forward to $\overline{\mathcal{M}}_{g,n}$ is locally free of rank $g$ (see \cite{Hartshorne}[Exercise 5.8]). Thus it defines a holomorphic vector bundle $$\pi:\Omega\overline{\mathcal{M}}_{g,n}\rightarrow\overline{\mathcal{M}}_{g,n}$$ called the Hodge bundle. The fiber of $\pi$ over a point $(C,r_1,\ldots,r_n)\in\overline{\mathcal{M}}_{g,n}$ corresponds precisely to the set of stable forms $\Omega(C)$. When $n=0$, the Hodge bundle trivializes over any of the preferred neighbourhoods $U=U_c/\Gamma_c$. A choice of a Lagrangian $\Lambda\subset H_1(\Sigma_g)$ containing all the classes induced by the curves in the curve system $c$ allows to define a trivialization $$\Omega U\simeq U \times \text{Hom}(\Lambda,\mathbb{C})$$ 

\cbstart
\begin{definition}
The boundary strata of $\Omega\overline{\mathcal{M}}_{g,n}$ is the set of stable forms over stable curves with marked points lying the boundary strata $\partial\overline{\mathcal{M}}_{g,n}$.
\end{definition}
\cbend

The pull back of the compactified Hodge bundle $\Omega\overline{\mathcal{M}}_{g,n}\rightarrow \overline{\mathcal{M}}_{g,n}$ by the  map $\overline{\mathcal{T}}_{g,n}\rightarrow \overline{\mathcal{M}}_{g,n}$ defines the (topological) complex vector bundle $\Omega\overline{\mathcal{T}}_{g,n}\rightarrow \overline{\mathcal{T}}_{g,n}$ of homotopically marked stable forms of genus $g$ with $n$ marked points. An element in $\Omega\overline{\mathcal{T}}_{g,n}$ is a pair $([f:\Sigma_{g,n}\rightarrow (C,r_1,\ldots,r_n)], \omega)$ where $[f:\Sigma_{g,n}\rightarrow (C,r_1,\ldots,r_n)]\in\overline{\mathcal{T}}_{g,n}$ and $\omega\in\Omega(C)$. The Torelli group $\mathcal{I}_{g,n}$ acts on the bundle $\Omega\overline{\mathcal{T}}_{g,n}\rightarrow\overline{\mathcal{T}}_{g,n} $. The quotient defines a bundle $\Omega\mnc_{g,n}\rightarrow \mnc_{g,n}$. These will be referred to as the Hodge bundle over augmented Teichm\" uller or augmented Torelli space, depending on the case.   

 Their boundaries are, by definition, the preimage of the boundary of $\Omega\overline{\mathcal{M}}_g$ by the corresponding projection,i.e. the stable forms over curves with some node. 

Given a collection of (marked or unmarked) stable curves $\mathcal{K}$ we define $\Omega\mathcal{K}$ to be the set of stable forms over $\mathcal{K}$, with the induced topology in the corresponding vector bundle of forms. 

Given a subset of some space of forms $L\subset \Omega\mathcal{K}$ we denote by $\partial L$ the intersection of $L$ with the boundary strata of the ambient space.

Endowed with the complex structure in each boundary stratum $B$, $\Omega B$ becomes a holomorphic vector bundle over $B$. We will refer to it as a boundary stratum of the space of forms where $B$ lies. 

\begin{definition}
 A homologically marked stable form of genus $g\geq 0$ with $n$ marked points is a tuple $(C,r_1,\ldots,r_n,m,\omega)\in\Omega\mnc_{g,n}$ where $\omega$ is a stable one-form on a homologically marked stable curve $(C,r_1,\ldots,r_n,m)$ of genus $g$ with $n$ marked points.  By abuse of language we say that $\omega$ pinches $a\in H_1(\Sigma_g, q_1,\ldots,q_n,\mathbb{Z})\setminus 0$ if $a$ is primitive and $m(a)=0$. We say that $(C,r_1,\ldots,r_n,m,\omega)$ belongs to the boundary strata if $C$ has some node. 
\end{definition}
\subsection{Attaching and forgetful maps on Hodge bundles}\label{ss:attach&forget on hodge bundles}
The continuous attaching maps defined at the level of marked curves in subsection \ref{ss:attach&forget on augmented Torelli} 
can be lifted to attaching maps on the corresponding Hodge bundles. 

In particular, using Lemmas \ref{l:attaching marked curves} and \ref{l:attaching at two points} the attaching maps can be lifted as continuous maps to the corresponding Hodge bundles: 
\begin{equation}\Omega\mnc_{g_1,n_1}\times\Omega\mnc_{g_2,n_2}\rightarrow\Omega\mnc_{g_1+g_2, n_1+n_2-2}\text { and} \label{eq:attach two components}\end{equation}
\begin{equation}\label{eq: attach two points in one component}\Omega \mnc_{g-1,n+2}\rightarrow \Omega\mnc_{g,n}\end{equation} by simply considering the stable form over the image curve that coincides with the given restriction on each part. The image will only contain forms with zero residue at the node produced by the attaching map.
More generally, we can use attaching maps and the the biholomorphism \eqref{eq:curvesystemdecomposition} to define, via the attaching maps a decomposition of forms with zero residues on a stratum $B_c$ of $\overline{\mathcal{T}}_{g,n}$: 
\begin{equation}\Omega_0B_{c}\cong\Pi_{i}\Omega \mathcal{T}_{g_i,n_i}\label{eq:zero_residue_formdecomposition}\end{equation}

On the other hand we can also lift  the forgetful maps defined by Lemma \ref{l:forget a point} to the Hodge bundle. To construct the natural lift $$\Omega\mnc_{g,n}\rightarrow \Omega\mnc_{g,n-1}$$it suffices to restrict the form to the image curve. Remark that on the eventually contracted components after forgetting the point the form is of genus zero and uniquely determined by the value of one of the residues.

\subsection{Sub-stratification of Hodge bundles and period coordinates on strata}

The boundary stratification of the Hodge bundle over $\overline{\mathcal{M}}_{g,n}$ is substratified by the topological properties of the zero divisor and polar set of the forms.
Two elements in $\Omega \overline{\mathcal{M}}_{g,n}$ are said to belong to the same substratum if there exists a homeomorphism between the underlying marked curves  preserving the following data of the forms:

\begin{enumerate}
\item zero components 
\item nodes with non-zero residue
\item the associated zero divisor on each non-zero component
\end{enumerate}
In particular, if an element has a zero of order $k$ at a marked point, then any  element in its substratum will have a zero at of order $k$ at a marked point. The same happens with poles with zero residue: if the restriction of the form to one of the branches has a zero of order $k$, then any form in its stratum will have this property on the corresponding branch of a node.
 
This stratification can be lifted to $\Omega\mnc_{g,n}$ and $\Omega\overline{\mathcal{T}}_{g,n}$. It is a substratification of the stratification induced by $\sqcup B_{c}$ on $\overline{\mathcal{T}}_{g,n}$. 
The generic stratum in $\Omega B_{c}$ is the only substratum that is open in $\Omega B_{c}$, and it is Zariski open too. Indeed, the union of all other substrata in $\Omega B_{c}$ form an analytic subset of $\Omega B_c$.
A substratum in $\Omega B_c$ is minimal if it has minimal dimension among substrata of $\Omega B_c$. In particular, the generic stratum in $\Omega B_{\emptyset}=\Omega \mathcal{T}_{g,n}$  is the stratum having only simple zeros at unmarked points and is also dense in $\Omega\overline{\mathcal{T}}_{g,n}$. The minimal stratum in $\Omega \mathcal{T}_{g,n}$ is formed by forms with a single zero of order $2g-2$ at some marked point (for $n\geq 1$). 

The restriction of an isomorphism of type  \eqref{eq:zero_residue_formdecomposition} to all possible products of substrata of $\Omega\mathcal{T}_{g_i,n_i}$ describe all the diferent substrata of forms with zero residues in $\Omega_0 B_{c}$. Let us define local holomorphic coordinates on them.

%By definition, every substratum conatained in $\Omega_0\overline{\mathcal{M}}_{g,n}$ (having  zero residues at all nodes) is isomorphic to products of substrata of $\Omega\mathcal{M}_{g_i,n_i}$ with $g_i\leq g$ via restrictig the corresponding attaching maps to the corresponding strata.    

The zero substratum of $\Omega\mathcal{M}_{g,n}$ is isomorphic to $\mathcal{M}_{g,n}$. On the other hand,  Veech \cite{Veech} and Masur in \cite{Masur2} proved that local holomorphic coordinates can be given in any substratum $R$ of $\Omega\mathcal{M}_{g,n}$ around a point $(C,P,\omega)$ with $\omega\neq 0$. They are defined by integration of the form on cycles in $H_1(C,Z(\omega)\cup P ;\mathbb{Z})$ and therefore lie in the vector space $H^1(C,Z(\omega)\cup P ,\mathbb{C})$. These endow the stratum with the structure of a complex manifold compatible with the one induced by the complex structure of $\Omega\mathcal{T}_{g,n}$. 

We conclude from the equivalence \eqref{eq:zero_residue_formdecomposition} and the description of strata, on smooth curves, that the substratum of stable forms  containing a point $(C,P,f,\omega)\in\Omega^*_0\overline{\mathcal{T}}_{g,n}$ without zero components and with zero residues at the nodes is \emph{locally} isomorphic to the product $$\prod_{i}H^1\big(C_i,(Z(\omega)\cup N(C)\cup P)\cap C_i,\mathbb{C}\big)$$
where each $C_i$ corresponds to a connected component of the normalization of $C$ and $N(C)$ is the union of all points of attaching in the components $C_i$ to obtain $C$. These local coordinates will be referred to as period coordinates on the substratum.

%Using these coordinates on each component of the normalization we can locally parametrize the substratum of stable forms at a point  $(C,\omega)\in\Omega_0\overline{\mathcal{M}}_{g}$:
%consider the components $$(C_{1},q^{1}_{1}, \ldots, q^{1}_{m_{1}}, \omega_{1}),\ldots,( C_{l},q^{l}_{1},\ldots, q^{l}_{m_{l}}, \omega_{l})$$ of its normalization, each with the marked points that correspond to nodes. The substratum of $(C,\omega)$ is locally parametrized by the product of the following spaces: 
%\begin{enumerate}
%\item The moduli space of curves with marked points $\mathcal{M}_{g_{i},m_{i}}$ if $i$ such that $\omega_{i}$ is the zero form on the genus $g_{i}$ curve $C_{i}$, and 
%\item $H^1(C_i,Z(\omega_i)\cup N(\omega_i),\mathbb{C})$ otherwise, where $N(\omega_i)$ denotes the set of marked points corresponding to nodes that are not zeros of the restriction $\omega_i$ of $\omega$ to $C_i$.
%\end{enumerate}

%For each stratum $B_{c}$ of marked curves, the space $\Omega B_{c}$ is substratified by the stratification given in its (biholomorphic) image under the map $\Omega\mnc_g\rightarrow\Omega\overline{\mathcal{M}}_g$.

%We will refer to those substrata as substrata of marked stable forms and the local parametrization as period parametrization (coordinates when the parametrization is injective).

\subsection{Periods of marked stable forms with zero residues at the nodes}

A stable form on $(C,r_1,\ldots,r_n)$ is holomorphic on $C^*$ and can thus be integrated along paths in $C^*$. For closed paths the value of the integral does only depend on the homology class in $H_1(C^*,r_1,\ldots,r_n;\mathbb{Z})$ and it is called the period of the class. If the residues at nodes are all zero, we can also integrate along paths passing {\em through} the nodes, and the integral along a closed path depends only on its class in $H_1(C,\mathbb{Z})$.

% We are interested in isoperiodic sets of stable forms, that is, subsets for which the integrals around closed curves, usually called periods, do not vary. Since for a smooth curve $C$ we have $C^*=C$ and the isoperiodic sets we will consider contain such points, it is natural to restrict ourselves to the space of stable forms with zero residues at the nodes.
For any marked stable form $([f:\Sigma_{g,n}\rightarrow (C,r_1,\ldots,r_n)],\omega)\in\Omega_0\overline{\mathcal{T}}_{g,n}$ of genus $g$ with $n$ marked points and  zero residues at the nodes we have a well defined notion of {\bf period homomorphism} $$\text{Per}([f:\Sigma_{g,n}\rightarrow (C,r_1,\ldots,r_n)],\omega):H_1(\Sigma_g, q_1,\ldots,q_n;\mathbb{Z})\rightarrow\mathbb{C}$$ defined by
\begin{equation}
 \per([f:\Sigma_{g,n}\rightarrow (C,r_1,\ldots,r_n)],\omega)(\gamma)=\int_{
 f_*(\gamma)}\omega\text{ for }\gamma\in H_1(\Sigma_g, q_1,\ldots,q_n;\mathbb{Z}).
\end{equation}
Since the period homomorphism depends only on the homological marking induced by the homotopical marking, we can also define the period on a homologically marked stable curve $(C,r_1,\ldots, r_n,m,\omega)$ with marked points by $\{\gamma\mapsto \int_{m(\gamma)}\omega\}$. 
Any homologically non-trivial curve $a$ in $\Sigma_g$ pinched by the marking, i.e. belonging to
$\text{Ker}(f_*)$, also belongs to the kernel of the period homomorphism.
Remark that if the period homomorphism is injective then the marking is an isomorphism, and thus the curve $C$ must be of compact type.

\begin{definition}
The period map associated to $(\Sigma_g,q_1,\ldots,q_n)$ is the map $$\Omega_0\overline{\mathcal{T}}_{g,n}\rightarrow H^1(\Sigma_g,q_1,\ldots,q_n,\mathbb{C})$$ that associates to any  stable form with zero residues marked by $(\Sigma_g,q_1,..,q_n)$, its period homomorphism. It is invariant by the Torelli group $\mathcal{I}_{g,n}$ action and descends to the quotient 
\cbstart$$\Omega_0\mnc_{g,n}\rightarrow H^1(\Sigma_g,q_1,\ldots,q_n,\mathbb{C}).$$\cbend
\end{definition} 
One of the difficulties that we will encounter is related to the fact that the domain of definition of the period map is neither open nor closed. It is a union of substrata of stable forms.  It contains the (dense) open subset of stable forms on smooth curves, but in the boundary strata we need to restrict to the closed set of strata that have no residues at the nodes. 
\begin{definition}
An isoperiodic deformation of a point in $\Omega_0\overline{\mathcal{T}}_0$ or $\Omega_0\mnc_{g,n}$ is a continous deformation in the ambient space that has constant value for the period map. 
\end{definition}

The following notation will be useful
\begin{definition}
Given a homomorphism $p:H_1(\Sigma_{g},q_1,\ldots,q_n;\mathbb{Z})\rightarrow \mathbb{C}$  and a family $\mathcal{K}\subset\Omega_0 \mnc_{g,n}$ we denote by $\mathcal{K}(p)$ the subset of marked stable forms in $\mathcal{K}$ having period homomorphism $p$, i.e. the intersection of the the period fiber over $p$  with the set $\mathcal{K}$. 
\end{definition}
%\begin{remark} \label{rem:slice in siegel}Let $p:H_1(\Sigma_g)\rightarrow\mathbb{C}$ be a homomorphism and consider $\Omega\mnc_g^{c}(p)$, the set of marked stable forms of compact type of period $p$. The same argument of the proof of Theorem \ref{t:connectedness in genus 2 and 3} using the extension of the Torelli map \eqref{eq:torelli map} described in subsection \ref{ss:extension of complex structure} provides a surjective map between the points of $\Omega\mnc_g
%^c(p)$ and the set of $(m_{ij})\in\mathfrak{S}_g$ that satisfy the set of linear equations \eqref{eq:slice of siegel} and the Schottky locus condition \eqref{eq:image of Torelli}. 
% The linear system  \eqref{eq:slice of siegel} has a solution if and only if $\vol(p)>0$. This constitutes a first necessary condition for $\Omega\mnc_g^c(p)\neq\emptyset$.
%If $g=2$, the image of the extended Torelli map of subsection \ref{ss:extension of complex structure} is the whole of Siegel space $\mathfrak{S}_2$ (see Remark \ref{rem:extended torelli map}). We deduce  that  the set $\Omega\mnc^c_2(p)$ of stable forms of compact type and periods $p$ of positive volume is biholomorphic to a linear slice of Siegel space $\mathfrak{S}_2$, therefore nonempty (but maybe containing only forms on products of elliptic curves).
%\end{remark}

\subsection{Local structure of the fibers of the period map}

%Recall that the stratified space $\Omega\mnc_{g,n}$ is of normal crossing type (see  subsections \ref{ss: strategy} and \ref{ss:augmented teich}). In this subsection we prove that the same local property is true  for the fibers of the period map at a point of $\Omega_{0}^{*}\mnc_{g,n}$:
A local fiber of the period map can be thought as a disjoint union of sets in the different strata of the ambient space. The next theorem shows that this partition is actually a nice stratification
\begin{theorem}\label{p:local structure of fiber at bdry point}

A local fiber $L$ of the period map in $\Omega^{*}_{0}\mnc_{g,n}$ projects to the orbifold chart of $\Omega\nc_{g,n}$ as a complex manifold transverse to all boundary divisors through the point. Therefore, $L$ is  an abelian ramified cover of a normal crossing divisor in $(\mathbb{C}^{2g+n-3},0)$ having precisely one component of codimension one in each component of codimension one of the ambient space $\Omega^{*}_{0}\mnc_{g,n}$ through the point. 
\end{theorem}

The statement is not true in general for forms with zero components (see subsection \ref{sss:singularities}).
\begin{proof}

A local period fiber on $\Omega_{0}\mnc_{g}$ can be lifted to a local period fiber in $\Omega_{0}\overline{\mathcal{T}}_{g}$. If the lift of the point belongs to the stratum $\Omega B_{c}$, the covering group is the subgroup $\Gamma_{c}\cap\mathcal{I}_{g}\subset \Gamma_{c}$. Hence, we just need to prove that the period fiber at the level of the Hodge bundle over augmented Teichm\" uller space has a stratification that is locally equivalent to some map $\varphi_{l,2g+n-3}$ as in \eqref{eq:bldown1} where $l$ denotes the number of simple closed curves in the curve system $c$.  It suffices in fact to show that at the level of the quotient $\Omega_{0}(U_{c}/\Gamma_{c})$ the period fiber is a holomorphic manifold transverse to every boundary component passing through the point. Thanks to the normal crossing condition of the ambient space, this is guaranteed if the period map is submersive in restriction to the (regular) stratum of the normal crossing divisor where the point belongs to. In fact we will not use the boundary stratum but a smaller regular submanifold, consisting of the substratum through the point.
\cbstart
\begin{proposition}\label{p:local period structure teich}
Consider $([f_0:\Sigma_{g,n}\rightarrow (C_0,P_0)],\omega_0)\in\Omega_0\overline{\mathcal{T}}_{g,n}$ and   $\Omega U_c$ its preferred neighbourhood. Denote $m_0:=f_{0*}: H_1(\Sigma_{g},q_1,\ldots,q_n;\mathbb{Z})\rightarrow H_1(C_0,P_0;\mathbb{Z})$ and define $H$ as the germ of the zero set of the map $\Omega U_c\rightarrow \emph{Hom}(\ker m_0,\mathbb{C})$ defined by integration. Then $H\subset \Omega_0 U_c$, $H/\Gamma_c$ is a proper holomorphic submanifold of $\Omega_0(U_c/\Gamma_c)$ and the period map induces a holomorphic map $\Omega_0(H/\Gamma_c)\rightarrow H^1(C_0,P_0,\mathbb{C})$. 
If the form $\omega_0$ has no zero components, the holomorphic map thus defined is submersive, even by restricting it to the substratum containing $(C_0,\omega_0)$. Hence, the isoperiodic foliation extends along $H/\Gamma_{c}$ as a regular foliation. 
\end{proposition}
\cbend
\begin{proof}

We first need a lemma about the holomorphicity of integration over a cycle:

\begin{lemma}\label{l:holomorphic integral}
Let $U_c/\Gamma_c\rightarrow U\subset\overline{\mathcal{M}}_{g,n}$ be a (orbifold) chart around a point $(C_0,P_0)\in\overline{\mathcal{M}}_{g,n}$ and $\gamma$ be a path in $\Sigma_g\setminus c$. 
Then, the map $\Omega U_c\rightarrow \mathbb{C}$ defined by $$([f:\Sigma_{g,n}\rightarrow (C,P)],\omega)\mapsto \int_{f_*(\gamma)}\omega$$ induces a well defined holomorphic map on $\Omega U\subset \Omega \overline{\mathcal{M}}_{g,n}$.
\end{lemma}

\begin{proof}
The function is well defined on $U_c/\Gamma_c$ because $\gamma$ does not intersect any of the simple closed curves in $c$. It is holomorphic outside the boundary and bounded in the neighbourhood of every boundary point, hence holomorphic by Riemann extension Theorem.  
\end{proof}

 Remark first that there is a non-trivial exact sequence \begin{equation}0\rightarrow \text{Hom}(H_1(C_{0},P_0;\mathbb{Z}), \mathbb{C})\rightarrow \text{Hom}(H_1(\Sigma_{g},q_1,\ldots,q_n;\mathbb{Z}), \mathbb{C})\rightarrow \text{Hom}(\ker m_{0},\mathbb{C})\rightarrow 0\label{eq:exact sequence of marking}\end{equation} induced by that defined by the marking $$0\rightarrow \ker m_0\rightarrow H_{1}(\Sigma_{g},q_1,\ldots,q_n;\mathbb{Z})\rightarrow H_{1}(C_{0},P_0;\mathbb{Z})\rightarrow 0.$$
The map $$\Omega U_{\curvesystem}\rightarrow \text{Hom}(\ker m_{0},\mathbb{C})$$ given by $([f:\Sigma_{g,n}\rightarrow (C,r_1,\ldots,r_n)],\omega)\mapsto \{c\mapsto\int_{f_*(c)}\omega\}$ is well defined and invariant by the action of $\Gamma_{\curvesystem}$ on $\Omega U_{\curvesystem}$. Lemma \ref{l:holomorphic integral} implies that it induces a holomorphic map \begin{equation} \label{eq:submersion at boundary points} h:\Omega (U_{\curvesystem}/\Gamma_{\curvesystem})\rightarrow\text{Hom}(\ker m_{0},\mathbb{C}).\end{equation} We claim that $h$ is submersive at $[f_0:\Sigma_{g,n}\rightarrow (C_0,P_0)]$. In fact, its restriction to the Hodge bundle fiber $\Omega C_0$ is already submersive. It associates to each node, the value of its residue. It is therefore linear. The kernel of this restriction is precisely $\Omega_0C_0$, the space of stable forms with zero residues on $C_0$. Let $C_1,\ldots,C_l$ be the components of the normalization of $C_0$ and define $N(C_i)\subset C_i$ the points corresponding to the nodes of $C_0$. 
The restriction of any stable form in $\Omega C_{0}$ to the component $C_{i}$ is a meromorphic form with at worst simple poles on $N(C_i)$. It satisfies the residue theorem. On the other hand, by Riemann-Roch theorem applied to $C_i$ with $i\geq 1$ if we denote $m_i=|N(C_i)|$ and $g_i=g(C_i)$ the genus of $C_i$ we get 
\begin{enumerate}
\item the space $M_i$ of meromorphic forms on $C_i$ that have at worst simple poles at the marked points has dimension $g_i+m_i-1$ and 
\item the space $\Omega C_i$ of holomorphic forms on $C_i$ has dimension $g_i$. 
\end{enumerate}
Denote $L_i\subset\mathbb{C}^{m_i}$ the codimension one space defined by $x_1+\ldots+x_{m_i}=0$. 
By the previous dimension calculations, the sequence $0\rightarrow \Omega C_i\rightarrow M_i\rightarrow L_i\rightarrow 0$ is exact.
Recall that $\sum m_i=2m$ where $m$ denotes the number of nodes of $C_0$. 
Consider $L\subset\mathbb{C}^{2m}$ the codimension $l$ subspace $L_1\oplus\cdots\oplus L_l$.

The sequence 
\begin{equation} \label{eq:exact seq} 0\rightarrow \Omega_0 C_{0}\rightarrow M_{1}\oplus\cdots\oplus M_{l}\rightarrow L \rightarrow 0\end{equation} is still exact. 
A necesssary and sufficient condition for an element of $M_{1}\oplus\cdots \oplus M_l$ to define a form in $\Omega C_0$ is that the residues at the points that are attached to produce a node in $C_0$ have opposite sign. This corresponds to cutting $L$ with $m$ linear equations of type $x_i-x_j=0$ in $\mathbb{C}^{2m}$ where $m$ is the number of nodes of $C_0$. When we intersect $m-1$ of those with $L$, we easily obtain the last of such equations. In fact, the dimension of the intersection is $2m-l-(m-1)=m-l+1$. The points in this intersection are in one to one correspondence with elements in $\text{Hom}(\ker m_0, \mathbb{C})$. Indeed, the non-separating curves in $\curvesystem$ generate $\ker m_0$, and the only relations they have to satisfy are the relations given by the fact that some sums of the generators are boundaries of components. This relations coincide with those defining the relevant subspace of $L$. On the other hand the exactness of (\ref{eq:exact seq}) tells us that we can find a form in $\Omega C_0$ with any given residue homomorphism $\ker m_0\rightarrow \mathbb{C}$. This proves that $h_{|\Omega C_{0}}$ is surjective. Since it is linear it is also submersive, and so are $h$ and the restriction of $h$ to each boundary component containing $(C_0,m_0,\omega_0)$. The levels of $h$ are regular subvarieties transverse to each boundary component and invariant by $\per$. The zero level set $$H=\{([f:\Sigma_{g,n}\rightarrow (C,P)],\omega)\in \Omega U_{\curvesystem}: \int_{f_*(c)}\omega=0\quad \forall c\in\ker m_{0}\}$$ contains the marked stable form $([f_0:\Sigma_{g,n}\rightarrow (C_0,P_0)],\omega_0)$ and is invariant under the action of $\Gamma_{\curvesystem}$.
By the exact sequence (\ref{eq:exact sequence of marking}), the restriction of the period map  to $H$ can be thought as a map $H\rightarrow H^{1}(C_{0},\mathbb{C})$ that is invariant by $\Gamma_{\curvesystem}$. The induced map $$h_2: H/\Gamma_{\curvesystem}\rightarrow \text{Hom}(H_1(C_{0},P_0;\mathbb{Z}),\mathbb{C})\text{ is holomorphic. }$$

This proves that the fibers of the period map in $\Omega_0\mnc_{g,n}$ are preimages under the branched covering $\Omega U_{\curvesystem}\rightarrow \Omega (U_{\curvesystem}/\Gamma_{\curvesystem})$ of analytic sets.

It remains to prove that $h_2$ is submersive in restriction to the substratum $R$ containing $([f_0:\Sigma_{g,n}\rightarrow (C_0,P_0)],\omega_0)$ whenever $\omega_0$ has no zero components.

Recall that $C_1,\ldots, C_l$ denote the components of the normalization of $C_0$ and define 

\begin{itemize}
\item $\omega_{i}$ the restriction of $\omega_0$ to $C_i$ 
 \item $g_i=g(C_i)$ the genus of $C_i$
 \item $P_i=(P\cap C_i)\cup N(C_i)\cup Z(\omega_{0i}))$
\end{itemize}

 The local coordinates in the substratum $R_{i}$ of the stable form $(C_i,P_i,\omega_{i})$. The latter are given by vectors in $W_{i}=H^{1}(C_{i},P_i,\mathbb{C})$.

 By using the attaching maps used to obtain $C_{0}$ from $C_{1},\ldots, C_{l}$ we define a surjective holomorphic map $$W_{1}\oplus \cdots\oplus W_{l}\rightarrow R.$$ If we prove that the composition of this map with the period map $h_2$ is submersive onto $H^{1}(C_{0}, \mathbb{C})$ we will be done. The said composition is a map $$W_{1}\oplus \cdots\oplus W_{l}\rightarrow \text{Hom}(H_{1}(C_{0},P_0,\mathbb{Z}),\mathbb{C})$$ that factors through $$\widehat{W}_{1}\oplus\cdots\oplus\widehat{W}_{l}\text{ where }\widehat{W}_{i}=H^{1}(C_{i},N(C_i),\mathbb{C}).$$ The map $W_{1}\oplus \cdots\oplus W_{l}\rightarrow \widehat{W}_{1}\oplus\cdots\oplus\widehat{W}_{l}$ and the map $\widehat{W}_{1}\oplus\cdots\oplus\widehat{W}_{l}\rightarrow \text{Hom}(H_1(C_{0},P_0;\mathbb{Z}),\mathbb{C})$ are obviously submersive.
\end{proof}
 
\end{proof}
\begin{definition} The map 
 $$\percompact_{g,n}:\Omega_0^* \mnc_{g,n}\rightarrow \text{Hom}(H_1(\Sigma_{g,n});\mathbb{C})$$ denotes the extension to the bordification  $\Omega_0^* \mnc_{g,n}$ of the period map $\per_{g,n}$ defined on $\Omega\mrs_{g,n}$.  When there is no risk of confusion we omit the subindex and write $\percompact$ and $\per$ respectively. 
\end{definition}

 \begin{definition}
  Let $p\in\mathcal{H}_g$ and denote $\mathcal{G}_{p}$ the dual boundary complex of the bordification $\percompact^{-1}(p)$ of $\per^{-1}(p)$ defined by its closure in $\Omega^{*}_{0}\mnc_{g}$ .  The final statement of Theorem \ref{p:local structure of fiber at bdry point} and Remark \ref{rem: normal crossing type} allow to define a continuous map of dual boundary complexes\begin{equation}
      \mathcal{G}_p\rightarrow \mathcal{C}(\mnc_g)\simeq\mathcal{C}_g/\mathcal{I}_g
  \end{equation}
   \end{definition}

\begin{corollary}\label{c:holomorphic extension to compact type}
The period map $\percompact_{g,n}$ induces a regular holomorphic foliation $\overline{\mathcal{F}}_{g,n}$ on the moduli space $\Omega^*\overline{\mathcal{M}}^c_{g,n}$ of stable forms without zero components on curves of compact type  called the isoperiodic foliation. It is transverse to all boundary components and to the substratum  passing through the point. Its restriction to $\Omega^*\mathcal{M}_{g,n}$ will be denoted by $\mathcal{F}_{g,n}$. 
\end{corollary}
\begin{proof}
 All the residues of all forms in a neighbourhood of a point in $\Omega_0\mnc^c_{g,n}$  are zero. Hence, the map $\percompact$ is holomorphic in some neighbourhood. Proposition \ref{p:local period structure teich} guarantees that it is also submersive there. On the other hand $\percompact_{g,n}$ is equivariant with respect to the natural action of $\text{Mod}(\Sigma_{g,n})$ on source and target. Hence the underlying foliation is well defined in the quotient. 
\end{proof}

\begin{remark}
 The isoperiodic foliation $\overline{\mathcal{F}}_{g,n}$ extends to the whole space $\Omega\overline{\mathcal{M}}_{g,n}$ as a singular holomorphic foliation. In fact, the zero section of the Hodge bundle is part of the singular set of this foliation, as are the points of non-compact type in $\Omega_0^*\overline{\mathcal{M}}_{g,n}$, and most strata with zero components (see section \ref{p:singularity of isoperiodic set} for an example). To construct the extension isoperiodic foliation to the missing strata (those with non-zero residue at the nodes) we need to define the notion of isoperiodic family  of  \emph{meromorphic} forms on smooth curves. This subject will be developed in a forthcoming paper. 
\end{remark}

\cbstart
\subsection{Some singularities of the isoperiodic set on regular points of $\Omega\mnc_{g,2}$}

\label{sss:singularities} 
In this subsection we analyze the local isoperiodic deformation spaces at some points of the Hodge bundle over the complex manifold  $\U_{g,2}\subset\mnc_{g,2}$, bordification of $\mrs_{g,2}$,  introduced in Section \ref{s:bordification of Sg2}. The contents of this section will be useful in Section \ref{s:period fibers with marked points}.

Recall from Section \ref{p:bordification of Sg2} that the boundary points in $\U_{g,2}$ consist on stable curves having precisely one separating node that leaves both marked points on a genus zero component (in Figure \ref{fig:normalize_degeneration} we depict such a stable curve for $g=2$). The strata of  $\partial\U_{g,2}$ form a smooth divisor having a connected component $B_{\beta}$ for each class $\beta\in\betas=\{\beta\in H_1(\Sigma_g,q_1,q_2;\mathbb{Z}): \partial\beta=q_2-q_1\}$ 

%Consider the following composition of forgetful maps described in subsection \ref{ss:attach&forget on hodge bundles} where we forget \emph{both} marked points : $$\text{For}:\mnc_{g,2}\rightarrow \mnc_{g,0}.$$

 %The open set $U=\text{For}^{-1}(\mrs_{g,0})\subset \mnc_{g,2}$ of marked stable curves whose stabilization is a \emph{smooth} curve,
%\begin{corollary}\label{c:parametrizing boundary with marked points} The set $\betas=\{\beta\in H_1(\Sigma_g,q_1,q_2;\mathbb{Z}): \partial\beta=q_2-q_1\}$ is in 1-1 correspondence with  
% Torelli classes  of simple closed curves in $\Sigma_{g,2}$ that bound a disc containing both marked points.  We write $$\partial U=\bigsqcup_{\beta\in\betas}B_\beta$$
%\end{corollary}

 The bundle $\Omega \U_{g,2}\rightarrow \U_{g,2}$ is a complex manifold. The stratification of the ambient space induces a stratification on $\Omega \U_{g,2}$ that has the following boundary strata:  $$\bigsqcup_{\beta\in\betas}\Omega B_\beta. $$ 
 %Moreover, the restriction of $\per_{g,2}$ to $\Omega_0U$ is holomorphic. 
 
 Each stable form in $\Omega B_\beta$ has  a zero component of genus zero and lies in the domain $\Omega_0\mnc_{g,2}$ of the period map $\per_{g,2}$. Moreover the restriction of this map to $\Omega\U_{g,2}$ is holomorphic (see Proposition \ref{p:local period structure teich}) .  
 We want to analyze the local analytic set formed by a  period fiber around generic points of $\Omega B_\beta$. 

 By abuse of language we write $\Omega^{SZ}B_\beta\subset \Omega B_\beta $ the forms that have simple zeros on the smooth genus $g$ component and a zero component of genus zero, and $\Omega^{SZ,1}B_\beta\subset \Omega^{SZ}B_\beta$ those where the zero component is glued at a simple zero of the non-zero component (as in Figure \ref{fig:normalize_degeneration}) . 

\begin{proposition}\label{p:singularity of isoperiodic set}
 Let $(C^0,r^0_1,r^0_2,m^0,\omega^0)\in\Omega^{SZ}B_\beta\subset\Omega\U_{g,2}$. The local isoperiodic deformation space at $(C^0,r^0_1,r^0_2,m^0,\omega^0)$ is
 \begin{enumerate}
     
     \item a normal crossing of two smooth manifolds, precisely one of which is contained in the boundary strata, if  $(C^0,r^0_1,r^0_2,m^0,\omega^0)\in\Omega^{SZ,1}B_\beta$ or 

\item a smooth manifold contained in the boundary if $(C^0,r^0_1,r^0_2,m^0,\omega^0)\notin\Omega^{SZ,1}B_\beta$
 \end{enumerate}
\end{proposition}

\begin{proof}[Proof of Proposition \ref{p:singularity of isoperiodic set}] 
Along the proof we write $U=\U_{g,2}$. Let $(C,r_1,r_2,m,\omega)\in\Omega^{SZ}B_\beta$. We consider the contraction of the spherical component of $\omega$, namely the tuple $(\widehat{C},\widehat{q},\widehat{m},\widehat{\omega}))\in\Omega^{SZ}\mrs_{g,1}$ satisfying the following relations
\begin{equation}
    \widehat{m}=m\circ\iota\text{ where }\iota:H_1(\Sigma_{g,1})\rightarrow H_1(\Sigma_{g,2}) \text{ is the natural inclusion}
\end{equation}\begin{equation}\label{eq:boundary points in Sg2} 
(C,r_1,r_2,m)=(\widehat{C},\widehat{q},\widehat{m})\underset{\widehat{q}=\infty}{\vee}(\mathbb{C}P^1,0,1,\infty)\text{ for some } \widehat{q}\in \widehat{C}\end{equation} and 
 \begin{equation}\label{eq:stable form with two marked}\omega=\widehat{\omega}\vee 0\text{ on }(\widehat{C},\widehat{q},\widehat{m})\underset{\widehat{q}=\infty}{\vee}(\mathbb{C}P^1,0,1,\infty).\end{equation}
 The point lies in $\Omega^{SZ,1}B_\beta$ if and only if $\widehat{\omega}(\widehat{q})=0$. 
 
We can define a holomorphic map 

\begin{equation}
    h_{\beta}:\Omega^{SZ}U\rightarrow \mathbb{C} \text{ by } h_\beta(C,r_1,r_2,m,\omega)=\int_{m(\beta)}\omega
\end{equation}
Its zero set $H_\beta=\{h_\beta=0\}$ contains $\Omega^{SZ}B_\beta$ and is invariant by the isoperiodic foliation on $\Omega^{SZ}U$.

Let us provide adapted coordinates of $\Omega^{SZ}U$ around the point $(C^0,r^0_1,r^0_2,m^0,\omega^0)\in\Omega^{SZ}B_\beta$. Denote 
\begin{equation}\label{eq:restricted forgetful}\text{For}:\Omega^{SZ}U\rightarrow\Omega^{SZ}\mrs_{g}\end{equation}
the restriction
of the forgetful map \(\text{For}_{g,2}:\Omega\mnc_{g,2}\rightarrow\Omega\mrs_{g}\) to the open set \(\Omega^{SZ}U\). 

Given a small neighbourhood $V^0$ of $\text{For}(C^0,r^0_1,r^0_2,m^0,\omega^0)$ in $\Omega\mrs_{g}$ suppose we are given a local holomorphic section $q:V^0\rightarrow \Omega^{SZ}\mrs_{g,1}$ of  $\Omega^{SZ}\mrs_{g,1}\rightarrow \Omega^{SZ}\mrs_g$ such that 
\begin{enumerate}
    \item $q(\widehat{C^0},\widehat{m^0}, \widehat{\omega^0})=\widehat{q^0}\in\widehat{C^0}$ is the point corresponding to the node of $C^0$, and 
     \item the order  of the form $\hat{\omega}$ at the point $q(C,m,\omega)$ is independent of $(C,m,\omega)$ and (therefore) has value $k-1\in\{0,1\}$
\end{enumerate} 

The germs of holomorphic functions $$r_1\mapsto \int_{q\circ\text{For}(C,r_1,r_2,m,\omega)}^{r_1}\widehat{\omega}\quad\text{ and }\quad r_2\mapsto \int_{q\circ\text{For}(C,r_1,r_2,m,\omega)}^{r_2}\widehat{\omega} $$
at $q(C,m,\omega)$ have each degree $k$ and can therefore be written as $$r_1\mapsto \big(\zeta_1(C,r_1,r_2,m,\omega)\big)^k\quad\text{ and }\quad r_2\mapsto \big(\zeta_2(C,r_1,r_2,m,\omega)\big)^k$$ for some germ of biholomorphisms $$\zeta_1,\zeta_2:(C,q(C,m,\omega))\rightarrow (\mathbb{C},0)$$  
In a small neighbourhood $W^0$ of $(C^0,r^0_1,r^0_2,m^0,\omega^0)$ in $\text{For}^{-1}(V^0)$ we consider the holomorphic map $\Psi_q: W^0\rightarrow V^0\times\mathbb{C}^2$  defined by 
\begin{equation}
\Psi_q(C,r_1,r_2,m,\omega)=\big(\text{For}(C,r_1,r_2,m,\omega),\zeta_1(C,r_1,r_2,m,\omega),\zeta_2(C,r_1,r_2,m,\omega)\big).\end{equation}
It is locally injective and therefore it is locally a biholomorphism. In these coordinates, the boundary strata of $\Omega^{SZ}U$ are defined by  $\{\zeta_1=\zeta_2\}$ and the map $h_\beta$ and $\per_{g,2}$ by $$ h_\beta\big((C,m,\omega), \zeta_1,\zeta_2\big)=\zeta_2^k-\zeta_1^k\quad\text{ and }\quad \per_{g,2}((C,m,\omega),\zeta_1,\zeta_2)=\big(\per_{g,0}(C,m,\omega),\zeta_2^k-\zeta_1^k\big).$$

In particular $$\per_{g,2}^{-1}(p_0,0)=\bigcup_{\{\alpha\in\mathbb{C}:\alpha^k=1\}}\per_{g,0}^{-1}(p_0)\times\{\zeta_2=\alpha\zeta_1\}$$ 

When $k=1$ the germ of isoperiodic set at $(C^0,r^0_1,r^0_2,m^0,\omega^0)$ is smooth contained in the boundary.

When $k=2$, the holomorphic section $q:V_0\rightarrow \Omega^{SZ}\mrs_{g,1}$ is uniquely defined by the implicit condition $\omega\big(q(C,m,\omega)\big)=0$. There are two smooth isoperiodic components intersecting transversely at $(C^0,r^0_1,r^0_2,m^0,\omega^0)$; precisely one lies in the boundary.
\end{proof}
\cbend

\section{Surgeries on stable forms and models of degeneracy}\label{s:moving points in the leaf}

In this section we recall a surgery called Schiffer variation, that allows to construct isoperiodic deformations of forms without zero components on stable curves. We use them mainly to find boundary points that have some node zero residues and no zero components in the closure of any connected component of a fiber of $\per$.

\subsection{Schiffer variations on stable forms}
\label{s:Schiffer variations} 
A Schiffer variation is a continuous deformation of stable forms with some branch point or node. It can be best described in terms of the associated translation structures. The surgery changes the translation structure associated to a form on some small neighbourhood in the surface without changing it on the complement. By taking representatives of the cycles in the homology group that avoid that open set, it is easy to see that the period maps before and after the surgery coincide. There are several instances of the surgery that produce differences on the underlying surface and the translation structure. For instance it allows to deform the projective structure without varying the orders and number of zeros and nodes. It also allows to split a multiple branch point into simpler branch points; at last it allows to produce a smooth surface with several branch points from a node. The surgery operation is invertible (the surgery is actually involutive under suitable restrictions) so the inverse operations can be used to join branch points and to produce nodal curves. They were first considered by Schiffer in~\cite{Schiffer}. A detailed discussion of Schiffer variations on projective structures the can be found in ~\cite{bps}. We will introduce it only for the case of branched translation structures and will use them to prove that any abelian differential on a smooth curve can be joined to an abelian differential on a stable curve with some node by a sequence of such surgeries. 

Let $(C_0,m_0,\omega_0)$ be a marked abelian differential on a nodal curve $C_0$ and $q$ be point where $0<\text{ord}_q(\omega_0)< \infty$. Remark that the chart $\phi=\phi_q$ defined by $\omega_0$ around $q$ can be analytically continued along any path in $C_0$. Let $\gamma_1$ and $\gamma_2$ be two embedded paths in $C_0$ starting at $q$ parametrized by $[0,1]$ whose images do not intersect outside the endpoints. For our purposes it is important to stress that both endpoints \emph{can} coincide with each other, and/or with the starting point. We say that $\gamma_1$ and $\gamma_2$ are {\bf twin paths} if none passes through two distinct nodes and the continuation $\phi_i$ of $\phi$ along $\gamma_i$ satisfies that $\phi_1\circ \gamma_1(t)=\phi_2\circ\gamma_2(t)\in\mathbb{C}$ for all $t$ and $t\mapsto \phi_i\circ \gamma_i(t)$ is a simple path in $\mathbb{C}$. Since the chart $\phi_q$ is a branched covering of degree $\text{ord}_q(\omega_0)+1$ each path starting at $q$ has $\text{ord}_q(\omega_0)$ candidates to be its twin paths. 
Remark that the restriction of a pair of twin paths parametrized by $[0,1]$ to some sub-interval $[0,t_0]$ is still a pair of twin paths.

Given a pair of twin paths $\gamma_1,\gamma_2$ in $C_0$, we can use the equivalence between abelian differentials and atlases formed by branched coverings over open sets in $\mathbb{C}$ and with transitions in the set of translations $z\mapsto z+ \const$ to produce a new abelian differential $\omega_1$ by cutting and pasting the twin paths as follows. Let $U_0$ be a small neighbourhood of $\gamma_1\cup\gamma_2$ where the $\phi_i$'s are defined. Cut $U_0$ along the segments $\gamma_1$ and $\gamma_2$ and glue the boundary on the left (resp. on the right) of $\gamma_1$ to the boundary on the right (resp. on the left) of $\gamma_2$ by identifying points that have the same image for $\phi_i$(see Figure \ref{fig:example1}). The fact that none of the twins passes through distinct nodes implies that at any point $m$, the new germ $(V,m)$ of singular surface has the property that $V\setminus m$ has one or two components. This implies that the new singular surface has only regular points and nodes. Let $U_1$ be the set of points obtained from $U_0$ after the gluing. The new nodal surface is equipped with a family of local branched coverings. Indeed, on the complement of $U_1$ we consider the family of branched coverings given by integrating $\omega$. On $U_1$ we consider the branched covering defined by the $\phi_i$'s after the new identification. This family of branched coverings is translation invariant and defines a stable form $\omega_1$ on some stable curve $C_1$, that has a non-zero component whenever the component has points belonging to $U_1$. The number of points where the local covering has degree at least two is finite. The order of $\omega_1$ at a point that has not been glued is the same as in $\omega_0$. The zeros or nodes of $\omega_1$ that lie on the glued points, can only appear at points corresponding to endpoints of the twin paths, or to zeros of $\omega_0$ lying on the twins. By construction, the total order of the zero divisor of $\omega_1$ on $U_1$ is the same as that of $\omega_0$ on $U_0$. Therefore, the genus of $C_1$ is the same as the genus of $C_0$.

%As for the markings, we can choose a family of cycles in $C_0\setminus U_0$ that induce a symplectic basis of the homology of $C_0$ and use them to mark $C_1$ through the natural identification, Since the transition functions associated to the charts of $\omega_0$ and $\omega_1$ are the same by construction, we have 
%In the case where there is no coincidence of endpoints (apart from both starting points), the neighbourhood $U$ can be taken as a topological disc. Choosing a basis of cycles of the homology on $C_0$ that do not cut $U$, we can define a basis of the homology of $C_1$ naturally and produce a marking $m_1$ of $C_1$ via this equivalence. By construction, we have
%$$\percompact(C_0,m_0,\omega_0)=\percompact (C_1,m_1,\omega_1).$$

Remark that the twin paths $\gamma_1,\gamma_2$ that we cut in $\omega_0$ produce a pair of paths in $C_1$. After inverting their orientation we get a pair of twin paths $\tilde{\gamma}_1,\tilde{\gamma}_2$ for $\omega_1$. Cutting and pasting the twin paths $\tilde{\gamma}_1,\tilde{\gamma}_2$ in $\omega_1$ we obtain $(C_0,m_0,\omega_0)$ back. Therefore the inverse surgery is naturally defined in the same manner. We will use this inversion of surgery especially when the base point of the twin paths is a node.

Suppose $\gamma_1,\gamma_2$ are twin paths of a stable form $\omega_0$ starting at a point $q$ and ending at points $q_1$ and $q_2$ respectively. Suppose that only the starting point and endpoints can be zeros or nodes of $\omega_0$. We are going to describe the structure of the twin paths $\tilde{\gamma}_1,\tilde{\gamma}_2$ of the form $\omega_1$ in some cases. Denote by $\tilde{q}$ the starting point of $\tilde{\gamma}_i$ and $\tilde{q}_i$ the final endpoint of $\tilde{\gamma}_i$. 

\underline{Example 1}: Suppose $q$, $q_1$ and $q_2$ are pairwise distinct points and $q_1$ and $q_2$ lie in the same component of $C_0^*$. Then the same is true for $\tilde{q}$, $\tilde{q}_1$ and $\tilde{q}_2$ (see Figure \ref{fig:example1}).

\begin{figure}[httb]
\centering
\def\svgwidth{\columnwidth}
\includegraphics[width=5in]{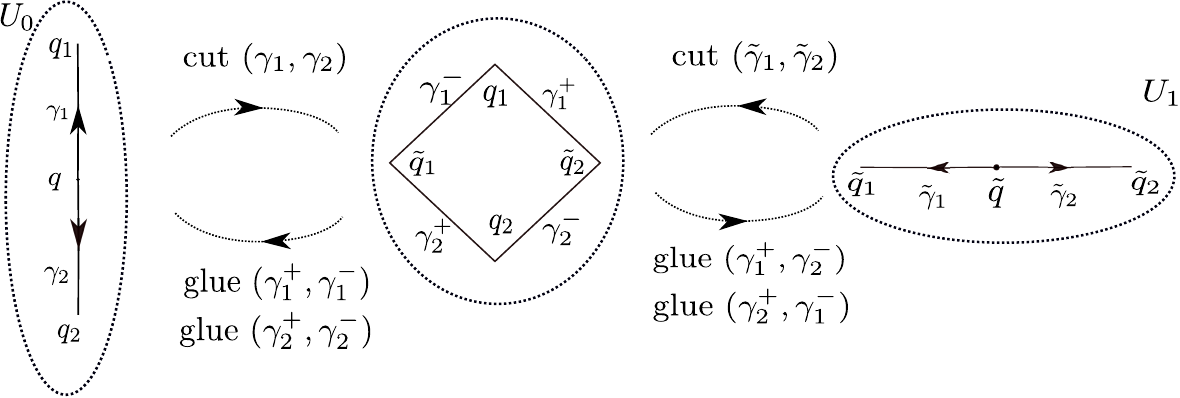}
 \caption{Cut and paste along twin paths contained in one part of the stable curve} \label{fig:example1}
\end{figure}

Comparing the local degrees of the branched coverings at each point we get: $$\text{ord}_{\tilde{q}}(\omega_1)+1=(\text{ord}_{q_1}(\omega_0)+1)+(\text{ord}_{q_2}(\omega_0)+1);$$ $$\text{ord}_q(\omega_0)+1=(\text{ord}_{\tilde{q}_1}(\omega_1)+1)+(\text{ord}_{\tilde{q}_2}(\omega_1)+1).$$
By taking appropriate combinations of twins and zeros we can do several types of changes to the zero divisor. For instance, if $q$ is a simple zero and $q_1,q_2$ are regular points, the surgery simply changes the position of the simple zero. We can also {\bf split a multiple zero}, meaning that two or three zeros of $\omega_1$ are produced from a single zero of $\omega_0$. It suffices to take any pair of twins starting at that zero with distinct endpoints. 

The inverse operation of the latter will be referred to as {\bf joining} two or three different zeros in one zero. This is the case, for instance, when two twins leaving a simple zero have distinct endpoints and at least one of them is a zero.

\underline{Example 2}: Suppose that $\gamma_1$ and $\gamma_2$ leave $q$ following different branches of $C_0$ at $q$. Then $q$ is a node and there are three interesting sub-cases that will be referred to as {\bf smoothing of a node}: 

\begin{enumerate}

\item \label{it:example node1} If $q_1,q_2$ and $q$ are pairwise distinct, then $\tilde{q_1}=\tilde{q}_2\neq \tilde{q}$, none of them is a node, and they are zeros of $\omega_1$. Each $\tilde{\gamma}_i$ joins the same pair of distinct zeros of $\omega_1$ (see Figure \ref{fig:example2_1}). 

 \begin{figure}[httb]
\centering
\def\svgwidth{\columnwidth}
\includegraphics[width=4in]{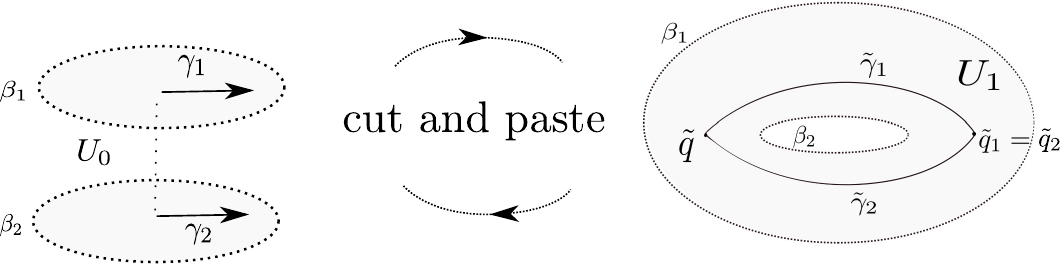}
 \caption{Cut and paste along twin paths contained in different branches of a node to smoothen the node and obtain a couple of twins between zeros} \label{fig:example2_1}
\end{figure}

\item If $q_1=q\neq q_2$ and $\gamma_1$ does not come back to $q$ through a different branch of $C_0$, then $\tilde{q}$ is not a node of $\omega_1$ and $\tilde{q}=\tilde{q}_1=\tilde{q}_2$, i.e. both $\tilde{\gamma}_1$ and $\tilde{\gamma}_2$ are closed loops based at a zero of $\omega_1$ (see Figure \ref{fig:example2_2}). Moreover, by considering the intersection of the twins with a neighbourhood of $\tilde{q}$ we observe that among the oriented segments there are two consecutive segments in the cyclic order with the same sign (either both enter or both leave the singularity). 

\begin{figure}[httb]
\centering
\def\svgwidth{\columnwidth}
\includegraphics[width=3.5in]{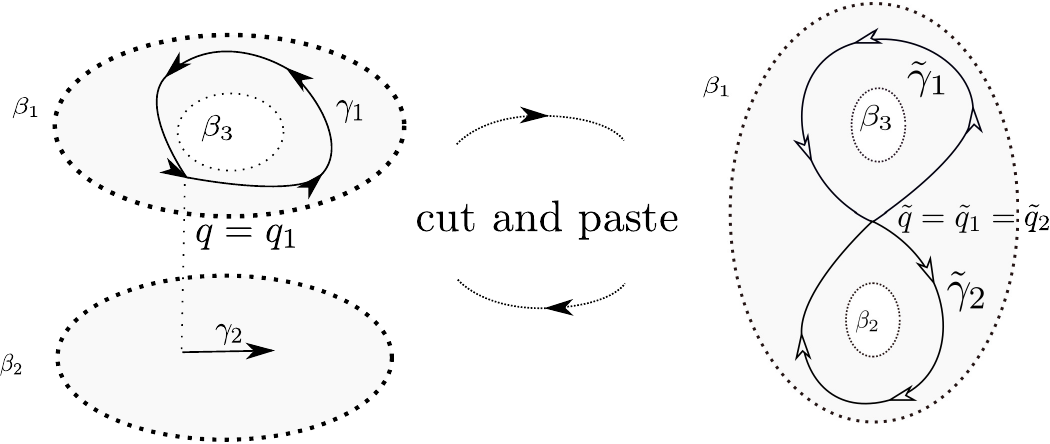}
\caption{$\gamma_1$ is a closed path that leaves and arrives to the node through one of the branches. After the surgery, there is no node, and a couple of closed twin paths based at a zero $\tilde{q}$.}
\label{fig:example2_2}
\end{figure}

\item If $q_1=q\neq q_2$ and $\gamma_1$ returns to $q$ through the other branch of the node, then the conclusion is the same as in case (2). The only difference with that case is that the cyclic order changes (see Figure \ref{fig:example2_3}).

 \begin{figure}[httb]
\centering
\def\svgwidth{\columnwidth}
\includegraphics[width=4in]{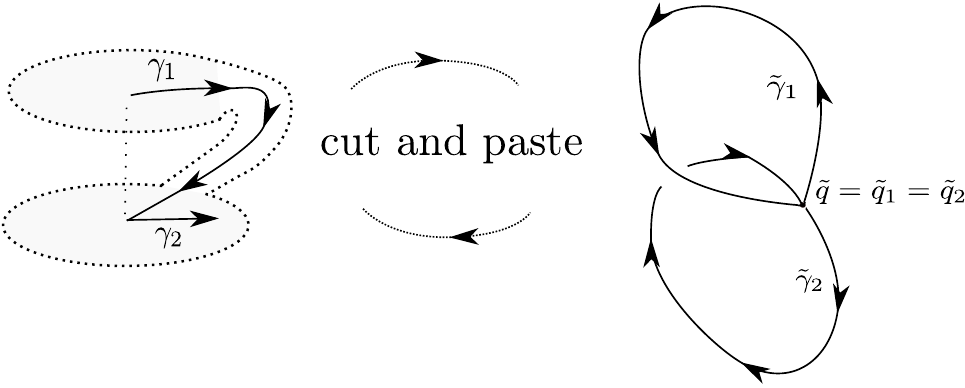}
\caption{$\gamma_1$ is a closed path based at the node. Its starting segment and end segment lie in different branches. After the surgery, there is no node, and a couple of closed twin paths based at a zero $\tilde{q}$}
\label{fig:example2_3}
\end{figure}
\end{enumerate}

\cbstart
The inverse surgeries of those examples allow to construct stable forms on nodal curves starting from certain stable forms on smooth curves. In Figure \ref{fig:degeneration genus 2} we depict an example in genus two joining a form in the minimal stratum with a form with a node from the point of view of the description of a form as a translation surface

 \begin{figure}[httb]
\centering
\def\svgwidth{\columnwidth}
\includegraphics[width=5in]{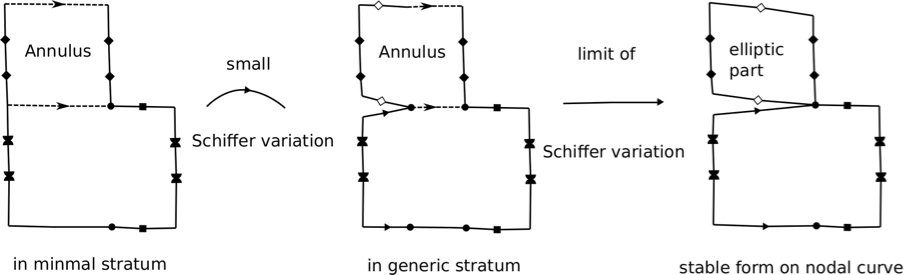}
\caption{The dotted lines represent  pairs of twins in the smooth genus two surface. After the Schiffer variation along them we obtain a stable form with a node.}
\label{fig:degeneration genus 2}
\end{figure}

In figure \ref{fig:twins for degeneration} we provide an example of combinatorics of a pair of closed twins whose Schiffer variation does not lead to a nodal curve.  

 \begin{figure}[httb]
\centering
\def\svgwidth{\columnwidth}
\includegraphics[width=3in]{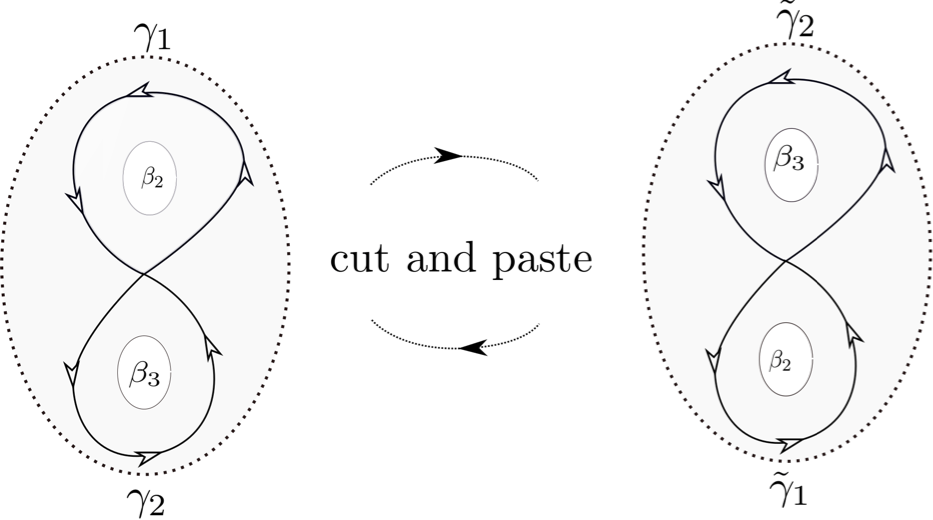}
\caption{The Schiffer variation along a pair of closed twins as in Lemma \ref{l:twins for degeneration} exchanges the relative position of $\beta_2$ and $\beta_3$, but no node appears.}
\label{fig:twins for degeneration}
\end{figure}

For future reference we state: 

\begin{lemma}\label{l:twins for degeneration}
If a pair of twin paths based at a zero share starting point and endpoint then either the cut and paste surgery produces a stable form with an additional node, or both start and endpoint coincide, and in the cyclic order at that point, any pair of consecutive oriented segments of the twins have opposite sign (as in Figure \ref{fig:twins for degeneration}) . 
\end{lemma}
\cbend

\begin{proof}
The cases with a node correspond to the inverse surgeries of the smoothing of a node (Example 2). The last case described in the lemma does not produce a node, since the base point obtained after the surgery does not separate the germ of surface in two connected components. 
\end{proof}

The surgery can be generalized to families of twin paths. We call a family of paths $\Gamma=\{\gamma_1,\ldots, \gamma_k\}$ a family of twin paths for a form $\omega_0$ if every pair forms a pair of twin paths for $\omega_0$. The cut and paste surgery associated to the family $\Gamma$ cuts each $\gamma_i$ producing two sides of each cut, and glues each to its adjacent twin side. \cbstart  When $\Gamma$ is maximal and all points in the paths are regular except for the initial point, that is a zero of order $k-1$, the surgery provides a deformation of the form in its substratum. We can use the families of twins to {\bf join} zeros of any order:\cbend 
\cbstart
\begin{lemma}\label{l:joining zeros}
Let $\Gamma=\{\gamma_1,\ldots,\gamma_k\}$ be a \emph{maximal} family of twin paths starting at a zero of order $k-1$ of a stable form $\omega_0$ . Suppose that all points in the paths are regular except for their common starting point and the endpoint of $\gamma_1$, that is a zero of $\omega_0$ distinct from the starting point. Then the form $\omega_1$ obtained from the cut and paste surgery associated to $\Gamma$ has less zeros than $\omega_0$ (See Figure \ref{fig:joining_zeros} for an example)
\end{lemma}

 \begin{figure}[httb]
\centering
\def\svgwidth{\columnwidth}
\includegraphics[width=4.5in]{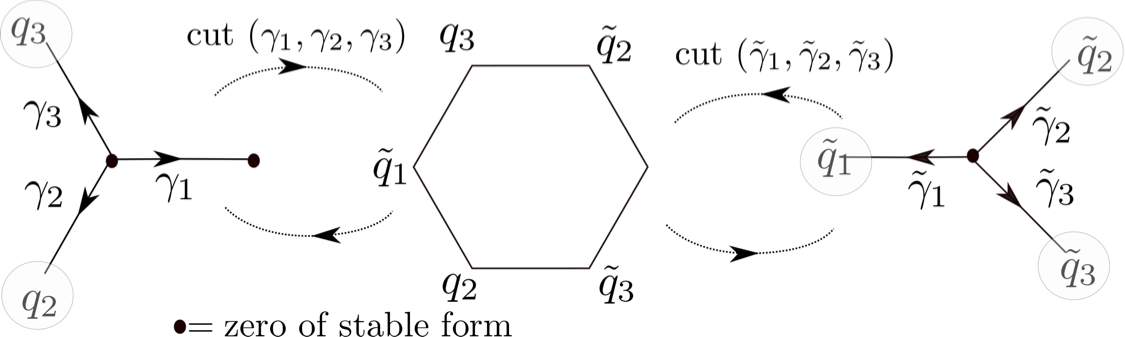}
 \caption{On the left a maximal family of twin paths at a double zero. Both endpoints of $\gamma_1$ are zeros. The maximality implies that, after the Schiffer variation, the points $\widetilde{q_j}$ have angle $2\pi$ (the circle around them represents this angle). The obtained form has one zero less.} \label{fig:joining_zeros}
\end{figure}
\cbend
\begin{proof}
After the surgery all the endpoints of the initial twins are glued together to form a single zero. The angle between any pair of adjacent twins is $2\pi$. Therefore the starting point of the twins splits into $k$ regular points after the surgery. All other glued points are regular and the conclusion follows.
\end{proof}

Since the restriction of a family of twin paths parametrized by $[0,1]$ to some interval starting at time zero is still a family of twin paths, the previous construction gives a stable form for each $t\in[0,1]$. We thus obtain a parametrized family $\{(C_t,\omega_t)\}_{t\in[0,1]}$ of stable forms. Moreover, at any $t\in(0,1)$ there are two natural families of twin paths on $(C_t,\omega_t)$. On the one hand there is the family that inverts the surgery from $(C_0,\omega_0)$ to $(C_t,\omega_t)$; on the other, there is the family of twin paths that correspond to the segments of the original twin paths parametrized by $[t,1]$. The cut and paste surgery applied to the latter family of twin paths on $(C_t,\omega_t)$ produces the same stable form $(C_1,\omega_1)$. With this descritption at hand, items (ii) and (iii) of Example 2 are just a concatenation of Example 1 and the inverse of item (i) in Example 2. To compare the periods of the constructed stable forms we need to produce a (homological) marking $m_t$ on each $C_t$, starting from a marking $m_0$ on $C_0$. It suffices to do that under the hypothesis of Example 1 and of the inverse of item (i) in Example 2.

In the case of Example 1 (see Figure \ref{fig:example1}), the neighbourhood $U_0$ of the twins $\gamma_1\cup \gamma_2$ can be taken as a topological disc. By construction there is a natural identification between $C_0\setminus U_0$ and $C_1\setminus U_1$. The map $\partial U_0\rightarrow \partial U_1$ has a unique extension up to isotopy to a homeomorphism $\overline{U_0}\rightarrow\overline{ U_1}$. The gluing of those two identifications produces a homeomorphism $C_0\rightarrow C_1$ that induces, by postcomposing it at the homology to $m_0$, a marking $m_1$ on $C_1$. With this marking it is readily verified that $$\percompact (C_0,m_0,\omega_0)=\percompact (C_1,m_1,\omega_1).$$
In the case of the inverse of item (i) in Example 2, we have the couple of twins $\tilde{\gamma}_1,\tilde{\gamma}_2$ that join two distinct zeros of a stable form $(C_1,\omega_1)$ on a smooth curve $C_1$ (the component they belong to). The neighbourhood $U_1$ is the annulus depicted in Figure \ref{fig:example2_1} surrounding the twins. The neighbourhood $U_0$ is a neighbourhood of the node that contains both twins $\gamma_0\cup \gamma_1$. There is a natural identification $C_1\setminus U_1\rightarrow C_0\setminus U_0$. Each boundary component of $U_1$ is sent to the boundary of a disc contained in one of the branches of the node, giving an orientation preserving homeomorphism $\partial U_1\rightarrow \partial U_0$. By arguing as before with each side of the pair of twins, up to isotopy there is a unique collapse $\overline{U_1}\rightarrow \overline{U_0}$ that sends the twins to the node and has the given values on the boundary. If there is a marking $m_1$ on $C_1$, it can be postcomposed to the action of the collapse $C_1\rightarrow C_0$ on homology, giving a homological marking $m_0$ of $C_0$. Again, the equality $\percompact (C_0,m_0,\omega_0)=\percompact (C_1,m_1,\omega_1)$ is easily verified.

With the given markings the parametrized family produces a map $[0,1]\rightarrow\Omega\mnc_g$ whose composition with the projection $\Omega\mnc_g\rightarrow \mnc_g$ to the space of marked curves $t\mapsto (C_t,m_t)\in\mnc_g$ is continuous by construction. On the other hand we claim that there exists a local trivialization of the fiber bundle $\Omega\mnc_g\rightarrow\mnc_g$ around the point $(C_t,m_t)$ where isoperiodic sets are contained in the constant sections of the local trivialization. Indeed, $\ker m_t$ is isotropic and we can find a symplectic basis $\{a_i,b_i: i=1,\ldots,g\}$ of $H_1(\Sigma_g)$ such that $\ker m_t$ is contained in the Lagrangian generated by $a_1,\ldots, a_g$.
The map $\Psi: \Omega U\rightarrow U\times\mathbb{C}^{g}$ defined by $$\Psi(C,m,\omega)=((C,m), \int_{m(a_1)}\omega,\ldots,\int_{m(a_g)}\omega)$$ is continuous thanks to Lemma \ref{l:holomorphic integral}. It preserves the fibers and it is also linear in restriction to each fiber. This restriction is injective since every form satisfying $\Psi(C,m,\omega)=(C,m, 0,\ldots,0)$ has zero residues at all non-separating nodes, hence at all nodes. In section \ref{ss: image of the period map} we will see that the condition implies that $\vol{\omega}=0$ and this volume is also the sum $\sum \vol(\omega_i)$ where $\vol(\omega_i)$ denotes the (positive) volume of the restriction $\omega_i$ of $\omega$ to the part $C_i$ of $C$. Hence $\vol(\omega_i)=0$ for all $i$. However a holomorphic form $\omega_i$ on a smooth curve $C_i$ has zero volume if and only if it is the zero form. Hence, the only possibility is that $\omega$ is zero everywhere. The map $\Psi$ is therefore a trivialization of the bundle $\Omega U\rightarrow U$. In this local trivialization the fibers of $\percompact$ are contained in the constant horizontal sections, which are continuous. Hence, the map $[0,1]\mapsto \Omega\mnc_g$ is continuous. The same argument works for a family of twin paths. 

\cbstart
\begin{definition}Let $\Gamma=\{\gamma_1,\ldots, \gamma_k\}$ be a (not necessarily maximal)  family of twin paths for $(C_0,m_0,\omega_0)$ parametrized by $t\in[0,1]$ and set $p=\percompact(C_0,m_0,\omega_0)$. The {\bf Schiffer variation} along $\Gamma$ is the continuous path \begin{equation} t\mapsto(C_t,m_t,\omega_t)\in\percompact ^{-1}(p)\label{eq:path_schiffer_variation}\end{equation}
where $(C_t,m_t, \omega_t)$ is obtained from $(C_0,m_0,\omega_0)$ by cutting and pasting the family of twins $\{\gamma_{1|[0,t]},\ldots, \gamma_{k|[0,t]}\}$. 
\end{definition}
\cbend
In the case of translation structures there is naturally a distinguished family of twin paths, namely those that have image in straight lines in $\mathbb{C}$. This produces a sub-family of paths in the fibers of $\percompact$. 
However, connectedness by paths in either of the families is equivalent to connectedness in the case of fibers of $\percompact$. The advantage of using straight paths is that the candidates to twins are invariant of the associated directional foliation. This allows to have some control on the embedding/intersection properties. In particular, a pair of geodesic segments of leaves of one of the directional foliations leaving the same singularity and having the same length, form a pair of twin paths. Let us analyze with more detail the oriented directional foliations.

\subsection{Periodic annuli}\label{ss:directionalfoliations}

The dynamics of each oriented directional foliation $\mathcal{G}_{\theta}$ induced by an abelian differential $\omega$ without residues or zero components on a smooth curve $C$ is well known. Indeed, by Maier's Theorem (see \cite{Maier} or \cite{Strebel}) there exist a finite number of saddle connections, that is, leaves $\gamma_1,\ldots, \gamma_n$ such that each $\gamma_{i}$ converges to some singular point in the positive direction and to some singular point in the opposite direction. Each component of $C\setminus\cup\gamma_i$ is saturated by $\mathcal{F}$ and is either a {\bf periodic annulus}, i.e. an annulus formed of closed leaves/geodesics, or {\bf minimal }, i.e. each leaf in the component is dense in the component. 

The length of all leaves in a periodic annulus is the same and coincides with the length of each of its boundary components. We orient each boundary component with the orientation of the foliation (not that induced by the orientation of the annulus). We call $b^{+}$ the boundary component that has the annulus to its right, and $b^{-}$ the boundary component that has the annulus on its left. Each boundary component of the annulus is identified with an ordered cycle of saddle connections $(\gamma_1^{\pm},\ldots,\gamma_k^{\pm})$ each starting at the singular point where the former ended. The angle that a saddle connection forms with the following is of $\pi$ to the corresponding side (right for $b^{+}$, left for $b^{-}$). Thus the intersection of the annulus with a neighbourhood of a zero of the abelian differential is a (possibly empty) family of sectors of angle $\pi$.

We recall an important existence result: 
\begin{theorem}[Masur, \cite{Masur}]
 For any non-zero abelian differential on a smooth curve there exists a periodic annulus. 
\end{theorem}

\subsection{Degeneracy}
We will use Schiffer variations to show the following
%\textcolor{green}{New statement}
\begin{proposition}\label{p:degeneration}
 Let $g\geq 2$ . Given  $(C,m,\omega)\in\Omega^{*}\mrs_{g}$ there exists a finite sequence of Schiffer variations joining it to a stable form with one node and no zero components. If  $\omega$ has a single zero we can suppose that the node separates the stable curve, and has an elliptic component.\end{proposition}

\begin{proof}
Let $(C,m,\omega)\in\Omega^{*}\mrs_g$. We will apply Schiffer variations to connect it to a point in $\Omega\mrs_g$ having a configuration of twin paths as in the first alternative of Lemma \ref{l:twins for degeneration}: two twin paths joining the same pair of zeros with a pair of non-alternating signs whenever the zeros coincide. In fact the twins that we will find are geodesic saddle connections. The application of the Lemma then allows to connect the latter to a point in the boundary strata via the associated Schiffer variations. Since Schiffer variations are paths in the fibers of $\percompact$, the proposition follows.

 First we will treat three cases where the boundary of some periodic annulus for $\omega$ provides the twins.  In those three cases the Schiffer variation leads to a stable form on a curve with one separating node and an elliptic component. Denote by $b^+$ and $b^-$ the boundary components of some periodic annulus $A$ of $\omega$.
 
 \underline {Case 1:} If $b^+$ and $b^-$ are closed saddle connections at a zero $q$ of $\omega$, meaning that each is formed by a single saddle connection starting and ending at $q$. Then they cannot both coincide. Indeed, if they did, the surface would have genus one, which is not the case. Therefore they are different and the extension of the chart of $\omega$ at $q$ along the saddle connections sends each of them to a segments of oriented straight line of the same length and direction. This implies that they form a pair of twin paths for $\omega$. To be able to apply Lemma \ref{l:twins for degeneration} it remains to check that there is a consecutive pair of separatrices at $q$ with the same sign. Since the annulus lies to the right of $b^+$ and to the left of $b^-$ the pair of twin segments leaving the singularity are consecutive in the cyclic order and we conclude. \cbstart
 The situation is similar to the one depicted in Figure \ref{fig:degeneration genus 2}.
 \cbend
 
 \underline{Case 2:} Suppose one of the saddle connections of $b^{+}$ is closed, and coincides with one of the saddle connections of $b^{-}$. Call the saddle connection $\gamma$. In this case we can find another periodic annulus satisfying the conditions of Case 1. Think of the universal cover of $A$ as an infinite horizontal band $\widetilde{A}\subset \mathbb{C}$. In each boundary component of $\widetilde{A}$ we have copies of $\gamma$ and they all point in the same direction (see Figure \ref{fig:annulus_cover}). Choose a copy on each boundary component of the band, and draw the parallel geodesics (straight lines) in the band joining corresponding points in the chosen copies of $\gamma$. By construction, for the foliation given by the direction of the geodesics we get a periodic annulus in $\omega$ whose boundary components are each formed by one closed saddle connection, as in Case 1.

\begin{figure}[httb]
\centering
\def\svgwidth{\columnwidth}
\includegraphics[width=3in]{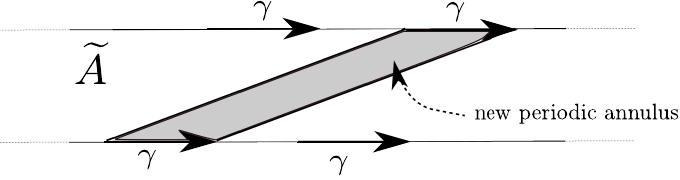}
 \caption{Case 2: Universal cover of the initial annulus $A$} \label{fig:annulus_cover}
\end{figure}

\underline{Case 3:} If $b^+$ and $b^-$ only pass through one zero $q$ of $\omega$, and at least one of them is formed by two saddle connections. The only case that is left is the case where none of the saddle connections of $b^+$ and $b^-$ coincide. We focus on the distribution of $\pi$-sectors described by the annulus at the zero $q$ (see Figure \ref{fig:saddle_colours}). 
 \begin{figure}[httb]
\centering
\def\svgwidth{\columnwidth}
\includegraphics[width=2in]{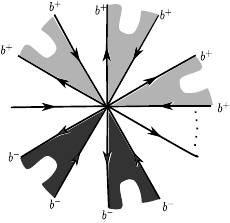}
 \caption{Case 3: The $\pi$-sectors corresponding to the boundary of $A$ at the saddle $q$ are coloured black if they correspond to sectors of $b^-$ and light grey if they correspond to sectors of $b^+$. Consecutive sectors of distinct colour have an even number of white $\pi$-sectors between them. If they are coloured the same there is an uneven number of white sectors between them.} \label{fig:saddle_colours}
\end{figure}

Some of the $\pi$-sectors correspond to $b^+$ and some to $b^-$. Draw them in two different colors, say black and grey. By hypothesis the intersections of the closures of any two such sectors are empty, since otherwise we would be in Case 2. In the cyclic order at $q$ there is a pair of such $\pi$-sectors of distinct color that are consecutive (among the coloured sectors). Between two such consecutive sectors of different colours there is necessarily an even number number of uncoloured $\pi$-sectors. Hence, up to changing the orientation of the directional foliation if needed, we can consider that the consecutive saddle connections $\gamma^+$ and $\gamma^-$ that lie consecutively in boundaries of those two sectors of different colors both leave the singularity. If the lengths of $\gamma^+$ and $\gamma^{-}$ coincide, they are twin paths satisfying all the conditions of the first alternative of Lemma \ref{l:twins for degeneration}. Otherwise, we consider the shortest among $\gamma^+$ and $\gamma^-$ and its twin path starting along the other. The Schiffer variation along this pair of twin paths produces a form that has a periodic annulus as in Case 2. 

This said we proceed to prove the Proposition by induction on the number of zeros of the form. 

Suppose first that $\omega$ has one zero. Then all the saddle connections appearing in the boundary of a periodic annulus start and end at the zero. Therefore we fall in one of Cases 1,2 or 3 described above.  Since in all cases we reached a stable form on a curve of compact type this proves the final statement of Proposition \ref{p:degeneration}. 

Suppose that we have proved Proposition \ref{p:degeneration} for all marked forms with $n$ zeros and let us prove it for a form $\omega$ with $n+1$ zeros on a smooth curve $C$. 

Let $z_1,\ldots,z_{n+1}$ be the zeros of $\omega$. Up to moving them via Schiffer variations we can suppose that the number $$d_{min}=\min_{j\neq k}\{\text{dist}(z_j,z_k)\}$$ is realized by a single pair of zeros, say $z_1,z_2$. We can further move the point $z_2$ to guarantee that $\int_{z_1}^{z_2}\omega$ is not in the countable set $\{\int_a\omega\in \mathbb{C}:a\in H_1(C)\}$.

Consider one of the geodesics $\gamma_1$ between $z_1$ and $z_2$ that realizes the distance between them. It is a saddle connection. If the zero $z_1$ has order $k$ there are $k+1$ oriented geodesics leaving $z_1$ in the same direction as $\gamma_1$. We extend each until distance $d_{min}$. By construction they do not pass through zeros of $\omega$, except for (maybe) at their endpoint, where they can only reach $z_2$. Indeed, if either reached $z_j$ with $j\neq 2$ one of the conditions of the definition of $z_1,z_2$ would be violated.

If one of the twins ends at $z_2$ we are done by Lemma \ref{l:twins for degeneration}. Otherwise the family of twin paths satisfies the hypothesis of Lemma \ref{l:joining zeros}. We use the Lemma to connect $\omega$ to a form with less zeros. The inductive hypothesis then concludes the proof. 
\end{proof}
For future reference we state 
\begin{corollary}\label{c:nonempty boundary iff haupt}
Let $p:H_1(\Sigma_g)\rightarrow \mathbb{C}$ be a homomorphism. Then there exists an abelian differential $(C,m,\omega)$ on a smooth curve $C$ satisfying $p=\per(C,m,\omega)$ if and only if the same is true for some stable form without zero components defined on a singular curve. 
\end{corollary}
\begin{proof}
Suppose first that there exists a marked abelian differential $(C,m,\omega)$ on a smooth curve $C$ satisfying $\per(C,m,\omega)=p$. Then Proposition \ref{p:degeneration} implies that there also exists an abelian differential with no zero component on a marked stable curve with a node, having periods $p$. \newline
For the converse, suppose that there exists a point in a boundary stratum contained in $\percompact^{-1}(p)$. By smoothening the nodes isoperiodically we obtain a form on a smooth curve. Alternatively, we can argue by Theorem \ref{p:local structure of fiber at bdry point}: the local fiber $L$ of $\percompact$ around this point satisfies that $L\setminus \partial L$ is nonempty. A point in the latter set provides a marked abelian differential on a smooth curve with period homomorphism given by $p$. 
\end{proof}

\section{Period homomorphisms of stable forms}

\label{s: isoperiodic sets on curves with one node}

\subsection{The image of the period map $\percompact_{g,0}$}
\label{ss: image of the period map}
Recall from subsection \ref{ss:singular flat metric} that a stable form $(C,\omega)$ induces a flat singular metric $\omega\otimes\overline{\omega}$ on $C^*$ with volume \[\vol(\omega)=\frac{i}{2}\int_C\omega\wedge\overline{\omega}.\] In particular, $0\leq \vol(\omega)\leq\infty$ and it is finite if and only if all the residues of $\omega$ at the nodes of $C$ are zero. If $\vol(\omega)=0$ then $\omega$ is the zero form.

A second fact more directly related with periods is that whenever $\omega$ has no zero components on $C$, and the set of periods $\Lambda=\{\int_a\omega\in\mathbb{C}: a\in H_1(C)\}$ is a lattice in $\mathbb{C}$,  integration along paths produces a branched covering \begin{equation}\pi:C\rightarrow \mathbb{C}/\Lambda \text{ whose primitive degree satisfies } \deg (\pi)=\frac{\vol(\omega)}{\vol(\mathbb{C}/\Lambda)}.\end{equation} In particular if the genus of $C$ is at least 2, $\deg (\pi)>1$.

Now, by Riemann's relations, the volume of $\omega$ can be calculated by using the periods of $\omega$. Indeed, for any symplectic basis $\{a_1,b_1,\ldots, a_g,b_g\}$ of $H_1(C)$ we have
 \begin{equation*}
 %\label{eq: riemann relations}
 \vol(\omega)=\int _{C}\Re \omega \wedge \Im \omega = \frac{i}{2}\int_{C}\omega\wedge\overline{\omega}=\frac{i}{2}\sum_{j=1}^g\int_{a_j}\omega\int_{b_j}\overline{\omega}-\int_{a_j}\overline{\omega}\int_{b_j}\omega= \end{equation*} $$=\sum_{j}\Im(\overline{\int_{a_j}\omega}\int_{b_j}\omega))$$
 
 The positivity of both terms implies that if the periods of a form are discrete, they cannot be contained in a real line and thus define a lattice.
 
 We extend the definitions of volume and degree to homomorphisms on symplectic modules:

 \begin{definition}
 Given a unimodular symplectic $\mathbb{Z}$-module $M$ and a homomorphism $p:M\rightarrow \mathbb{C}$ we define 
 \begin{itemize}
 \item The volume of $p$ as $\vol(p):=\Re (p)\cdot \Im(p)$ where the intersection is on the dual space $\text{Hom}(M,\mathbb{R})$. Up to a choice of a symplectic basis $\{a_j,b_j\}$ of $M$, the volume can be calculated as $$\vol(p)=\sum_{j}\Im(\overline{p(a_j)}p(b_j))$$
 \item Define the primitive degree of $p$, denoted $\deg(p)$ as $\infty$ if $p(M)$ is non-discrete and as $$\deg (p)=\frac{\vol(p)}{\vol(\mathbb{C}/p(M))}\text{ if } p(M)\subset \mathbb{C} \text{ is discrete. }$$ 
 \end{itemize}
For any symplectic submodule $V\subset M$ we denote $\vol_p(V):=\vol(p_{|V})$
\end{definition}

Remark that if $0<\deg(p)<\infty$, the kernel of $p$ has necessarily co-rank two. 

If $(C,m,\omega)$ is a homologically marked stable form with zero residues and no zero components on a marked stable curve $(C,m)$, then the homomorphism $p=\percompact(C,m,\omega)\in H^1(\Sigma_g,\mathbb{C})$ has the following properties:
\begin{itemize}
 \item[($H_1$)] $\vol (p) >0$ and
 \item [($H_2$)]If $g\geq 2$ , $\deg(p)>1$. 
 \end{itemize}

Haupt proved in~\cite{Haupt} that conditions $(H_1)$ and $(H_2)$ are in fact also sufficient for a homomorphism
$p:H_1(\Sigma_g)\rightarrow\mathbb{C}$ to be the period of a non-zero abelian differential on a smooth curve. For genera $g=2,3$ we have already given an argument in Theorem \ref{t:connectedness in genus 2 and 3}. We will give an alternative proof of this theorem in subsection \ref{ss:proofs of Haupt}.

If we allow the non-zero form to have some zero components, there is a case where $(H_2)$ is not satisfied: when there is a single non-zero component and it has genus one.
\subsection{Haupt homomorphisms}

Condition $(H_2)$ of a homomorphism has other equivalent statements that will be useful, and show that the  said exception is unique: 

\begin{proposition}
\label{p:Haupt characterizations}
Let $M$ be a unimodular symplectic module of rank $2g$ and $p:M\rightarrow \mathbb{C}$ be a homomorphism. Suppose $\vol(p)>0$ and $\Lambda=p(M)\subset\mathbb{C}$ discrete. Then the following are equivalent:
\begin{enumerate}
\item $\vol (p)\leq \vol (\mathbb{C}/\Lambda)$;
%\item $p$ is induced by a collapse $\Sigma_g\rightarrow \mathbb{C}/L$ of $g-1$ handles.
\item $\deg (p)=1$;
\item $\ker p$ is a symplectic submodule of $M$ (of rank $2g-2$).
\end{enumerate}
\end{proposition}

\begin{proof}

First remark that by virtue of Riemann's relations we have $\vol(p)\in\mathbb{Z}\vol(\mathbb{C}/\Lambda)$.
If we assume $(1)$ then the positivity of $\vol(p)$ implies $(2)$. Obviously $(2)$ implies $(1)$.
For the proof of $(2)\Rightarrow (3)$, we can normalize $p$ by post-composing it by a real linear map to suppose that $\Lambda=\mathbb{Z}^2\subset \mathbb{C}$. The kernels before and after this composition coincide. Up to taking a symplectic basis of $M$, we have $p:\mathbb{Z}^{2g}\rightarrow \mathbb{Z}^2\subset \mathbb{C}$ a homomorphism of positive volume. Then $\vol(p)=p_1\cdot p_2$ where $p=(p_1,p_2)$ and $p_i\in(\mathbb{Z}^{2g})^*$. On the other hand $\ker p=\ker p_1\cap \ker p_2$.
Now, the dual of $p_i$ is an element $u_i\in\mathbb{Z}^{2g}$ and we have the equality $p_1\cdot p_2=u_1\cdot u_2$. If $\vol(p)=\vol(\mathbb{C}/\mathbb{Z}^2)=1$, then $u_1,u_2$ generate a symplectic submodule of rank two. Its orthogonal is a symplectic submodule of rank $2g-2$ that coincides with $\ker p$.

%By Edmonds' result in \cite{Edmonds} we know that up to a symplectic isomorphism of $H_1(\Sigma_g)$, $p:H_1(\Sigma_g)\rightarrow\mathbb{C}$ factors through a map $\alpha_*$ where $\alpha:\Sigma_g\rightarrow\mathbb{C}/L$ is either a collapse of $g-1$ handles or a branched cover. The first case is characterized by having degree one, the second by having $\deg(\alpha)>1$.

%If $p$ does not satisfy $(H_2)$, then $\alpha$ cannot be a branched cover, since otherwise we would have $\vol(p)=\vol(\alpha_*)=\deg(\alpha)\vol(\mathbb{C}/L)>\vol(\mathbb{C}/L)$. Hence $\alpha$ is a collapse of $g-1$ handles. Therefore the kernel of $\alpha_*$ contains the symplectic unimodular submodule of $H_1(\Sigma_g)$ generated by the cycles of the collapsed handles. If $\ker p$ is bigger than that, then $\vol(p)$ would be zero. Hence $\ker p$ is the unimodular symplectic submodule corresponding to the $g-1$ collapsed handles.

Next suppose $(3)$ and let us prove $(2)$. Recall that $\ker p$ has rank $2g-2$. Since $\ker p$ is a symplectic unimodular submodule of $M$ we can take a symplectic basis $a_1,b_1,\ldots, a_{g-1},b_{g-1}$ of $\ker p$ and complete it to a symplectic basis of $M$ by adding $a_g,b_g$. Apply Riemann's relations to deduce that $0<\vol(p)=\Im(p(a_g)\overline{p(b_g)})$. This implies that $p(a_g)$ and $p(b_g)$ generate a lattice; since they also generate $\Lambda$, we have $\vol(p)=\vol(\mathbb{C}/\Lambda)$.

%If $\vol(p)=\vol(\mathbb{C}/L)$ then obviously $(H_2)$ is false.
%$(3)\Leftrightarrow (1)\Rightarrow (4)$ is given by Edmond's results (\cite{Edmonds}): $\neg (H_2)$ is
%equivalent to say that any $\alpha$ such that $p=\alpha_*\circ m$ has degree $1$, and we know
%$\vol(p)=\vol(\alpha_*)=\deg(\alpha)\vol(\mathbb{C}/L)$ and thus $(3)\Leftrightarrow (1)$. Morover, if this is the case, then by~\cite{Edmonds}
%$\alpha$ can be choosen to be a pinch map, that is to say $(4)$.
%Clearly $(4)\Rightarrow (2)$.
%We prove $(2)\Rightarrow(3)$. If $(2)$, then there exists $a,b\in\ker(p)^\perp$ such
%that $a\cdot b=1$ and $<a,b>\oplus \ker(p)=H_1(\Sigma_g)$. This implies $\vol(p)=\vol(p|_{<a,b>})=\vol(\mathbb{C}/L)$.

\end{proof}

\begin{definition}
 Let $M$ be a unimodular symplectic module of rank $2g$ and $p:M\rightarrow \mathbb{C}$ be a homomorphism. We say that $p$ is a Haupt homomorphism if $\vol(p)>0$ and either $g=1$ (and $\deg p=1$) or $g\geq 2$ and $\deg (p)>1$. The set of Haupt homomorphisms in $H^1(\Sigma_g,\mathbb{C})=\text{Hom}(H_1(\Sigma_g),\mathbb{C})$ will be denoted by $\mathcal{H}_g$.
 \end{definition}

\begin{corollary}
 A homomorphism $p:M\rightarrow\mathbb{C}$ on a unimodular module of rank $2g$ with $\vol (p)>0$ is \emph{not} Haupt if and only if $g\geq 2$ and $\ker p$ is a symplectic module of rank $2g-2$.
\end{corollary} 
\subsection{Homological invariants of isoperiodic boundary components}

The image of the extended period map $\percompact_g$ is contained in $\mathcal{H}_g\cup 0$. The fundamental remark for the proof of Theorem \ref{t:connectedness} is related to the fact that the normalization of some node of a form in $\Omega_0^*\mnc_g$ is again a collection of forms in some $\Omega_0^*\mnc_{h,n}$ with $h<g$ and $n\geq 1$. The periods of those forms are intimately related and are subject to satisfy Haupt's conditions. In more detail: 

Suppose $c$ is a simple closed curve in $\Sigma_g$ that is collapsed by the marking of a stable form $(C,m,\omega)\in\Omega_0^*\mnc_g$ to a node and set $p=\percompact(C,m,\omega)\in \mathcal{H}_g$. 

If $[c]=0$ in homology, then the curve is separating and  $C$ is a union $C_1\vee C_2$ of stable curves and the form $\omega=\omega_1\vee \omega_2$ where $\omega_i\in\Omega_0^*(C_i)$. On the other hand $m$ and the decomposition $H_1(C)=H_1(C_1)\oplus H_1(C_2)$ define a symplectic splitting $V_1\oplus V_2$ of $H_1(\Sigma_g)$. The map $m_i=m_{|V_i}$ is a marking of $H_1(C_i)$. The periods of $(C_i,m_i,\omega_i)$ are precisely $p_i:=p_{|V_i}$, which are Haupt homomorphisms of lower genus. Therefore we have a decomposition $$p=p_1\oplus p_2$$
of the Haupt homomorphism $p$ into two Haupt homomorphisms. In this case $$\vol(p)=\vol_p(V_1)+\vol_p (V_2)=\vol(p_1)+\vol(p_2)$$ and each factor of the sum is positive.
On the other hand $$\deg p=\deg p_1\cdot \left \lvert \frac{\image(p)}{\image(p_1)}\right \rvert+\deg p_2\cdot \left \lvert \frac{\image(p)}{\image(p_2)}\right \rvert.$$
If $[c]\neq 0$ then the curve is non-serparating and $p([c])=0$. The normalization of the node produces a stable curve $C_2$ of genus $g-1$ equipped with a form $\omega_2\in\Omega_0^*\mnc_{g-1}$. Let $V_1$ be a rank two symplectic submodule of $H_1(\Sigma_g)$ that contains $[c]$ and $V_1\oplus V_2$ its associated symplectic splitting of $H_1(\Sigma_g)$. Then, $m_{|V_2}$ defines a marking of $H_1(C_2)$. The map $p_2=p_{|V_2}$ corresponds to the periods of $(C_2,m_2,\omega_2)$ and is therefore a Haupt homomorphism. In this case the volume of $p_1=p_{|V_1}$ is equal to zero and we have $$\vol(p)=\vol(p_1)+\vol(p_2)=\vol(p_2).$$ The positivity of $\vol(p_2)$ is guaranteed by that of $\vol(p)$. However for $p_2$ to be a Haupt homomorphism we need to verify the primitive degree condition $(H_2)$. 

\begin{remark}\label{rem:recognizing pinchables}
Given any symplectic submodule $V$ of rank two containing a primitive element $a\in\ker p$, there is a natural symplectic isomorphism between $V^{\perp}$ and $a^{\perp}/\mathbb{Z}a$. Under this identification $p_{|V^{\perp}}$ is equal to the map $$p_a:a^{\perp}/\mathbb{Z}a\rightarrow\mathbb{C}$$ induced by $p$ on the quotient. In particular, if one is a Haupt homomorphism then so is the other.
\end{remark}
\begin{definition}
Let $p:M\rightarrow \mathbb{C}$ be a Haupt homomorphism defined on a symplectic unimodular module. Given a symplectic splitting $V_1\oplus\cdots\oplus V_k$ of $M$, we say that it is $p$-admissible if every $p_{|V_i}$ is a Haupt homomorphism. We say that a primitive element $a\in \ker (p)\setminus 0$ is pinched by $p$ if the induced homomorphism $p_a$ is a Haupt homomorphism. 
\end{definition}

\begin{remark}\label{rem:symplectic in ker is pinchable}
If $p:M\rightarrow\mathbb{C}$ is a Haupt homomorphism, then every primitive element of a symplectic submodule of $\ker p$ is pinched by $p$. 
\end{remark}

In Corollary \ref{c:realization of decompositions} we will prove inductively that all those decompositions and elements of $\ker p$ appear as induced by  marked stable forms.

Factors of rank two of $p$-admissible splittings and the primitive elements they contain will be important in the sequel.

\begin{definition}
Given a Haupt homomorphism $p:M\rightarrow\mathbb{C}$ define 
$$\mathcal{V}_p :=\{V\subset M: \text{rank}(V)=2, V\oplus V^{\perp} \text{ is a } p\text{ -admissible splitting of }M\}.$$ 
A primitive element $a\in M$ is said to be $p$-admissible if it belongs to some $V\in\mathcal{V}_p$. 
\end{definition}

\subsection{Volumes of symplectic submodules and $p$-admissible elements}
 A necessary condition for a symplectic splitting $V\oplus V^{\perp}$ to be $p$-admissible is that $\vol_p(V)\in (0,\vol (p))$. In this subsection we will analyze the possibilities for the volumes for rank two symplectic submodules containing a given element $a$ from an algebraic point of view. The idea behind the Lemmas is that we can parametrize such submodules by $a^{\perp}$. The volume function is then affine with linear part $$L_a:a^{\perp}\rightarrow\mathbb{R}\text{ defined by } L_a(c)=\Im(p(c)\overline{p(a)}).$$ Up to an isomorphism $\mathbb{R}
^2\cong\mathbb{C}$, $L_a(c)$ is the projection of $p(c)\in\mathbb{C}$ along the line $\ell=p(a)\mathbb{R}$ to its orthogonal line. Whenever the image is discrete (which amounts to having many elements having $p$-image in $\ell$) there will be none, or a finite number of values of the affine function in the interval $(0,\vol(p))$. In the other cases, the values are dense in $\mathbb{R}$. 

\begin{lemma}\label{l:rank}
 Let $W$ be a unimodular symplectic module of rank $2g\geq 4$ and $p:W\rightarrow\mathbb{C}$ a non-trivial homomorphism. Let $a\in W\setminus\ker p$ and suppose that one of the following conditions hold:
 \begin{enumerate}

 \item $\emph{rank}(a^{\perp}\cap p^{-1}(\mathbb{R}p(a)))\leq 2g-3$ or
 \item there exists a real line $\ell\neq \mathbb{R}p(a)$ in $\mathbb{C}$ containing $0$ and satisfying $$\emph{rank}(p(W)\cap \ell)>2.$$
\end{enumerate}
 Then for every $\varepsilon_1<\varepsilon_2$ there exists a symplectic submodule $V\subset W$ of rank $2$ such that $a\in V$ and $\varepsilon_1<\vol_p(V)<\varepsilon_2$. In particular if $a$ is primitive, it is also $p$-admissible.
\end{lemma}
\begin{proof} First suppose $a$ is primitive and choose $b\in W$ be such that $a\cdot b= 1$. For each $e\in a^\perp$ define $b'= b+e$. Denote $\alpha=p(a), \beta=p(b)$. The volume of $V= \mathbb Z a + \mathbb Z b'$ is given by
\begin{equation}\label{eq: volume of W} \text{vol}_p (V)= \Im (\beta \overline{\alpha}) + \Im (p(e) \overline{\alpha}). \end{equation}

If $1)$ holds, the form $e\in a^\perp \mapsto \Im (p(e) \overline{\alpha}) \in \mathbb R$ has image a submodule of $\mathbb{R}$ of rank
$$\rank( a^\perp) - \rank (a^{\perp}\cap p^{-1}(\mathbb R \alpha))\geq 2g-1-(2g-3)) \geq 2.$$ Therefore its image is dense in $\mathbb{R}$ and we can find the desired $e$ for any given $\varepsilon$'s. On the other hand, $2)$ implies $1)$ so the same conclusion holds.

If $a$ is not primitive, it is an integer multiple of some primitive $a_1$. If one of the conditions $(1)$ or $(2)$ is valid for $a$, then it is also valid for $a_1$. Since the lemma is valid for $a_1$ we obtain $V$ of appropriate volume containing $a_1$. By construction $a\in V$.

\end{proof}

\begin{proposition}\label{l:padmissible elements} Let $W$ be a unimodular symplectic module of rank $2g\geq 4$, and $p: W \rightarrow \mathbb C$ be a homomorphism whose image is not contained in a real line. Suppose that either $p$ is injective or $\rank (p)\geq 5$. Then at least one of the following possibilities occur
\begin{enumerate}
\item there exists an element $a\in W\setminus \ker p$ such that for any pair of real numbers $\varepsilon_1<\varepsilon_2$, there exists a rank two symplectic submodule $V\subset W$ containing $a$ such that $$\varepsilon_1< \vol_p(V) < \varepsilon_2.$$
\item $g= 2$, and for every real line $l\subset \mathbb C$, the preimage $p^{-1} (l)$ is either $\{0\}$ or a Lagrangian submodule of $W$.
\end{enumerate}
Moreover, if $g\geq 3$ there exists a submodule $\ker p\subset I\subset W$ of positive co-rank such that the conclusion is true for every primitive $a\in W\setminus I$. If $I=\ker p$ does not have the property, then there exists a unique real line $\ell\subset\mathbb{C}$ such that $\rank(p(W)\cap \ell)>2$. In this case, the module $I=p^{-1}(\ell)$ does the job.
\end{proposition}

\begin{proof} We first treat the case $g\geq 3$. Assume that for every real line $l\subset \mathbb C$, $p(W)\cap l$ has rank $\leq 2$. Take $a\in W\setminus\ker p$. Then $$\rank(a^{\perp}\cap p^{-1}(\mathbb{R}p(a))\leq \rank p^{-1}(\mathbb{R}p(a))\leq \rank \ker p+ \rank (p(W)\cap \mathbb{R}p(a))\leq$$ $$\leq 2g-5+2=2g-3$$ and we conclude by Lemma \ref{l:rank}. Therefore in this case the conclusion with $I=\ker p$ is valid.
If there exists a real line $l\subset \mathbb C$ such that $p(W)\cap l$ has rank $> 2$ therefore  item (1) in Proposition \ref{l:padmissible elements} follows by item (1) of Lemma \ref{l:rank}. In this case the submodule $I=p^{-1}(l)$ does the job.
For the uniqueness of the module $I$ as defined: suppose that there exists $a\in W\setminus \ker p$ and an interval $(\varepsilon_1,\varepsilon_2)$ in $\mathbb{R}$ such that no symplectic submodule $V\subset W$ containing $a$ satisfies $\varepsilon_1<\vol_p(V)<\varepsilon_2$. Then there exists a real line $l\subset \mathbb C$, $p(W)\cap l$ has rank $> 2$. On the other hand, no real line $l\neq\mathbb{R}p(a)$ can have this property, since otherwise $a$ would belong to rank two submodules of $W$ of arbitrary volume. Hence the only possibility is that $l=\mathbb{R}p(a)$. The submodule $I=p^{-1}(l)$ is the only of this type that has the desired property.

Next suppose $g=2$. Then $p$ is injective by assumption. If there exists a real line $l=\mathbb{R}p(w)\subset \mathbb C$ with $\rank (a^{\perp}\cap p^{-1} (l)) =1$ or $\rank(p^{-1}(l))>2$, we can use Lemma \ref{l:rank} to find the desired element $a\in W\setminus 0$. Otherwise we have that for every $a\in W\setminus 0$ $$\rank(a^{\perp}\cap p^{-1}(\mathbb{R}p(a)))=2\text{ and } \rank p^{-1}(\mathbb{R}p(a))\leq 2.$$ By injectivity of $p$ this means that $p^{-1}(\mathbb{R}p(a))\subset a^{\perp}$ for every $a$, so $p^{-1}(\mathbb{R}p(a))$ is a Lagrangian.

\end{proof}
\begin{corollary}\label{c:collinear non admissible}
 Let $W$ be a unimodular symplectic module of rank $2g\geq 6$, and $p: W \rightarrow \mathbb C$ be a homomorphism whose image is not contained in a real line such that $\rank (p)\geq 5$. Then, either all elements in $W\setminus\ker p$ are $p$-admissible or all elements that are not $p$-admissible have image in a real line $\ell\subset\mathbb{C}$ containing $0$ and lie in a submodule of rank $2g-2$ or $2g-1$ of $W$. Consequently for every real line $l\neq \ell$ passing through $0$ we have  $\rank p^{-1}(l) \leq 2$. 
\end{corollary}
\begin{example}\label{ex:genus 2 exception}
In $\mathcal{H}_2$ there are examples of injective homomorphisms for which the volume of symplectic submodules can only take discrete values. They necessarily correspond to case (2) in Proposition \ref{l:padmissible elements}. These examples correspond precisely to the periods of forms belonging to Hilbert modular invariant submanifolds (see Theorem \ref{t:dynamics}). Let $W$ be a rank $4$ unimodular symplectic module and the homomorphism $p:W\rightarrow\mathbb{C}$ given on a symplectic basis $a_1,b_1,a_2,b_2$ by
$\alpha_1 = 1,\ \beta_1 = i \sqrt{D}, \ \alpha_2 = \sqrt{D}, \ \beta_2 = i, $
where $D\geq 2$ is an integer. Then, direct calculation shows that for any symplectic submodule $V$ of $W$ we have $\text{vol} _p (V) \in \sqrt{D} + \mathbb Z$.
%Indeed, taking a symplectic pair $a,b$ of $W$, and writing
%$$a = \sum m_i a_i + n_i b_i , \ \ \ b = \sum _i m_i' a_i + n_i' b_i, $$
%we have
%$$ a\cdot b = m_1 n_1' - n_1 m_1' + m_2 n_2' - n_2 m_2' = 1.$$
%A straightforward computation shows that the volume of $V= \mathbb Z a + \mathbb Z b$ is
%thus $$\text{vol}_p (W) = \Im \big( (m_1 ' + n_1 ' i \sqrt{D} + m_2 ' \sqrt{D} + n_2' i ) ( m_1 - n_1 i\sqrt{D} + m_2 \sqrt{D} - n_2 i)\big) $$ so $$ \text{vol}_p (W)= (n_2' + n_1' \sqrt{D}) (m_1 + m_2 \sqrt{D}) -(m_1' + m_2' \sqrt{D}) (n_2 + n_1 \sqrt{D}) $$ and finally
%$$ \text{vol}_p (V)= n_2'm_1 - m_1' n_2 + D (n_1' m_2 - m_2' n_1 ) + \sqrt{D},$$
%which ends the proof of  \eqref{eq: bound from below}.
Even if the possible volumes of symplectic submodules form a discrete set, there are an infinite number of elements in $\mathcal{V}_p$, all having volumes in a finite set of values. 
%In the given example with $D=2$ the only possibilities for the volume of a module in $\mathcal{V}_p$ are $\sqrt{2}$ or $\sqrt{2}+1$.
%Then $X$ identifies with $ \mathbb Z [\sqrt{D}] ^2$, since any period $\alpha$ is written $\alpha = \alpha' + i \alpha ''$ with $\alpha',\alpha'' \in \mathbb Z[\sqrt{D}]$. Under this identification, the symplectic form on $X$ is the $\sqrt{D}$-part of the determinant on $\mathbb Z[\sqrt{D}]^2$, namely $$ a \cdot b = \frac{1}{2\sqrt{D}} \big((\alpha ' \beta '' - \alpha '' \beta ') - (\alpha ' \beta '' - \alpha '' \beta ')^\sigma \big) ,$$ $\sigma$ being the Galois involution on $\mathbb Z [\sqrt{D}]$.
\end{example}
\subsection{Existence of pinched elements for non-injective $p\in\mathcal{H}_g$}

\begin{lemma}\label{l:adding handles to Haupt}
Suppose $V_1$ is a symplectic module of rank $\geq 4$, and $V_2$ one of rank $\geq 2$. Let $p_i:V_i\rightarrow\mathbb{C}$ be a homomorphism for $i=1,2$. Suppose $p_1$ is Haupt and $\vol(p_2)\geq 0$. Then $p=p_1\oplus p_2: V_1\oplus V_2\rightarrow\mathbb{C}$ is a Haupt homomorphism.
\end{lemma}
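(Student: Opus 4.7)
The plan is to verify the two parts of the Haupt condition for $p = p_1 \oplus p_2$. Positivity is immediate by additivity of the symplectic volume along orthogonal symplectic decompositions:
\[
\vol(p) \;=\; \vol(p_1) + \vol(p_2) \;>\; 0,
\]
since $\vol(p_1) > 0$ (as $p_1$ is Haupt) and $\vol(p_2) \geq 0$ by hypothesis. It remains to verify the second Haupt condition: whenever the image $L := p(V_1 \oplus V_2) \subset \mathbb{C}$ is a lattice, $\ker p$ is not a symplectic submodule of rank $2g-2$, where $2g := \rank V_1 + \rank V_2 \geq 6$.

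So assume $L$ is a lattice. The first step I would make is the observation that $L_1 := p_1(V_1)$ is then also a lattice. Indeed $L_1 \subseteq L$ is discrete, and the hypothesis $\vol(p_1) > 0$ forces the real linear forms $\Re p_1$ and $\Im p_1$ on $V_1$ to be $\mathbb{R}$-linearly independent (otherwise the alternating pairing $\Re p_1 \cdot \Im p_1$ would vanish by skew-symmetry), so $L_1$ spans $\mathbb{C}$ over $\mathbb{R}$; a discrete subgroup of $\mathbb{C}$ of full real rank is a lattice.

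The heart of the argument is then to apply Proposition~\ref{p:Haupt characterizations} to $p_1$. Since $p_1$ is Haupt with lattice image $L_1$ (and $g_1 := (\rank V_1)/2 \geq 2$), the equivalence $(1) \Leftrightarrow (3)$ gives $\vol(p_1) \neq \vol(\mathbb{C}/L_1)$. In fact one has the stronger strict inequality $\vol(p_1) > \vol(\mathbb{C}/L_1)$, by a topological degree argument: any character with lattice image is realized by a continuous map $f : \Sigma_{g_1} \to \mathbb{C}/L_1$ of integer degree $d$, and $\vol(p_1) = d \cdot \vol(\mathbb{C}/L_1)$; the positivity $\vol(p_1)>0$ gives $d \geq 1$, and the Haupt condition rules out the degree-one collapse case ($d=1$), so $d \geq 2$.

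To conclude, since $L_1 \subseteq L$ as lattices, $\vol(\mathbb{C}/L_1) = [L:L_1]\, \vol(\mathbb{C}/L) \geq \vol(\mathbb{C}/L)$. Combining,
\[
\vol(p) \;\geq\; \vol(p_1) \;>\; \vol(\mathbb{C}/L_1) \;\geq\; \vol(\mathbb{C}/L),
\]
so $\vol(p) \neq \vol(\mathbb{C}/L)$; the equivalence $(2) \Leftrightarrow (3)$ of Proposition~\ref{p:Haupt characterizations} applied to $p$ then ensures $\ker p$ is not a rank $2g-2$ symplectic submodule, so $p$ is Haupt. The only mildly delicate point in this plan is establishing the strict inequality $\vol(p_1) > \vol(\mathbb{C}/L_1)$ rather than mere non-equality; it is the only place where the interpretation of $\vol$ as an integer topological degree is needed, and where the hypothesis $\rank V_1 \geq 4$ (making the Haupt condition on $p_1$ nontrivial) is used.
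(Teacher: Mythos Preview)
Your proof is correct and follows essentially the same route as the paper's: both establish the chain $\vol(p) \geq \vol(p_1) > \vol(\mathbb{C}/L_1) \geq \vol(\mathbb{C}/L)$ and invoke the volume characterization of Proposition~\ref{p:Haupt characterizations}. You are simply more explicit than the paper about why $L_1$ is a lattice and why the middle inequality is strict (the paper compresses this to ``the strict inequality comes from the Haupt condition for $p_1$'', which is exactly your degree argument).
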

\begin{proof} We already have $\vol(p)=\vol(p_1)+\vol(p_2)\geq\vol(p_1)>0$.
If $p$ were not Haupt, then $\vol(p)=\vol(\mathbb{C}/\image(p))$. On the other hand $\image(p_1)\subset \image(p)$ are discrete and therefore $$\vol(\mathbb{C}/\image(p))\leq\vol(\mathbb{C}/\image(p_1))<\vol(p_1)\leq \vol(p)$$ where the strict inequality comes from the Haupt condition for $p_1$.

\end{proof}
\begin{lemma}\label{lem:symplectic submodules of kerp}
 Let $p:V\rightarrow \mathbb{C}$ be a homomorphism on a rank $2g\geq 6$ unimodular symplectic module $V$ and suppose $W\subset \ker p$ is a symplectic submodule of rank at most $2g-4$. Then $p$ is Haupt if and only if $p_{|W^{\perp}}$ is Haupt. If both are Haupt, then
 \begin{enumerate}
 \item all primitive elements in $W$ are pinched by $p$;
 \item a primitive $a\in\ker (p)\cap W^{\perp}$ is pinched by $p$ if and only if it is pinched by $p_{|W^{\perp}}$.
 \end{enumerate}
 Whenever  $p$ is Haupt and $\deg (p)<\infty$ there exists such a $W$ of (maximal) rank $2g-4$.
\end{lemma}
\begin{proof}

 We have that $V=W\oplus W^{\perp}$ is a symplectic splitting and $\vol_p(W)=0$. Hence $\vol_p=\vol_p(W^{\perp})$ is positive as soon as one of them is. As for the second of Haupt's conditions, remark that $\ker p$ is a symplectic submodule of rank $2g-2$ if and only if $\ker p\cap W^{\perp}$ is a symplectic submodule of rank $\text{rank}(W^{\perp})-2$. By Proposition \ref{p:Haupt characterizations} we have that under the positivity of the volume hypothesis, $p$ is not Haupt if and only if $p_{|W^{\perp}}$ is not Haupt.

 To prove item (1), we remark that for any $a\in W$, $p_a$ is a Haupt homomorphism if and only if $p_{|W^{\perp}}$ is. As for item (2), take $a\in \ker p\cap W^{\perp}$ and $W_1\subset W^{\perp}$ a symplectic submodule of rank two containing $a$. Take the splitting $W^{\perp}=W_1\oplus W_2$. If $a$ is pinched by $p_{|W^{\perp}}$, then $p_{|W_2}$ is a Haupt homomorphism. By Lemma \ref{l:adding handles to Haupt} $p_{|W_2\oplus W}$ is also a Haupt homomorphism which implies that $a$ is pinched by $p$. If $a$ is not pinched by $p_{|W^{\perp}}$ then $W_2\cap\ker p$ is a symplectic submodule of rank $2g-6$ and thus $\ker p\cap W_1^{\perp}$ is a symplectic submodule of rank $2g-4$. This implies that $p_a$ is not Haupt, and thus $a$ is not pinched by $p$.

Using the normal form for homomorphisms of finite primitive degree, see Lemma \ref{l:finite degree orbit} proved in Appendix \ref{s:appendix2}, we find a rank $2g-4$ symplectic submodule $W\subset\ker p$.  
\end{proof}

\begin{lemma}\label{l:existence of pinched classes}

Let $g\geq 2$ and $p\in\mathcal{H}_g$ be a Haupt homomorphism with $\ker p\neq 0$. Then either all primitive elements of $\ker p$ are pinched by $p$, or, $g\geq 3$, $\ker p$ has rank $\geq 2g-3$ and contains a symplectic submodule of rank $2g-4$ (whose primitive elements are pinched by $p$).
\end{lemma}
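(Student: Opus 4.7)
The first observation is that the volume condition in the Haupt property is automatic for $p_a$: if $a\in\ker p$ is primitive and $b\in H_1(\Sigma_g)$ satisfies $a\cdot b=1$, then $H_1(\Sigma_g)=(\mathbb{Z}a\oplus\mathbb{Z}b)\oplus\{a,b\}^{\perp}$ orthogonally, $a^{\perp}/\mathbb{Z}a\cong\{a,b\}^{\perp}$ as symplectic modules, and $\vol(p_a)=\Im(\overline{p(a)}p(b))+\vol(p|_{\{a,b\}^{\perp}})=\vol(p)>0$ since $p(a)=0$. In particular, if $g=2$ then $g-1=1$ and the second clause of the Haupt condition is vacuous, so every primitive $a\in\ker p$ gives a Haupt $p_a$ and we are done.

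Assume now $g\geq 3$ and that some primitive $a_0\in\ker p$ fails to give a Haupt $p_{a_0}$. Then $\ker p_{a_0}\subset a_0^{\perp}/\mathbb{Z}a_0$ is a symplectic submodule of rank $2g-4$, and its preimage $a_0^{\perp}\cap\ker p$ has rank $2g-3$, which gives the bound $\rank\ker p\geq 2g-3$ in the statement. Since the symplectic form on $a_0^{\perp}/\mathbb{Z}a_0$ is literally induced from the restriction of the form on $H_1(\Sigma_g)$ to $a_0^{\perp}$, any lift $\tilde e_1,\tilde f_1,\ldots,\tilde e_{g-2},\tilde f_{g-2}\in a_0^{\perp}\cap\ker p$ of a symplectic basis of $\ker p_{a_0}$ spans a submodule $W\subset\ker p$ that is symplectic of rank $2g-4$ in $H_1(\Sigma_g)$. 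This is the candidate $W$.

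It remains to verify that for every primitive $a\in W$, $p_a$ is Haupt. Because $W$ is unimodular symplectic we have an orthogonal decomposition $H_1(\Sigma_g)=W\oplus W^{\perp}$ with $W^{\perp}$ symplectic of rank $4$, and since $W\subset\ker p$ this gives $\ker p=W\oplus K$ where $K=\ker p\cap W^{\perp}$. Writing $r=\rank\ker p\in\{2g-3,2g-2\}$ (the upper bound $r\leq 2g-2$ follows from $\vol(p)>0$), we have $\rank K=r-(2g-4)$. For any primitive $a\in W$, pick $b\in W$ with $a\cdot b=1$ and write $W=\mathbb{Z}a\oplus\mathbb{Z}b\oplus W''$ with $W''$ symplectic of rank $2g-6$; then $(a^{\perp}\cap\ker p)/\mathbb{Z}a\cong W''\oplus K$ orthogonally, of rank $2g-6+\rank K$.

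If $r=2g-3$, this rank equals $2g-5\neq 2g-4$, so $\ker p_a$ cannot be a rank $2g-4$ symplectic submodule and $p_a$ is Haupt. If $r=2g-2$, then $K$ has rank $2$, and the Haupt hypothesis on $p$ forces $\ker p$ not to be symplectic; since $\ker p=W\oplus K$ is an orthogonal sum with $W$ unimodular, the restriction of the form to $K$ must fail to be unimodular, which for a rank $2$ submodule of the rank $4$ symplectic module $W^{\perp}$ means $K$ is Lagrangian. Then the form on $W''\oplus K$ has $K$ in its radical and is not unimodular, so $(a^{\perp}\cap\ker p)/\mathbb{Z}a$ is not a symplectic submodule of $a^{\perp}/\mathbb{Z}a$, and again $p_a$ is Haupt. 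The main subtle point is precisely this last step: the Haupt condition on $p$ is what prevents $K$ from being a symplectic pair and thereby propagates the Haupt property from $p$ to $p_a$ uniformly across all primitive $a\in W$.
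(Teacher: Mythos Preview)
Your argument is essentially correct and in fact more detailed than the paper's, but it contains one false intermediate claim. In the case $r=2g-2$ you assert that a primitive rank~$2$ submodule $K$ of the rank~$4$ unimodular symplectic module $W^{\perp}$ on which the form is not unimodular must be Lagrangian. This is not true: for instance, take $g=4$, a symplectic basis $a_1,b_1,\ldots,a_4,b_4$, and define $p$ by $p(a_1)=2$, $p(b_1)=i$, $p(a_2)=1$, and $p=0$ on the remaining generators. Then $p\in\mathcal{H}_4$ with $\rank\ker p=6=2g-2$, one may take $W=\langle a_3,b_3,a_4,b_4\rangle$, and $K=\ker p\cap W^{\perp}=\langle a_1-2a_2,\,b_2\rangle$ is primitive with $(a_1-2a_2)\cdot b_2=-2$, so $K$ is neither unimodular nor Lagrangian.

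Fortunately the Lagrangian claim is an unnecessary detour. What you actually need is that $W''\oplus K$ is not unimodular, and this follows immediately from the fact that it is an \emph{orthogonal} direct sum with $W''$ unimodular and $K$ not: the determinant of the form on $W''\oplus K$ is the product of the determinants on the two factors, hence is not $\pm 1$. With this one-line fix your proof goes through.

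For comparison, the paper's argument for the last assertion is shorter and avoids computing $\ker p_a$ explicitly: since $W$ is unimodular symplectic, any primitive $a\in W$ lies in a rank~$2$ symplectic submodule $V'\subset W\subset\ker p$; if $p_a\cong p_{|V'^{\perp}}$ failed Haupt, then $\ker p\supset V'\oplus\ker(p_{|V'^{\perp}})$ would be symplectic of rank $2g-2$, contradicting $p\in\mathcal{H}_g$. Your explicit decomposition $\ker p_a\cong W''\oplus K$ and the case split on $r$ give a bit more information (for instance the rank of $\ker p_a$), at the cost of a longer computation.
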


\begin{proof}
Suppose that there exists a primitive element $a\in\ker p$ such that $p_a$ is not Haupt and take a rank two symplectic submodule $V$ containing $a$. Then $p(V)\neq 0$ and $0<\vol(p)=\vol_p(V)+\vol_p(V^{\perp})=\vol_p(V^{\perp})$.

If $g=2$ then $V^{\perp}$ has rank two, and $p_{V^{\perp}}$ is a Haupt homomorphism. A contradiction. Thus in genus $g=2$ every $a\in\ker p$ satisfies that $p_a$ is Haupt, therefore pinched by $p$.

If $g\geq 3$, then $p_{|V^{\perp}}$ is of positive volume and primitive degree one, therefore contains a symplectic submodule $W\subset \ker p$ of rank $2g-4$. Moreover $W\oplus\mathbb{Z}a\subset\ker p$.
\end{proof}

%\begin{example}\label{ex:non compact type}
%Let $g\geq 3$ and $p\in\mathcal{H}_g$ of primitive degree two and image the Gaussian integers. Using the normal form of period of finite primitive degree (see Lemma \ref{l:finite degree orbit} proved in Appendix \ref{s:appendix2}) it is defined on a symplectic basis $\{a_i,b_i\}$ by $p(a_1)=p(a_2)=1$, $p(b_1)=p(b_2)=i$ and zero elsewhere. In this case $\vol(p)=2$ and $\deg p=2$. The volume of any symplectic submodule of $H_1(\Sigma_g)$ is an integer. Suppose there exists a $p$-admissible splitting $V_1\oplus V_2$ of $H_1(\Sigma_g)$. The volume of each component is a positive integer. Hence, the only possibility is that each component has volume one and thus $\vol(V_i)=\vol(\mathbb{C}/\text{Im}(p))$. One of both factors, say $V_1$, has even rank $\geq 4$ so $p|_{V_1}$ is not a Haupt homomorphism, contradicting the definition of $p$-admissible splitting.\end{example}

\subsection{Alternative proofs of Haupt's Theorem}\label{ss:proofs of Haupt}
 In this section we provide an alternative proof of Haupt's Theorem  in \cite{Haupt}. 
\begin{theorem}[Haupt's Theorem]\label{c:Haupt}
A character $p\in H^1(\Sigma_g,\mathbb{C})$ is the period of some marked abelian differential on a smooth curve if and only if it is a Haupt homomorphism.
\end{theorem}
\begin{proof} 
For genus one and $p\in \mathcal{H}_1$ the condition $\vol(p)>0$ implies that $p$ is injective and the image of $p$ is a lattice $\Lambda\subset \mathbb{C}$. The abelian differential $dz$ on $\mathbb{C}/\Lambda$ has periods $p$.

For genus $2$ (and even three) we can apply Theorem \ref{t:connectedness in genus 2 and 3}. 

 For $g\geq 3$, we can proceed by induction. Suppose Haupt's Theorem is true for all genera up to $g-1\geq 2$ and take $p\in\mathcal{H}_g$.

\underline{Case 1:} If $\ker p=0$. By Proposition \ref{l:padmissible elements} applied for $g\geq 3$ we deduce that there exists a $p$-admissible splitting $V_1\oplus V_2$. The restriction $p_i=p_{|V_i}$ is an injective Haupt homomorphism. By inductive hypothesis we can realize $p_i$ as the periods of a marked abelian differential  on a smooth curve $(C_i,m_i,\omega_i)$. The marked nodal form $(C_1\vee C_2, m_1\oplus m_2, \omega_1\vee\omega_2)$ has period $p$, no zero components and induces the splitting $V_1\oplus V_2$. 

\cbstart
Before proceeding to the proof of the non-injective case, we state a well-known lemma from the theory of Riemann surfaces: 
\begin{lemma}\label{lbz}
 Let $(C,\omega)$ be a non-zero abelian differential on a smooth curve of genus $g\geq 1$. For any $z\in\mathbb C$ there is an
 immersed arc $\delta$ in $C$ so that $\int_\delta\omega=z$. Moreover, if $g\geq 2$, the arc
 $\delta$ can be chosen to be embedded with distinct endpoints.
\end{lemma}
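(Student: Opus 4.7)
My approach goes through the developing map of $\omega$ on the universal cover of $C$. Fix $\tilde p_0\in\tilde C$ above $p_0:=\pi(\tilde p_0)$ and consider the holomorphic map
\[
F\colon \tilde C\to\mathbb C,\qquad F(\tilde p)=\int_{\tilde p_0}^{\tilde p}\pi^*\omega,
\]
which satisfies $F(\gamma\cdot\tilde p)=F(\tilde p)+\chi(\gamma)$ for the period character $\chi\colon \pi_1(C)\to\mathbb C$. The first step is to prove that $F$ is surjective. Because $\vol(\omega)>0$, Riemann's bilinear relations force the period group $P=\chi(\pi_1(C))\subset\mathbb C$ to span $\mathbb C$ over $\mathbb R$, so $P$ is either dense in $\mathbb C$ or a lattice. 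In the dense case, the open image $F(\tilde C)$ is $P$-invariant and contains a dense subset of $\mathbb C$, hence equals $\mathbb C$. In the lattice case, $F$ descends to a nonconstant holomorphic map $C\to\mathbb C/P$, which is surjective by compactness, so $F$ itself is surjective.

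Granted this, the first assertion is immediate: given $z\in\mathbb C$, pick any $\tilde q\in F^{-1}(z)$ and any smooth embedded arc $\tilde\beta$ in the simply connected surface $\tilde C$ from $\tilde p_0$ to $\tilde q$. Its projection $\beta=\pi\circ\tilde\beta$ is a smooth immersion (since $\pi$ is a local diffeomorphism) with $\int_\beta\omega=F(\tilde q)=z$.

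For the "moreover" statement when $g\geq 2$, the extra flexibility comes from the fact that $F$ factors through the intermediate cover $C_\chi=\tilde C/\ker\chi$: when $g\geq 2$ the commutator subgroup $[\pi_1(C),\pi_1(C)]\subset\ker\chi$ is infinite, so $F^{-1}(z)$ is infinite and its image in $C$ accumulates on $C$. I would therefore choose $\tilde q$ with $q:=\pi(\tilde q)\ne p_0$ and $q$ not a zero of $\omega$, so that $\beta$ already has distinct endpoints. To upgrade $\beta$ to an embedded arc with the same integral, I would invoke the classical surface-topological fact that on any orientable surface every relative homotopy class of arcs between two distinct points admits an embedded representative (obtained by iterated bigon-removal surgeries of self-intersections of an immersed representative). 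Since $\int\omega$ depends only on the relative homotopy class, this surgery preserves the prescribed value $z$; a final $C^0$-small perturbation can push the arc off the finitely many zeros of $\omega$ while keeping it embedded and its integral equal to $z$.

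The main technical obstacle is the embedded-representative step, which rests on the classical surface topology above combined with homotopy-invariance of the integral. The hypothesis $g\geq 2$ is essential: in genus one $F\colon \mathbb C\to\mathbb C$ is a biholomorphism, so $F^{-1}(z)$ is a singleton and, whenever $z$ is a period, the only available $q$ equals $p_0$, precluding distinct endpoints.
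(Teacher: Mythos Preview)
Your overall architecture coincides with the paper's: pass to the universal cover, use the developing map $F$, establish surjectivity, project an arc down, and then homotope rel endpoints to an embedded representative. Your surjectivity argument is in fact more carefully justified than the paper's (which simply asserts it).

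The gap is in the distinct-endpoints step. From the infinitude of $\ker\chi$ you correctly deduce that $F^{-1}(z)\subset\tilde C$ is infinite, but the conclusion that its image in $C$ ``accumulates on $C$'' does not follow: every $\ker\chi$-translate of a given $\tilde q$ projects to the \emph{same} point of $C$, so the argument gives no information about $\pi(F^{-1}(z))$. In fact the accumulation claim is false when the periods form a lattice $P$. In that case $F$ descends to a degree-$d$ branched cover $f\colon C\to\mathbb C/P$, and $\pi(F^{-1}(z))=f^{-1}([z])$ has at most $d$ points. Concretely, take a genus-two curve that is a degree-two cover of an elliptic curve (two simple ramification points by Riemann--Hurwitz), let $p_0$ be one of the ramification points, and let $z\in P$: then $f^{-1}([z])=f^{-1}([0])=\{p_0\}$, so the only available endpoint is $p_0$ itself and your choice of $q\ne p_0$ is impossible.

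The paper repairs exactly this step by a different local idea: it \emph{chooses} the basepoint $x_0$ to be a zero of $\omega$. Near such a point the primitive of $\omega$ is a branched cover of degree $\ge 2$, so if the constructed arc happens to close up at $x_0$ one can slide the two endpoints to two \emph{different} preimages of the same nearby value, producing distinct endpoints while keeping $\int_\beta\omega$ unchanged. After that, the homotopy to an embedded representative proceeds as you describe. (Your final remark about perturbing off the zeros is harmless but unnecessary: the lemma does not ask the arc to avoid them.)
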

\proof
Let $\tilde C$ be the universal cover of $C$ and $\tilde\omega$ the lift of
$\omega$ to $\tilde C$. The map $I:\tilde C\to \mathbb C$ defined by $x\mapsto
\int_{x_0}^x\tilde\omega$ is holomorphic, non-constant and equivariant with respect to the action of $\pi_{1}(C)$ on $\mathbb{C}$ by translations under elements of $\Lambda=\{\int_{\gamma}\omega:\gamma\in\pi_{1}(C)\}$. The image of $I$ is therefore open and invariant by $\Lambda$. We claim that the image is $\mathbb{C}$. Indeed, if $\Lambda$ is dense we are done . If $\Lambda$ is discrete, it is a lattice and the map $I$ induces a branched covering $C\rightarrow \mathbb{C}/\Lambda$ onto a torus. The equivariance then tells us that the image of $I$ is the universal cover $\mathbb{C}$ of the torus. The only other possibility is that the closure of $\Lambda\subset \mathbb{C}$ is isomorphic to $\mathbb{R}\times\mathbb{Z}$. Up to post-composing $I$ with a real linear map we can suppose that $\overline{\Lambda}=\mathbb{R}+\mathbb{Z}i$. The imaginary part $\Im I$ induces a surjective map $C\rightarrow \mathbb{R}/\mathbb{Z}$. Therefore the image of $I$ covers a neighbourhood $U$ of the imaginary axis. The $\Lambda$ invariant set containing $U$ is then $\mathbb{C}$. 

Let $z\in \mathbb{C}$ be the chosen point. By the surjectivity of $I$, there exists an immersed arc $\delta$ in $C$, starting from
$x_0$, such that $\int_\delta\omega=z$. If $g\geq 2$ we can require $x_0$ to be a zero of
$\omega$. In this case the map $I$ is a branched covering of degree $>1$ near $x_0$. Thus,
if the endpoints of both $\delta$ coincide with $x_0$, we can
move them a little so that $\int_\delta\omega$ does not change and they become distinct. Now,
since $\delta$ is an arc, via an homotopy relative to endpoints we can eliminate all
self-intersections so that $\delta$ becomes embedded.\qed
\cbend

\underline{Case 2:} If $\ker p\neq 0$. Use Lemma \ref{l:existence of pinched classes} to consider a primitive element $a\in\ker p$ such that $p_a$ is Haupt. Choose $b\in H_1(\Sigma_g)$ such that $a\cdot b=1$, define $V_1=\mathbb{Z}a\oplus\mathbb{Z}b$, $V_2=V_1^{\perp}$, and $p_i=p_{|V_i}$. By construction $p_2$ is a Haupt homomorphism. By inductive hypothesis, let $\omega_2$ be an abelian differential on a smooth curve of periods $p_2$. Since $g-1\geq 2$, by Lemma \ref{lbz} applied to $\omega_2$ we can find an embedded arc $\beta$ with distinct endpoints in $\omega_2$ such that $\int_{\beta}\omega_2=p(b)$. Glue the endpoints and mark the obtained nodal curve to guarantee that the period homomorphism of the stable form is $p$ and the class $a$ is pinched to the node.

In either case smoothing the node produces a form of period $p$ on a smooth curve. \end{proof}

Kapovich gave a proof of Haupt's Theorem for genus $g\geq 3$ as a corollary of Proposition \ref{p:Kapovich} in \cite{Kapovich}.  Indeed, he remarked that the image of the map $\text{Per}_g$ is an open set in $H^{1}(\Sigma_g,\mathbb{C})$ invariant by the action of $\text{Sp}(2g,\mathbb{Z})$. For genus $g\geq 3$ he deduced that it is the set $\mathcal{H}_g$ by finding forms with periods in each of the closed invariant sets defined by Proposition \ref{p:Kapovich}, except of course, for the collapse of $g-1$ handles.

Using the constructions of Cases 1 and 2 of the proof of Haupt's Theorem we can easily construct stable forms with more nodes and given period map:

\begin{corollary} \label{c:realization of decompositions} Let $p:H_1(\Sigma_g)\rightarrow\mathbb{C}$ be a Haupt homomorphism 
and $H_1(\Sigma_g)=V_1\oplus\ldots\oplus V_k$ a $p$-admissible splitting. Then $k\leq g$ and there exists a stable form in $\Omega_0^{*}\mnc_g(p)$ with $k-1$ separating nodes such that each $p_{|V_j}$ corresponds to the periods of a component of its normalization.
Moreover, if we choose up to $g-k\geq 0$ classes $a_j\in\ker p\setminus 0$, each belonging to a distinct $V_j$ and such that $a_j$ is pinched by $p_{|V_j}$, then there exists a stable form as before that shares the separating nodes with the form of the previous case, and has non-separating nodes that pinch precisely the chosen classes $a_j$. 

\end{corollary}

\subsection{ Non-admissible splittings for injective $p\in\mathcal{H}_g$ and disconnected covers}

Since the primitive degree condition ($H_2$) of a Haupt homomorphism is automatically satisfied for  injective $p\in\mathcal{H}_g$, a symplectic submodule $V\subset H_1(\Sigma_g)$ induces a $p$-admissible splitting $V\oplus V^{\perp}$ if and only if the intermediate volume condition  $0<\vol_p(V)<\vol(p)$  is satisfied. Using the bijection between symplectic splittings and boundary components of the ambient space of compact type given in section \ref{ss:homological invariants} we prove

\begin{proposition}

\label{c:line ranks for injective}Let $g\geq 2$. If $p\in\mathcal{H}_g$ is injective, then $\Omega\mnc^c_g(p)$ cuts an infinite number of boundary strata of $\Omega\mnc_g^c$ and also avoids an infinite number of them. 
\end{proposition}
\begin{proof}
If $p$ falls into case $(1)$ of Proposition \ref{l:padmissible elements}, for each choice of interval $(\varepsilon_1,\varepsilon_2)$ in $\mathbb{R}$ we can construct an example of symplectic splitting $V\oplus V^{\perp}$ of $H_1(\Sigma_g)$ with $V$ of rank two and $\vol_p(V)\in (\varepsilon_1,\varepsilon_2)$ by using that proposition. 
 If the interval belongs to $(0,\vol(p))$ the splitting is $p$-admissible. If it belongs to $\mathbb{R}\setminus [0,\vol(p)]$ it is not $p$-admissible. Since two splittings having factors of different volumes are different and an infinite number of disjoint intervals can be chosen, we conclude.

If $p$ falls into case (2) of Proposition \ref{l:padmissible elements} we have that the genus $g=2$. Haupt's Theorem guarantees that $\per^{-1}(p)$ contains a stable form of period $p$ on a smooth curve. By Corollary \ref{c:nonempty boundary iff haupt}, $\Omega\mnc_g(p)$ contains a point in some boundary stratum of codimension one. It corresponds necessarily to a $p$-admissible splitting $H_1(\Sigma_g)=V\oplus V^{\perp}$ with factors of rank two. Furthermore, the detailed description of the isoperiodic set of marked stable forms associated to $p$ can be found in \cite[Theorem 1.2, p. 2274]{McMullen}. Among other properties we find an infinite number of elements defined over nodal curves with a (necessarily separating) node, and inducing distinct $p$-admissible splittings $H_1(\Sigma_g)=V\oplus V^{\perp}$ with factors of rank two.

To construct splittings that are not $p$-admissible we take any $p$-admissible splitting $V\oplus V^{\perp}$ with $V=\mathbb{Z}a\oplus \mathbb{Z} b$ and $c\in a^{\perp}$ with $p(c)\notin \mathbb{R}p(a)$. For each $k\in\mathbb{Z}$ consider $V_k=\mathbb{Z}a\oplus \mathbb{Z}(b+kc)$. Then only for a finite number of $k$'s we have $\vol_p(V_k)\in (0,\vol(p))$. Since the volumes of the $V_k$'s are all different, we have found an infinite family of distinct splittings $V_k\oplus V_k^{\perp}$ that are not $p$-admissible.

Any $p$-admissible splitting is realized by some singular stable form via Corollary \ref{c:realization of decompositions}. \\ 
\end{proof}

\begin{corollary}\label{c: fibers on T_2 are disconnected}
The generic fiber of the period map on $\Omega\mathcal{T}_2$ is disconnected. 
\end{corollary}
\begin{proof}
By \cite{Mess} there is a free generating family of the Torelli group $\mathcal{I}_2$ that can be thought in Siegel space as  the family of cycles around every boundary component of Torelli space in Siegel space. These boundary components are parametrized by symplectic splittings $V\oplus V^{\perp}$. 
Let $p\in\mathcal{H}_2$ be injective (which is generic). From the proof of Theorem \ref{t:connectedness in genus 2 and 3}, $\per^{-1}(p)$ is a slice of dimension one of Torelli space that is isomorphic to $\mathbb{D}\setminus B$ where $B$ is the intersection of the slice with the boundary components. Inclusion induces a map $\pi_1(\mathbb{D}\setminus B)\rightarrow \mathcal{I}_2$. By Corollary \ref{c:line ranks for injective} this map is not surjective. Hence the lift of $\per
^{-1}(p)$ to $\mathcal{T}_2$ is disconnected. 
\end{proof}

\subsection{Non-admissible splittings for non-injective $p\in\mathcal{H}_{g}$ and compact type boundary}
\cbstart
\begin{proposition}\label{ex:non compact type}
Let $g\geq 2$ and $p\in\mathcal{H}_{g}$.  There exists a $p$-admissible splitting of $H_{1}(\Sigma_{g})$  if and only if either $g=2$ or $g\geq 3$ and $\deg (p)>2$.  
\end{proposition}

\begin{proof}%\label{ex:non compact type}
First remark that if there are no $p$-admissible splittings, then there do not exist forms of compact type with no zero components and periods $p$.  By Proposition \ref{p:degeneration} there do also not exist forms with a single zero (i.e. in the minimal stratum) of period $p$.

As was already shown in the beginning of the proof of Lemma \ref{l: transitivity}, the image of the minimal stratum contains all points with $\deg(p)=\infty$, so we are left with the cases of finite degree. 

For $g=2$ there are always $p$-admissible splittings. Indeed, if $\deg(p)=2$ there cannot exist a form with a single zero of period $p$, since the local degree of the covering at the zero would be three so we cannot use this argument. However the stable form with a separating node $dz\vee dz$ on $\frac{\mathbb{C}}{\Lambda}\vee \frac{\mathbb{C}}{\Lambda}$ where $\Lambda:=p(H_1(\Sigma_g))$ defines a $p$-admissible splitting. For $\deg(p)\geq 3$ there are forms in the minimal stratum with period $p$ (\cite{Le Fils}, \cite{BJJP} for constructive proofs).

If $g\geq 3$ and $3\leq\deg(p)<\infty$ we proceed by induction on the genus to construct $p$-admissible splittings.  There is a symplectic submodule $W\subset \ker p$ of rank two, the map $p_{|W^{\perp}}$ has the same degree and volume as $p$.  If $\deg (p)\geq 3$ then there is a  $p_{|W^{\perp}}$-admissible splitting of $W^{\perp}$. Adding $W$ to one of the factors produces a $p$-admissible splitting. 

It remains to treat the case $g\geq 3$ and $\deg (p)=2$. 
Without loss of generality we can suppose that the image of $p$ is the group  of  Gaussian integers. Using the normal form of period of finite primitive degree (see Lemma \ref{l:finite degree orbit} proved in Appendix \ref{s:appendix2})
we can find  a symplectic basis $\{a_i,b_i\}$ by $p(a_1)=p(a_2)=1$, $p(b_1)=p(b_2)=i$ and zero elsewhere. In this case $\vol(p)=2$ and $\deg p=2$. The $p$- volume of any symplectic submodule of $H_1(\Sigma_g)$ is an integer. Suppose that there exists a $p$-admissible splitting $V_1\oplus V_2$ of $H_1(\Sigma_g)$. The volume of each component is a positive integer. Hence, the only possibility is that each component has volume one and thus $\vol(V_i)=\vol(\mathbb{C}/\text{Im}(p))$. Since $g\geq 3$, one of both factors, say $V_1$, has even rank $\geq 4$ so $p|_{V_1}$ is not a Haupt homomorphism, contradicting the definition of $p$-admissible decompositon. 
\end{proof}
\cbend
\begin{corollary}
If $p\in\mathcal{H}_{3}$ has $\deg(p)=2$ then  $\per^{-1}_{3}(p)\simeq \mathfrak{S}_2$.  Therefore its lift to $\Omega\mathcal{T}_{3}$ has infinitely many connected components.  
\end{corollary}
\begin{proof}
The proof of Theorem \ref{t:connectedness in genus 2 and 3} together with the fact that there are no $p$-admissible splittings, imply that  $\per^{-1}(p)$ is isomorphic to Siegel space $\mathfrak{S}_2$. By simple connectedness of Siegel space we deduce the final statement. \end{proof}
%\vspace{0.5cm}
\begin{corollary}
 For  $p\in\mathcal{H}_{g}$ with $\deg(p)\geq 3$ that can be decomposed as a direct sum of primitive degree two homomorphisms on modules of rank at least ten,  the set of boundary points of compact type of $\per^{-1}(p)$ in $\Omega^{*}_{0}\mrs_{g}$ is disconnected. 
\end{corollary}
\begin{proof} Let $V_1\oplus V_2$ be the splitting such that $p_i=p_{|V_i}$ has degree two for $i=1,2$. By Proposition \ref{ex:non compact type} there are no $p_i$-admissible decoompositions of $V_i$. 

In particular there are no forms of compact type with two separating nodes and periods $p$ such that one of the nodes induces the splitting $V_1\oplus V_2$ so we cannot approach a point of intersection with another irreducible boundary component of $\per^{-1}(p)$ of compact type. On the other hand, the intersection of the boundary component with the fiber $\percompact^{-1}(p)$ is homeomorphic to a product of two fibers as in  Theorem \ref{t:degree 2} via the normalization map of the node. Hence it is a disconnected set. \end{proof}
This corollary indicates that to prove the connectedness of the bordification the boundary of compact type does not solve the problem, since it is not connected. 

\section{Detecting connected components of isoperiodic forms with one node}
\label{s:period fibers with marked points}
%After that it will be easy 
Attaching and forgetful maps have nice relations with respect to the fibers of period maps. For instance,  the restriction of a map of type \eqref{eq:attach two components} with $n_1=n_2=1$ to a product of fibers produces a map \[\Omega\mrs_{g_1,1}(p_1)\times \Omega\mrs_{g_2,1}(p_2) \rightarrow \Omega\mnc_{g_1+g_2,0}\] whose image points have one node and constant periods $p$ that can be decomposed as $p_1\oplus p_2$. Equivalently, the restriction of a map of type \eqref{eq: attach two points in one component} with $n=0$ to a fiber \(\Omega\mrs_{g-1,2}(p)\) has its image in the set of forms with a unique non-separating node in a fiber of the period map in \(\Omega\mnc_{g,0}\) (see Figure \ref{fig:normalization}). In fact, thanks to the characterization given in Corollary \ref{c:parametrizing boundary}, these maps provide continuous parametrizations of the components of the intersections of fibers of $\percompact$ with some boundary stratum $\Omega B_c$ of stable forms with one node. 
Remark that the hypothesis of Theorem \ref{t:connectedness} does not apply to isoperiodic spaces of forms \textit{with} some marked points, so to control the number of connected components of the image of the attaching maps, we will first relate  isoperiodic sets on spaces of curves with marked points with isoperiodic spaces without marked points. 

\cbstart
\subsection{Connectedness of the fibers of  $\per_{g,2}$} 
%Forgetful maps preserve the fibers of period maps, so their restriction to a period fiber provides a holomorphic fibration of the isoperiodic set that we will exploit to deduce some of its topological properties.
Recall that the relative homology long exact sequence gives a natural injection $$0\rightarrow H_1(\Sigma_{g},\mathbb{Z})\rightarrow H_1(\Sigma_{g},q_1,q_2;\mathbb{Z}).$$   

%\marginpar{in the statement of thm 7.1 and its proof we denote \(H_1(\Sigma_{g})\) as \(H_1(\Sigma_{g,0})\) so we should choose...}

\begin{theorem}\label{p:add marked points} Let $g\geq 1$ and    
  $p:H_1(\Sigma_{g},q_1,q_2;\mathbb{Z})\rightarrow \mathbb{C}$ be a homomorphism such that  $0\neq p_0=p_{|H_1(\Sigma_{g},\mathbb{Z})}$ has degree at least three and $\Omega\mrs_{g,0}(p_0)$ is connected. Then, 
 $\Omega\mrs_{g,2}(p)$ is non-empty and connected. 
\end{theorem}

Since non-generic sub-strata of forms in $\Omega\mrs_{g,2}(p)$ form analytic subsets, the claim of Theorem \ref{p:add marked points} is equivalent to proving that the open subset $$\Omega^{SZ}\mrs_{g,2}(p)=\{(C,m,\omega)\in\Omega\mrs_{g,2}(p):\omega\text{ has simple zeros }\}$$ is connected . To prove Theorem \ref{p:add marked points} we will bordify $\Omega^{SZ}\mrs_{g,2}(p)$ in the complex manifold $\Omega^{SZ} \U_{g,2}\subset \Omega\mnc_{g,2}$ defined in subsection \ref{sss:singularities} by adding the limit points. Thanks to Proposition \ref{p:singularity of isoperiodic set} the closure is a smooth complex manifold where the boundary points form a divisor. 

Recall that $\U_{g,2}=\text{For}_{g,2}^{-1}(\mrs_{g,0})\subset \mnc_{g,2}$ where $\text{For}_{g,2}:\mnc_{g,2}\rightarrow \mnc_{g,0}$ is the map that forgets both marked points and stabilizes the obtained curve. Every point in some boundary stratum in $\U_{g,2}$ has one node and a part of genus zero containing both marked points. To simplify the notations along the proofs we fix $g$ and write $\U:=\U_{g,2}$. 

The fibration $\Omega^{SZ}\U\rightarrow \U$ of forms having only simple zeros in the smooth genus $g$ part is a complex manifold and the restriction of the period map defined on $\Omega^{SZ}\U$ is holomorphic (see Lemma \ref{l:holomorphic integral}).

Given a homomorphism $p:H_1(\Sigma_{g},q_1,q_2;\mathbb{Z})\rightarrow \mathbb{C}$, we define  the analytic subset $$X_p=\{(C,r_1,r_2,m,\omega)\in\Omega^{SZ}\U_{g,2}: \per_{g,2}(C,r_1,r_2,m,\omega)=p\}. $$

To understand the topology of $X_p$ we will slice it by fibers of the restriction of the forgetful map to \(\Omega^{SZ}\U\) introduced in equation \eqref{eq:restricted forgetful}
\begin{equation}
    \text{For}:\Omega^{SZ}\U\rightarrow\Omega^{SZ}\mrs_g.
\end{equation}
%\marginpar{why just we say that this is the restriction to \(\Omega^{SZ}\U\) of  the forgetful map \(\Omega^{SZ}\mrs_{g,2}\rightarrow \Omega^{SZ}\mrs_g\)?}
%where $\text{For}(C,r_1,r_2,m,\omega)$ is the restriction of  $\omega$ to the smooth genus $g$ curve $\text{For}_{g,2}(C,r_1,r_2,m)$.
The fiber of $\text{For}$ over a point \((C,m,\omega)\in \Omega\mrs_g\)  is  biholomorphic to the covering of the surface $C\times C$ described in Proposition \ref{p:bordification of Sg2}. 

Recall that  $p_0:=p_{|H_1(\Sigma_{g})}$. Thanks to Lemma \ref{lbz}, we have $$\text{For}(X_p)=\Omega^{SZ}\mrs_g(p_0).$$
%and the map $$\text{For}_p:X_p\rightarrow \Omega\mrs_{g,0}(p_0)\text{ that forgets the marked points. }$$ Thanks to Lemma \ref{lbz} $\text{For}_p$ is a surjective map.
%\begin{proposition}\label{p:fix form move points}
% The fibers of $\text{For}_p$ are connected.  
%\end{proposition}
%\begin{proof}
%Prove equivalence of the fiber over $(C,m,\omega)$  and a connected abelian cover of $C\times C$, where isoperiodic sets are defined by level sets of a function given by integration. If $(C,m,\omega)$ has  non-discrete periods, Simpson's theorem in \cite{simpson} concludes. Otherwise,  the periods of $\omega$ form a lattice in $\mathbb{C}$. Let....
%\end{proof}

\begin{lemma} \label{l: connectedness section isoperiodic set} 
For any \( (C,m_0 ,\omega_0 )\in \Omega^{SZ}\mrs_g(p_0)\),  the analytic subset \[X_p \cap \Forget^{-1} (C,m_0,\omega_0) \] is a connected singular curve.
\end{lemma} 

\begin{proof}
Fix a homology class \( \alpha_0 \in H_1 (\Sigma_g, \{q_1, q_2\}, \mathbb Z) \) with boundary \(\partial \alpha _0 = q_2- q_1\). Define the map \(f: \Forget^{-1} ( C,m_0, \omega_0) \rightarrow \mathbb C\) by 
\[ f( C', r_1,r_2, m , \omega) = \int _{m_* \alpha_0} \omega\quad\text{ where }\quad C'=C\quad\text{ or }\quad C'=C\vee\mathbb{P}^1 \]
%\marginpar{why the \(C'\) in this formula? The curve is \(C\) right?\textcolor{red}{not really, $C$ is just a part of it}}
Since \(H_1 (\Sigma_g, \{q_1, q_2\}, \mathbb Z)\) is generated by \(H_1 (\Sigma_g, \mathbb Z)\) and \(\alpha_0\), the set \(X_p \cap \Forget^{-1} (C,m_0,\omega_0) \) is equal to the fiber \( f^{-1} (z_0) \) with \(z_0=p(\alpha_0)\in \mathbb C\). We will actually prove that all fibers of \(f\) are connected to conclude the proof. 

The function \(f\) is a primitive of the holomorphic \(1\)-form \(r^* \Omega \) where
\begin{itemize}
    \item[(i)] \(r=R_{|\text{For}^{-1}(C_,m_0,\omega_0)}:\text{For}^{-1}(C,m_0,\omega_0)\rightarrow C\times C\) is the $H_1(\Sigma_g)$-covering of monodromy \[ (\alpha_1, \alpha_2) \cdot \alpha= \alpha+ \alpha_2 - \alpha_1 ,\]described in Proposition \ref{p:bordification of Sg2} and 
\item [(ii)]\(\Omega\) is the form on \( C\times C\) defined by \(\Omega= pr_2 ^* \omega_0 - pr_1^* \omega_0\), where \(pr_i : C\times C\rightarrow C\) for \(i=1,2\) are the projections on the first and second factors respectively. 
\end{itemize}

A result of Simpson \cite{Simpson} shows that either 
\begin{enumerate} 
\item the fibers of \(f\) are connected, or
\item there exists a complex curve \(E\) equipped with a holomorphic one form \(\eta\), and a holomorphic map \( h : C\times C\rightarrow E\) with connected fibers such that \(\Omega= h^* \eta\). 
\end{enumerate} 
So it remains to prove the Lemma in the second case. 

Since the set of zeros of \(\Omega\) is finite (it is the square of the set of zeroes of \(\omega_0\) in \( C\times C\)), and that fibers of \( h\) are one dimensional subvarieties, the form \(\eta\) does not vanish, hence \(E\) is an elliptic curve. So we have \( E=\mathbb C/ \Lambda\) for a certain lattice \(\Lambda \subset \mathbb C\), and \(\eta= dz\) for \(z\in \mathbb C\) the coordinate. 

Notice that the set of absolute periods of \(\omega, \Omega\) and \(\eta\) coincide with \(\Lambda\); indeed, it is clear that the set \(\Lambda'\) of absolute periods of \(\omega\) and \(\Omega\) coincide and that it is contained in the set of periods of \(\eta\), which is equal to \(\Lambda\). Now, assuming for contradiction that  \(\Lambda'\) is strictly contained in \( \Lambda\), the map \( h\) could be factored in the form \( h =  r' \circ h' \) where \( h ' : C\times C \rightarrow \mathbb C/ \Lambda'\) and \( r' : \mathbb C/ \Lambda' \rightarrow \mathbb C / \Lambda \simeq E\) a covering of degree \(>1\). But \(h'\) is onto since it is a non constant holomorphic map, so this contradicts the fact that the fibers of \(h\) are connected. Hence \(\Lambda '= \Lambda\). This discussion shows that up to changing the coordinate \(z\) by a translation if necessary, we have the following commutative diagram
\begin{equation} \label{eq: quotient of period map} 
\begin{tikzcd}
\text{For}^{-1}(C,m_0,\omega_0) \arrow[r, "f"] \arrow[d,"r"]
& \mathbb{C} \arrow[d] \\
C\times C \arrow[r,"h"]
& \mathbb{C}/\Lambda
\end{tikzcd}
\end{equation} 
%Two points in a fiber of $r$ have the same value under $f$ if and only if the element of the monodromy of $r$ relating them belongs to $\ker(p_0)$. 
Let us fix $z\in\mathbb{C}$ and analyze the restriction \begin{equation}\label{eq:cover of fiber of f}
r_{|f^{-1}(z)}:f^{-1}(z)\rightarrow h^{-1}(z\text{ mod }\Lambda)\end{equation} 

Recall from Proposition \ref{p:bordification of Sg2} that the covering group $G\simeq H_1(\Sigma_g)$ of $r$ is presented as  \( G \subset \text{Aut} ( H_1 (\Sigma_, \{q_1, q_2\}, \mathbb Z)) \) acting on \(\alpha_0\) by \(\alpha \cdot \alpha_0= \alpha_0 + \alpha\). Therefore we have  
 \[ f\circ \alpha = f + p_0 (\alpha).\]
%\begin{itemize} 
%\item we have \( f \text{ mod } \Lambda = h \circ R\).{eq: quotient of period map}
%\item  for any \(\alpha \in H_1 (\Sigma_g, \mathbb Z)\), we have \[ f\circ \alpha = f + p_0 (\alpha).\] 
%\end{itemize}  
Together with the diagram \eqref{eq: quotient of period map}, this shows that \(r_{|f^{-1}(z)}\) is a \(\text{Ker} (p_0) \)-covering.  Its monodromy is the composition of the inclusion map \( H_1 (h^{-1} (z\text{ mod } \Lambda), \mathbb Z) \rightarrow H_1 (C\times C, \mathbb Z)\simeq H_1 (C,\mathbb Z) ^2 \) with that of the \(H_1 (\Sigma_g, \mathbb Z)\)-covering \(r\). The connectedness of $f^{-1}(z)$ is equivalent to the  transitivity of the action of the monodromy on the fiber of $r_{|f^{-1}(z)}$.

%Let us first analyze the image of the inclusion at the fundamental group level:

%is a $\ker (p_0)$-cover. 

%\vspace{0.3cm} 

%{\bf Claim:} \textit{for every \(z\in \mathbb C\), the sequence  induced by $h$ at the fundamental group level
%\[\pi_1 (h^{-1} (z\text{ mod } \Lambda)) \rightarrow \pi_1 (C\times C) \rightarrow \pi_1 (E)\simeq \Lambda \rightarrow 0\] 
%is exact. 
%}

 Picard-Lefschetz theory can be applied to analyze this monodromy. We include a statement of a  particular instance of it that is needed here, in Corollary \ref{c: exact sequence} of Appendix \ref{a: Picard-Lefschetz}. It shows that the image of \( H_1 (h^{-1} (z\text{ mod } \Lambda), \mathbb Z) \rightarrow H_1 (C\times C, \mathbb Z)\) is the kernel of \( h_* = : H_1 (C\times C, \mathbb Z)\simeq H_1 (C,\mathbb Z)^2 \rightarrow H_1 (E, \mathbb Z)\simeq \Lambda\). Moreover, 
the map \( h \) at the level of the homology is given by the periods of \( \Omega\), namely 
by \( h_* =p_0 \circ (pr_2 -  pr_1). \)
In conclusion, the monodromy of the \(\text{Ker} (p_0) \)-covering \(r_{|f^{-1} (z)}: f^{-1} (z)\rightarrow h^{-1} (z\text{ mod } \Lambda) \) acts transitively, and this concludes the proof that \( f^{-1} (z)\) is connected, hence Lemma \ref{l: connectedness section isoperiodic set}. \end{proof}

\begin{corollary}\label{c:cor simpson}
$X_p$ is connected if and only if $\text{For}(X_p)$ is connected.
\end{corollary}
\begin{proof}[Proof of Corollary \ref{c:cor simpson}]
Suppose $\text{For}(X_p)=\Omega^{SZ}\mrs_{g}(p_0)$ is connected. By Lemma \ref{l: connectedness section isoperiodic set} , each connected component of $X_p$ is a union of fibers of $\text{For}_{|X_p}$. Suppose $X_p=X^1\sqcup X^2$ where $X^1$ and $X^2$ are disjoint open sets and $X^1\neq\emptyset$ is connected. Then $X^1$ is a union of fibers of $\text{For}_{|X_p}$, and so is its complement $X^2$. Moreover $\text{For}_{|X_p}:X_p\rightarrow\Omega^{SZ}\mrs_g(p_0)$ is an open map, since on each fiber there is some point where $\text{For}_{|X_p}$ is a submersion. Then $\text{For}(X_p)=\text{For}(X^1)\sqcup\text{For}(X^2)$. By connectedness of $\text{For}(X_p)$ we deduce $\text{For}(X^2)=\emptyset$, and therefore $X^2=\emptyset$, which implies that $X_p=X^1$ is connected. The other implication is obvious.

\end{proof}
When $X_p$ contains a point in some boundary stratum, there are, by Proposition \ref{p:singularity of isoperiodic set}, irreducible components of $X_p$ strictly contained in the boundary. To deduce the connectedness of the subset of points of $X_p$ that lie outside the boundary (to prove Theorem \ref{p:add marked points}) we need to show that taking out those components we still have a connected set. We will apply the following general 

\begin{figure}[httb]
\centering
\def\svgwidth{\columnwidth}
\includegraphics[width=4in]{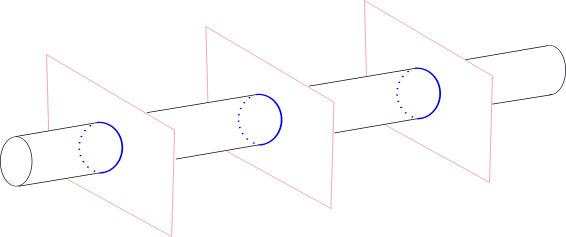}
 \caption{Real representation for the set $X$ in Lemma \ref{l:connectedness of analytic subset}: each plane represents component of $Z$. Each circle in a component of $Z$ represents a connected component of $\text{Sing}(X)$ that forms a divisor in $Y$, the closure of $X\setminus Z$.} \label{fig:connanset}
\end{figure}

\begin{lemma}\label{l:connectedness of analytic subset}
 Let $X$ be a connected analytic set on a complex manifold and suppose $X=Y\cup Z$ where both $Y$ and $Z$ are unions of irreducible components of $X$ that satisfy 
 \begin{enumerate}
     \item around each point of $\text{Sing}(X)$ the set $X$ is a normal crossing of two smooth manifolds, one of which lies in $Y$ and the other in $Z$, and 
     \item the intersection of each irreducible component of $Z$ with $\text{Sing}(X)$ is connected.
 \end{enumerate}
 Then $X\setminus Z\subset Y$ is connected. 
\end{lemma}
\begin{proof} The diagram in Figure \ref{fig:connanset} helps to understand the ideas behind the proof. 
The properties of $\text{Sing}(X)$ imply that $Y$ and $Z$ are smooth manifolds. Moreover,  each connected component of $\text{Sing}(X)$ is a smooth complex submanifold of $Y$ and of $Z$ that coincides with $Z_i\cap Y$ for some irreducible component $Z_i$ of $Z$.  In particular, the connectedness of $X\setminus Z$ is equivalent to the connectedness of $Y$. 

On the other hand, for $X$,  (and also for the manifolds  $Y$ and $\text{Sing}(X))$) connectedness is equivalent to path connectedness. 

Given $y_1,y_2\in Y\subset X$ we can, by hypothesis, find a path $\gamma:I\rightarrow X$ starting at $y_1$ and ending at $y_2$.  If the path lies in $Y$ we are done. Otherwise $I\setminus\gamma^{-1}(Y)$ is a finite family of disjoint segments $I_1,\ldots,I_k\subset I$ where $\gamma_{|I_k}$ is a path contained in a unique $Z_{k}$ with endpoints in $\text{Sing}(X)$. Substitute $\gamma_{|I_k}$ by a path in the connected manifold $Z_k\cap Y\subset \text{Sing}(X)$ joining the same endpoints. The constructed path lies in $Y$. Therefore, $Y$ is path connected and so is $X\setminus Z$. 
\end{proof}

\subsubsection{Proof of Theorem \ref{p:add marked points}}

 Recall from subsection \ref{sss:singularities} that the boundary strata in $\Omega^{SZ}\U_{g,2}$ is  precisely the disjoint union  $\bigsqcup_{\beta\in\betas}\Omega^{SZ}B_{\beta}$, where $\betas=\{\beta\in H_1(\Sigma_{g},q_1,q_2,\mathbb{Z}): \partial\beta=q_2-q_1\}$. The boundary strata in $X_p$ are 

$$Z_p=\bigsqcup_{\beta\in\betas}\Omega^{SZ}B_{\beta}(p).$$ Let $Y_p$ be the union of irreducible components of $X_p$ that have some point outside the boundary.

From Proposition \ref{p:singularity of isoperiodic set} we deduce that $$\text{Sing}(X_p)=\bigsqcup_{\beta\in\betas}\Omega^{SZ,1}B_\beta(p).$$ 
where $\Omega^{SZ,1}B_{\beta}$ is the substratum of forms where the points corresponding to the node are zeros of the form on both sides of the node. Moreover, the local structure of $X_p$ at a singular point is a normal crossing of two smooth components: one lying in $Z_p$ and the other in $Y_p$.

\textit{Claim 1:} $X_p$ is connected.\\
\textit{Proof of Claim 1:} Since $\text{deg}(p_0)\geq 3$ and $\per_{g,0}^{-1}(p_0)$ is connected by hypothesis, so is the Zariski open set $\per_{g,0}^{-1}(p_0)\cap\Omega^{SZ}\mrs_{g,0}=\text{For}(X_p)$.  Corollary \ref{c:cor simpson} then implies that $X_p$ is connected.  \\

\textit{Claim 2:} Each $\Omega^{SZ}B_\beta(p)$ is a smooth irreducible component of $Z_p$.\\
\textit{Proof of Claim 2:} Proposition \ref{p:singularity of isoperiodic set} shows that the set $\Omega^{SZ}B_\beta(p)$ is a smooth manifold. 
As for connectedness, the attaching map 
 \begin{equation}\label{eq:attaching P1}
\Omega^{SZ}\mrs_{g,1}\cong\Omega^{SZ}\mrs_{g,1}\times\Omega\mrs_{0,3}\rightarrow \Omega^{SZ} B_{\beta}
\end{equation} that sends a form with simple zeros on a \emph{smooth} genus $g$ curve with a marked point to the form that attaches a zero component of genus zero at the marked point, is a biholomorphism. Define $p_{1}=p_{|H_1(\Sigma_{g,1})}$. Then, the set $\Omega^{SZ}B_\beta(p)$ corresponds to $\Omega^{SZ}\mrs_{g,1}(p_1)$ under this biholomorphism. Remark that the forgetful map $\mrs_{g,1}\rightarrow \mrs_g$ is a fiber bundle that is equivalent to the universal curve bundle $\mathcal{C}\mrs_{g}\rightarrow\mrs_g$. The restriction of the forgetful map $\mathcal{C}\Omega\mrs_{g,1}\rightarrow \Omega\mrs_{g}$ to   $\Omega^{SZ}\mrs_{g,1}(p_1)\subset \Omega^{SZ}\mrs_{g,1}$ coincides with the restriction of the universal curve bundle to the open set $\Omega^{SZ}\mrs_g(p_0)\subset \Omega\mrs_g$. Since the latter is connected by hypothesis, so is $\Omega^{SZ}\mrs_{g,1}(p_1)\subset \Omega^{SZ}\mrs_{g,1}$.

\textit{Claim 3:}  Each $\Omega^{SZ,1}B_\beta(p)$ is a connected set. 
\\
\textit{Proof of Claim 3:}  By the biholomorphism \eqref{eq:attaching P1}, the set $\Omega^{SZ,1}B_\beta(p)$ corresponds to the subset $\Omega^{SZ,1}\mrs_{g,1}(p)\subset\Omega^{SZ}\mrs_{g,1}$ where the marked point coincides with one of the simple zeros.  This set can be interpreted as the multi-section described by the set of $2g-2$ simple zeros on each fiber of the universal curve bundle $$\mathcal{C}\Omega^{SZ}\mrs_{g,0}(p_0)\rightarrow \Omega^{SZ}\mrs_{g,0}(p_0). $$ Since $\Omega^{SZ}\mrs_{g,0}(p_0)$ is connected and  $\deg p_0\geq 3$, we deduce from Lemma \ref{l: transitivity} that this multi-section is connected.

From Claims 1,2 and 3 we deduce that the hypotheses of Lemma \ref{l:connectedness of analytic subset} are satisfied for $X_p=Y_p\cup Z_p$. We deduce from it  that $\Omega\mrs_{g,2}(p)=X_p\setminus Z_p$ is connected. 

\cbend
\subsection{Some connected components of isoperiodic sets on curves with one node}

\label{ss:isoperiodic components with one node}
\begin{proposition}\label{p:inductive step compact type}
Let \( g\geq 2 \), \( p\in H^1( \Sigma_g, \mathbb C )\), and $\curvesystem$ be a Torelli class of a simple separating curve in \( \Sigma_g\) such that \(\Omega^*B_{\curvesystem}(p)\) is non empty. Let \( H_1(\Sigma_g, \mathbb Z) = V_1 \oplus V_2\) the symplectic splitting associated to \(\curvesystem\) and denote $p_i=p_{V_i}$. Suppose that, after a symplectic identification of \( V_i \) with \( H_1(\Sigma_{g_i}, \mathbb Z)\) for corresponding \(g_i\)'s, the set \(\Omega\mrs_{g_i}(p_i)\) is connected for \( i=1,2\). Then, $\Omega_0^*B_{\curvesystem}(p)$ is connected as well. 
\end{proposition}

%\begin{proposition}\label{p:inductive step} Let $g\geq 4$ and $p\in H^1(\Sigma_g,\mathbb{C})$. Suppose that the fibers of $\per$ are connected for any genus up to $g-1$. Then, for any Torelli class of a simple closed curve $\curvesystem$ in $\Sigma_g$, the intersection $$\Omega_0^*B_{\curvesystem}(p):=\Omega_0^*B_{\curvesystem}\cap \percompact^{-1}(p) \text{ is connected}$$ \end{proposition}

\begin{proof}

First remark that $p_i\neq 0$ and $\Omega\mrs_{g_i,0}(p_i)$ is non-empty, since the restriction of an element $(C_1\vee C_2,m,\omega_1\vee\omega_2)\in\Omega^*B_{\curvesystem}(p)$ to a part $(C_i,m_{|V_i},\omega_i)$ belongs to that set.  By Theorem \ref{p:add marked points} the set $\Omega\mrs_{g_i,1}(p_i)$ is also non-empty and connected.  
The natural attaching map $$\Omega\mrs_{g_1,1}(p_1)\times \Omega\mrs_{g_2,1}(p_2)\rightarrow \Omega^*B_{\curvesystem}(p)$$ that attaches the marked curves and the forms at the marked points %and considers the homological marking of the result induced by the direct sum of the markings of each factor, 
is a surjective continuous map (see Subsection \ref{ss:attach&forget on hodge bundles}). Therefore the image is connected. \end{proof}

\begin{proposition}\label{p:inductive step non compact type}
Let \( g\geq 3 \), \( p\in H^1( \Sigma_g, \mathbb C )\), and $\curvesystem$ be a Torelli class of a simple non separating curve in \( \Sigma_g\) such that \(\Omega_0 B_{\curvesystem}(p)\)  is non empty. Suppose that, after a symplectic identification of \( [\curvesystem]^\perp / \mathbb Z [\curvesystem] \) with \( H_1(\Sigma_{g-1}, \mathbb Z) \), the primitive degree of \( p_{[\curvesystem]} \) is at least three, and  \(\Omega\mrs_{g-1}(p_{[\curvesystem]})\) is non-empty and connected. Then, $\Omega_0 B_{\curvesystem}(p)$ is connected as well. 
\end{proposition}

\begin{proof}
Take a representative of \( \curvesystem \) and introduce the closed connected oriented topological surface \(\Sigma_{g-1}\) with two marked points \(q_1\neq  q_2\in \Sigma_{g-1}\) defined by cutting \(\Sigma_g\) along \(\curvesystem\) and identifying each component of the geometric completion to a point, one defining the point \(q_1\) and the other the point \(q_2\).  Notice that we have a natural isomorphism 
\begin{equation} \label{eq: isomorphism relative homology} H_1 (\Sigma_{g-1}, q_1,q_2; \mathbb Z) \simeq H_ 1 (\Sigma_g, \mathbb Z)/ \mathbb Z [c], \end{equation}  
where \([\curvesystem]\) denotes the homology class of \( \curvesystem \). The inclusion \( H_1 (\Sigma_{g-1} , \mathbb Z)\hookrightarrow H_1 (\Sigma_{g-1},q_1,q_2; \mathbb Z)  \) coming from the long exact sequence of relative homology translates under the isomorphism \eqref{eq: isomorphism relative homology} into the inclusion \( [\curvesystem]^{\perp} / \mathbb{Z} [\curvesystem] \hookrightarrow H_1 (\Sigma_g, \mathbb Z) /\mathbb Z [\curvesystem]\). 

Since \([\curvesystem]\) belongs to the kernel of \( p: H_1(\Sigma_g, \mathbb Z) \rightarrow \mathbb C\), this latter defines a period \( \prel : H_1 (\Sigma_{g-1} , q_1,q_2; \mathbb Z) \rightarrow \mathbb C\). 
The attaching map that identifies the two marked points provides an isomorphism 
\[ \Omega_0 B_{\curvesystem}(p) \simeq \Omega \mrs_{g-1, 2} (\prel). \]
Now, the restricition $(p_{rel})_0$ of $p_{rel}$ to $H_1(\Sigma_{g-1},\mathbb{Z})$ coincides with $p_{[\curvesystem]}$ up to the given identification. In particular $\deg((p_{rel})_0)\geq 3$ and $\Omega\mrs_{g-1,0}((p_{rel})_0)$ is (non-emtpy and) connected. By Theorem \ref{p:add marked points} applied to $p_{rel}$ we deduce that $\Omega\mrs_{g-1,2}(p_{rel})$ is connected.   
 \end{proof}

\section{Connectedness of the boundary of a fiber of $\percompact$}
\label{s:connectedness of the boundary}
This section will be devoted to show the following
\begin{theorem}\label{t:connected boundary}
Let $g\geq 4$ and suppose that Theorem \ref{t:connectedness} is true up to genus $g-1$. Then for any $p\in\mathcal{H}_g$ with $\deg p\geq 3$ the boundary  $\partial \per^{-1}(p)$ in $\Omega^{*}_0\mnc_{g}$ is connected. 
\end{theorem}
Its proof will be split in two separate parts, depending on whether \(p\in \text{Hom}(H_1(\Sigma_g, \mathbb Z)  ,\mathbb C) \) is injective or not.

\subsection{Proof of Theorem \ref{t:connected boundary} for injective $p\in\mathcal{H}_g$}\label{ss:connected boundary injective case} 
In the case of injective $p\in\mathcal{H}_g$ all points in $\Omega\mnc_g(p)$ are contained in the smooth manifold $\Omega^*\mnc_g^ c$ formed by stable forms without zero components on curves of compact type. Moreover,  Corollary \ref{c:holomorphic extension to compact type} guarantees that $\Omega\mnc_g(p)$ is a smooth manifold, and the boundary strata
 $\partial\percompact^{-1}(p)$ form a normal crossing divisor.  The  connectedness of this divisor is equivalent to the connectedness of its dual complex  $\mathcal{G}_p$. 

\begin{remark}\label{rem:multiplying_periods}
If $L\in\text{GL}_2^+(\mathbb{R})$ is an orientation preserving real linear map, the complexes associated to $p$ and to $L\circ p$ are isomorphic. In particular, for any $c\in\mathbb{C}^*$, $\mathcal{G}_p$ and $\mathcal{G}_{cp}$ are isomorphic.  
\end{remark}

In the rest of this section we will prove the connectedness of the complex $\mathcal{G}_{p}$. 

Since each  component of a boundary stratum of $\percompact^{-1}(p)$ lies in a component of a stratum of the  boundary of $\Omega\mnc^{*}_{g}$ we can define, using the dual boundary complexes,  a continuous map  of simplicial complexes 
\begin{equation}\label{eq:bdry inclusion ss}
    \mathcal{G}_p\rightarrow \curvecomplex^{\text{sep}}_g/\mathcal{I}_g. 
\end{equation}
We claim that if $p$ is injective and $g\geq 4$ then so is the map \eqref{eq:bdry inclusion ss} at the level of vertices. Indeed, suppose $D_{1},D_{2}$ are connected components of $\partial \per^{-1}(p)$ lying on the boundary component of  $\Omega\mnc^{*}$  corresponding to a symplectic splitting $V_{1}\oplus V_{2}$. Since $\ker p=0$, $\deg(p_{|V_{i}})<\infty$ if and only if $\rank(V_i)=2$. The inductive hypothesis of Theorem \ref{t:connectedness} and Proposition   \ref{p:inductive step compact type} guarantee that there is only one isoperiodic irreducible component of period $p$ in the boundary stratum of $\Omega\mnc_{g}$ corresponding to $V_{1}\oplus V_{2}$. Hence, $D_{1}=D_{2}$. The connectedness of $\mathcal{G}_{p}$ is therefore equivalent to that of the image of \eqref{eq:bdry inclusion ss}. Two vertices of the image of \eqref{eq:bdry inclusion ss} are said to be equivalent if they lie in the same connected component of the image. Suppose that for $i=1,2$,  $\{V_i,V_i^{\perp}\}$ are the modules associated to two vertices. We write $V_1\sim V_2$ if the vertices lie in the same connected component of the image of  \eqref{eq:bdry inclusion ss}. In particular $V\sim V^{\perp}$ for any pair $\{V,V^{\perp}\}$ representing a vertex in the image. In fact, more generally, if $V_1,V_2$ are orthogonal submodules of intermediate volume $\vol_p(V_i)\in (0,\vol(p))$ such that $$0<\vol_p(V_1)+\vol_p(V_2)<\vol(p)\text{  then }V_1\sim V_2.$$ Indeed, they are two of the factors of a $p$-admissible splitting $V_1\oplus (V_1\oplus V_2)^{\perp}\oplus V_2$. By Corollary \ref{c:realization of decompositions}, this splitting occurs as the period of a stable form without zero components on a curve with two separating nodes. Moreover it lies in the intersection of the closure of the components of $V_1$ and $V_2$.

We first prove that every vertex  given by a $p$-admissible splitting $V\oplus V^{\perp}$ is equivalent to another $V_1\oplus V_1^{\perp}$ where $V_1\in\mathcal{V}_p$, i.e. $V_1$ is of rank two. Indeed, if $V$ is of rank strictly between $2$ and $2g-2$ then $p_{|V}$ is an injective Haupt homomorphism and by Proposition \ref{c:line ranks for injective} there exists a $p_{|V}$-admissible splitting $V_1\oplus V_2$ of $V$ with a factor $V_1$ of rank $2$. Therefore $V_1\oplus V_2\oplus V^{\perp}$ is also a $p$-admissible splitting and $V\sim V^{\perp}\sim V_1$. It remains to connect the vertices corresponding to rank two $p$-admissible submodules.

\begin{proposition}\label{p: injective periods}
If $g\geq 4$, $p\in\mathcal{H}_g$ is injective and $V,V'\in\mathcal{V}_p$, we have $V\sim V'$. Therefore the image of \eqref{eq:bdry inclusion ss} is connected. 
\end{proposition}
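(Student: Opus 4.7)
The strategy is to produce, for given $V,V'\in\mathcal V_p$, a third rank-two symplectic submodule $W\in\mathcal V_p$ that enters simultaneously into $p$-admissible three-factor decompositions with $V$ and with $V'$; by the observation recorded just before the proposition (factors of a common $p$-admissible decomposition are pairwise $\sim$-equivalent), this will yield $V\sim W\sim V'$.

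The natural place to look for $W$ is inside $M=(V+V')^\perp=V^\perp\cap V'^\perp$, so that $W$ is automatically orthogonal to and disjoint from both $V$ and $V'$. If such a $W$ exists with $\vol_p(W)$ small, then $V\oplus W$ and $V'\oplus W$ are symplectic of rank four (using that $V,V'$ are symplectic, hence intersect their own orthogonals trivially), their orthogonal complements $(V\oplus W)^\perp$ and $(V'\oplus W)^\perp$ are symplectic of rank $2g-4\ge 4$, and the volume inequality
\[
 \vol_p(W)<\min\bigl(\vol(p)-\vol_p(V),\;\vol(p)-\vol_p(V')\bigr)
\]
guarantees positivity of the complementary volumes. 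Since $p$ is injective, positivity of volume is sufficient for Haupt, so both decompositions $V\oplus W\oplus(V\oplus W)^\perp$ and $V'\oplus W\oplus(V'\oplus W)^\perp$ are $p$-admissible. Corollary~\ref{c:realization of decompositions} produces the corresponding stable forms with two separating nodes, and smoothing one node at a time yields the paths in $\mnc'(p)$ witnessing $V\sim W$ and $V'\sim W$.

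The technical step is to exhibit such a $W$ inside $M$. Because $V$ and $V'$ are themselves symplectic and the symplectic form is alternating (hence of even rank on any submodule), the radical of $V+V'$---which coincides with the radical of the restricted form on $M$---has rank at most $2$: rank $0$ when $V+V'$ is itself symplectic of rank $4$, rank $1$ when $\mathrm{rk}(V+V')=3$, and rank $2$ only when $V+V'$ has rank $4$ with a two-dimensional radical. Consequently $M$ admits a symplectic direct summand $S$ of its radical of rank at least $2g-6\ge 2$. In the generic situation $\mathrm{rk}\,S\ge 4$, and Proposition~\ref{l:padmissible elements} applied to $(S,p|_S)$---injective, of sufficient rank, with image not contained in a real line because $\vol_p$ does not vanish identically on $S$---produces a rank-two symplectic submodule $W\subset S\subset M$ of any prescribed small positive $p$-volume, closing the argument.

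The main obstacle is the borderline case $g=4$ with $V+V'$ of rank $4$ and a two-dimensional radical, which forces $\mathrm{rk}\,S=2$, hence $W=S$ with an a priori uncontrolled volume. My approach is to bypass this configuration by first replacing $V$ with an auxiliary $V_1\in\mathcal V_p$ equivalent to $V$, chosen via Proposition~\ref{l:padmissible elements} as a small-volume rank-two symplectic submodule of $V^\perp$ (which is symplectic of rank $6$, where Proposition~\ref{l:padmissible elements} applies since $\vol_p(V^\perp)=\vol(p)-\vol_p(V)>0$) in sufficiently generic position that $V_1+V'$ is symplectic of rank $4$; then $(V_1+V')^\perp$ is symplectic of rank $2g-4=4$ and the main construction applied to the pair $(V_1,V')$ delivers the chain $V\sim V_1\sim V'$.
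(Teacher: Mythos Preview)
Your core idea---finding a rank-two symplectic $W\subset(V+V')^\perp$ of small positive volume and using it as a common factor---is exactly Step~2 of the paper's Lemma~\ref{c:rank one intersection}. But the paper uses this only as one ingredient, under the extra hypotheses that $V\cap V'\neq0$ and that the periods of $(V+V')^\perp$ are not all contained in a real line; the rest of the argument (Step~3 there, and then the whole line-rank analysis via Lemmas~\ref{l:connecting primitive elements}--\ref{l:interdits non generique}) is devoted to the cases where those hypotheses fail. Your proposal tries to make Step~2 do all the work, and it does not.

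There are two concrete gaps. First, your assertion that $p|_S$ has image not contained in a real line ``because $\vol_p$ does not vanish identically on $S$'' is unjustified and in fact can fail: nothing prevents $M=(V+V')^\perp$, hence $S$, from lying entirely inside $p^{-1}(\ell)$ for some real line $\ell$ (this occurs for instance when $r(p)$ is large and $V,V'$ are chosen with generators off $\ell_{\max}$). In that situation every rank-two submodule of $S$ has $p$-volume zero and no $W\in\mathcal V_p$ exists inside $M$. The paper handles the analogous situation explicitly in Step~3. Second, even when $p(S)$ is not in a real line, if $\mathrm{rk}\,S=4$ Proposition~\ref{l:padmissible elements} may return case~(2), the exceptional Lagrangian configuration of Example~\ref{ex:genus 2 exception}, where the volumes of rank-two symplectic submodules of $S$ form a discrete set and need not contain any value in the interval you require. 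This is not only a $g=4$ phenomenon: for $g=5$ with $V\cap V'=0$ and $\mathrm{rk}\,\mathrm{rad}(V+V')=2$ one again has $\mathrm{rk}\,S=4$.

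Your proposed bypass---replacing $V$ by some $V_1\subset V^\perp$ ``in sufficiently generic position that $V_1+V'$ is symplectic of rank $4$''---does not resolve these issues. Proposition~\ref{l:padmissible elements} gives a $V_1$ containing a specific element $a$ with a prescribed volume range, but gives no control over the position of $V_1$ relative to $V'$; you would have to prove that the set of admissible $V_1$ is large enough to meet your genericity condition, and even then the resulting $(V_1+V')^\perp$ is again a rank-$(2g-4)$ symplectic module to which the same two obstructions (real-line image, case~(2)) apply. The paper's route---first reducing to pairs with nontrivial intersection, then splitting by the global invariant $r(p)$---is designed precisely to get around these obstructions.
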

We will split the proof in several Lemmas :

\begin{lemma}\label{c:rank one intersection}
Let $g\geq 4$ and $p\in\mathcal{H}_g$ be injective. If $V,V'\in\mathcal{V}_p$ satisfy $V\cap V'\neq 0$, then $$V\sim V'.$$
\end{lemma}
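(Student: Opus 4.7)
The cases $g=2$ and $g=3$ follow at once from the observation preceding Proposition~\ref{p:connecting different nodes}, since by \cite{McMullen} $\mnc'(p)$ is already connected for those genera. So I will assume $g\geq 4$ and $V\neq V'$, in which case $V\cap V'=\mathbb Z a$ for some primitive class $a$. My plan is to produce a third module $W\in\mathcal{V}_p$ that is orthogonal to both $V$ and $V'$ and fits into two distinct $p$-admissible decompositions of $H_1(\Sigma_g)$, one with $V$ and one with $V'$. Granting this, the remark preceding the proposition (any two factors of a common $p$-admissible decomposition are equivalent in $\bdcomp_p$) immediately yields $V\sim W\sim V'$.

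Writing $V=\mathbb Z\langle a,b\rangle$ with $a\cdot b=1$, I would first observe that $V'$ can be written as $\mathbb Z\langle a,b+e\rangle$ for some primitive $e\in V^\perp\setminus\{0\}$; indeed $a\cdot b'=1$ forces $b'-b\in a^\perp=\mathbb Z a\oplus V^\perp$, and the $\mathbb Z a$ component can be absorbed into $b'$. A direct computation then gives
\[
V^\perp\cap (V')^\perp=e^\perp\cap V^\perp,
\]
a submodule of rank $2g-3$ whose intersection form has one-dimensional radical $\mathbb Z e$. Because $p$ is injective and $g\geq 4$, the Haupt conditions needed to make $V\oplus W\oplus(V\oplus W)^\perp$ and $V'\oplus W\oplus(V'\oplus W)^\perp$ into $p$-admissible decompositions all reduce to positivity of the corresponding volumes, and these in turn unpack to the single requirement
\[
\vol_p(W)\in I:=\bigl(0,\;\vol(p)-\max(\vol_p(V),\vol_p(V'))\bigr),
\]
an open non-empty subinterval of $(0,\vol(p))$.

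The crux is thus to produce a rank-two symplectic $W\subset e^\perp\cap V^\perp$ whose $p$-volume lies in $I$. To do so I would adapt the construction of Lemma~\ref{l:rank} to this sub-setting: choose a primitive $a_0\in e^\perp\cap V^\perp$ with $p(a_0)\neq 0$, complete it to a symplectic pair $(a_0,b_0)$ in $V^\perp$, and modify $b_0$ by an element $z\in a_0^\perp\cap V^\perp$ so that $(b_0+z)\cdot e=0$ (forcing $W=\mathbb Z\langle a_0,b_0+z\rangle\subset e^\perp$) while $\vol_p(W)=\Im(\overline{p(a_0)}\,p(b_0+z))$ lands in $I$. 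The first constraint is $\mathbb Z$-linear with integer right-hand side and is easy to solve using unimodularity of $V^\perp$; the second demands that the $\mathbb Z$-linear functional $z\mapsto\Im(\overline{p(a_0)}\,p(z))$, restricted to the rank $2g-4\geq 4$ submodule $a_0^\perp\cap e^\perp\cap V^\perp$, have image dense in $\mathbb R$. I expect this density to be the main obstacle: it is a relative version of the rank computation in Lemma~\ref{l:rank}, and fails only in low-dimensional exceptional configurations analogous to case (2) of Proposition~\ref{l:padmissible elements} and Example~\ref{ex:genus 2 exception}. In such cases one must instead exploit the flexibility in the choice of $a_0$ inside the corank-one submodule $e^\perp\cap V^\perp$, using the abundance of admissible classes guaranteed by Corollary~\ref{c:line ranks for injective}, to ensure that some attainable discrete volume lies in the open interval $I$. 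Once $W$ is produced, both decompositions are in hand and $V\sim W\sim V'$ as announced.
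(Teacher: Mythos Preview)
Your overall plan coincides with the paper's Step~2: find a rank-two symplectic $W\subset (V+V')^\perp$ with $\vol_p(W)$ in the open interval $I$, and conclude $V\sim W\sim V'$. The normal form you set up is also the paper's Step~1. The gap is in your treatment of the exceptional case.

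The failure mode is not that the attainable volumes become a discrete set from which you must select a value in $I$; it is that they collapse to $\{0\}$. Concretely, suppose $p\bigl((V+V')^\perp\bigr)$ lies on a single real line $l\subset\mathbb C$ (this occurs whenever $r(p)\geq 2g-3$ and $l=\ell_{\max}$, which is not excluded for injective $p\in\mathcal H_g$). Then for \emph{every} rank-two symplectic $W\subset (V+V')^\perp=e^\perp\cap V^\perp$ one has $p(W)\subset l$, hence $\vol_p(W)=\Im(\overline{p(a_0)}\,p(b_0'))=0$. No choice of $a_0$, and no appeal to Corollary~\ref{c:line ranks for injective}, can repair this: there is simply no symplectic plane orthogonal to both $V$ and $V'$ with positive $p$-volume. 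Your proposed workaround of varying $a_0$ within $e^\perp\cap V^\perp$ therefore cannot succeed.

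The paper resolves this degenerate case by a different idea (its Step~3): rather than insisting on a $W$ orthogonal to both $V$ and $V'$, it inserts an intermediate module $V''=\mathbb Z a\oplus\mathbb Z(b+c)$ with $c$ chosen in the symplectic complement $X$ of $\mathbb Z\langle a,b,e,\ast\rangle$. One checks that $(V+V'')^\perp$ and $(V'+V'')^\perp$ each contain an element whose period is off the line $l$ (coming from $b_2$ and $b_2+m_2'a_1$ respectively), so Step~2 applies to the pairs $(V,V'')$ and $(V',V'')$ separately, giving $V\sim V''\sim V'$. This two-hop argument is the missing ingredient in your proposal.
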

\begin{proof}
If $V=V'$ we are done. Suppose that $V\neq V'$

\vspace{0.5cm}

\noindent \textit{First step: there is a symplectic basis $a_1, b_1,\ldots , a_g, b_g$ such that $V= \mathbb Z a_1 + \mathbb Z b_1$ and $V' = \mathbb Z a_1 + \mathbb Z (b_1 + m_2 ' a_2)$ for a certain integer $m_2 '$.}

\vspace{0.2cm}

\begin{proof}[Proof of the first step] The intersection $V\cap V'$ is a primitive submodule of $H_1$, since both $V$ and $V'$ are primitive. Being of rank $1$, we have $V\cap V' = \mathbb Z a_1$ with $a_1$ primitive. Let $b_1\in V$ (resp. $b_1'\in V'$) such that $a_1 \cdot b_1 = 1$ (resp. $a_1 \cdot b_1' = 1$). These elements exist since $V$ and $V'$ are unimodular. The element $b_1 ' - b_1$ belongs to $a_1^\perp$. For a certain integer $n$, the element $b_1' + n a_1 - b_1 $ is also orthogonal to $b_1$. Change $b_1$ to $b_1 - na_1$ if necessary. We then have that $b_1' - b_1$ is orthogonal to $V = \mathbb Z a_1 + \mathbb Z b_1$. Write $b'_1 - b_1 = m_2 ' a_2$ where $a_2$ is a primitive element of $V^\perp$. Completing $a_2$ into a symplectic basis $a_2,b_2,\ldots, a_g, b_g$ of $V^\perp$ gives the desired statement.
\end{proof}
\vspace{0.5cm}
\noindent \textit{Second step: if the periods of $(V + V')^\perp$ do not lie in a real line of $ \mathbb C$, there exists a symplectic rank two submodule $W\subset H_1(\Sigma_g)$ such that $V \perp W$, $V' \perp W$ and
\begin{equation} \label{eq: volume condition}
0 < \vol_p (W) < \inf (\vol_p(V^{\perp}), \vol_p ((V')^\perp).\end{equation}
In particular, $V \sim W \sim V'$. }

\vspace{0.2cm}

\begin{proof}[Proof of second step] In the coordinates of the first step, we have $(V + V' )^\perp = \mathbb Z a_2 + X$ where $X := \sum _{k\geq 3} \mathbb Z a_k +\mathbb Z b_k$. We apply Proposition \ref{l:padmissible elements} to $p_{|X}$. If the restriction of $p$ to $X$ belongs to case (1) of that proposition, we are done. If it belongs to case (2), we use the

\vspace{0.2cm}

\begin{lemma} Let $X$ be a unimodular symplectic module of rank $4$. For every Lagrangian subspace $L\subset X$, there exists a symplectic rank two submodule $Y\subset X$ such that $L \cap Y = \{ 0 \}$.
\end{lemma}
\vspace{0.3cm}

\begin{proof} We can assume that $L= \mathbb Z a + \mathbb Z a'$ is primitive. Let $a_1= a$ and $b_1$ be an element of $X$ such that $a_1 \cdot b_1 = 1$. We have $a' = m_1 a_1 + c$ where $c\in (\mathbb Z a_1 + \mathbb Z b_1)^\perp$ and $m_1 \in \mathbb Z$. Up to replacing $a' $ by $a' - m_1 a_1$, we can assume that $m_1=0$. Since $L$ is primitive, so is $c$, so that we can extend the family $a_1, b_1, a_2=c$ to a symplectic basis of $X$. The symplectic submodule $Y= \mathbb Z (a_1 + b_2) + \mathbb Z b_1$ has the desired properties.
\end{proof}

\vspace{0.5cm}
From now on a Greek letter will denote the period of the corresponding Latin letter.
Let $l= \mathbb R \alpha_2$, and $L= X \cap p^{-1} (l)$. This space is either $\{0 \}$ or a Lagrangian subspace of $X$ since we assume the restriction of $p$ to $X$ is in case (2) of Proposition \ref{l:padmissible elements}. By the preceding Lemma, there exists a symplectic rank two submodule $Y \subset X$ such that $Y \cap p^{-1}(l) = \{ 0 \}$. Let $a', b'$ be a symplectic basis of $Y$, and let
$$ a = a' + A a_2, \ \ \ \ b = b' + B a_2,$$
for some $A,B \in \mathbb Z$. We have $a\cdot b=a'\cdot b' = 1$, and the volume of $W= \mathbb Z a + \mathbb Z b$ is given by
\[ \text{vol}_p (W) = \Im ( (\beta' + B\alpha_2) \overline{(\alpha ' + A \alpha_2)} ) = \Im ( \beta' \overline{\alpha'}) + \Im ( ( A \beta' - B \alpha') \overline{\alpha_2}). \]
By construction none of the cycles of $Y$ are mapped by $p$ to an element of the line $l=\mathbb R \alpha_2$, so the linear form $(A,B) \in \mathbb Z^2 \mapsto \Im ( A \beta' - B \alpha') \overline{\alpha_2}) \in \mathbb R$ is injective, and thus the volume of $W$ can approximate any real value. Since $W$ is orthogonal to both $V$ and $V'$, choosing $W$ with $\vol_p(W)\in (0,\vol(p))$ gives the solution to step 2.

\end{proof}
\vspace{0.5cm}

\noindent \textit{Third step: assume that the periods of $(V+V')^\perp$ lie on a real line $l\subset \mathbb C$. Then $V \sim V'$.}

\vspace{0.2cm}

\begin{proof}[Proof of the third step] Thanks to Remark \ref{rem:multiplying_periods} it suffices to show that $V$ is equivalent to $V'$ for some $wp$ where $w\in\mathbb{C}^*$. By choosing an appropriate $w$ of modulus one we can suppose that $wp((V+V')^{\perp})\subset \mathbb{R}$. If we prove Step 3 for the case $l=\mathbb{R}$ we will be done. Recall that $X = \sum_{i\geq 3} \mathbb Z a_i + \mathbb Z b_i \subset (V+V')^\perp$. Let $c\in X$ and define $V_{c} := \mathbb Z a_1 + \mathbb Z (b_1 + c)$. The volume of $V_{c}$ is given by
$$\text{vol} _p (V_{c}) = \text{vol}_p (V) + \Im ( p(c)\overline{\alpha_1}).$$
If $\alpha_{1}\in\mathbb{R}$, then $V_{c}$ is automatically $p$-admissible. Otherwise $p(\alpha_{1})\notin\mathbb{R}$ and $V_{c}$ is $p$-admissible as soon as
$$ -\text{vol}_p (V) < \Im ( p(c)\overline{\alpha_1}) < \text{vol}(p) - \text{vol}_p (V). $$
Now $X$ has rank at least $4$ and so has the image of the homomorphism $X\rightarrow \mathbb{R}$ $$c\mapsto\Im (p(c)\overline{\alpha}_1).$$ Therefore, the inequality has an infinite number of solutions $c\in X \setminus \{ 0 \}$. We can even impose a further condition that $c\cdot a_3=1$. Define $V''=V_{c}$ for such a solution $c$. 

We claim that $V'' \sim V$. Indeed, the space $(V+V'')^\perp$ contains the element $b_2 $. Observe that the period $\beta_2$ of $b_2$ is not real, since otherwise all the periods of $V^\perp$ would be real, and so we would have $\text{vol}_p (V) = \text{vol}(p)$ which contradicts $V\in \mathcal V_p$. On the other hand, the submodule $c^\perp \cap X$ has rank $\geq 3$ and is contained in $(V+V')^\perp$. Since the periods of $X$ are real, this proves that some periods of $(V+V'')^\perp$ are real. We can thus apply Step 2 to the couple $(V,V'')$ to infer $V''\sim V$.

To prove that $V'' \sim V'$ we will show that $(V'+V^{''})^{\perp}$ does not have all its periods in a line and apply Step 2. Consider the element $ b_2 + m_2'(a_1+a_3)$. It belongs to $(V' + V^{''})^\perp$. If it has real period, then $b_2+m_2'a_1$ has also real period, which implies that $(V')^{\perp}$ has only real periods. This is in contradiction with the fact that $V'$ is $p$-admissible. Therefore $b_2+m_2'(a_1+a_3)$ has non-real period. On the other hand there are elements in $(V'+V^{''})^{\perp}\cap X\setminus 0$ and their periods are non-zero real numbers.
\end{proof}\end{proof}
Lemma \ref{c:rank one intersection}, allows to reduce the equivalence relation $\sim$ on submodules in $\mathcal{V}_p$ to an equivalence relation on the elements that belong to those submodules.

\begin{definition} Let $p\in \mathcal{H}_g$. Recall that a primitive element $w\in H_1(\Sigma_g)$ is said to be $p$-admissible if it is contained in some module $V\in\mathcal{V}_p$.
Two $p$-admissible elements $w,w'$ are equivalent and denoted $w\sim w'$ if there exist $V,V'\in\mathcal{V}_p$ containing $w$ and $w'$ respectively such that $V\sim V'$.
\end{definition}
The transitivity property of this equivalence relation is proven by the use of Lemma \ref{c:rank one intersection}.

In particular, we already know that if $V\cap V'\neq 0$ then any pair of primitive elements in $V\cup V'$ are equivalent.

If $V$ and $W$ belong to $\mathcal{V}_p$ and there exists some elements $v\in V$ and $w\in W$ such that $v\sim w$, then $V\sim W$. Indeed, we can find $V',W'\in \mathcal{V}_p$ such that $v\in V'$, $w\in W'$ and $V'\sim W'$. By Lemma \ref{c:rank one intersection}, $V\sim V'$ and $W\sim W'$, so $V\sim W$.

Let us analyze the $p$-admissible elements.

\begin{lemma}\label{l:connecting primitive elements} Let $p:H_{1}(\Sigma_{g})\rightarrow\mathbb{C}$ be an injective Haupt homomorphism. Given $w_1,w_2,w_3\in H_1(\Sigma_g)$ such that
\begin{enumerate} \item $w_i\cdot w_{i+1}=1$ for $i=1,2$,
\item $p(w_3)\notin\mathbb{R} p(w_1)$ and
\item for every real line $\ell\subset\mathbb{C}$ containing $0$ $$\text{rank}(p^{-1}(\ell)\cap w_1^{\perp}\cap w_3^{\perp})<2g-3.$$
 \end{enumerate}
 Then there exists $w_2'\in H_1(\Sigma_g)$ such that $w_1\cdot w_2'=w_2'\cdot w_3=1$ and $\mathbb{Z}w_1\oplus\mathbb{Z}w_2'$ and $\mathbb{Z}w_2'\oplus\mathbb{Z}w_3$ belong to $\mathcal{V}_p$. Therefore $w_1$ and $w_3$ are $p$-admissible and $w_1\sim w_3$.
\end{lemma}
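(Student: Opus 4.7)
The plan is to look for $w_2'$ in the form $w_2 + e$ for a suitable $e \in T := w_1^{\perp} \cap w_3^{\perp}$. Orthogonality of $e$ to $w_1$ and $w_3$ makes $w_1 \cdot w_2' = w_2' \cdot w_3 = 1$ automatic, so $V_1 := \mathbb{Z} w_1 \oplus \mathbb{Z} w_2'$ and $V_2 := \mathbb{Z} w_2' \oplus \mathbb{Z} w_3$ are rank-two symplectic submodules for every such $e$. A direct calculation gives
\begin{align*}
\vol_p(V_1) &= \Im\bigl(\overline{p(w_1)}\,p(w_2)\bigr) + \Im\bigl(\overline{p(w_1)}\,p(e)\bigr), \\
\vol_p(V_2) &= \Im\bigl(\overline{p(w_3)}\,p(w_2)\bigr) + \Im\bigl(\overline{p(w_3)}\,p(e)\bigr).
\end{align*}
If we can arrange both volumes to fall into $(0, \vol(p))$, then $V_1, V_2 \in \mathcal{V}_p$: for each rank-two factor $V_i$ the Haupt condition is equivalent to positive volume, and for the rank-$(2g-2)$ complementary factors $V_i^{\perp}$ positive volume also suffices in the injective setting of Proposition~\ref{p: injective periods}, since then $\ker(p_{|V_i^{\perp}}) = 0$ and the kernel hypothesis for Haupt is automatic. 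Once $V_1, V_2 \in \mathcal{V}_p$ is secured, the nonzero element $w_2'$ lies in $V_1 \cap V_2$, and Lemma~\ref{c:rank one intersection} yields $V_1 \sim V_2$, hence $w_1 \sim w_3$ together with the $p$-admissibility of $w_1$ and $w_3$.

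Thus the main task is to establish that the homomorphism
\[
\Psi : T \longrightarrow \mathbb{R}^2, \qquad e \longmapsto \bigl(\Im(\overline{p(w_1)}\,p(e)),\; \Im(\overline{p(w_3)}\,p(e))\bigr)
\]
has dense image in $\mathbb{R}^2$; density lets us drop both volumes into any prescribed open rectangle. Here $T$ has rank $2g-2$ because $w_1$ and $w_3$ are linearly independent: any relation $w_3 = c\, w_1$ would force $p(w_3) \in \mathbb{R}\, p(w_1)$, contradicting (2).

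To prove density I plan to argue by contradiction. If the closure $H$ of $\Psi(T)$ is a proper closed subgroup of $\mathbb{R}^2$, the structure theorem for closed subgroups of Euclidean space leaves only two possibilities for $H$: either it is contained in a proper linear subspace, or it is discrete of rank at most two. In each case one can exhibit a nonzero $(\lambda, \mu) \in \mathbb{R}^2$ such that $\lambda x + \mu y$ takes values in a rank-$\leq 1$ subgroup of $\mathbb{R}$ on $H$—use a defining covector of the subspace in the first case, and a covector annihilating a nonzero generator of the lattice in the second. Setting $\alpha := \lambda\, p(w_1) + \mu\, p(w_3)$, which is nonzero by the $\mathbb{R}$-linear independence granted by (2), this translates to the fact that the homomorphism $e \in T \mapsto \Im(\overline{\alpha}\, p(e)) \in \mathbb{R}$ has image of rank $\leq 1$. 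Its kernel $T \cap p^{-1}(\mathbb{R}\alpha)$ therefore has rank at least $(2g-2) - 1 = 2g - 3$, contradicting hypothesis (3) applied to the real line $\ell = \mathbb{R}\alpha$.

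The hard part is exactly this density step: hypothesis (3) is formulated uniformly over \emph{every} real line through the origin precisely because, as $(\lambda, \mu)$ ranges over $\mathbb{R}^2 \setminus \{0\}$, the vector $\lambda\, p(w_1) + \mu\, p(w_3)$ sweeps out every such line, and both shapes of proper closure of $\Psi(T)$ must be excluded at once. Once density is in hand, the remaining steps (volume bookkeeping and the application of Lemma~\ref{c:rank one intersection}) are routine.
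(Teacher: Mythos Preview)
Your argument is correct and follows the same route as the paper: write $w_2'=w_2+e$ with $e\in w_1^{\perp}\cap w_3^{\perp}$, compute the two volumes as an affine map to $\mathbb{R}^2$ with linear part $\Psi$, and show $\Psi$ has dense image by producing, for any proper closed subgroup, a covector $(\lambda,\mu)$ that forces $\operatorname{rank}\bigl(T\cap p^{-1}(\mathbb{R}(\lambda p(w_1)+\mu p(w_3)))\bigr)\geq 2g-3$, contradicting (3). Two cosmetic points: your stated dichotomy for proper closed subgroups of $\mathbb{R}^2$ omits the $\mathbb{R}v\oplus\mathbb{Z}w$ type, but your covector conclusion still holds there (take $(\lambda,\mu)$ annihilating $v$), so nothing is lost; and your displayed formula for $\vol_p(V_2)$ has a sign slip (the correct value is $\Im(\overline{p(w_2')}\,p(w_3))$), though this is immaterial since density survives a sign flip in one coordinate.
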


\begin{proof}
Write $w_2'=w_2+z$ where $z\in w_1^{\perp}\cap w_3^{\perp}$. If we show that the image of the map $$w_1^{\perp}\cap w_3^{\perp}\rightarrow\mathbb{R}^2$$ defined by $z\mapsto (\text{vol}_p(\mathbb{Z}w_1\oplus\mathbb{Z}(w_2+z)),\text{vol}_p(\mathbb{Z}(w_2+z)\oplus\mathbb{Z}w_3))$ has a point in the square $(0,\text{vol}(p))\times (0,\text{vol}(p))$ we will be done. The previous map is affine, with linear part $$\varphi(z)=(\Im(p(z)\overline{p(w_1)}),\Im(p(w_3) \overline{p(z)})).$$
Since $p(w_1)$ and $p(w_3)$ are not $\mathbb{R}$-colinear, $\text{Ker} (\varphi)=0$ and therefore $\text{rank}(\text{Im}\varphi)=2g-2$. The topological closure of $\text{Im}\varphi$ in $\mathbb{R}^2$ is either $\mathbb{R}$, $\mathbb{Z}\times\mathbb{R}$ or $\mathbb{R}^2$. Suppose it is not $\mathbb{R}^2$. Then there exists a submodule $H\subset w_1^{\perp}\cap w_3^{\perp}$ such that $\varphi(H)\subset\ell\subset\mathbb{R}^2$ for some real line $\ell$ passing through the origin and $\text{rank}H\geq (2g-2)-1=2g-3$. Write $\ell=\{(x,y): \alpha x+\beta y=0\}$ and then for each $z\in H$, $$\Im (p(z)\overline{(\alpha p(w_1)-\beta p(w_3))}=0.$$
Hence $p(H)\subset \mathbb{R}(\alpha p(w_1)-\beta p(w_3))$ is a submodule of rank at least $2g-3$ and we reach a contradiction with the rank hypothesis.
\end{proof}

The role played by the rank of $p$ on lines in Lemma \ref{l:rank}, Lemma \ref{l:connecting primitive elements} and Proposition \ref{l:padmissible elements} makes it useful to introduce the following

\begin{definition}
Given a homomorphism $p:W\rightarrow\mathbb{C}$ from a $\mathbb{Z}$-module $W$ we define its \textbf{line rank} as $$r(p)=\max_{a\in\mathbb{S}^1}\rank_{\mathbb{Z}}(p^{-1}(a\mathbb{R})).$$ \end{definition}

Remark that if $W$ is symplectic and $\text{vol}(p)>0$ then $r(p)< 2g$. Also, if $p$ is injective and $r(p)>g$, then the maximum is attained by a unique real line $\ell_{max}\subset\mathbb{C}$ containing $0$.

As far as  Lemma \ref{l:connecting primitive elements} is concerned, the rank condition is automatically satisfied if $r(p)<2g-3$. For $g\geq 4$ and $r(p)\geq 2g-3$ we have to check the rank condition only for $\ell_{max}$. In case $r(p)=2g-1$ the rank condition cannot be attained. In the other two cases it depends on whether there exists a rank two submodule of $w_1^{\perp}\cap w_3^{\perp}$ with periods outside $\ell$ or not.

\begin{lemma}\label{l:suite de primitives}
For $g\geq 2$, given primitive $w_1, w_4\in H_1(\Sigma_g)$ such that $w_1\cdot w_4=0$, there exists $w_2, w_3\in H_1(\Sigma_g)$ such that $$w_i\cdot w_{i+1}=1\text{ for }i=1,2, 3 $$
\end{lemma}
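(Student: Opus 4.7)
The plan is to argue by a direct algebraic construction using the unimodular symplectic structure. Since $w_1$ is primitive, there exists $w_2\in H_1(\Sigma_g)$ with $w_1\cdot w_2=1$. Pick any such $w_2$; this already takes care of the first intersection relation and yields an orthogonal decomposition
\[
H_1(\Sigma_g)=(\mathbb{Z}w_1\oplus\mathbb{Z}w_2)\oplus W,\qquad W:=\{w_1,w_2\}^{\perp},
\]
with $W$ a unimodular symplectic submodule of rank $2g-2\geq 2$. It remains to find $w_3$ satisfying $w_2\cdot w_3=1$ and $w_3\cdot w_4=1$.

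Next I would exploit the hypothesis $w_1\cdot w_4=0$: in the above splitting, the $w_2$-component of $w_4$ must vanish, so $w_4=\alpha w_1+w_4'$ with $\alpha\in\mathbb{Z}$ and $w_4'\in W$. Writing $w_4'=\delta\,w_4''$ with $w_4''\in W$ primitive and $\delta\geq 0$ (and $w_4'=0$ if $w_4\in\mathbb{Z}w_1$), the primitivity of $w_4$ in $H_1(\Sigma_g)$ is easily seen to translate into the coprimality condition $\gcd(\alpha,\delta)=1$.

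I would then parametrize candidates for $w_3$ as
\[
w_3=-w_1+bw_2+w_3',\qquad b\in\mathbb{Z},\ w_3'\in W.
\]
The condition $w_2\cdot w_3=1$ is automatic from the coefficient $-1$ in front of $w_1$. Using that $W$ is symplectically orthogonal to $w_1,w_2$, a direct computation gives
\[
w_3\cdot w_4=-b\alpha+\delta\,(w_3'\cdot w_4''),
\]
so the problem reduces to solving $\delta\,(w_3'\cdot w_4'')=1+b\alpha$ in $b\in\mathbb{Z}$ and $w_3'\in W$. The coprimality $\gcd(\alpha,\delta)=1$ lets us choose $b$ with $1+b\alpha\in\delta\mathbb{Z}$; setting $k=(1+b\alpha)/\delta$, it remains to find $w_3'\in W$ with $w_3'\cdot w_4''=k$. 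Since $w_4''$ is primitive in the unimodular symplectic module $W$ of rank $\geq 2$ (this is where $g\geq 2$ is used), the linear form $z\mapsto z\cdot w_4''$ on $W$ is surjective onto $\mathbb{Z}$, providing the desired $w_3'$.

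The degenerate cases fit into the same argument: if $\delta=0$ then $w_4=\pm w_1$ by primitivity, and one simply takes $w_3'=0$ and $b=\mp 1$; if $\alpha=0$ then $w_4=\delta w_4''$ is primitive in $W$, so $\delta=1$ and any $b$ works with $w_3'$ a symplectic dual of $w_4''$ in $W$. I do not expect any real obstacle; the main technical point is simply to extract the coprimality $\gcd(\alpha,\delta)=1$ from primitivity of $w_4$ and to invoke the elementary fact that pairing with a primitive vector in a unimodular symplectic module surjects onto $\mathbb{Z}$.
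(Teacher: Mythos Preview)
Your argument is correct. The key computation $w_3\cdot w_4=-b\alpha+\delta(w_3'\cdot w_4'')$ is right, the coprimality $\gcd(\alpha,\delta)=1$ does follow from primitivity of $w_4$ in the direct-sum decomposition, and the degenerate cases are handled properly.

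However, the paper takes a noticeably shorter route by reversing the order of construction. Instead of fixing $w_2$ first and then solving a B\'ezout-type equation for $w_3$, the paper first builds $w_3$: pick $b_1$ with $w_1\cdot b_1=1$ and $b_4$ with $b_4\cdot w_4=1$, then set $w_3=b_4+kw_1$ with $k$ chosen so that $w_3\cdot b_1=0$ (possible since $w_1\cdot b_1=1$). This already gives $w_3\cdot w_4=1$. Then $w_2$ is obtained as $w_2=b_3+lb_1$ for a dual $b_3$ of $w_3$ and a suitable $l$; the orthogonality $b_1\perp w_3$ makes $w_2\cdot w_3=1$ automatic, and $l$ adjusts $w_1\cdot w_2$ to $1$. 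This avoids the orthogonal decomposition, the primitivity bookkeeping for $w_4''$, and the case analysis entirely. Your approach has the merit of being very explicit about where $g\geq 2$ enters (the rank of $W$), whereas the paper's argument never visibly uses it; the trade-off is that building $w_3$ first lets one sidestep the B\'ezout step altogether.
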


\begin{proof}
Let $w_2'$ satisfy $w_1\cdot w_2'=1$ and $w_3'$ satisfy $w_3' \cdot w_4=1$. Since $w_1\in w_4^{\perp}$ for any $k$ we have $(w_3'+kw_1)\cdot w_4=1$. Choose $k$ as to have $w_2' \cdot (w_3'+kw_1) =0$ and define $w_3=w_3' +k w_1$. It is primitive and we can take $w_2''$ satisfying $w_2''\cdot w_3=1$. Since $w_2'\in w_3^{\perp}$ there exists $l$ such that $w_2=(w_2'' +l w_2')$ satisfies $w_1\cdot w_2=1$.
\end{proof}

\begin{lemma}\label{l: low line rank}
 If $g\geq 4$, $p:H_1(\Sigma_g)\rightarrow\mathbb{C}$ is an injective homomorphism of positive volume and $r(p)<2g-3$, then any pair of primitive elements in $H_1(\Sigma_g)$ are equivalent.
\end{lemma}

\begin{proof}
 Let $v,w\in H_1(\Sigma_g)$ be two primitive elements. By taking $z\in v^{\perp}\cap w^{\perp}$ and applying twice Lemma \ref{l:suite de primitives} we can consider a sequence $w_0,w_1,\ldots,w_6\in H_1(\Sigma_g)$ such that $w_i\cdot w_{i+1}=1$ for $i=0,\ldots, 5$, $w_0=v$, $w_3=z$ and $w_6=w$. 
 
 We claim that there exist $c_2\in w_1^{\perp}\cap w_3^{\perp}$ and $c_4\in w_3^{\perp}\cap w_5^{\perp}$ such that for $w_0'=w_0$, $w_2'=w_2+c_2$, $w_4'=w_4+c_4$ and $w_6'=w_6$ we can apply Lemma \ref{l:connecting primitive elements} to each of the triples $w'_i,w'_{i+1},w'_{i+2}$ for $i=0,2,4$ and conclude. By the line rank assumption on \(p\), we need to verify 
 \begin{equation}
 \Im(p(w_2')\overline{p(w_0)})\neq 0, \quad
 \Im(p(w_4')\overline{p(w'_2)})\neq 0, \quad
 \Im(p(w_6')\overline{p(w'_4)})\neq 0,
 \end{equation}
 for this claim to hold. 
 
 If \(p(w_2')\) is always colinear to \( p(w_0)\) for any choice of \(c_2\in w_1^\perp\cap w_3^\perp\), this means that \(p(w_1^\perp\cap w_3^\perp)\) is contained in the line \( \mathbb R p(w_0)\), which contradicts the line rank assumption. Let us fix \(c_2\in w_1^\perp\cap w_3^\perp\) (and the corresponding \(w_2'\)) such that \( p(w_2')\) is not colinear with \(p(w_0)\). 
 
 Now, the same argument shows that each affine subspace 
 \[ \{ c_ 4 \in w_3 ^\perp\cap w_5^\perp:\  \Im(p(w_4')\overline{p(w'_2)}) =0 \} \text{ and } 
 \{ c_ 4 \in w_3 ^\perp\cap w_5^\perp:\  \Im(p(w_6')\overline{p(w'_4)}) =0 \} \]
have positive corank. Hence, the union of these subspaces does not fill the whole \( w_3 ^\perp\cap w_5^\perp\). Choosing \(c_4\) outside the union of these subspaces gives the solution to Lemma \ref{l: low line rank}.\end{proof}

 Lemma \ref{l: low line rank} provides a proof of Proposition \ref{p: injective periods} for the case of $r(p) < 2g-3$. In fact, a similar reasoning, but more elaborated, permits to cover also the case \( r(p)= 2g-3\). However, we will treat this case, together with the cases \( r(p) \geq 2g-3\), by different methods, in the next family of lemmata:

\begin{lemma} \label{l:non interdits perp}
Let $g\geq 4$ and $p:H_1(\Sigma_g)\rightarrow\mathbb{C}$ an injective Haupt homomorphism with $r(p)\geq 2g-3$. Write $I=p^{-1}(\ell_{max})$ and
suppose $v,w\in H_1(\Sigma_g)\setminus I$ are primitive elements such that $v\cdot w=0$ and $[w]\in v^{\perp}/\mathbb{Z}v$ is also primitive. Then $v\sim w$.
\end{lemma}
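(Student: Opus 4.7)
The plan is to exhibit a common symplectic partner $u\in H_1(\Sigma_g)$ for $v$ and $w$, i.e.\ an element with $v\cdot u = u\cdot w = 1$, so that both rank-two symplectic submodules $V:=\mathbb{Z}v+\mathbb{Z}u$ and $W:=\mathbb{Z}u+\mathbb{Z}w$ belong to $\mathcal{V}_p$. Since $\mathbb{Z}u\subset V\cap W$ is nontrivial, Lemma~\ref{c:rank one intersection} will then give $V\sim W$, hence $v\sim w$.

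Such a $u$ exists: fixing $u_0$ with $v\cdot u_0=1$ (possible since $v$ is primitive), I write $u=u_0+x$ with $x\in v^\perp$ and note that the remaining condition becomes $x\cdot w=1-u_0\cdot w$. By primitivity of $[w]$ in $v^\perp/\mathbb{Z}v$ and unimodularity of the induced symplectic form there, $x\mapsto x\cdot w$ is surjective onto $\mathbb{Z}$ on $v^\perp$, so $u$ exists and the solutions form a coset $u_0+(v^\perp\cap w^\perp)$ of rank $2g-2$. Since $p$ is injective and $g\geq 4$, a rank-two symplectic submodule $U\subset H_1(\Sigma_g)$ lies in $\mathcal{V}_p$ if and only if $0<\vol_p(U)<\vol(p)$: the second Haupt condition on $p|_U$ and $p|_{U^\perp}$ is vacuous because $\ker p$ is trivial and thus never of rank $2g-4$ (as $g\geq 3$). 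Consequently the task reduces to finding $x\in v^\perp\cap w^\perp$ for which
\[
\Phi(x):=\bigl(\Im(\overline{p(v)}\,p(u_0+x)),\ \Im(\overline{p(u_0+x)}\,p(w))\bigr)\in (0,\vol(p))^2.
\]

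In the generic case where $p(v)$ and $p(w)$ are $\mathbb{R}$-linearly independent, the linear part $\Psi$ of $\Phi$ has kernel $\ker p\cap v^\perp\cap w^\perp=0$, so its image has maximal rank $2g-2$ in $\mathbb{R}^2$. I would then show this image is dense in $\mathbb{R}^2$: a non-dense image would produce a nonzero $c=\alpha\overline{p(v)}-\beta\overline{p(w)}\in\mathbb{C}$ with $\Im(c\cdot p(x))\in\mathbb{Z}$ for every $x\in v^\perp\cap w^\perp$, so the kernel of this functional on $v^\perp\cap w^\perp$, of rank $\geq 2g-3$, would sit inside $p^{-1}(\ell_c)$ for the real line $\ell_c:=\ker\Im(c\cdot -)$. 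Since $g\geq 4$ gives $2g-3\geq 5>2$, Corollary~\ref{c:line ranks for injective} forces $\ell_c=\ell_{max}$, so a rank-$(2g-3)$ chunk of $v^\perp\cap w^\perp$ lies in $I$; combining with the hypotheses $v,w\in v^\perp\cap w^\perp\setminus I$ and $r(p)\geq 2g-2$ yields a contradiction. Density of the image of $\Phi$ then provides the desired $x$, and the argument is complete in this case.

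The main obstacle is the degenerate case where $p(v)$ and $p(w)$ are $\mathbb{R}$-collinear; they then lie on a real line $\ell$ necessarily distinct from $\ell_{max}$ (since $v,w\notin I$), with $\rank p^{-1}(\ell)=2$. Here $\Psi$ takes values on a line in $\mathbb{R}^2$ and the direct construction of $u$ cannot be guaranteed. To bypass this I would interpose a primitive element $z\in v^\perp\cap w^\perp$ chosen so that $z\notin I$, $p(z)\notin\ell$, and $[z]$ is primitive in both $v^\perp/\mathbb{Z}v$ and $w^\perp/\mathbb{Z}w$; such $z$ exists because the union of all submodules to avoid inside $v^\perp\cap w^\perp$ has rank strictly less than $2g-2$ (using $g\geq 4$). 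Applying the non-collinear construction to the pairs $(v,z)$ and $(z,w)$ then gives $v\sim z\sim w$ by transitivity.
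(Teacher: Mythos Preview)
Your strategy of finding a common symplectic partner $u$ for $v$ and $w$ is different from the paper's, which instead builds \emph{orthogonal} rank-two modules $V\ni v$ and $W\ni w$ inside a single $p$-admissible decomposition. The paper turns the hypothesis $r(p)\geq 2g-2$ into an asset: since $I$ has large rank and $p(I)\subset\ell_{\max}$, one can adjust the volume of $\mathbb{Z}v\oplus\mathbb{Z}(b+e)$ by $p(e)\cdot\Im(\overline{p(v)})$ with $e$ ranging over $I\cap v^\perp\cap w^\perp$; this gives a dense set of values because $\Im(\overline{p(v)})\neq 0$ (as $v\notin I$) and $\rank\bigl(I\cap v^\perp\cap w^\perp\bigr)\geq 2g-4\geq 4$. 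After fixing $V$, the same trick produces $W\subset V^\perp$. Only one volume is controlled at a time, so a one-dimensional density suffices.

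Your approach, by contrast, must hit a two-dimensional target, and here there is a genuine gap. In the non-collinear case you assert that a non-dense image of $\Psi$ forces a rank-$(2g-3)$ piece of $v^\perp\cap w^\perp$ into $I$, and then claim that ``combining with $v,w\in v^\perp\cap w^\perp\setminus I$ and $r(p)\geq 2g-2$ yields a contradiction''. It does not: you are implicitly assuming that $v$ and $w$ are independent modulo $I$, but nothing prevents $v\equiv w\pmod{I_{\mathbb Q}}$. Concretely, take $g=4$, $\ell_{\max}=\mathbb R$, $r(p)=2g-1=7$, a symplectic basis $a_1,b_1,\ldots,a_4,b_4$ with $p(a_1)=i$ and all other periods real and $\mathbb Q$-independent, $\vol(p)=-p(b_1)>0$. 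Set $v=a_1$, $w=a_1+a_2$; both are primitive, lie outside $I$, and $[w]=[a_2]$ is primitive in $v^\perp/\mathbb Z v$. Then $v^\perp\cap w^\perp=\langle a_1,a_2,a_3,b_3,a_4,b_4\rangle$, and $I\cap v^\perp\cap w^\perp=\langle a_2,a_3,b_3,a_4,b_4\rangle$ has rank exactly $2g-3$, so no contradiction arises. Worse, a direct computation shows that $\Phi(x)_1+\Phi(x)_2\in\mathbb Z\cdot p(a_2)$ for every $x$, so if $|p(a_2)|\geq 2\vol(p)$ the image of $\Phi$ misses the open square $(0,\vol p)^2$ entirely and no admissible $u$ exists. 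Your collinear-case patch relies on the non-collinear case and therefore inherits the gap.

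The fix is essentially to adopt the paper's orthogonal construction, or at least to treat separately the case where $v$ and $w$ are dependent mod $I$ (equivalently, $\Im p(v)$ and $\Im p(w)$ are $\mathbb Q$-dependent when $\ell_{\max}=\mathbb R$), which your current argument does not address.
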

\begin{proof}By applying Remark \ref{rem:multiplying_periods} we can suppose without loss of generality that $\ell_{max}=\mathbb{R}$. Choose $b\in w^{\perp}$ such that $v\cdot b=1$. We claim that, up to changing $b$ by $b+e$ for some $e\in I\cap w^{\perp}\cap v^{\perp}$ we can suppose that $V=\mathbb{Z}v\oplus\mathbb{Z}b$ belongs to $\mathcal{V}_p$. Indeed, since $p(e)\in\mathbb{R}$,
$$\vol(\mathbb{Z}v\oplus\mathbb{Z}(b+e))=\vol(\mathbb{Z}v\oplus\mathbb{Z} b)+p(e)\Im(\overline{p(v)}).$$
By hypothesis the rank of $p(I\cap w^{\perp}\cap v^{\perp})$ is at least $2g-5\geq 3$ for $g\geq 4$, so the value of the volume of $V$ can be chosen arbitrarily close to any desired value.

Next take $c\in V^{\perp}$ such that $w\cdot c=1$. Given $f\in w^\perp\cap V^{\perp}\cap I$, we have $$\vol(\mathbb{Z}w\oplus\mathbb{Z}(c+f))=\vol(\mathbb{Z}w\oplus\mathbb{Z} c)+p(f)\Im(\overline{p(w)}).$$
Again, since the rank of $w^\perp\cap V^{\perp}\cap I$ is at least $2g-6\geq 2$ for $g\geq 4$, we can suppose that $c$ is chosen so that $W=\mathbb{Z}w\oplus\mathbb{Z} c$ belongs to $\mathcal{V}_{p_{|V^{\perp}}}$. By construction $V\perp W$ and $0<\vol (V)+\vol(W)<\vol (p)$. Therefore $V\sim W$ and also $v\sim w$.
\end{proof}

\begin{lemma}\label{l:non interdits generique}
Let $g\geq 4$ and $p:H_1(\Sigma_g)\rightarrow\mathbb{C}$ an injective Haupt homomorphism with $r(p)\geq 2g-3$. Define $I=p^{-1}(\ell_{max})$. If $v,w\in H_1(\Sigma_g)\setminus I$ are primitive such that $$v^{\perp}\cap w^{\perp}\nsubseteq I$$ then $v\sim w$.
\end{lemma}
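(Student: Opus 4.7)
The plan is to connect $v$ and $w$ by a chain $v\sim u\sim w$ through a suitable intermediate element $u\in v^\perp\cap w^\perp\setminus I$, applying Lemma \ref{l:non interdits perp} to each of the pairs $(v,u)$ and $(w,u)$. Assume $w\neq \pm v$, otherwise the conclusion is trivial. Then $v$ and $w$ are linearly independent primitive elements of $H_1(\Sigma_g)$, so the submodule $v^\perp\cap w^\perp$ has rank exactly $2g-2\geq 6$ for $g\geq 4$. The hypothesis $v^\perp\cap w^\perp\nsubseteq I$ says that the sublattice $I\cap(v^\perp\cap w^\perp)$ is proper, and in particular $(v^\perp\cap w^\perp)\setminus I$ is non-empty.

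First I would choose $u\in v^\perp\cap w^\perp$ with $p(u)\notin\ell_{max}$ and such that the class $[u]$ is primitive both in $v^\perp/\mathbb{Z}v$ and in $w^\perp/\mathbb{Z}w$. Each of these primitivity conditions excludes only a countable union of proper sublattices (the preimages in $v^\perp\cap w^\perp$ of $n\cdot(v^\perp/\mathbb{Z}v)$ and $n\cdot(w^\perp/\mathbb{Z}w)$ for $n\geq 2$), whose complement in $v^\perp\cap w^\perp\setminus I$ is non-empty by a standard density count in a free $\mathbb{Z}$-module of rank $\geq 6$. Observe that $[u]$ primitive in either quotient automatically forces $u$ primitive in $H_1(\Sigma_g)$: indeed, if $u=nu'$ with $n\geq 2$, then $v\cdot u'=w\cdot u'=0$ and so $u'\in v^\perp\cap w^\perp$, whence $[u]=n[u']$ in each quotient, contradicting primitivity.

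Second, I would apply Lemma \ref{l:non interdits perp} to the pair $(v,u)$: both $v$ and $u$ are primitive and outside $I$, we have $v\cdot u=0$ since $u\in v^\perp$, and $[u]$ is primitive in $v^\perp/\mathbb{Z}v$ by construction; hence $v\sim u$. The same argument applied to $(w,u)$ yields $w\sim u$. Transitivity of $\sim$ then gives $v\sim w$.

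The main difficulty is in the choice of $u$: one must ensure that the three avoidance conditions—being outside $I$, and having primitive image in both $v^\perp/\mathbb{Z}v$ and $w^\perp/\mathbb{Z}w$—can be met simultaneously. This is guaranteed by the rank count $2g-2\geq 6$ for $g\geq 4$, which leaves enough room in $v^\perp\cap w^\perp$ after removing the proper sublattice $I\cap(v^\perp\cap w^\perp)$ and the sublattices obstructing primitivity in the two quotients.
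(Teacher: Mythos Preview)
Your approach coincides with the paper's: find an intermediate element in $v^\perp\cap w^\perp\setminus I$ and apply Lemma~\ref{l:non interdits perp} to each of the pairs $(v,u)$ and $(w,u)$. The paper, however, produces this element constructively rather than by a density count. It chooses a symplectic basis with $a_1=v$ and $w\in\mathbb{Z}a_1\oplus\mathbb{Z}b_1\oplus\mathbb{Z}a_2$, sets $X=\bigoplus_{i\geq 3}(\mathbb{Z}a_i\oplus\mathbb{Z}b_i)$, and takes $z$ primitive in $X\setminus I$ (modifying the basis via an element of $(v^\perp\cap w^\perp)\setminus I$ if $X\subset I$). Because $X$ is a symplectic direct summand orthogonal to both $v$ and $w$, a primitive $z\in X$ has primitive image in both $v^\perp/\mathbb{Z}v$ and $w^\perp/\mathbb{Z}w$ automatically, so the hypotheses of Lemma~\ref{l:non interdits perp} come for free.

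Your density argument is correct in spirit but, as written, is not quite complete. Two points deserve justification. First, to conclude that each $\pi_v^{-1}\big(n\cdot(v^\perp/\mathbb{Z}v)\big)$ is a \emph{proper} sublattice of $v^\perp\cap w^\perp$, you need to know that the image $\pi_v(v^\perp\cap w^\perp)$ actually contains a primitive element of $v^\perp/\mathbb{Z}v$; this is true, but it is not automatic and depends on how $v$ and $w$ sit relative to each other (the image can be a non-primitive finite-index sublattice when $v\cdot w\neq 0$). Second, even granting that each individual condition excludes only proper sublattices, the assertion that the union of all three families (the $\pi_v$-primitivity obstructions, the $\pi_w$-primitivity obstructions, and $I$) fails to cover $v^\perp\cap w^\perp$ is not a pure rank count: it needs a Chinese-remainder argument at each prime showing that the two congruence conditions are simultaneously satisfiable, together with the observation that $I\cap(v^\perp\cap w^\perp)$ has infinite index (hence density zero). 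These gaps are fillable, but the paper's explicit choice of $z$ inside a symplectic complement sidesteps them entirely.
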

\begin{proof}
Take a symplectic basis $a_i,b_i$ of $H_1(\Sigma_g)$ such that $a_1=v$ and $$w=m_1a_1+n_1b_1+m_2 a_2.$$
Let $X:=\mathbb{Z}a_3\oplus\mathbb{Z}b_3\oplus\cdots\oplus\mathbb{Z}a_g\oplus\mathbb{Z}b_g$. If $X\nsubseteq I$ then choose $z\in X\setminus (I\cap X)$ primitive. By Lemma \ref{l:non interdits perp} $v\sim z$ and $w\sim z$ therefore $v\sim w$. If $X\subset I$ take $c\in (v^{\perp}\cap w^{\perp})\setminus I$ and write $c=c_{X}+c_{\perp}$ where $c_{\perp}\in X^{\perp}$. Then $c_{\perp}\notin I$ and $c_{\perp}\cdot v=c_{\perp}\cdot w=0$, so $v,w\in (X')
^{\perp}$ where $X':=\mathbb{Z}(a_3+c_{\perp})\oplus\mathbb{Z}b_3\oplus\cdots\oplus\mathbb{Z}a_g\oplus\mathbb{Z}b_g$. Restart the argument of the proof with a symplectic basis of $(X')^{\perp}$ whose first element is still $a_1=v$ and complete it to a basis of $H_1(\Sigma_g)$ with the chosen basis of $X'$. Since $X'\nsubseteq I$ the argument will fall in the previous case and we will be done.
\end{proof}
\begin{lemma}\label{l:interdits non generique}
 Let $g\geq 4$ and $p:H_1(\Sigma_g)\rightarrow\mathbb{C}$ an injective Haupt homomorphism with $r(p)\geq 2g-3$. Define $I=p^{-1}(\ell_{max})$. Let $v\in H_1(\Sigma_g)\setminus I$ and define $$I_v=\{z\in H_1(\Sigma_g): z^{\perp}\cap v^{\perp}\subset I\}.$$ Then there exists a proper submodule $J\varsubsetneq H_1(\Sigma_g)$ such that $I_v\subset J$.
\end{lemma}
\begin{proof}
 If $I_v=\emptyset$, the module $J=0$ does the job. Otherwise take $z\in I_v$. Then $z^{\perp}\cap v^{\perp}\subset v^{\perp}\cap I$. We also have $$2g-2\leq \text{rank}(v^{\perp}\cap I)<\text{rank} (v^{\perp})=2g-1$$ where the strict inequality comes from the fact that $v\notin I$ and $I$ is a primitive module. Therefore $\text{rank}(v^{\perp}\cap I)=2g-2$. Its primitive submodule $z^{\perp}\cap v^{\perp}$ has also rank $2g-2$ so the only possibility is that $z^{\perp}\cap v^{\perp}=v^{\perp}\cap I$. Therefore $z\in (v^{\perp}\cap I)^{\perp}=:J$
\end{proof}

\begin{lemma}
 Let $g\geq 4$ and $p:H_1(\Sigma_g)\rightarrow\mathbb{C}$ an injective Haupt homomorphism with $r(p)\geq 2g-3$. Then for any $V,W\in\mathcal{V}_p$ we have $V\sim W$.
\end{lemma}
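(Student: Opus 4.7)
The plan is to first reduce the problem to connecting primitive elements via Lemma \ref{c:rank one intersection}, then combine Lemma \ref{l:non interdits generique} with a dimension-counting avoidance argument based on Lemma \ref{l:interdits non generique}.

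By Lemma \ref{c:rank one intersection}, two modules in $\mathcal{V}_p$ sharing a nonzero primitive element are equivalent, so it suffices to produce a chain of equivalences between some primitive $v\in V$ and some primitive $w\in W$. Since $p_{|V}$ is a Haupt homomorphism, $p(V)$ is not contained in the real line $\ell_{max}$; hence $V\not\subset I$, and I may pick a primitive $v\in V\setminus I$, and similarly a primitive $w\in W\setminus I$.

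If $v^\perp\cap w^\perp\not\subset I$, Lemma \ref{l:non interdits generique} directly yields $v\sim w$ and we are done. Otherwise, the strategy is to bridge through an intermediate primitive element $u\notin I$ satisfying $u\notin I_v$ and $u\notin I_w$: two applications of Lemma \ref{l:non interdits generique} then give $v\sim u$ and $u\sim w$. The existence of such a $u$ will follow from an avoidance argument. By Lemma \ref{l:interdits non generique} the sets $I_v$ and $I_w$ are contained in proper submodules $J_v, J_w\subsetneq H_1(\Sigma_g)$ of rank at most $2$. Moreover, $\vol(p)>0$ forces $p(H_1(\Sigma_g))\not\subset\ell_{max}$, so $I$ is itself a proper submodule of rank $r(p)\leq 2g-1$. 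Since $2g\geq 8$, each of $I, J_v, J_w$ has rank strictly less than $2g$, so the $\mathbb{Q}$-vector subspaces they span in $H_1(\Sigma_g)\otimes\mathbb{Q}$ are all proper. A finite union of proper $\mathbb{Q}$-subspaces of $\mathbb{Q}^{2g}$ cannot be the whole space, so I can pick an integer vector outside $I\cup J_v\cup J_w$ and divide by its $\gcd$ to produce the required primitive $u$.

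The main conceptual point — and the reason the hypothesis $r(p)\geq 2g-2$ is essential — is already packaged into the control on the ranks of $J_v$ and $J_w$: the identification $J_v=(v^\perp\cap I)^\perp$ from the proof of Lemma \ref{l:interdits non generique} gives $\rank J_v\leq 2$ precisely because $\rank(v^\perp\cap I)\geq 2g-2$, which uses $r(p)\geq 2g-2$. Once this input is granted, the remaining step is a routine dimension count: no subtlety is expected beyond checking that three proper submodules cannot exhaust a lattice of strictly larger rank, which is automatic for $g\geq 4$.
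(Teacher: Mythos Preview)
Your proof is correct and follows essentially the same approach as the paper: pick primitive $v\in V\setminus I$, $w\in W\setminus I$, apply Lemma \ref{l:non interdits generique} directly if $v^\perp\cap w^\perp\not\subset I$, and otherwise bridge through an intermediate $u$ avoiding $I\cup I_v\cup I_w$ using Lemma \ref{l:interdits non generique}. Your additional remarks on the rank of $J_v=(v^\perp\cap I)^\perp$ and the role of the hypothesis $r(p)\geq 2g-2$ are accurate and add useful detail beyond the paper's terser presentation.
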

\begin{proof}
Again we suppose $\ell_{max}=\mathbb{R}$ and define $I=p^{-1}(\mathbb{R})$. Since $V$ and $W$ are of positive volume we can find primitive elements $v\in V\cap I^{c}$ and $w\in W\cap I^{c}$. If $w^{\perp}\cap v^{\perp}\nsubseteq I$ we have $v\sim w$ by Lemma \ref{l:non interdits generique}. Therefore $V\sim W$.

If $w^{\perp}\cap v^{\perp}\subset I$ we can consider the union $I\cup I_v\cup I_w$. Since by Lemma \ref{l:interdits non generique} it is contained in a \emph{union} of proper submodules, it cannot cover the whole of $H_1(\Sigma_g)$. Take $z\in H_1(\Sigma_g)\setminus (I\cup I_v\cup I_w)$. Then by Lemma \ref{l:non interdits generique} $v\sim z\sim w$, which as before implies that $V\sim W$.
\end{proof}

Proposition \ref{p: injective periods} is now proven for all possible ranks of an injective $p\in\mathcal{H}_g$.

\subsection{Proof of Theorem \ref{t:connected boundary} for non-injective $p\in\mathcal{H}_g$}

In the case of non-injective $p$ we also consider the dual boundary complex $\mathcal{G}_p$ of the stratification of $\percompact^{-1}(p)$. 

 Remark that each connected component of a codimension one boundary stratum in  $\percompact^{-1}(p)$ is contained in a component of the boundary of $\Omega\mnc_{g}$ and again we can define a continuous map of dual complexes
 \begin{equation}\label{eq:bdry inclusion ssker}
    \mathcal{G}_p\rightarrow \curvecomplex_g/\mathcal{I}_g. 
\end{equation}
However, in the case $\ker p\neq 0$ we know that there will be some image points that correspond to Torelli classes of non-separating curves (see Lemma \ref{l:existence of pinched classes}). In terms of the parametrization of the vertices of  $\curvecomplex_g/\mathcal{I}_g$ given in subsection \ref{s:parametrization of torelli classes},  these correspond to primitive classes in $\ker p\setminus 0$ pinched by $p$.

In contrast with the case of injective periods, when $p$ has large kernel, there may be boundary components of codimension one of $\Omega\mnc_g$ whose intersection with $\percompact
^{-1}(p)$ is disconnected (when $\ker p$ contains symplectic submodules of large rank). For instance, the boundary component of $\Omega\mnc_{g}$ corresponding to a $p$-admissible splitting $V_1\oplus V_2$ with  $\rank V_1\geq 10$ and $\deg( p_{|V_1})=2$ provides an example, via attaching maps and Theorem \ref{t:degree 2}. This implies that the map \eqref{eq:bdry inclusion ssker} is no longer injective. However, we can use the inductive hypothesis to identify a particular subcomplex $\mathcal{G}_{p}'\subset\mathcal{G}_{p}$ spanned by \textit{simple vertices} where the restriction of \eqref{eq:bdry inclusion ssker} is injective: 

\begin{definition}
Let $p\in\mathcal{H}_{g}$. A $p$-admissible splitting $V_1\oplus V_2$ is $\spl$ if it satisfies  $\rank (V_i)\leq 6$ or $\deg(p_{|V_{i}})\geq 3$ for $i=1,2$. A pinched class $a\in\ker p\setminus 0$ is $\spl$ if either $g=3,4$ or $\deg p_a\geq 3$. A vertex of $\mathcal{C}_g/\mathcal{I}_g$ is $p$-simple if the corresponding $p$-admissible splitting or pinched class is $p$-admissible and simple. 
\end{definition}

\begin{lemma}\label{l:existence of good vertices}
Let $g\geq 3$. For any $p\in\mathcal{H}_{g}$ with $\ker p\neq 0$ and $\deg p\geq 3$ there exist $p$-$\spl$ vertices of non-compact type in $\mathcal{C}_g/\mathcal{I}_g$. 
\end{lemma}
\begin{proof} For $g=3,4$ the statement is equivalent to the existence of pinched classes (see Lemma \ref{l:existence of pinched classes}). Suppose $g\geq 5$ and that there is a vertex that is not $\spl$. If it is of compact type, it means that one of the factors of the associated $p$-admissible splitting is of rank at least eight and the restriction of $p$ to it is of primitive degree two. If it is of non-compact type, there is a symplectic submodule of rank at least 8 where the restriction of $p$ has degree two. In either case there is a symplectic submodule of $\ker p$ of rank at least two. Each primitive element in that submodule determines a simple vertex of non-compact type of $\mathcal{G}_p$.  
\end{proof}
\begin{proof}[Proof of Theorem \ref{t:connected boundary} in the non injective case]

We claim that the restriction of \eqref{eq:bdry inclusion ssker} to the subcomplex $\mathcal{G}_{p}'$--spanned by simple vertices-- is injective. Indeed,  suppose $D_{1}$ and $D_{2}$ are irreducible boundary components in boundary strata contained in  $\percompact^{-1}(p)$  lying in a boundary component of $\Omega\mnc_{g}$ corresponding to a simple $p$-admissible splitting or primitive class in $\ker p\setminus 0$.  Propositions \ref{p:inductive step compact type} and \ref{p:inductive step non compact type} describe this set  as the image of products of isoperiodic sets under attaching maps. However, by inductive hypothesis (or by  Theorem \ref{t:connectedness in genus 2 and 3} in the cases of genus two and three and degree two) the products of isoperiodic sets under consideration are connected. Hence $D_{1}=D_{2}$. We can therefore talk about simple vertices of $\mathcal{G}_p$, namely, those lying over $p$-simple vertices of $\mathcal{C}_g/\mathcal{I}_g$.  
%\textcolor{red}{Therefore each $\spl$ $p$-admissible splitting or pinched class appears only associated to one vertex of $\mathcal{G}_p$.}
The connectedness of the complex $\mathcal{G}_p$ in the non injective case proceeds in three basic steps:

\vspace{0.3cm}

\noindent \emph{Step 1:} Each vertex of compact type is equivalent in $\mathcal{G}_p$ to some vertex of non-compact type.\\ 
\emph{Step 2:} Each vertex of non-compact type is equivalent in $\mathcal{G}_p$ to some $\spl$ vertex of non-compact type.\\
\emph{Step 3:} Any pair of $\spl$ vertices of non-compact type are equivalent in $\mathcal{G}_{p}'$. 

\subsubsection{Proof of Step 1: from compact type to non-compact type:}
Let $V_{1}\oplus V_{2}=H_{1}(\Sigma_{g})$ be the $p$-admissible splitting associated to a separating simple closed curve $c$ in $\Sigma_{g}$ and denote $p_i=p_{|V_{i}}$. If one of the $p_{i}$'s is non-injective we claim that any stable form $$(C,m,\omega)=(C_{1}\vee C_{2}, m_{1}\oplus m_{2},\omega_{1}\vee \omega_{2})$$ of period $p$ sitting in the boundary component associated to $V_1\oplus V_2$ is equivalent to a stable form with a non-separating node. Indeed, suppose without loss of generality, that $\ker p_{2}\neq 0$. Then $V_{2}$ has at least rank $4$. 
Suppose first that the vertex defined by $(C,m,\omega)$ is $\spl$. 
Then, under the hypotheses of Proposition \ref{t:connected boundary}, $\percompact^{-1}(p_{2})$ is connected, contains $\omega_{2}$ and a stable form $\omega_{3}$ over a curve with a non-separating node. An isoperiodic path $\eta_{t}$ between them provides an isoperiodic path $\omega_{1}\vee\eta_{t}$ between 
$\omega=\omega_{1}\vee\omega_{2}$ and $\omega_{1}\vee\omega_{3}$. The latter is a form in $\Omega_{0}^{*}\mrs_{g}$ with periods $p$ and two nodes, one of which is non-separating. Therefore it defines an edge between the vertex defined by $(C,m,\omega)$ and a vertex of non-compact type of $\mathcal{G}_p$.

Next suppose that the vertex defined by $(C,m,\omega)$ is not $\spl$. Then $\rank V_{2}\geq 8$ and $\deg p_2=2$. Integration of $\omega_{2}$ along $C_{2}$ provides a branched degree two cover $$C_{2}\rightarrow \mathbb{C}/p_{2}(V_{2})=:E_{2}$$ having at least two distinct critical values. The preimage of a path in $E_{2}$ joining these critical values provides a pair of twin paths for $\omega_{2}$ sharing the same endpoints. The Schiffer variation along this pair of twin paths describes an isoperiodic deformation $\eta_{t}$ joining $\omega_{2}$ with a stable form $\omega_{3}$ with a non-separating node. Indeed, if it would join the two critical points to a single critical point, the local degree of the branched covering around the latter would be at least three, contradicting the degree two hypothesis. If it would collapse to a separating node, the degree of the restriction of the period map to each part of the curve would be one, and since one of the parts has genus at least two it would contradict Haupt's Theorem. 

It remains to treat the case where both $p_1$ and $p_2$ are injective. We will prove that the (\spl) vertex is equivalent to another vertex of compact type where the restriction of $p$ to one of the parts has non-trivial kernel. 

If both $p_1$ and $p_2$ are injective, we can find $p$-admissible splittings of $V_1$ and $V_2$ with all factors of rank $2$ by using Corollary \ref{c:line ranks for injective} inductively to $p_1$ and $p_2$. In particular the vertex associated to $(C,m,\omega)$ is equivalent to a vertex defined by a $p$-admissible splitting having a rank two factor, so we can suppose $\rank V_1=2$. If $p_2$ is non-injective we are done. It remains to treat the case where $p_2$ is injective.

Take a primitive element $a\in\ker p$ and write $a=n_1a_1+n_2a_2$ for primitive $a_i\in V_i$ and co-prime $n_1,n_2\in\mathbb{N}^*$.
If $a_2\in V_2$ is contained in a factor $V$ of a $p_2$-admissible splitting of $V_2$, define $V_1'=V_1\oplus V$. The splitting $V_1'\oplus V_1'^{\perp}$ is also $p$-admissible and satisfies $\ker (p_{|V_1'})\neq 0$. It defines a vertex that is equivalent to the one defined by $V_1\oplus V_2$. In particular this argument works if $a_2$ is $p_2$-admissible.

So suppose $p_2$ is injective and $a_2\neq 0$ is not $p_2$-admissible. Then by Lemma \ref{l:rank} applied to $p_2$, the rank of $p_2 ^{-1} ( \mathbb R p (a_2)) \cap a_2 ^{\perp}$ is at least $(2g-2)-3+1=2g-4$. Completing $a_2$ to a symplectic basis $a_2, b_2, a_3, b_3, \ldots , a_g, b_g$ of $V_2$, and denoting $V_3 :=\mathbb Z a_3 + \mathbb Z b_3 + \ldots + \mathbb Z a_g + \mathbb Z b_g$, we conclude that $H = (p_2 ^{-1} (\mathbb R p(a_2) )\cap a_2^{\perp})\cap b_2^{\perp}=p ^{-1} (\mathbb R p(a_2) ) \cap V_3$ is either $V_3$ or a co-rank one primitive submodule of $V_3$.

In the latter case, by considering an element $w\in V_3$ such that $p(w) $ does not belong to $\ell:=\mathbb R p(a_2)$, we apply Lemma \ref{l:rank} to $\ell$ and $V_3$ to construct a symplectic rank two submodule $W \subset V_3$ containing $w$ with $0<\vol_p(W)<\vol_p(V_2)$. Since $p_2$ is injective, this implies that $W\in\mathcal{V}_{p_2}$. The splitting $V_1\oplus W\oplus (W^{\perp}\cap V_2)$ of $H_1(\Sigma_g)$ is $p$-admissible, and in particular so is $W\oplus W^{\perp}$. On the other hand $a\in W^{\perp}\cap\ker p$ so the restriction $p_{|W^{\perp}}$ is non-injective and we are reduced to one of the previous cases.

It remains to treat the case where $H = V_3$, namely $p(V_3) \subset \mathbb R p(a_2)$. In this case we will find a new initial $p$-admissible splitting $V_1\oplus V_2$ that falls in one of the previous cases and that defines an equivalent splitting. The new $V_1$ that we want to construct, call it $V_1'$, will be a rank two factor of a $p_2$-admissible splitting of $V_2$.

%Then, $V_1\oplus V_1'\oplus V_2$ is a $p$-admissible decomposition and $V_1\sim V_1'\sim \mathbb{Z}a$ for some pinchable $a\in\ker p$.
The candidates to $V_1'$ satisfy $0< \text{vol}_p (V_1') < \text{vol} _p (V_2)$. The initial splitting of $a$ will change to $$a= n_1 '' a_1 '' + n_2 '' a_2''$$ with primitive $a_1 '' \in V_1'$ and $a_2'' \in V_2' := (V_1 ' ) ^\perp$ and co-prime $n_1''$ and $n_2''$ (the reason for this notation $a_i''$ instead of $a_i'$ will become clear hereafter). We claim that it is possible to find $V_1'$ so that either $a_2''$ is $p_{V_2'}$-admissible or $p_2'=p_{|V_2'}$ is not injective. We already explained that this would conclude the proof.

Take $b_1\in V_1$ such that $a_1\cdot b_1=1$.
Up to composing $p$ with a $\mathbb R$-linear orientation preserving equivalence from $\mathbb C$ to $\mathbb C$, we can assume that
$$ p(a_1)= n_2, \ \Im p(b_1) = 1 , \ p(a_2) = -n_1, \ \Im p(b_2) < 0 ,$$ and that
$$ p(a_k) = \alpha _k \in \mathbb R, \ p(b_k) = \beta_k \in \mathbb R \text{ for } k\geq 3. $$

By injectivity of $p_2$, the numbers $\alpha_3,\ldots,\alpha_g,\beta_3,\ldots, \beta_g$ are linearly independent over $\mathbb Q$.

We are going to look for the module $V_1 '$ as being generated by the elements $a_1 '$ and $b_1 '$, where
$$ a_1 ' = a_2 + \sum _{k\geq 3} m_k a_k + n_k b_k, \ b_1 ' = b_2 , \ a_2 ' = a_1, \ b_2 ' = b_1 $$
and for $k\geq 3$
$$ a_k ' = a_k + n_k b_2 , \ b_k' = b_k - m_k b_2 $$ form a new symplectic basis for $H_1(\Sigma_g)$.
Here $m_k, n_k$ are integers that have to be determined for $k\geq 3$. We have
$$ \text{vol} _p (V_1 ' ) = -\Im(p(b_2)) ( n_1 - \sum _{k\geq 3} m_k \alpha_k + n_k \beta_k ).$$
Observe also that $\text{vol} _p ( V_2 ) = - n_1 \Im (p(b_2))$.
We will choose $m_k, n_k$ multiple of $n_1$, so we write $m_k = m_k' n_1$, $n_k= n_k' n_1$ with $m_k', n_k'$ integers. We then have
$$ \text{vol} _p (V_1 ' ) = \varepsilon \text{vol}_p (V_2) \text{ with } \varepsilon = 1 - \sum _{k\geq 3} m_k' \alpha_k + n_k' \beta_k. $$
Since $p_2$ is injective, $V_1'$ is $p_2$-admissible if and only if $0< \varepsilon < 1$. By rational independence of $\alpha_3,\ldots,\alpha_g,\beta_3,\ldots, \beta_g$ we can find $m'_k,n'_k\in\mathbb{Z}$ satisfying this condition. If for such a choice the homomorphism $p_2':=p_{|V_2'}$ is not injective, we are done. Suppose that for all choices we have that $p_2'$ is injective. In this case we will refine the solution to have $a_2''$ to be $p_2'$-admissible.

Let us understand how the class $a$ decomposes according to the splitting $V_1 ' + (V_1 ') ^\perp$: it is given by $ a = n_2 a_1 '' + n_1 a_2'' $ with $ a_1'' = a_1' $ and $$a_2'' = a_2 ' - n_2 \sum _{k\geq 3} m_k ' a_k' + n_k' b_k' . $$
Hence, it suffices to find $n'_k,m'_k\in\mathbb{Z}$ such that $a_2 '' $ is $p_2'$-admissible. The volume of the symplectic rank two submodule $\mathbb Z a_2 '' + \mathbb Z b_2 ' \subset V_2 '$ (containing $a_2''$) is
$$ \text{vol}_p(\mathbb Z a_2 '' + \mathbb Z b_2 ' ) = \Im ( p(b_2 ' ) \overline{ p (a_2'') } ) = n_2 \big(1- \sum_{k\geq 3} m_k ' \alpha_k + n_k' \beta_k \big) = n_2 \varepsilon, $$
while
$$ \text{vol}_p ( V_2 ' ) = \text{vol}_p - \text{vol}_p (V_1') = \text{vol} _p - \varepsilon \text{vol}_p (V_2) . $$
Hence, as soon as $0 < \varepsilon < \frac{\text{vol}_p}{n_2 + \text{vol} _p (V_2)}$ one concludes that $a_2''$ is $p_2'$-admissible.
The rational independence of $\alpha_3,\ldots,\alpha_g,\beta_3,\ldots, \beta_g$ allows to find a solution, again.

\subsubsection{Proof of Step 2: From non-compact type to $\spl$ of non-compact type}
\begin{lemma}\label{l:degree_normalization}
Let $(C,\omega)$ be a stable form with a non-separating node having $(\widehat{C},\widehat{\omega})$ as its normalization and $n_1,n_2 \in \hat{C}$ the points corresponding to the node in $C$. Let $p$ (resp. $\widehat{p}$) be the period homomorphism associated to $\omega$ (resp. $\widehat{\omega}$). Assume that $\deg(\widehat{p})<\infty$, and let $\widehat{F}:\widehat{C}\rightarrow \mathbb{C}/\widehat{p}(H_1(\widehat{C}))$ be the map defined by integration of $\widehat{\omega}$. Then $$\deg(\widehat{p})=\deg (p)\Leftrightarrow \widehat{F}(n_1)=\widehat{F}(n_2) .$$
\end{lemma}
\begin{proof}
The proof is straightforward once one realizes that $\vol(\hat{p})=\vol(p)$. 
\end{proof}
Continuing with the notations of Lemma \ref{l:degree_normalization} we consider a marked stable form $(C,m,\omega)$ with period map $p$ of primitive degree at least three, and a non-separating node that collapses a curve $c$ such that $\deg(p_c)=2$. The period map $\widehat{p}$ of its normalization $(\widehat{C},\widehat{\omega})$ has therefore precisely degree two. By Lemma \ref{l:degree_normalization}, $\widehat{F}(n_1)\neq \widehat{F}(n_2)$ for the integral $$\widehat{F}:\widehat{C}\rightarrow \mathbb{C}/\widehat{p}(H_1(\widehat{C}))$$ of $\widehat{\omega}$. The map $\widehat{F}$ is a branched covering of degree two. All its branch points are therefore simple and have distinct values for $\widehat{F}$. Choose a path in \(\mathbb{C}/\widehat{p}(H_1(\widehat{C}))\) that avoids the two points $\widehat{F}(n_1)$ and $\widehat{F}(n_2)$ and joins two distinct critical values. Its preimage by $\widehat{F}$ defines a pair of twins that join two critical points in $\widehat{C}$ and also two zeros of $\omega$ in $C$. The closed curve formed by both twins cannot be separating. Indeed, if it was separating, it would induce a $\widehat{p}$-admissible splitting of $H_1(\Sigma_{g-1})$. However, these do not exist for $\deg(\hat{p})=2$ and $g-1\geq 3$ (see Example \ref{ex:non compact type}). Therefore, the curve formed by both twins is non-separating. After the Schiffer variation on \((\widehat{C},\widehat{\omega})\) along these twins, we obtain a new stable form $(\widetilde{C},\widetilde{\omega})$ of period $\widehat{p}$ with a node  $\tilde{n}$ that is non separating. It is a degree two cover of an elliptic differential. By Lemma \ref{l:degree_normalization}, identifying the points \(n_1 \) and \(n_2\) on \((\tilde{C},\tilde{\omega})\) produces a stable form whose period map has finite primitive degree different from $\deg (\widehat{p})=2$ (and from one by Haupt's Theorem). Hence it has degree at least three.  Again by the same Lemma, the normalization of $\tilde{n}$ on this form has that same degree, larger than three.  

%produces an abelian differential with two non separating nodes, the original point \(n=n_k\), and another one that we denote by \(n'\). The normalization at the node \(n'\) is a noda1 abelian differential of genus \(g-1\), which is a degree two branched cover over an abelian differential on a nodal curve of genus two: the elliptic differential \(dz\) on the curve \(\mathbb{C}/\widehat{p}(H_1(\widehat{C}))\), on which we identify the points \(\widehat{F}(n_k)\).  The degree of the normalization at the node \(n'\) is the double of the degree of the abelian differential of genus two (this is due to the fact that a ramified double covering is surjective on the first homology group), hence its value is at least four. Consequently, the form with two nodes defines an edge between a $\spl$ vertex of $\mathcal{G}_p$ and the vertex associated to $(C,m,\omega)$. This ends the proof of the second step.

%the curve has two non-separating nodes. By construction and Lemma \ref{l:degree_normalization}, the degree of the period of the normalization of that new node coincides with $\deg p=3$. Hence the form with two nodes defines an edge between a $\spl$ vertex of $\mathcal{G}_p$ and the vertex associated to $(C,m,\omega)$.

\subsubsection{Proof of Step 3: connecting $\spl$ vertices of non-compact type in $\mathcal{G}_p'$ }

Recall that each $\spl$ vertex of non-compact type of the complex $\mathcal{G}_p$ corresponds to a unique cyclic primitive submodule $\mathbb{Z}a\subset \ker p$ such that $\deg p_a\geq 3$. We will parametrize those vertices by the primitive elements $\pm a\in\ker p\setminus 0$ generating the submodule. We say that two such primitive elements $a, a'\in\ker p\setminus 0$ are equivalent if the corresponding vertices in $\mathcal{G}'_p$ lie in the same connected component of the subcomplex $\mathcal{G}'_p$ of $\mathcal{G}_p$ spanned by simple vertices . 
\begin{lemma}\label{l:equivalent vertices in kernel}
Given $g\geq 4$, a homomorphism $p:H_1(\Sigma_g)\rightarrow \mathbb{C}$ with $\vol p>0$, $\deg p\geq 3$ and 
\begin{enumerate}
 \item a symplectic submodule $W\subset \ker p$ of rank two, then every pair of $\spl$ vertices of non-compact type of $\mathcal{G}'_p$ corresponding to classes in $W\cup W^{\perp}$ are equivalent. 
 \item an isotropic primitive submodule $L\subset\ker p$ of rank two, then every pair of $\spl$ vertices of non-compact type of $\mathcal{G}_p'$ corresponding to classes in $L$ are equivalent. 
 \item a $p$-admissible splitting $H_1(\Sigma_g)=V_1\oplus V_2$. The primitive elements $a\in V_i\cap\ker p$ such that $\deg\big((p|_{V_i})_a\big)\geq 2$ satisfy $\deg(p_a)\geq 3$. \end{enumerate}

\end{lemma}
\begin{proof}
$(1)$ First remark that the homomorphism $p_{1}:=p_{|W^{\perp}}$ is defined on a module of rank at least $6$ and has the same volume and primitive degree as $p$. By Lemma \ref{l:existence of good vertices} applied to $p_{1}$ we deduce that there exists a $\spl$ vertex for $\mathcal{G}_{p_1}$. Thanks to Lemma \ref{l:adding handles to Haupt} and the fact that $W\subset \ker p$, it also determines a simple vertex for $\mathcal{G}_p$. Using Corollary \ref{c:realization of decompositions} we construct a marked stable form $(C_{1},m_{1},\omega_{1})$ of genus $g-1$ and period $p_1$, with a node that induces the given $\spl$ vertex and -- thanks to the genus hypothesis and up to some Schiffer variations -- a simple zero at some regular point. Take a pair of embedded twins with distinct endpoints starting at the simple zero and glue the endpoints, thus obtaining a stable form on a genus $g$ curve with two nodes. After the gluing, the union of the twins forms a closed loop in the nodal curve whose period vanishes. Given any symplectic basis $a,b$ of $W$ we mark the obtained nodal form by collapsing $a$ to the node, associating $b$ to the loop obtained from the twins, and keeping the marking on $W^{\perp}$ as it was for $(C_{1},\omega_{1},m_{1})$. The given form with two nodes defines an edge in $\mathcal{G}_p$ joining two $\spl$ vertices. One of them corresponds to the node of the initial $\omega_1$ and is independent of the chosen $a$. In particular all primitive classes in $W$ define equivalent $\spl$ vertices. Moreover, if the chosen node for $\omega_{1 }$ is determined by a primitive element $\ker p\cap W^{\perp}$ defining a $\spl$ vertex for $\mathcal{G}_p$ it is also a $\spl$ vertex for $\mathcal{G}_{p_1}$. This vertex is equivalent to any vertex defined by a primitive element of $W$. This shows that all simple vertices of non-compact type of $\mathcal{G}_p$ corresponding to primitive classes in $W\cup W^{\perp}$ are equivalent. \\
(2) The module $L^{\perp}/L$ has rank $2g-4$ and the ambient symplectic form on $H_{1}(\Sigma_{g})$ induces a symplectic form on it. Since $L\subset \ker p$, the map $p$ induces a homomorphism $$p_{L}:L^{\perp}/L\rightarrow\mathbb{C}.$$ 
If $\ker p_{L}$ contains a non-trivial symplectic submodule, then by item $(1)$ of this Lemma, every pair of $\spl$ vertices in $L$ are equivalent. Item $(2)$ is proven for the cases $\deg p_{L}=1$ (where the kernel is symplectic of corank two) or $g\geq 5$ and $\deg p_{L}<\infty$ (thanks to Lemma \ref{lem:symplectic submodules of kerp} ). It remains to treat the cases $g=4$ and $\deg p_{L}\geq 2$ and $g\geq 5$ and $\deg p_{L}=\infty$. 

Next suppose $a_{1},a_{2}$ forms a basis of $ L$ corresponding to $\spl$ vertices and $\deg p_{L}\geq 2$. We claim that we can construct a marked stable form of period $p$ with precisely two non-separating nodes that correspond to the given classes. The corresponding vertices are thus joined by an edge in $\mathcal{G}_{p}$. 
Indeed, take a form $(C_{2},m_{2},\omega_{2})$ of genus $g-2$ and periods $p_{L}$ with two simple zeros. Complete the elements $a_{1 },a_{2}$ to a symplectic basis $a_{i},b_{i}$ of $H_{1}(\Sigma_{g})$. For $i=1,2$, apply Lemma \ref{lbz} to $\omega_{2}$ to choose an embedded path $\beta_{i}$ in $C_{2}$ having distinct endpoints and satisfying 
$$p(b_i)=\int_{\beta_{i}}\omega_{2}.$$ We can further assume that $\beta_{1}$ and $\beta_{2}$ are disjoint. Indeed, if $p(b_1)=p(b_2)=0$ it suffices to take pairs of short twins at distinct zeros of $\omega_{2}$. If only one of them is non-zero, we can take a very short pair of twins to realize the zero period so as to avoid the path of non-zero length. If both are non-zero and we have initially taken two paths that intersect, we change one of the paths in its homotopy class with fixed endpoints to avoid the intersections. Gluing the endpoints of the said paths and marking the form by collapsing $a_1$ and $a_2$ to the nodes, and associating $b_i$ to the corresponding loop $\beta_{i}$ we obtain a marked stable form of period $p$ with two non-separating nodes as was claimed. 
In full generality it is not true that every primitive element of $L$ corresponds to a simple vertex. However, if $\deg p_{L}\geq 3$ we have that every primitive element of $L$ corresponds to a $\spl$ vertex. The previous argument shows that we only have to check that for any pair of primitive elements $a,a'\in L$ we can construct a sequence $a=a_{0}, \ldots, a_{n}=a'$ of primitive elements in $L$ such that $a_{i},a_{i+1}$ form a basis of $L$ for each $i=0,\ldots, n-1$. This is guaranteed by Gauss algorithm in $L$. 

It remains to treat the case $g=4$ and $\deg p_{L}=2$. In this case, $p_{L}$ is defined on a rank four symplectic module and has a lattice $\Lambda\subset \mathbb{C}$ as image. The normal form for periods of finite primitive degree, see Lemma \ref{l:finite degree orbit} proved in Appendix \ref{s:appendix2}, provides a symplectic splitting of $L^{\perp}/L$ into rank two submodules $W_{1}\oplus W_{2}$ satisfying $p_{L}(W_{i})=\Lambda$. Let $V\subset H_{1}(\Sigma_{g})$ be a rank four symplectic submodule containing $L$ and consider a symplectic splitting of $V^{\perp}=V_{1}\oplus V_{2}$ that induces $W_{1}\oplus W_{2}$ on $L^{\perp}/L$. Let $V_{3}=V\oplus V_{1}$. The symplectic splitting $H_{1}(\Sigma_{g})=V_{3}\oplus V_{2}$ defines a $\spl$ vertex in $\mathcal{G}_{p}$, since all factors are of rank at most $6$. We claim that there is an edge of $\mathcal{G}_{p}$ between this $\spl$ vertex and any $\spl$ vertex corresponding to a primitive element in $L$. To construct it, it suffices to construct a marked stable form with two nodes one of which induces the symplectic splitting $V_{3}\oplus V_{2}$ and the other collapses the class $a\in L\subset V_{3}$ satisfying $\deg p_{a}\geq 3$ to a node. On the other hand we have $\text{Im}(p_{|V_{3}})_{a}=\text{Im}(p_{a})$, $\vol_{p}(V)=0$, and $\vol_{p}(V_{2})=\vol_{p}(V_{1})$. Therefore $$\deg \big( (p_{|V_{3}})_{a}\big)=\frac{\vol \big((p_{|V_{3}})_{a}\big)}{ \vol \big(\mathbb{C}/\text{Im}(p_{|V_{3}})_{a}\big)}=\frac{\vol \big(p_{a}\big)/2}{ \vol \big(\mathbb{C}/\text{Im}(p_{a})\big)}=\frac{1}{2}\deg(p_{a})\geq \frac{3}{2}>1.$$ 
By using Corollary \ref{c:realization of decompositions} we can construct a stable form with two nodes,   one of which corresponds to $a$ and the other to  the splitting $V_3\oplus V_2$.
\\

(3) Without loss of generality suppose $a\in V_{1}$. Remark that by definition $\text{Im}\big( (p_{|V_{i}})_{a}\big)\subset \text{Im} (p_{a})$. Therefore $$\deg p_{a}=\frac{\vol p_{a}}{\vol \big(\mathbb{C}/\text{Im}(p_{a})\big)}\geq \frac{\vol p_{a}}{\vol \big(\mathbb{C}/\text{Im}(p_{|V_{1}})_{a}\big)}=\frac{\vol\big( (p_{|V_{1}})_{a}\big)+ \vol (p_{|V_{2}})}{\vol \big(\mathbb{C}/\text{Im}(p_{|V_{1}})_{a}\big)}>$$ $$>\deg \big( (p_{|V_{1}})_{a}\big)\geq 2.$$ 

\end{proof}
\begin{corollary}\label{c:cor_1}
If $c,c'\in\ker p$ are primitive elements satisfying $c\cdot c'=0$ and $\deg(p_c),\deg(p_{c'})\geq 3$ and  then $c\sim c'$. 
\end{corollary}
\begin{proof}
Write $a_1=c$ and choose $b_1\in H_1(\Sigma_g)$ such that $a_1\cdot b_1=1$. Define $W_1=\mathbb{Z}a_1\oplus\mathbb{Z}b_1$ and write $$c'=m_1a_1+n_1b_1+m_2a_2$$ where $a_2\in W_1^{\perp}$ is a primitive element and $m_1,n_1,m_2\in\mathbb{Z}$. Since $c\cdot c'=0$ we have $n_1=0$. If $m_2=0$ then $c'=\pm c$ and we are done. Otherwise $a_2\in\ker p$ and we can apply item (2) of Lemma \ref{l:equivalent vertices in kernel} to $L=\mathbb{Z}a_1\oplus\mathbb{Z}a_2$ to conclude $c\sim c'$. 
\end{proof}
\begin{corollary}\label{c:cor_2}
If there exists a non-trivial symplectic submodule $W\subset\ker p$ then every pair of primitive $c,c'\in\ker p$ such that $\deg(p_c),\deg(p_{c'})\geq 3$ are equivalent. 
\end{corollary}
\begin{proof}
For every primitive $c\in W$ $\deg (p_c)=\deg p\geq 3$. Every pair of primitive elements in $W$ are equivalent by (1) in Lemma \ref{l:equivalent vertices in kernel}. Take any primitive $c\in\ker p$ with $\deg (p_c)\geq 3$. Then $\rank (c ^{\perp}\cap W)\geq 1$ and for any primitive element $c'\in c ^{\perp}\cap W$ we have $c'\sim c$ by Corollary \ref{c:cor_1} we deduce $c\sim c'$. 
\end{proof}
\begin{corollary}\label{c:cor_3}
If there exists a primitive $a\in\ker p$ such that $\deg (p_a)<\infty$ then for every pair of primitive $c,c'\in\ker p$ such that $\deg (p_c),\deg(p_{c'})\geq 3$ we have $c\sim c'$. 
\end{corollary}
\begin{proof}
$\deg (p_a)<\infty$ implies that $\rank \ker p=2g-2$. Since $g\geq 4$ we have that $\ker p$ contains a non-trivial symplectic submodule and conclude by Corollary \ref{c:cor_2}. 
\end{proof}
The following result improves Corollary \ref{c:cor_1}:
\begin{corollary}\label{c:cor_4}
If $\rank(\ker p)\geq 3$ then every pair of primitive $c,c'\in\ker p$ such that $\deg (p_c),\deg (p_{c'})\geq 3$ satisfy $c\sim c'$. 
\end{corollary}

\begin{proof}
By Corollary \ref{c:cor_3} if there exists an element $c$ such that $\deg(p_c)<\infty$ we are done. So we can suppose $\deg (p_c)=\infty$ for every primitive $c\in\ker p$. Let $c,c'\in\ker p$ be two primitive elements. Then $\rank(c^{\perp}\cap c'^{\perp}\cap \ker p)\geq 1$ and a primitive element $a\in c^{\perp}\cap c'^{\perp}\cap \ker p$ will satisfy $c\sim a\sim c'$ by Corollary \ref{c:cor_1}. 
\end{proof}
\begin{lemma}
If $\rank (\ker p) \leq 2$ then every pair of primitive $a,a'\in\ker p$ such that \[\deg (p_a),\deg (p_{a'})\geq 3\text{  satisfy }a\sim a'.\]
\end{lemma}

\begin{proof}
If $\ker p$ has rank one, or $a\cdot a'=0$ or there exists a symplectic module in $\ker p$ containing $a$ and $a'$  we are done by Corollaries \ref{c:cor_1} and \ref{c:cor_2}.

Let $a_1=a$ and $b_1$ such that $a_1\cdot b_1=1$. Write $W_1=\mathbb{Z}a_1\oplus \mathbb{Z}b_1$ and \begin{equation}a'=m_1a_1+n_1b_1+m_2a_2\label{eq:decompostion pinchable} \end{equation} for a primitive $a_2\in W_1^{\perp}$ and integers $m_1,n_1,m_2$. We can assume $n_1=a\cdot a'\neq 0$ and $m_2p(a_2)\neq 0$ (otherwise we contradict the rank hypothesis on $\ker p$ or we fall in one of the initial cases). Define $p_1=p_{|W_1^{\perp}}$. 

If $a_2$ is $p_1$-admissible, i.e., it belongs to a symplectic module of rank two $V_2\in \mathcal{V}_{p_1}$, there exists a $p$-admissible splitting $W_1^{\perp}=V_2\oplus V_3$. Then $a_1,a'\in V_4:=W_1\oplus V_2$ and $\vol_p(V_4)=\vol_p(V_2)>0$. The restriction of \(p\) to the orthogonal \( V_4^\perp= V_3\) has infinite degree, since it has rank at least four, and the kernel of the restriction of \(p\) to \(V_4^\perp\) has rank bounded by one (otherwise the whole period would have a rank bounded from below by three). Since $a_1\in V_4$ does not belong to any symplectic submodule contained in \(\text{ker}(p)\), $p_{|V_4}$ is a Haupt homomorphism on a rank four symplectic module. By Lemma \ref{l:existence of pinched classes} applied for $g=2$, all primitive elements \(a''\) in $\ker p_{|V_4}$ are pinched by $p_{|V_4}$. 
Applying Corollary \ref{c:realization of decompositions} we can find a form with two nodes: one that pinches $a''$ and another that induces the splitting $V_4\oplus V_4^{\perp}$, which is simple, hence the vertex of $\mathcal{G}_p$ associated to $a''$ is equivalent to the vertex associated to $V_4\oplus V_4^{\perp}$. Since $a,a'$ are primitive elements in $V_4\cap \ker p$ we have $a'\sim a$.

By Corollary \ref{c:collinear non admissible} applied to $W_1^{\perp}$, there exists a proper submodule $I\subset W_1^{\perp}$ containing all elements that are not $p_1$-admissible, i.e. that do not belong to any symplectic module $V_2\in\mathcal{V}_{p_1}$. So if $a_2\notin I$ we are done. Next we are going to show that, up to changing the initial choice of $b_1$, we can guarantee that we fall in the previous case.

Indeed, suppose $a_2\in I\setminus \ker p$ is not admissible. Denote $\ell=\mathbb{R}p(a_2)$. We know by Corollary \ref{c:collinear non admissible} that $I=p^{-1}(\ell)\cap W_1^{\perp}$ has rank at least $2g-4$ and for every other real line $\ell'$, $\ell'\cap p(W_1^{\perp})$ has rank at most $2$. Since $p(a_1),p(b_1)\in \ell$, $p^{-1}(\ell)$ has rank at least $2g-2\geq 6$. On the other hand, for every other real line $\ell'\subset\mathbb{C}$ containing $0$ we have $\rank(p^{-1}(\ell')\cap W_1^{\perp})\leq 2+\rank(\ker p_1)\leq 3$. Therefore $p^{-1}(\ell')$ has rank at most $5$. If we manage to find a splitting as in equation \eqref{eq:decompostion pinchable} where the image of the $a_2$ is outside $\ell$ we will be done. We are going to show that, up to changing the initial $b_1$, we can suppose that we fall in this case or one of the previous cases.

Given $w\in a_1^{\perp}$ define $b_1'=b_1+w$ and $W_1'=\mathbb{Z}a_1\oplus\mathbb{Z}b_1'$. Then $$a=m_1a_1+n_1b_1'+m'_2a_2'$$ where $m_2'a_2'=m_2a_2-n_1w$.
If we manage to guarantee that
\begin{itemize}
 \item $a_2'\in W_1'^{\perp}$, or equivalently $0=-n_1(b_1\cdot w)+m_2(w\cdot a_2)$
 \item $p(a_2')\notin\ell=\mathbb{R}p(a_2)$ or equivalently $p(w)\notin\ell$,
\end{itemize}
we will be done: $a_2'\in W_1'^{\perp}$ will be $p_{|W_1'^{\perp}}$-admissible.

If there exists $w\in a_2^{\perp}\cap W_1^{\perp}\setminus p^{-1}(\ell)$, it constitutes a solution. Otherwise $a_2^{\perp}\cap W_1^{\perp}\subset p^{-1}(\ell)\cap W_1^{\perp}$, and since $W_1^{\perp}$ has positive volume, any $b_2\in W_1^{\perp}$ satisfying $a_2\cdot b_2=1$ satisfies $p(b_2)\notin\ell$. In this case the element $w=m_2a_1+n_1b_2$ provides a solution.
\end{proof}

\end{proof}
\subsection{Proof of Theorem \ref{t:connectedness}}
\label{s:proof of inductive step of transfer}
The Theorem is true for $g=2,3$ by Theorem \ref{t:connectedness in genus 2 and 3} . We proceed by induction on the genus.
Fix some $g\geq 4$ and suppose that Theorem \ref{t:connectedness} is true up to genus $g-1$. Take $p\in\mathcal{H}_g$ with $\deg(p)\geq 3$. By Proposition \ref{p:degeneration} every connected component of $\percompact^{-1}(p)$ has points in the boundary. The boundary $\percompact^{-1}(p)\setminus \per^{-1}(p)$ is connected thanks to Theorem \ref{t:connected boundary}. Therefore $\percompact^{-1}(p)$ is connected. On the other hand, by Theorem \ref{p:local structure of fiber at bdry point}, the boundary points do not locally separate  $\per^{-1}(p)$, and the latter is therefore also connected.

\section{Appendix I: proof of Proposition \ref{p:Kapovich}}\label{s:appendix2}

We first begin by providing a normal form for periods of positive volume and finite primitive degree, which implies  the first item of Proposition \ref{p:Kapovich} in the case where the subspace \eqref{eq: W} is rational. 

\begin{lemma}\label{l:finite degree orbit}
 Given a surjective homomorphism $p:\mathbb{Z}^{2g}\rightarrow \mathbb{Z}+ i \mathbb{Z}$ of volume (and primitive degree) $d\geq 2$, there exists $M\in\text{Sp}(2g,\mathbb{Z})$ and a symplectic basis $\{a_1,b_1,\ldots, a_g,b_g\}$ of $\mathbb{Z}^{2g}$ such that $$p\circ M=a_1^*+i(db_1^*+a_2^*)$$ where the star denotes the symplectic dual of the given element. 
\end{lemma}

\begin{proof} Denote by $x=\Re(p)$ and $y=\Im (p)$ the elements in $(\mathbb{Z}^{2g})^*$. They satisfy $x\cdot y=d$. Choose $a_1\in\mathbb{Z}^{2g}$ the symplectic dual of $x$. Choose some $b_1\in\mathbb{Z}^{2g}$ such that $a_1\cdot b_1=1$ and write \begin{equation} y=m_1a_1^*+db_1^{*}+m_2a_2^{*}\label{eq:general form for degree d discrete}\end{equation} where $a_2$ is a primitive element satisfying $a_1\cdot a_2=b_1\cdot a_2=0$ and $m_1,d$ and $m_2$ are co-prime.
Complete those elements to a symplectic basis $a_1,b_1, \ldots, a_g,b_g$ of $\mathbb{Z}^{2g}$. The image of an element $u=\sum_{k\geq 1}\alpha_ka_k+\beta_kb_k$ under $p$ is $$p(u)=(a_{1}^{*}(u), (m_1a_1^*+db_1^{*}+m_2a_2^{*})(u))=(\alpha_1, m_1\alpha_1+d\beta_1+m_2\alpha_2).$$
Therefore $$p(u)=1\Leftrightarrow\alpha_1=1\text{ and } m_1+d\beta_2+m_2\alpha_2=0$$ $$p(u)=i\Leftrightarrow \alpha_1=0\text{ and } d\beta_1+m_2\alpha_2=1.$$

Hence, $p$ is surjective if and only if $d$ and $m_2$ are co-prime.

To conclude, we are going to show that under the hypothesis of $d$ and $m_2$ co-prime, there exists a choice of $b_1$ such that the coefficients $m_1$ and $m_2$ in the splitting given by (\ref{eq:general form for degree d discrete}) are $0$ and $1$ respectively.

Let us analyze the effect of a change of the first given $b_1$. Write $\widetilde{a_1}^*=a_1^*$ and $$\widetilde{b_1}^*=u_1a_1^*+b_1^*+\sum_{k\geq 2}u_ka_k^*+v_kb_k^*$$ for some $u_k,v_k\in \mathbb{Z}$. The new decomposition $y=\widetilde{m_1}\widetilde{a_1}^*+d\widetilde{b_1}^*+\widetilde{m_2}\widetilde{a_2}^*$ has the properties $$(y-\widetilde{m_1}\widetilde{a_1}^*-d\widetilde{b_1}^*)\cdot \widetilde{a_1}^*=0\text{ and } (y-\widetilde{m_1}\widetilde{a_1}^*-d\widetilde{b_1}^*)\cdot \widetilde{b_1}^*=0.$$ The first equation is automatically satisfied, and the second gives
\begin{equation}\label{eq:zero of m1}
 \widetilde{m_1}=m_1-du_1+m_2v_2
\end{equation}
Since $d$ and $m_2$ are co-prime we can already choose $u_1$ and $v_2$ to get $\widetilde{m_1}=0$ and restart the argument by supposing $m_1=0$.

For this choice, we have $$\widetilde{m_2}^*\widetilde{a_2}^*=y-\widetilde{m_1}\widetilde{a_1}^*-d\widetilde{b_1}^*=m_2v_2 a_1^{*}+(m_2-du_2)a_2^*-dv_2b_2^*-d(\sum_{k\geq 3}u_ka_k^*+v_kb_k^*)$$ and from this $$\widetilde{m_2}=\text{gcd}(-m_2v_2, m_2-du_2, -dv_2, -du_3, -dv_3, \ldots, -d u_g, -dv_g)$$

If we still want $\widetilde{m_1}=0$ we need to impose $m_2v_2=du_1$ by equation (\ref{eq:zero of m1}). The choice $v_2=d$, $u_1=m_2$ and all other coefficients equal to zero gives $\widetilde{m_1}=0$ and $\widetilde{m_2}=1$, as desired.
\end{proof}

We continue the proof of Proposition \ref{p:Kapovich}, which is reminiscent of Ratner's theory. 

Equipp \(\mathbb R^{2g}\) with its canonical symplectic form  $\omega (x,y) = \sum_{1\leq k\leq g} x_{2k}y_{2k+1}-x_{2k+1}y_{2k}$. The volume of a period $p\in \mathbb C^{2g}$ is the symplectic product \( V(p)=  \omega (\Re p , \Im p ) \).
Since the action of $\Gamma$ is linear, and that the volume is multiplicative, namely $V(\lambda p ) = |\lambda|^2 V(p)$ for every $\lambda \in \mathbb C$ and $p\in \mathbb C^{2g}$, we can restrict our attention to the action of $\Gamma$ on the subset $X\subset \mathbb C^{2g}$ whose elements have volume $1$. In real and imaginary coordinates the set of periods of volume \(1\) is then the set of pairs $(x,y)\in\mathbb{R}^{2g}\times\mathbb{R}^{2g}$ such that $\omega(x,y)=1$.

The simple real Lie group $G = \text{Sp}(2g, \mathbb R)$ acts transitively on the set of couples $(x,y)\in (\mathbb R^{2g})^2$ such that $\omega(x,y)=1$, and that the stabilizer of the couple \[\big( (1,0,\ldots,0), (0,1,0,\ldots,0) \big)\text{ is the group }
 \left( \begin{array}{ccc}
1 & & \\
& 1 & \\
& & \text{Sp}(2g-2,\mathbb R)\end{array} \right)\]
that we will denote by $U$ in the sequel. Our set $X$ is isomorphic to the homogeneous space $G/U$. The linear action of $\Gamma $ on $X$ is under the isomorphism $X \simeq G/U$ given by left multiplication on $G/U$.

Since the group $G$ is simple, that $U$ is generated by unipotent elements, and that $\Gamma $ is a lattice in $G$, Ratner's theorem \cite{Ratner} tells us that the closure of the $\Gamma$-orbits on $X$ are homogeneous in the following sense

\begin{theorem} [Ratner]\label{thm:Ratner}
For every $p\in X$ of the form $p = gU$, there exists a closed subgroup $H$ of $G$ containing $U^g= gUg^{-1}$, such that $\Gamma \cap H$ is a lattice in $H$, and such that $\overline{\Gamma \cdot p} = \Gamma H p$.
\end{theorem}

%The notations are taken from the beginning of Section \ref{s:dynamics} where we consider a point $p=gU\in G/U$ and from Ratner's Theorem \ref{thm:Ratner} we have a subgroup $H$ of $G$ containing $U^g=gUg^{-1}$ such that $\Gamma\cap H$ is a lattice in $H$ and $\overline{\Gamma\cdot p}=\Gamma H p$. 

Notice that in our situation, we have $U^g = I_{|W} \oplus \text{Sp}(W^\perp) \simeq \text{Sp} (2g-2,\mathbb R)$ where $W=\mathbb{R}\Re p+\mathbb{R}\Im p \subset \mathbb{R}^{2g}$ is the symplectic subspace associated to the volume one $p\in\mathbb{C}^2$.

Let $H_0$ be the connected component of $H$ containing the identity: then $\Gamma \cap H_0$ is still a lattice in $H_0$, and $U^g $ is contained in $H_0$.

If $H_0=G$ then $\overline{\Gamma \cdot p}=G$ and we deduce that the orbit closure is dense in $X$. Since the closure of $\Lambda(p)$ contains all the $\Lambda(q)$ of elements $q\in \overline{\Gamma \cdot p}$, we have $\overline{\Lambda(p)}=\mathbb{C}$.

If $H_0$ is a proper subgroup of $G$, Kapovich observes that it falls into two categories
\begin{itemize}
\item (Semi-simple case) $H_0$ is of the form $S \oplus \text{Sp} (W^\perp)$, where $S$ is a Lie subgroup of $\text{Sp}(W)$.
\item (Non semi-simple case) $H_0$ is not semi-simple and preserves a line $L \subset W$.
\end{itemize}
The proof of this dichotomy can be found in \cite[p. 12]{Kapovich}, and is based on Dynkin's classification of maximal connected complex Lie subgroups of $\text{Sp}(2g,\mathbb C)$, see \cite{Dynkin}. Let $L$ be a maximal complex Lie subgroup of $\text{Sp}(2g,\mathbb C)$ which contains $H_0$. If $H_0\neq \text{Sp} (2g,\mathbb R)$, its Zariski closure in the complex domain is a strict subgroup of $\text{Sp}(2g,\mathbb C)$, so it is contained in a maximal complex Lie (strict) subgroup of $\text{Sp}(2g,\mathbb C)$. It satisfies one of the following properties (see \cite[Ch. 6, Thm 3.1, 3.2]{GOV}):
\begin{enumerate}
\item $L= \text{Sp} (V) \oplus \text{Sp} (V^\perp)$ for some complex symplectic subspace $V\subset \mathbb C^{2g}$,
\item $L$ is conjugated to $\text{Sp} (s,\mathbb C) \otimes \text{SO} (t,\mathbb C)$ where $2g = st$, $s\geq 2$, $t\geq 3$, $t\neq 4$ or $t=4 $ and $s=2$,
\item $L$ preserves a line of $\mathbb C^{2g}$.
\end{enumerate}
Since $H_0$ contains $U^g$, $L$ contains the complexification of $U^g$, which is nothing but $\text{Id}_{W_ \mathbb C}\oplus \text{Sp} (W_ \mathbb C ^\perp)$, where $W_\mathbb C$ denotes the complexification $W \otimes_\mathbb R \mathbb C$ of $W$. In case (1), the only possibility is that up to permutation of $V$ and $V^\perp$, we have $W_\mathbb C= V$. In particular, $H_0$ is a subgroup of $\text{Sp} (W) \oplus \text{Sp}(W^\perp)$. Since it contains $\text{Id}_{|W} \oplus \text{Sp}(W^\perp)$, it must be of the form $S \oplus \text{Sp} (W^\perp)$, where $S$ is a Lie subgroup of $\text{Sp}(W)$. Case (2) cannot occur. In case (3), observe that the line $L$ needs to be in $W_\mathbb C$, since the group $\text{Id}_{|W_\mathbb C} \oplus \text{Sp} (W_\mathbb C ^\perp)$ preserves this line. If $L$ is defined over the reals, we are done. If not, both $L$ and $\overline{L}$ (the image of $L $ by the complex conjugation) are preserved by $H_0$, and thus $H_0$ is a subgroup of $\text{Sp} (W) \oplus \text{Sp} (W^\perp)$. As before, because it contains $\text{Id}_W \oplus \text{Sp} (W^\perp)$, it must be of the form $S \oplus \text{Sp}(W^\perp)$, where $S$ is a Lie subgroup of $\text{Sp}(W)$.

\vspace{0.2cm}

\begin{remark}\label{rem:rational W}
If $\Gamma_W:= \Gamma \cap (\text{Id}_W \oplus \text{Sp}(W^\perp))$ is a lattice in $\text{Id}_W \oplus \text{Sp}(W^\perp)$ then $W$ is defined over $\mathbb{Q}$ and the conclusion of the first item of Proposition \ref{p:Kapovich} follows easily. Indeed, $\Gamma _W $ acts by the identity on $W^\sigma$ for every Galois automorphism $\sigma$. The Zariski closure of $\Gamma_W$ being $\text{Id}_W \oplus \text{Sp} (W^\perp)$ (by Borel density theorem, see \cite{Zimmer}), $\text{Id}_W \oplus \text{Sp}(W^\perp)$ acts by the identity on $W^\sigma$ as well. This implies that $W^\sigma= W$ for every $\sigma$ (otherwise $\text{Id}\oplus\text{Sp}(W^{\perp})$ acts by the identity on the non-trivial subspace $(W+W^{\sigma})\cap W^{\perp}$), and so $W$ is rational.
\end{remark}

Let us proceed to analyze the different cases.

\textit{Semi-simple case.} Since the group $H_0 = S\oplus \text{Sp} (W^\perp)$ contains a lattice, it must be unimodular. In particular, either $S$ is the trivial group, or a $1$-parameter subgroup, or the whole $\text{Sp} (W)$.
If $S$ is trivial, then Ratner's Theorem tells us $\Gamma_W:= \Gamma \cap (\text{Id}_W \oplus \text{Sp}(W^\perp))$ is a lattice in $\text{Id}_W \oplus \text{Sp}(W^\perp)$ and we fall in case (1) by Remark \ref{rem:rational W}.

If $S$ is $1$-dimensional, $S\oplus \text{Id}_{W^\perp}$ would be the radical of $H_0$, and a theorem of Wolf and Raghunathan, see \cite{Raghunathan}, shows that it would intersect $\Gamma$ in a lattice. This implies that the intersection of $\Gamma$ with $\text{Id}_W \oplus \text{Sp}(W^\perp)$ is also a lattice. Indeed, let $\overline{\Gamma}$ denote the natural projection of $\Gamma$ on $\text{Sp}(W^{\perp})$. Since the sequence $$0\rightarrow S\cap\Gamma\rightarrow \Gamma\rightarrow\overline{\Gamma}\rightarrow 1$$ is exact, the map $\Gamma\rightarrow (S\cap\Gamma)\backslash S$ induced by the projection of $S\oplus\text{Sp}(W^{\perp})$ to $S$ induces a morphism $\overline{\Gamma}\rightarrow (\Gamma\cap S)\backslash S$. The group $(\Gamma\cap S)\backslash S$ is abelian, and since $\overline{\Gamma}$ has Kazdhan property (T), the image group in $(\Gamma\cap S)\backslash S$ is finite. We thus conclude that the image of $\Gamma$ on the factor $S$ is a lattice and therefore $\Gamma\cap(\text{Id}_W \oplus \text{Sp}(W^\perp))$ as well. We conclude as in Remark \ref{rem:rational W}.

Finally, it remains to treat the case where $S= \text{Sp}(W)$. This case splits into two subcases, depending on the lattice $\Gamma \cap \text{Sp} (W) \oplus \text{Sp} (W^\perp)$ being reducible or irreducible. If it is reducible, this implies that $\Gamma \cap (\text{Id}_W \oplus \text{Sp} (W^\perp))$ is a lattice, and then $W$ must be rational by the above considerations. Assume now that we are in the irreducible case. Then $g=2$, by a theorem of Margulis \cite{Margulis}. Assume $W$ is not rational, otherwise we are done. Let $\sigma$ be a Galois automorphism such that $W^\sigma \neq W$. The group $\Gamma \cap \text{Sp} (W) \oplus \text{Sp} (W^\perp)$ preserves the splitting $W^\sigma \oplus (W^\sigma)^\perp$, since $\Gamma$ and the symplectic form are defined over the rationals. Borel density theorem applied to the lattice $\Gamma \cap \text{Sp} (W) \oplus \text{Sp} (W^\perp)$ shows that $\text{Sp}(W) \oplus \text{Sp} (W^\perp)$ preserves the splitting $W^\sigma \oplus (W^\sigma)^\perp$. This implies that $W^\sigma = W^\perp$ and $(W^\sigma)^\sigma= W$. This being true for every Galois automorphism, this means that $W$ is defined over a totally real quadratic field $K$, and we have $W^\sigma= W^\perp$ where $\sigma$ is the Galois automorphism of $K$. This is the only situation where we fall in the last case of Proposition \ref{p:Kapovich}.

\vspace{0.2cm}

\textit{Non semi-simple case.} We suppose that $H_0$ is not semi-simple and prove in this case that the periods $p$ satisfy the second case of Proposition \ref{p:Kapovich}. We already know that there is a line $L\subset W$ that is invariant by the action.

For this, we will first need to understand in detail the subgroup $B$ of $\text{Sp}(2g,\mathbb R) $ formed by all elements that stabilize the line $L$, see \cite[p. 10]{Kapovich}. To unscrew the structure of $B$, notice that any element of $B$ stabilizes both $L$ and $L^\perp$ so that we have an exact sequence
\[ CH_{2g}\rightarrow B \rightarrow \text{Sp} (L^\perp /L)\simeq \text{Sp}(2g-2) \]
The group $ CH_{2g}$ is then the set of elements $M\in \text{Sp} (2g)$ which induce the identity map on $L^\perp/L$.

We now have another exact sequence
\begin{equation}\label{eq:exact sequence2} H_{2g-1} \rightarrow CH_{2g} \rightarrow \text{GL}(L) \simeq \mathbb R^*, \end{equation}
the last arrow being given by the restriction of an element $M\in CH_{2g}$ to the line $L$. Hence the subgroup $H_{2g-1}\subset CH_{2g}$ is the group of elements $M\in \text{Sp} (2g)$ which act as the identity on $L$ and on $L^\perp / L$. Such $M$ are easily seen to be of the form $M_{\varphi, \alpha}$, for some $\varphi \in (L^\perp/L)^*$
and $\alpha \in \mathbb R$, where
\begin{itemize}
\item the restriction of $M_{\varphi, \alpha}$ to $L^\perp$ equals $id_{|L^\perp} + \varphi a_1$
\item $ M_{\varphi, \alpha} (b_1) = \alpha a_1 + b_1 + \sum _{k\geq 2} \varphi (b_k) a_k -\varphi (a_k) b_k$,
\end{itemize}
where $a_1,b_1,\ldots, a_g,b_g$ is a symplectic basis such that $L= \mathbb R a_1$.
The group structure on $H_{2g-1}$ is then given by the following relation
\begin{equation}\label{eq:group structure} M_{\varphi, \alpha} M_{\varphi ' , \alpha '} = M_{\varphi + \varphi', \alpha+ \alpha' + \omega(\varphi, \varphi')}, \end{equation}
where $\omega (\varphi, \varphi')$ is the natural symplectic product induced by $\omega$ on $(L^\perp/L)^*$, namely
\[ \omega(\varphi, \varphi') = \sum_{k\geq 2} \varphi(a_k)\varphi'(b_k)-\varphi'(a_k) \varphi(b_k). \]
Equation \eqref{eq:group structure} is a straightforward computation. An equivalent formulation is that $H_{2g-1}$ is the central extension
\[ \mathbb R \rightarrow H_{2g-1} \rightarrow (L^\perp / L)^*, \]
defined by the $2$-cocycle $ (\varphi, \varphi') \mapsto \omega (\varphi , \varphi')$. The group $H_3$ is isomorphic to the classical Heisenberg group of upper triangular real matrices of size $3\times 3$ with $1$'s on the diagonal.

Now $CH_{2g} $ is a semi-direct product of $\mathbb R^*$ by $H_{2g-1}$, see \eqref{eq:exact sequence2}. To understand its structure, we introduce for every $\lambda$, one of its lift $S_\lambda \in CH_{2g}$ defined by
\[ S_\lambda (a_1) = \lambda a_1, \ S_\lambda (b_1) = \frac{1}{\lambda} b_1, \ S_\lambda (a_k) = a_k,\ S_\lambda(b_k)= b_k\text{ for } k\geq 2.\]
A trivial computation shows that for any $\lambda \in \mathbb R^*$, every $\varphi \in (L^\perp /L)^*$ and every $\alpha \in \mathbb R$, we have
\begin{equation} S_\lambda M_{\varphi, \alpha} S_\lambda^{-1} = M_{\lambda \varphi, \lambda^2 \alpha}. \label{eq:action avoiding unimodularity}\end{equation}
This shows that $CH_{2g}$ is not unimodular, and consequently does not contain any lattice.

By construction, our group $H_0$ is contained in $B$. We have an exact sequence $CH_{2g} \rightarrow B \rightarrow \text{Sp}(L^\perp/L, \omega) $. The image of $H_0$ by the right arrow is onto since $H_0$ contains $U^g$, so that $H_0$ itself splits as an exact sequence $CH_{2g} \cap H_0 \rightarrow H_0 \rightarrow \text{Sp} (L^\perp/L, \omega)$. The group $CH_{2g}\cap H_0$ is invariant under the action by conjugation of $\text{Sp} (L^\perp/L, \omega)\simeq \text{Sp}(W^{\perp})$. The restriction of this action on $H_{2g-1}$ can be described explicitly: for $U\in\text{Sp}(L^{\perp}/L)\simeq \text{Sp}(W^{\perp})$ denote $s(U)=Id_W\oplus U\in H_0\subset B$. Then $$s(U)M_{\varphi,\alpha}s(U)^{-1}=M_{\varphi\circ s(U)^{-1},\alpha}.$$

\begin{lemma}\label{l:subgroups of CH}
 The closed non-trivial connected subgroups of $CH_{2g}$ invariant by $\text{Sp}(W^{\perp})$ are
 \begin{enumerate}
 \item $Z(H_{2g-1})$
 \item lifts of $\text{GL}^+(L)$ in $CH_{2g}$
 \item $H_{2g-1}$
 \item $\text{GL}^{+}(L)\ltimes Z(H_{2g-1})$
 \item $CH_{2g}$
 \end{enumerate}
\end{lemma}
The proof of Lemma \ref{l:subgroups of CH} is an easy consequence of the previous exact sequences and calculations.

Now $H_0\cap CH_{2g}$ cannot fall in cases $(1)$ and $(2)$ of Lemma \ref{l:subgroups of CH} since in either of those, $H_0$ would be semi-simple, contrary to hypothesis. It can neither fall in cases $(4)$ or $(5)$, since in those cases equation (\ref{eq:action avoiding unimodularity}) does not allow $H_0$ to be unimodular. Hence we are left with the possibility $H_0\cap CH_{2g}=H_{2g-1}$ (and $H_0\simeq\text{Sp}(W^{\perp})\ltimes H_{2g-1}$). In this case we will show that the invariant line $L$ is rational, and thus we fall in the second possibility of Proposition \ref{p:Kapovich}.

The theorem of Raghunathan and Wolf cited above tells us that $\Gamma \cap H_{2g-1}$ is a lattice in $ H_{2g-1}$. By using Borel's density Theorem in \cite[p.91]{Gromov} we deduce that its Zariski closure is $H_{2g-1}$. We have $L\subset K:=\bigcap _{\gamma \in \Gamma \cap H_{2g-1} } \text{Ker} (\gamma - I)$ which is an intersection of rational spaces. If the inclusion is proper, then the Zariski closure of $\Gamma\cap H_{2g-1}$ would not be the whole of $H_{2g-1}$. This shows that $L=K$ and it is a rational one-dimensional subspace of $\mathbb R^{2g}$.

Up to a real affine change of coordinates on $\mathbb C$, we can assume that the imaginary part of $p$ generates $L$, and that it is a primitive element of $\mathbb Z^{2g}$. Since the group $H_{2g-1}$ acts transitively on the set of vectors $v\in \mathbb R^{2g}$ such that $v\cdot \Im p= 1$, while keeping the period $\Im p$ fixed, we see that $H\cdot p$ already contains all the periods $q$ such that $\Im q = \Im p $ and such that $V(q)= V(p)= 1$. Since, $\Gamma$ acts transitively on the set of primitive elements of $\mathbb Z^{2g}$, we infer that $\Gamma H p = \overline{\Gamma \cdot p}$ contains all the periods $q$ with volume $V(p) = 1$ and with a primitive integer imaginary part. Since any periods of $ \overline{\Gamma \cdot p}$ is of this form, we deduce that this situation is exactly the second case of the proposition. The proof of this latter is now complete.

Applying Moore's ergodic theorem in \cite{Moore} to each case $H$ above we deduce the final ergodicity part of Proposition \ref{p:Kapovich}. 
\section{Appendix II: proof of Lemma \ref{l: surjection symplectic modulo 2}}\label{s:appendix1}

Up to composing \( p\) by an element of \(\text{GL}_2^+ (\mathbb R)\) we can assume that the image of \(p\) is the set of Gaussian integers. By Lemma \ref{l:finite degree orbit}, we can assume that there exists a symplectic basis \(a_1, b_1, \ldots , a_g, b_g\) in which the period \(p\) has the following form: 
\[ p(a_1)=p(a_2)=1, \ p(b_1)=p(b_2)=i \text{ and } p(a_k)= p(b_k)=0 \text{ for } k\geq 3.\]
This basis permits to identify \( H_1(\Sigma_g,\mathbb Z) \) with \( \mathbb Z^{2g} \) equipped with the symplectic form 
\[ u\cdot v = u_1 v_2 - u_2 v_1 +\ldots + u_{2g-1} v_{2g} - u_{2g} v_{2g-1},\]
and the group \(\text{Aut}(H_1(\Sigma_g, \mathbb Z))\) with \(\text{Sp}(2g,\mathbb Z)\). In these coordinates we have 
\[ p (u_1, \ldots, u_{2g}) = (u_1+u_3) + (u_2+u_4)i .\]
The form \( u_1+u_3 \in (\mathbb Z^{2g})^*\) is dual to the vector \(P_1= -(b_1+b_2)\) (meaning that \(u_1+u_3= - (b_1+b_2)\cdot u\)), whereas the form \( u_2+u_4\in (\mathbb Z^{2g})^*\) is dual to \(P_2= a_1+a_2\) (meaning \(u_2+u_4= (a_1+a_2)\cdot u\)). We then have that an edomorphism of $\mathbb{Z}^{2g}$ given by  a matrix \(M\) stabilizes \(p\) if and only if \(M (P_k)=P_k\) for \(k=1,2\), and similarly an endomorphism of $(\mathbb Z/2\mathbb Z)^{2g}$ given by a matrix \(M[2]\) stabilizes $p[2]$ if and only if \(M[2] (P_k[2])=P_k[2]\) for \(k=1,2\).
The condition can be read in the columns of the matrices. If $C_k$ (resp. $C_k[2]$) denotes the $k$-th columun of $M$ (resp. $M[2]$) then \begin{equation}\label{eq: invariance of p} C_1 +C_3= a_1+a_2 \text{ and } C_2 +C_4=b_1+b_2.\end{equation} \begin{equation}\label{eq:invariance of p[2]}(\text{resp. } C_1[2]+C_3[2]=a_1[2]+a_2[2] \text{ and } C_2[2]+C_4[2]=b_1[2]+b_2[2])\end{equation} 

Let \(M[2]\in \text{Stab} _{\text{Sp}(2g,\mathbb Z / 2\mathbb Z)} (p[2])\) a symplectic isomorphism whose columns \( C_1[2] , \ldots , C_{2g}[2]\) satisfy \eqref{eq:invariance of p[2]}. 
Let \( C_k' \in \mathbb Z^{2g}\) be representatives of the classes \(C_k[2]\in (\mathbb Z / 2\mathbb Z)^{2g}\), for \(k=1,\dots, 2g\), and  \(M' \) be the square matrix whose columns are the \(C_k'\). Up to changing the representative in each class, we can suppose \eqref{eq: invariance of p} is valid for the columns of $M'$. We will assume in the sequel that these equations are always satisfied.

Our goal is to modify the vectors \( C_k '\) by defining 
\[ C_k := C_k ' + 2E_k, \text{ with } E_k\in \mathbb Z^{2g}\]
in such a way that the matrix \( M:= (C_1,\ldots , C_{2g})\) not only stabilizes $p$ (satisfying \eqref{eq: invariance of p}) but also belongs to \( \text{Sp} (2g,\mathbb Z)\). It is therefore necessary to impose
\begin{equation}\label{eq: Ek} E_1+E_3=0 \text{ and } E_2+E_4=0.\end{equation}
\begin{equation}
    C_1,\ldots, C_{2g}\text{  forms a symplectic basis of } \mathbb{Z}^{2g} 
\end{equation}
\textit{Main step: construction of \(C_1,C_2,C_3,C_4\).} The conditions we need to satisfy are 
\begin{enumerate}
\item \( C_1\cdot C_2= 1\) 
\item \( C_1\cdot C_3=0 \), or equivalently \( C_1 \cdot (a_1+a_2) =0\)
\item \( C_1\cdot C_4= 0\), or equivalently, knowing (1): \( C_1\cdot (b_1+b_2)=C_1\cdot C_2 =1\)
\item \(C_2\cdot C_3=0\), or equivalently, knowing (1): \( C_2 \cdot (a_1+a_2)=C_2\cdot C_1=-1\) 
\item \(C_2\cdot C_4=0\), or equivalently, \( C_2 \cdot (b_1+b_2) =0\)
\item \(C_3\cdot C_4=1\), or equivalently, \( ((a_1+a_2) -C_1)\cdot ((b_1+b_2) - C_2) =1\) 
\end{enumerate}
 If \eqref{eq: invariance of p} and (1)--(5) are true, we have that (6) is automatically satisfied 
\[ ((a_1+a_2) -C_1)\cdot ((b_1+b_2) - C_2) =\]\[=(a_1+a_2)\cdot (b_1+b_2) -(a_1+a_2) \cdot C_2 -C_1 \cdot (b_1+b_2) +C_1\cdot C_2= 2-1-1+1=1.\]
Let us first find \(C_1, C_2, C_3, C_4\) so that conditions (2)--(5) are satisfied. In real and imaginary coordinates this is equivalent to
\[ p (E_1) = \left( \frac{1-C_1'\cdot (b_1+b_2) } {2}, \frac{C_1' \cdot (a_1+a_2)}{2} \right),\] 
and 
\[ p(E_2) = \left(\frac{(b_1+b_2)\cdot C_2' }{2} , \frac{1+C_2'\cdot (a_1+a_2)}{2} \right). \]
Observe that the right hand sides of the last two equations are integers because \(C_k'\)'s are lifts of - the elements of the symplectic basis- \(C_k[2]\)'s and satisfy \eqref{eq: invariance of p}. By surjectivity of \(p\) there exist solutions to the equations (2)--(5), and so we can assume that the \(C_k'\)'s satisfy (2)--(5). We can now replace \(C_k'\) by \( C_k = C_k'+2E_k\) for \(k=1,2\) with 
\[ E_k \in (a_1+a_2)^{\perp} \cap (b_1+b_2) ^\perp \text{ for } k=1,2. \]
to preserve conditions (2)--(5). Condition (1) is equivalent to 
\[1= C_1 \cdot (C_2' +2 E_2) = C_1\cdot C_2' + 2 C_1\cdot E_2,\]
and we know that \( C_1 \cdot C_2'\) is odd. So we are done if we can choose \(C_1\) so that the map 
\begin{equation}\label{eq: onto?} E_2 \in (a_1+a_2)^{\perp} \cap (b_1+b_2) ^\perp \mapsto C_1\cdot E_2\in \mathbb Z\end{equation}
is onto. We have \( (a_1+a_2)^{\perp} \cap (b_1+b_2) ^\perp= \mathbb Z (a_1-a_2) +\mathbb Z (b_1-b_2) +\sum_{k\geq 3} \mathbb Z a_k+\mathbb Z b_k\)
So the map \eqref{eq: onto?} is onto if and only if 
\begin{equation}\label{eq: gcd} \text{gcd} ( C_1 \cdot (a_1-a_2) , C_1 \cdot (b_1-b_2) , C_1 \cdot a_3 , C_1\cdot b_3, \ldots, C_1 \cdot a_g , C_1\cdot b_g) =1.\end{equation}
However, observe the following:
\begin{itemize}
\item \(C_1 \cdot (b_1-b_2)= C_1 \cdot (b_1+b_2) [2]= 1[2] \) is odd, and 
\item If $g\geq 3$, choosing \(E_1\) appropriately, we can assume that the value of \( C_1 \cdot a_3 \) is either \( 1\text{ or } 2\).
\end{itemize} 
So  \eqref{eq: gcd} is satisfied with these choices of \(E_1\). Hence the main step is achieved.

To conclude the proof of Lemma \ref{l: surjection symplectic modulo 2}, we need to construct the columns \( C_5,\ldots, C_{2g}\). We will inductively find the pair \( C_5, C_6\), then the pair \( C_7, C_8\), etc.. Let us construct the first one. We need to find \(E_5, E_6\) so that 
\[ C_5, C_6 \in \left(C_1, C_2, C_3, C_4 \right) ^\perp \text{ and } C_5\cdot C_6 = 1.\]
This means that the equations 
\begin{equation}\label{eq: 1} (E_k\cdot C_1, \ldots , E_k \cdot C_4) =-\frac{1}{2} (C_k'\cdot C_1,\ldots ,C_k'\cdot C_4) \text{ for } k=5,6 \end{equation}
and 
\begin{equation}\label{eq: 2} C_5\cdot (C_6'+ 2 E_6) = 1\end{equation}
hold. We can find \(E_5\) and \(E_6\) so that \eqref{eq: 1} is satisfied, since \(C_1, C_2, C_3, C_4\) is a symplectic family (i.e. \(C_1\cdot C_2=C_3\cdot C_4=1\) and other products are zero). So we can assume that \(C_5'\) and \(C_6'\) belong to the orthogonal of \( C_1, C_2, C_3, C_4\), which is a symplectic submodule of \(\mathbb Z^{2g}\) isomorphic to \(\mathbb Z^{2g-4}\) with the canonical symplectic product. In these coordinates, one of the coordinates of \( C_5'\) is odd (since \(C_5[2]\), the reduction of \(C_5'\) modulo 2, is non zero) so by adding to \(C_5\) an even vector of \(\mathbb Z^{2g-4}\) one can assume that one of the coordinates of \( C_5\) is equal to \(1\). Hence, equation \eqref{eq: 2} being equivalent to \( C_5\cdot E_6= \frac{-C_5\cdot C_6'}{2}\) and the product \( C_5\cdot C_6\) being even, equation \eqref{eq: 2} can be solved for some suitable choice of \(E_6\). We get \(C_5\) and \(C_6\) in this way. For the construction of the other pairs by induction, the argument is similar. 

The first part of the lemma follows, namely the surjectivity of the map \(\text{Stab}_{\text{Aut}(H_1(\Sigma_g, \mathbb Z))}(p) \rightarrow \text{Stab}_{\text{Aut}(H_1(\Sigma_g, \mathbb Z / 2\mathbb Z))}(p[2])\).

For the second part, it suffices to remark that the stabilizer of \(p[2]\) in \( \text{Aut} (H_1(\Sigma_g, \mathbb Z / 2\mathbb Z))\) is the stabilizer of a pair of non colinear elements of \(H^1(\Sigma_g, \mathbb Z / 2\mathbb Z)\) that do not intersect each other. The number of elements in this group is given by the announced formula, by elementary considerations. 

\cbstart
\section{Appendix III: a result in Picard-Lefschetz theory} \label{a: Picard-Lefschetz}

We prove here a result which is a well-known consequence of Picard-Lefschetz theory, but which we cannot find as such in the literature. 

Let \(h : S\rightarrow C\) be a holomorphic map from a compact complex surface \(S\) to a compact complex curve \(C\). We assume both \(S\) and \(C\) are non singular, that the critical points of \(h\) are non degenerate, and that the fibers of \(h\) are connected. 

\begin{lemma} \label{l:picard-lefschetz}
For every \(c\in C\), the sequence 
\begin{equation} \label{eq: exact sequence 1} \pi_1 (h^{-1} (c) ) \rightarrow \pi _1 (S) \rightarrow \pi_1 (C) \end{equation} where the map on the left is induced by inclusion and the one on the right by \(h_*\),  is exact. 
\end{lemma}

\begin{proof} 
Let us first prove the claim for \(c\) a regular value of \(h\). To do so, given a lift \(w\in h^{-1} (c)\), we will use some useful loops \(\tilde{\mu_i}: [0,1] \rightarrow S\) starting and ending at \(w\) that do not intersect the critical fibers of \(h\). 

Denote by \( c_i \), \(i= 1,\ldots, r\) the critical values of \(h\). At any regular point \(p_i\) of the curve \( h^{-1} (c_i )\), the map \(h\) induces a biholomorphism between a compact disc \(D_i\) transversal  to \( h^{-1} (c_i \) at \(p_i\) and a neighborhood \(\delta_i\) of \(c_i \) in \(C\). We can assume that the discs \(\delta_i := h(D_i)\) are disjoint. Given a point \( w_i \in \partial D_i\), let \(\gamma_i:[0,1] \rightarrow E\setminus \cup_i \text{Int} (\delta_i)  \) be a family of paths with origin \(c\) and end point \( h(w_i) \), having the property of being disjoint appart from their origin. Since \(h\) induces a \(C^\infty\)-fibration with connected fibers from \( S \setminus \cup _i h^{-1} (c_i ) \) to \( C \setminus \{c_i \} \), given a lift \(w\in h^{-1} (c)\), we can lift the path \( \gamma_i\) to a path \(\tilde{\gamma_i} \)  starting at \( w \) and ending  at \( w_i\).  We denote by \(\mu_i\) the concatenation \(\gamma_i
 \star \partial \delta_i\star \gamma_i^{-1} \) and by \(\tilde{\mu_i}\) its lift \( \tilde{\gamma_i} \star \partial D_i \star \tilde{\gamma_i}^{-1}\). Notice the following two facts: 
\begin{enumerate} 
\item \(\tilde{\mu_i}\) is homotopically trivial in \(S \), 
\item the representatives of the loops \(\mu_i\)  in \(\pi_1 (C\setminus \{c_i \}, c)\) generate the kernel of the map 
\begin{equation} \label{eq: map fundamental group}  \pi_1 (C\setminus \{z_i \}, c)\rightarrow \pi_1 (C, c)\end{equation}  
induced by inclusion.
\end{enumerate} 
(The first is clear, and the second comes from the fact that the paths \(\gamma_i\) are disjoint appart from their origin.) 

We are now in a position to prove the claim in the case of a regular fiber. Consider an element in the kernel of the map \(h_\star: \pi_1 (S,w) \rightarrow \pi_1(C, c)\) represented by a loop \( \varepsilon : [0,1] \rightarrow S \) starting and ending at \(w\). Up to homotopy, we can assume that \(\varepsilon\) intersects none of the critical fibers of \(h\). The image \( h \circ \varepsilon\) is then a loop starting and ending at \(c\) which defines an element of the fundamental group of \( \pi_1 (C \setminus \{ z_i \} , c ) \) belonging to the kernel of the map \eqref{eq: map fundamental group}. Hence, \(h\circ \varepsilon \) is homotopic to a word \(W (\mu_1, \ldots , \mu_r) \).  Consider the path \(\varepsilon ' = \varepsilon \star W (\tilde{\mu_1}, \ldots, \tilde{\mu_r}) ^{-1} \); by construction, 
\begin{itemize} 
\item[(i)]  \(\varepsilon' \) is homotopic to \(\varepsilon\) is \(S\) 
\item[(ii)] it intersects none of the critical fibers of \(h\), 
\item[(iii)] its image \( h \circ \varepsilon '\) with fixed extremities \(c\) is homotopic in \( C\setminus \{c_i \} \) to the constant path \( c\). 
\end{itemize} 
Since the restriction of \(h\) induces a locally trivial \(C^\infty\) fibration from \(S \setminus \cup_i h^{-1} (c_i ) \) to \(C \setminus \{c_i \}\) --- this is a proper submersion, so this is Ehresmann's fibration theorem --- we can lift the homotopy found in (iii) to a homotopy in \( S \setminus \cup_i h^{-1} (c_i ) \) between \(\varepsilon '\) and a loop contained in the fiber \( h^{-1} ( c)\).  This establishes that \(\varepsilon\) is homotopic with fixed extremities to a loop contained in the fiber \( h^{-1} (c)\), and ends the proof of the claim in the case of a regular fiber.

It remains to prove the claim for the singular fibers of \(h\). For each \(i\), there exists a neighborhood \(U\) of \(h^{-1} (c_i)\) that retracts by deformation on \( h^{-1} (c_i)\) (see \cite[Chapter X, Section 9]{ACG}); in particular, the images of \(\pi_1 (h^{-1} (c_i))\) and \(\pi_1 (U)\) in \(\pi_1 (S)\) coincide. Since \(U\) contains a regular fiber, we know from the prerceeding discussion that the image of \(\pi_1 (U)\) in \(\pi_1 (S)\) contains \( \text{Ker} (h_\star)\). On the other hand, the image of  \( \pi_1 (h^{-1}(c_i))\) in \(\pi_1 (S)\) is contained in \( \text{Ker} (h_\star)\) since \(h\) is constant on \(h^{-1} (c_i)\), so the sequence  \eqref{eq: exact sequence 1} is exact for the critical value \(c=c_i\), and the Lemma follows.
\end{proof} 

\begin{corollary}\label{c: exact sequence}
For every \(c\in C\), the sequence 
\begin{equation} \label{eq: exact sequence 2} H_1 (h^{-1} (c),\mathbb Z ) \rightarrow H _1 (S,\mathbb Z) \rightarrow H_1 (C,\mathbb Z) \end{equation} where the map on the left is induced by inclusion and the one on the right by \(h_*\),  is exact. 
\end{corollary}

\begin{proof}
This is a consequence of Lemma \ref{l:picard-lefschetz} and Hurewicz' theorem.
\end{proof}
\cbend

\end{document}